\documentclass[english]{amsart} 
 
\usepackage{enumerate}
\usepackage{amssymb}
\usepackage{array}
\usepackage{mathrsfs}
\usepackage{mathtools} 
\usepackage{ stmaryrd }
\usepackage{alphabeta}
\usepackage[normalem]{ulem}
 
\usepackage{color}	
\usepackage[dvipsnames,svgnames,table]{xcolor}
\usepackage[curve,all]{xy}
\xyoption{matrix}
 \xyoption{curve}
 \xyoption{color}
 \xyoption{line}
\xyoption{arc}

\usepackage[shortlabels]{enumitem}

\setlist[enumerate]{leftmargin=6mm,topsep=0pt,itemsep=-1ex,partopsep=1ex,parsep=1ex,label=\rm{(\arabic*)}}
\setlist[enumerate,2]{label=\rm({\it\roman*}), ref=\rm({\it\roman*})}
\setlist[itemize]{leftmargin=6mm,topsep=0pt,itemsep=-1ex,partopsep=1ex,parsep=1ex,label=\raisebox{0.25ex}{\tiny$\bullet$}}

\usepackage[backref, colorlinks, linktocpage, citecolor = blue, linkcolor = blue]{hyperref}

\theoremstyle{plain}
\newtheorem{theoremA}{Theorem}
\newtheorem{propositionA}[theoremA]{Proposition}

\newtheorem{theorem}{Theorem}[subsection]
\newtheorem*{theoremaux}{Theorem \theoremauxnum}
\gdef\theoremauxnum{1}

\newtheorem{proposition}[theorem]{Proposition}
\newtheorem*{propositionaux}{Proposition \propositionauxnum}
\gdef\propositionauxnum{1}

\newtheorem{lemma}[theorem]{Lemma}
\newtheorem*{lemmaaux}{Lemma \lemmaauxnum}
\gdef\lemmaauxnum{1}

\newtheorem{corollary}[theorem]{Corollary}

\theoremstyle{definition}
\newtheorem{definition}[theorem]{Definition}
\newtheorem{notation}[theorem]{Notation}
\newtheorem{example}[theorem]{Example}

\theoremstyle{remark}
\newtheorem{remark}[theorem]{Remark}

\newcommand\iso{\stackrel{\simeq}{\longrightarrow}}

\newcommand{\incl}[1][r]{\ar@<-0.2pc>@{^(-}[#1] \ar@<+0.2pc>@{-}[#1]}

\renewcommand{\P}{\mathbb{P}}

\renewcommand{\L}{\mathcal{L}}

\newcommand{\V}{\mathcal{V}}
\renewcommand{\SS}{\mathcal{S}}
\newcommand{\PP}{\mathcal{P}}
\newcommand{\FF}{\mathcal{F}}
\newcommand{\cO}{\mathcal{O}}

\newcommand{\TT}{\hat{\mathcal{S}}}
\newcommand{\p}{\mathbb{P}}
\newcommand{\s}[1]{s_{#1}}

\newcommand{\Id}{\mathrm{id}}

\renewcommand{\k}{\mathrm{k}}

\renewcommand{\Im}{\mathrm{Im}}

\newcommand{\A}{\mathbb{A}}
\newcommand{\T}{\mathcal{T}}
\newcommand{\C}{\mathbb{C}}
\newcommand{\F}{\mathbb{F}}
\newcommand{\N}{\mathbb{N}}

\newcommand{\M}{\mathcal{M}}
\newcommand{\U}{\mathcal{U}}
\newcommand{\Z}{\mathbb{Z}}

\newcommand{\G}{\mathbb{G}}

\renewcommand{\O}{\mathcal{O}}
\renewcommand{\V}{\mathcal{V}}

\newcommand{\OP}{\mathcal{O}_{\mathbb{P}^1}}
\newcommand{\OPP}{\mathcal{O}_{\mathbb{P}^2}}

\newcommand{\OFa}{\mathcal{O}_{\mathbb{F}_a}}

\DeclareMathOperator{\SL}{SL}
\DeclareMathOperator{\Aut}{Aut}
\DeclareMathOperator{\Hom}{Hom}

\DeclareMathOperator{\Autz}{Aut^{\circ}}
\DeclareMathOperator{\PGL}{PGL}

\DeclareMathOperator{\Pic}{Pic}

\DeclareMathOperator{\GL}{GL}
\DeclareMathOperator{\Bir}{Bir}

\DeclareMathOperator{\Ker}{Ker}

\makeatletter
\@namedef{subjclassname@2020}{%
  \textup{2020} Mathematics Subject Classification}
\makeatother

\title[Automorphisms of $\P^1$-bundles over surfaces in characteristic $p$]{Automorphisms of $\P^1$-bundles over rational surfaces in positive characteristic}
\date{\today}

\author{J\'er\'emy Blanc}
\address{J\'er\'emy Blanc, Universit\'e de Neuch\^atel,
	Rue Emile-Argand 11,
	CH-2000 Neuch\^atel,
	Switzerland}
\email{jeremy.blanc@unine.ch}

\author{Ronan Terpereau}
\address{Univ. Lille, CNRS, UMR 8524 - Laboratoire Paul Painlev\'e, F-59000 Lille, France}
\email{ronan.terpereau@univ-lille.fr}

\subjclass[2020]{14E07, 14L30; 14G17, 14J26, 14J30, 14J50, 14J60, 14D20}

\begin{document}

\begin{abstract}
We study $\mathbb{P}^1$-bundles over smooth projective rational surfaces over an algebraically closed field of characteristic $p>0$, together with the actions of their neutral automorphism groups.
After an equivariant square birational reduction to the projective plane or a Hirzebruch surface, we classify the possible induced actions on the base and identify the families relevant to relative maximality.
For $p\geq 3$, we prove that every such $\mathbb{P}^1$-bundle is equivariantly square birational to one whose neutral automorphism group is relatively maximal, and we classify all bundles with this property.
We also describe all equivariant square birational maps between these relatively maximal bundles and determine which are stiff or superstiff, the analogues of rigidity and superrigidity for $\mathbb{P}^1$-bundles.
\end{abstract}

\maketitle

\tableofcontents
 
 \vspace{-10mm}
 
\section{Introduction} 

\subsection{Aim and scope}
Throughout the article, $\k$ is an algebraically closed field of
characteristic $p>0$. We study Zariski locally trivial $\P^1$-bundles
$\pi\colon X\to S$ over smooth projective rational surfaces $S$, together
with the identity component $\Autz(X)$ of the automorphism group of $X$.
We describe these pairs up to equivariant birational maps that preserve
the $\P^1$-bundle structures (which we call square birational maps;
see the definition below), and classify the pairs for which $\Autz(X)$
is relatively maximal.
The classification of $\P^1$-bundles with relatively maximal
neutral automorphism group is obtained for $p\geq3$. 
In characteristic $2$, additional phenomena arise,
and the general classification of relatively maximal
neutral automorphism groups remains incomplete.

Our motivation comes in part from the classification of connected
algebraic subgroups of the Cremona group $\Bir(\P^3)$, initiated by
Enriques and Fano~\cite{EF1898} and completed over $\C$ by Umemura in
a series of papers~\cite{Ume80,Ume82a,Ume82b,Ume85}; see also
\cite{MU83,Ume88}. We discuss this broader motivation in
Section~\ref{sec: classification max conn subgroups of Cr3}.

In characteristic zero, $\P^1$-bundles over rational surfaces and their
neutral automorphism groups were studied in~\cite{BFT}. 
Let us emphasize that five of the eight infinite families of Mori fiber spaces appearing in the classification of maximal connected algebraic subgroups of $\Bir(\P^3)$ consist of $\P^1$-bundles; see~\cite[Theorem~E]{BFT-MFS}.
The present article extends this study to positive characteristic,
where new examples arise, including positive-dimensional families
of relatively maximal $\P^1$-bundles, a phenomenon absent in
characteristic zero. These families consist of Umemura
$\P^1$-bundles over $\F_a$ and their contractions to $\P^2$ when $a=1$.

\subsection{Main results}
We study connected automorphism groups through equivariant
birational maps that preserve the given $\P^1$-bundle structures.
Such maps may modify both the total space and the base.
We begin by fixing the terminology needed to compare these models.

\begin{definition}$($\cite[Definition~1.2.1]{BFT}$)$
	Let $\pi\colon X\to S$ and $\pi'\colon X'\to S'$ be $\P^1$-bundles over smooth projective rational surfaces.
	\begin{itemize}
		\item A birational map $\varphi\colon X\dasharrow X'$ is \emph{square} if there exists a birational map $\eta\colon S\dasharrow S'$ such that the diagram
		\[
		\xymatrix@R=4mm@C=2cm{
			X \ar@{-->}[r]^{\varphi} \ar[d]_{\pi} & X' \ar[d]^{\pi'} \\
			S \ar@{-->}[r]_{\eta} & S'
		}
		\]
		commutes. We then say that $\varphi$ is \emph{above} $\eta$. If both $\varphi$ and $\eta$ are isomorphisms, we call $\varphi$ a \emph{square isomorphism}; if moreover $X=X'$, $S=S'$, and $\pi=\pi'$, we call it a \emph{square automorphism}.
		
		\item If $S=S'$ and $\pi'\varphi=\pi$, we say that $\varphi$ is a \emph{birational map of $\P^1$-bundles}. If $\varphi$ is an isomorphism, we call it an \emph{isomorphism of $\P^1$-bundles}.
		
		\item Finally, a square birational map $\varphi\colon X\dasharrow X'$ is \emph{$\Autz(X)$-equivariant} if
		\[
		\varphi\Autz(X)\varphi^{-1}\subseteq\Autz(X').
		\]
	\end{itemize}
\end{definition}

We now recall the notions of relative maximality,
stiffness, and superstiffness.

\begin{definition}$($\cite[Definition~1.2.3]{BFT} and \cite[Definition~2.1]{Fong}$)$\\
	Let $\pi\colon X\to S$ be a $\P^1$-bundle over a smooth projective rational surface.
	\begin{itemize}
		\item We say that $\Autz(X)$ is \emph{relatively maximal} if, for every $\Autz(X)$-equivariant square birational map
		\[
		\varphi\colon (X,\pi)\dasharrow(X',\pi'),
		\]
		we have
		\[
		\varphi\Autz(X)\varphi^{-1}=\Autz(X').
		\]
		\item We say that $(X,\pi)$ is \emph{stiff} if moreover, for every
		such $\varphi$, the $\P^1$-bundles $(X,\pi)$ and $(X',\pi')$
		are square-isomorphic, and \emph{superstiff} if every such
		$\varphi$ is itself a square isomorphism.
	\end{itemize}
\end{definition}

\begin{remark}
	Relative maximality, stiffness, and superstiffness depend on the $\P^1$-bundle structure and not only on the variety $X$. For instance, let $X=\P^1\times\F_1$
	with its two natural $\P^1$-bundle structures
	\[
	\pi\colon X\to\P^1\times\P^1
	\qquad\text{and}\qquad
	\pi'\colon X\to\F_1.
	\]
	Then $\Autz(X)$ is relatively maximal with respect to $\pi$, but not with respect to $\pi'$.
\end{remark}

\begin{remark}
The adjective \emph{relatively} indicates that we restrict attention to $\P^1$-bundles: relative maximality concerns equivariant square birational maps to other $\P^1$-bundles. We use \emph{maximality} when allowing equivariant birational maps to arbitrary Mori fiber spaces. Among $\P^1$-bundles, stiffness and superstiffness correspond to rigidity and superrigidity, respectively.
\end{remark}

The induced action on the base plays an essential role in our study.
Already for Hirzebruch surfaces, connected automorphism groups in
positive characteristic exhibit a behavior absent in characteristic
zero. Laurent asked in \cite[Remarque~3.5.10]{Lau} whether every connected algebraic subgroup
\[H\subseteq\Aut(\F_a) \simeq \k[z_0,z_1]_a \rtimes(\GL_2/\mu_a)\] 
acting on $\F_a$ with exactly two orbits is conjugate to
a semidirect product of the form
$V\rtimes(\GL_2/\mu_a)$ or $V\rtimes S$, where
$S=\Im(\SL_2\to\GL_2/\mu_a)$ and
$V\subseteq\k[z_0,z_1]_a$ is a $\GL_2$-submodule. 
We answer this question negatively in
every positive characteristic: for $a=2p$, we construct such a group
$H$ for which the induced extension of $S$ is non-split.

\begin{propositionA}[{Proposition~\ref{prop: UniformCounterexampleNew}}]
	Let $\k$ be an algebraically closed field of characteristic $p>0$.
	There exists a connected algebraic subgroup
	$H\subseteq\Aut(\F_{2p})$ acting with exactly two orbits,
	$s_{-2p}$ and its complement, whose projection to
	$\GL_2/\mu_{2p}$ induces a non-split exact sequence
	\[
	1\longrightarrow\G_a^3\longrightarrow H
	\longrightarrow
	S=\Im(\SL_2\to\GL_2/\mu_{2p})\simeq\PGL_2
	\longrightarrow1.
	\]
	In particular, $H$ contains no conjugate of $S$ in
	$\Aut(\F_{2p})$.
\end{propositionA}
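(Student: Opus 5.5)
The plan is to build $H$ inside $W\rtimes(\GL_2/\mu_{2p})$, with $W=\k[z_0,z_1]_{2p}$, as the preimage of a \emph{twisted} copy of $S$ modulo a $3$-dimensional submodule. The submodule is
\[
V=\langle z_0^{2p},\,z_0^pz_1^p,\,z_1^{2p}\rangle\subseteq W,
\]
the image of the Frobenius twist of $\k[z_0,z_1]_2$. It is a $\GL_2$-submodule isomorphic to $\G_a^3$. Given a morphism $c\colon S\to W/V$ satisfying the cocycle identity $c(gh)=c(g)+g\cdot c(h)$, set
\[
H=\{(f,g)\in W\rtimes S \;:\; f+V=c(g)\}.
\]
This is a closed subgroup. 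It is connected because it is an extension of the connected group $S$ by the connected group $V$. It also fits into $1\to V\to H\to S\to 1$.

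\textbf{Step 1 (two orbits).} The group $S\simeq\PGL_2$ acts transitively on the base $\P^1$ and fixes $s_{-2p}$. On the fibre of $\F_{2p}\setminus s_{-2p}\to\P^1$ over a point $[x_0:x_1]$, which is an $\A^1$, the element $f\in V$ acts by translation by $f(x_0,x_1)$. Already $z_0^{2p}$ or $z_1^{2p}$ is nonzero at any given point, so $V$ alone is transitive on each fibre. Hence $H$ has exactly the two orbits $s_{-2p}$ and its complement, whatever $c$ is.

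\textbf{Step 2 (reduction of non-splitness to cohomology).} Suppose $K\subseteq H$ maps isomorphically onto $S$. Then $K\cap W\subseteq H\cap W=V$ maps trivially to $S$, so $K\cap W=1$ and $K$ is a complement to $W$ in $W\rtimes S$. Hence $K$ is the graph of a cocycle $\tilde c\colon S\to W$ lifting $c$. Conversely, a conjugate of $S$ in $\Aut(\F_{2p})$ lying in $H$ is such a complement: conjugation by the $\GL_2/\mu_{2p}$ part normalizes $S$, and conjugation by $w\in W$ produces the graph of the coboundary $g\mapsto w-g\cdot w$. So both the non-splitting of the sequence and the final assertion follow once we know that $c$ does not lift to a cocycle $S\to W$. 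Equivalently, we need $\delta([c])\neq0$ under the connecting map
\[
\delta\colon H^1(S,W/V)\to H^2(S,V).
\]

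\textbf{Step 3 (constructing $c$ and proving it does not lift) — the main obstacle.} I would define $c$ explicitly on the torus $T$ and the two root subgroups $U^\pm\simeq\G_a$, then check the cocycle identity on the big cell $U^-TU^+$. Forcing $c|_T=0$ (possible after changing $c$ by a coboundary) makes $c|_{U^+}$ a $T$-equivariant additive polynomial $t\mapsto\sum_i t^{p^i}v_i$. The natural candidate is $c(u_t)=t\cdot(\text{class of } z_0^{2p-1}z_1)$, or a suitable $p$-power twist, chosen so that its weight matches.

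To show non-liftability, I would restrict to the Borel subgroup $B=TU^+$. Any lift $\tilde c$ may be assumed $T$-invariant after subtracting a coboundary from $W^T$. Then $\tilde c(u_t)$ is a sum of terms $t^{p^i}w_i$ with weights forced by $T$-equivariance. Comparing with the action of $U^-$ through the relation $s\,u_t\,s^{-1}=u^-_{-t}$, where $s$ is the Weyl element, should produce a contradiction. The source of that contradiction is the inseparable component in the $V$-direction, which comes from the binomial coefficients $\binom{2p}{k}$ vanishing mod $p$ except at $k=0,p,2p$.

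This weight-and-binomial bookkeeping is where I expect the real work. If the direct computation becomes unwieldy, my fallback is to identify $W/V$ and $V$ with known $\SL_2$-modules. I would then use the known nonvanishing of the relevant extension group ($\mathrm{Ext}^2$ between Frobenius-twisted Weyl modules) to get $\delta\neq0$ abstractly, and afterwards pass from $\SL_2$ to $S=\PGL_2$ by checking that $\mu_2$ acts trivially.
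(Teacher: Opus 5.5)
Your Steps~1 and~2 are correct and match the paper. $V$ acts transitively on every fibre of $\F_{2p}\setminus s_{-2p}\to\P^1$. A splitting, or a conjugate of $S$ inside $H$, is the graph of a regular cocycle lifting $c$, and it may be pulled back to $\SL_2$.

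\textbf{The gap is in Step~3, and as formulated it cannot work for $p\geq3$.} You assert that once $c|_T=0$, the restriction $c|_{U^+}$ is an additive polynomial $t\mapsto\sum_i t^{p^i}v_i$. This is false. The cocycle identity on $U^+$ is $c(u_{\lambda+\mu})=c(u_\lambda)+u_\lambda\cdot c(u_\mu)$, so additivity holds only if $c(U^+)\subseteq(W/V)^{U^+}$. That fixed space is a single line, spanned by the class of the next-to-extremal monomial. Normalizing so that $\mathrm{diag}(t,t^{-1})$ conjugates $u_\lambda$ to $u_{t^2\lambda}$, this line has $T$-weight $2p-2$, whereas a term $\lambda^{p^i}v_i$ needs weight $2p^i$. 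For $p\geq3$ these never agree.

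So your ansatz forces $c|_{U^+}=0$, hence $c|_B=0$. A cocycle vanishing on $B$ satisfies $c(gb)=c(g)$, so it descends to a morphism $S/B\simeq\P^1\to W/V$. That morphism is constant, so $c=0$. This is why no $p$-power twist of $[z_0^{2p-1}z_1]$ helps; that candidate can only work for $p=2$. The same false additivity is used for the lift $\tilde c$, whose values in $W$ are not $U^+$-fixed. The gluing on the big cell via $U^-$ and the Weyl element is never checked. The fallback does not repair this either: nonvanishing of $H^2(S,V)$ says nothing about $\delta([c])$ for a given class, and you still have no nonzero class in $H^1(S,W/V)$.

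\textbf{The missing idea is to produce $c$ as a coboundary from a larger module, so no gluing is needed.} The map $\pi=\partial/\partial z_0\colon W\to\k[z_0,z_1]_{2p-2}$ is $\SL_2$-equivariant, with kernel $V$ and image $E=\ker\chi$. Here $\chi$ is the coefficient of $h=(z_0z_1)^{p-1}$, a linear form that is $\SL_2$-invariant by Lucas' theorem. Hence $c(g)=h-g\cdot h$ is a regular cocycle $\SL_2\to E\simeq W/V$ on the whole group. It vanishes on $T$, and it factors through $S$ because the degree is even.

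Non-liftability is then proved on $B$ alone, with no need for $U^-$.
\begin{itemize}
\item A lift $b$ has $V$-valued restriction to $T$. Since $T$ is linearly reductive, you can normalize $b$ by the coboundary of an element of $V$ so that $b|_T=0$. The coboundary must come from $V$, not from $W^T$, since coboundaries of elements of $W^T$ vanish on $T$.
\item With $u_\lambda$ acting by $z_0\mapsto z_0+\lambda z_1$, $T$-equivariance gives $b(u_\lambda)=\sum_{i=0}^{p-1}c_i\lambda^{p-i}z_0^iz_1^{2p-i}$. This is not additive a priori.
\item Comparing the coefficient of $z_0^{p-2}z_1^p$ in $\pi(b(u_\lambda))=h-u_\lambda\cdot h$ gives $c_{p-1}=-1$.
\item The coefficient of $z_1^{2p}$ in $b(u_{\lambda+\mu})=b(u_\lambda)+u_\lambda\cdot b(u_\mu)$ forces $c_i=0$ for $1\leq i\leq p-1$, a contradiction.
\end{itemize}

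If you prefer your cohomological route, the correct input is $H^1(\SL_2,W)=0$, since $W$ is the induced module $H^0(2p)$. Then $\delta$ is injective. The class of $g\mapsto h-g\cdot h$ is nonzero because $\k[z_0,z_1]_{2p-2}$ has no nonzero $\SL_2$-invariants, and it supplies the element of $H^1(\SL_2,W/V)$ you were missing.
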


For $p\geq3$, the evenness of the Hirzebruch index is essential.
Indeed, if $a$ is odd, every connected algebraic subgroup
$H\subseteq\Aut(\F_a)$ surjecting onto $\PGL_2$ contains
a conjugate of $S$ by an element of
$U_a=\k[z_0,z_1]_a$
(Lemma~\ref{lem:H-maps-onto-PGL2-odd-a}).
In particular, if its image in $\GL_2/\mu_a$ is $S$,
the extension of $S$ by $H\cap U_a$ splits.

The counterexample concerns arbitrary connected algebraic subgroups
of $\Aut(\F_a)$. By contrast,
Corollary~\ref{cor:H-maps-onto-PGL2-even-a} shows that,
when $a\geq2$ is even, the image
$H$ of $\Autz(X)$ for a $\P^1$-bundle $X\to\F_a$
contains a conjugate of the standard subgroup
$\Im(\SL_2\to\GL_2/\mu_a)$ whenever its action on
the base of the ruling $\F_a\to\P^1$ is transitive.
Thus the phenomenon exhibited by the counterexample in Proposition~\ref{prop: UniformCounterexampleNew} cannot occur for these groups arising from $\P^1$-bundles over $\F_a$.

\smallskip

We now turn to $\P^1$-bundles over smooth projective rational
surfaces. The descent lemma (Lemma~\ref{lem:descent-lemma})
reduces the study to $\P^1$-bundles over minimal rational surfaces:
contracting the base yields an equivariant square birational
model over $\P^2$ or a Hirzebruch surface.
The surface $\F_1$ also plays a central role, since blowing up
a point of $\P^2$ and its fiber connects $\P^1$-bundles over these
two surfaces.

Over $\F_a$, with $a\geq1$, we introduce the Umemura
$\P^1$-bundles $\U_{a,\omega}^{b,c}$ in
Section~\ref{Sec:Umemurabundles}, extending the characteristic-zero
family considered in \cite[\S~10]{Ume88} and \cite[\S~3.6]{BFT}.
For fixed numerical invariants $(a,b,c)$, their isomorphism
classes over $\F_a$ are parametrized by a projective space
$\Omega_a^{b,c}$.
In characteristic zero, this space is empty or a point;
in positive characteristic, its dimension can be arbitrarily
large as $(a,b,c)$ varies
(Example~\ref{example: Omegaabc not bounded}).
Thus the parameter $\omega$ is an essential part of the
classification. Their neutral automorphism groups act
transitively on the base of the ruling, but their images in
$\Aut(\F_a)$ need not be the full group.

Despite these additional phenomena, including the nonsplitting
example above, the restrictions imposed by the $\P^1$-bundle structure
lead to the following trichotomy.

\begin{theoremA}[{Theorem~\ref{th:P1bundlesFa}}]
Let $a\geq0$, and let $\pi\colon X\to\F_a$ be a
$\P^1$-bundle.
Assume that $p\geq3$ or that $a$ is even.
	Then exactly one of the following holds:
	\begin{enumerate}
		\item
		$X\to\F_a$ is decomposable, hence isomorphic to
		$\FF_a^{b,c}\to\F_a$ for some $b,c\in\Z$
		$($Definition~$\ref{def Fabc})$;
		
		\item
		$a\geq1$ and $X\to\F_a$ is square-isomorphic to an Umemura
		$\P^1$-bundle $\U_{a,\omega}^{b,c}\to\F_a$, for some
		$b,c,\omega$ as in Definition~$\ref{def Umemura bundle}$;
		
		\item
		$X\to\F_a$ is indecomposable, and either $a=0$, or $a\geq1$
		and $\Autz(X)$ fixes a point on the base of the ruling
		$\F_a\to\P^1$.
		In both cases, there exists a decomposable $\P^1$-bundle
		$\FF_a^{b,c}\to\F_a$ and a commutative diagram
		\[
		\xymatrix@R=3mm@C=2cm{
			X \ar@{-->}[rr]^{\psi} \ar[rd]_{\pi}
			&& \FF_a^{b,c} \ar[ld] \\
			& \F_a
		}
		\]
		where $\psi$ is a birational map satisfying
		$\psi\Autz(X)\psi^{-1}\subsetneq\Autz(\FF_a^{b,c})$.
	\end{enumerate}
\end{theoremA}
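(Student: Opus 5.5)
Exclusivity comes first, since it is the cheap part. Decomposable bundles are not Umemura bundles: Umemura bundles are indecomposable by construction (Definition~\ref{def Umemura bundle}), because their transition data involves the nonzero class $\omega$. Case (3) is set up so that either the bundle is indecomposable with $a=0$, or $\Autz(X)$ fixes a point on the base of the ruling. The Umemura bundles have $a\geq1$ and act transitively on $\P^1$, so they are disjoint from (3).

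For existence, I would start from the image $H\subseteq\Aut(\F_a)$ of $\Autz(X)$ and split according to how $H$ acts on the base $\P^1$ of the ruling. Since $H$ is connected, its image in $\PGL_2$ is a connected subgroup. Either this image fixes a point, or it is all of $\PGL_2$. When $a=0$ the ruling is not canonical, so I would treat $a=0$ separately.

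\emph{Case $a=0$.} Let $X\to\P^1\times\P^1$ be indecomposable. The standard argument should go as follows. Restrict to a general fiber of one projection, which is $\P^1$, and use that $\Autz(X)$ acts on $\Pic$ trivially. Then perform elementary transformations along invariant curves, which exist by Borel's fixed point theorem applied to a Borel subgroup. These transformations move $X$ to a decomposable $\FF_0^{b,c}$. The strict inclusion should come from the fact that $\Autz$ of a decomposable bundle contains the full $\GL_2\times\GL_2$-type group, while the indecomposable one does not.

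\emph{Case $a\geq1$ with a fixed point on the base.} Let $H$ fix a point $q\in\P^1$, and let $f$ be the fiber of $\F_a$ over $q$. Then $\pi^{-1}(f)$ is an invariant Hirzebruch surface. It contains invariant curves, namely an invariant section over $f$ obtained by Borel. Performing equivariant elementary links centered on such curves changes the splitting type. Iterating, and using that the relevant obstruction group $H^1(\F_a,\O(\cdot))$ can be killed by twisting, one reaches a decomposable bundle $\FF_a^{b,c}$. The inclusion is strict because $\Autz(\FF_a^{b,c})$ surjects onto $\Autz(\F_a)$, which acts transitively on $\P^1$, whereas the conjugate of $\Autz(X)$ still fixes $f$.

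\emph{Case of a transitive action.} This is where the hypothesis ($p\geq3$ or $a$ even) enters. For $a$ odd, Lemma~\ref{lem:H-maps-onto-PGL2-odd-a} applies; for $a$ even, Corollary~\ref{cor:H-maps-onto-PGL2-even-a} applies. Either way, up to conjugation in $\Aut(\F_a)$, $H$ contains $S=\Im(\SL_2\to\GL_2/\mu_a)$. Conjugating accordingly, $X$ becomes an $\SL_2$-linearized (up to a central cover) $\P^1$-bundle over $\F_a$. Restricting to the open orbit structure, $X$ corresponds to an $\SL_2$-equivariant rank-$2$ bundle. Such bundles are classified by extension classes in $\SL_2$-invariant parts of cohomology groups of the form $H^1(\F_a,\O_{\F_a}(\cdot))$. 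These are computed via $\k[z_0,z_1]$-modules. If the class vanishes, $X$ is decomposable. Otherwise the invariant classes are exactly the parameters $\omega\in\Omega_a^{b,c}$ defining $\U_{a,\omega}^{b,c}$, so $X$ is square-isomorphic to an Umemura bundle.

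The main obstacle is this last identification: showing that every $S$-invariant nonsplit extension is an Umemura bundle, in positive characteristic where the invariants of $\SL_2$ on $H^1$ are larger. I would first try to compute $H^1$ via Čech cohomology on the standard four-chart cover. Then I would match the invariant cocycles with the definition of $\Omega_a^{b,c}$ directly.
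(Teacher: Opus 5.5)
\textbf{There is a genuine gap in your case $a=0$.}

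Borel's fixed point theorem applied to a Borel subgroup $B\subseteq\Autz(X)$ only produces $B$-invariant curves. Elementary transformations centered on such curves are therefore only $B$-equivariant: you would get $\psi B\psi^{-1}\subseteq\Autz(\FF_0^{b,c})$, not $\psi\Autz(X)\psi^{-1}\subsetneq\Autz(\FF_0^{b,c})$. This matters, because $\Autz(X)$ need not be solvable. For the indecomposable bundles $\T^{b,c}\to\F_0$ one has $\Autz(\T^{b,c})\simeq\PGL_2$ (Proposition~\ref{prop:properties of the Tbc}). You also give no reason why your transformations ever reach a decomposable bundle.

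The missing idea is a supply of curves invariant under the whole group, together with a quantity that strictly decreases. The paper obtains both in Proposition~\ref{prop:three case P1 bundles over F0}, which gives Corollary~\ref{cor: non-maximality of indecomposable}. There are two possibilities.
\begin{enumerate}
\item A fiber of one of the two rulings is $\Autz(X)$-invariant, and Lemma~\ref{Lemm:fiberInvariant} applies.
\item $\Autz(X)$ acts transitively on both factors, so neither ruling has jumping fibers. The unions of the negative sections of the Hirzebruch fibers of the two rulings are then two distinct $\Autz(X)$-invariant sections $S_1\neq S_2$ (Lemma~\ref{Lemm:S1S2}). Their intersection is $mC$ with $C\simeq\P^1$ invariant. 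Each elementary transformation centered at $C$ lowers $m$ by one, until the sections are disjoint and the bundle is decomposable.
\end{enumerate}
Strictness comes from the image of $\Autz(X)$ preserving a proper curve of $\F_0$ (namely $\pi(C)$, or an invariant fiber), whereas $\Autz(\FF_0^{b,c})$ acts transitively on $\F_0$. Your assertion that an indecomposable bundle does not carry the full group is true, but it needs exactly this kind of argument.

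\textbf{The other cases.} In the fixed-point case with $a\geq1$, your sketch has the same termination problem: ``killing $H^1$ by twisting'' is not an argument. Here you can simply invoke Lemma~\ref{Lemm:fiberInvariant}, as the paper does.

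Your transitive case and your exclusivity argument follow the paper. The step you single out as the main obstacle is in fact immediate, and needs no \v{C}ech computation. It also does not need an $\SL_2$-linearization of the rank-$2$ bundle, which you have not justified: the preimage of $S$ in $\Autz(X)$ is only an extension of $S$ by vertical automorphisms. The direct argument (Lemma~\ref{lem:characterization-Umemura}) runs as follows.
\begin{enumerate}
\item Transitivity on the base of the ruling gives no jumping fibers, hence a point of $\M_a^{b,c}$.
\item By Lemma~\ref{Lemm:ActionAutFaParSpace}\ref{ActionGL2}, the parametrization is equivariant for the action of Remark~\ref{rk: def of dual Vm} on the spaces $\k[z]_{\le n}$, so this point is $\SL_2$-fixed.
\item Since $\SL_2$ has no nontrivial characters, a representing vector is invariant.
\item $\Omega_a^{b,c}$ is by definition the projectivization of these invariants.
\end{enumerate}
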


The third case provides an explicit obstruction to relative
maximality: the $\P^1$-bundle admits an equivariant birational model
with a strictly larger neutral automorphism group.
Theorem~\ref{th:P1bundlesFa} thus reduces the study of relative
maximality over $\F_a$ to Umemura and decomposable $\P^1$-bundles
when $p\geq3$ or $a$ is even.
The case $(a,p)=(1,2)$ requires a different conclusion:
Lemma~\ref{lem: counter example for p=2} gives an indecomposable
$\P^1$-bundle over $\F_1$ which is not square-isomorphic
to an Umemura $\P^1$-bundle, although its neutral automorphism
group acts transitively on the base of the ruling.

\smallskip

Over the projective plane $\P^2$, two families account for the new phenomena.
The first consists of the $\P^1$-bundles $\L^{s,b}$, with
$s\geq0$ and $1\leq b\leq p^s$ (see Section~\ref{sec: Ls bundles}).
For $b<p^s$, their neutral automorphism groups induce
the full stabilizer of a line in $\PGL_3$, a possibility
excluded in characteristic zero by
\cite[Proposition~4.3.4]{BFT}.
At the endpoint $b=p^s$, the $\P^1$-bundle
$\L^s:=\L^{s,p^s}$ is the pullback of
$\P(T_{\P^2})\to\P^2$ by the $s$-th Frobenius morphism.
It is a homogeneous variety $\PGL_3/P_s$, where $P_s$
is a non-smooth parabolic subgroup scheme when $s>0$
(Remark~\ref{rk: Ps}).

The second family consists of generalized Schwarzenberger
$\P^1$-bundles $\SS^{b,c}$.
The classical Schwarzenberger $\P^1$-bundles considered in \cite[\S~4]{Ume88} and \cite[\S~4.2]{BFT}, indexed by a single integer $b$,
correspond to $c=b+2$; positive characteristic allows
additional values of $c$.
Their neutral automorphism groups are isomorphic to
$\PGL_2$ and preserve a smooth conic on the base,
except when $c=3b$.
In that case, $b=p^s$ for some $s\geq0$ and
$\SS^{b,3b}\simeq\L^s$, so the two families meet precisely
at these Frobenius pullbacks of the incidence $\P^1$-bundle.

Together with the decomposable $\P^1$-bundles, these families give
the following classification over $\P^2$.
Its fixed-point alternative returns us to $\F_1$, so the
classifications over Hirzebruch surfaces and over $\P^2$
can be used together.

\begin{theoremA}[{Theorem~\ref{th:P1bundlesP2}}]
Assume that $p \geq 3$.
Let $\pi\colon X\to\P^2$ be a $\P^1$-bundle and let $H:=\Im(\Autz(X)\to\Aut(\P^2))$. Then one of the following holds:
\begin{enumerate}
	\item  $H=\Aut(\P^2)$, and $\pi$ is either a decomposable $\P^1$-bundle or square-isomorphic to the $\P^1$-bundle $\L^{s} \to \P^2$ $($Definition~$\ref{def: P1 bundles Lsb})$, for some $s \geq 0$;
	\item  $H=\Aut(\P^2,\Gamma)\simeq\PGL_2$, where $\Gamma\subset\P^2$ is a smooth conic, and $\pi$ is square-isomorphic to a Schwarzenberger $\P^1$-bundle $\SS^{b,c} \to \P^2$, for some $b \geq 1$ and $c \geq 3$ as in Definition~$\ref{def:Schwarz bc}$ with $c \neq 3b$;
	\item  $H = \Aut(\P^2,L)\simeq \GL_2\ltimes\G_a^2$, where $L\subset\P^2$ is a line, and $\pi$ is square-isomorphic to the $\P^1$-bundle $\L^{s,b}\to\P^2$ (Definition~$\ref{def: P1 bundles Lsb}$), for some $s\ge1$ and $1\le b< p^s$; or
	\item there exists an $H$-fixed point $p_0\in\P^2$, and the blow-up $\eta\colon \hat X\to X$
	of the fiber $\pi^{-1}(p_0)$ yields a commutative diagram of $\Autz(X)$-equivariant maps
	\[
	\xymatrix@R=4mm@C=2cm{
		\hat X \ar[r]^{\eta} \ar[d]_{\hat\pi} & X \ar[d]^\pi \\
		\F_1 \ar[r]_{\tau} & \P^2
	}
	\]
	where $\hat{X}\to \F_1$ is a $\P^1$-bundle and $\tau$ is the blow-up of $p_0$.
\end{enumerate}
\end{theoremA}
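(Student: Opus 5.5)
The plan is to split according to the connected subgroup $H\subseteq\Aut(\P^2)=\PGL_3$. A connected algebraic subgroup of $\PGL_3$ either fixes a point, or preserves a line without fixing a point, or acts with no fixed point and no invariant line. In the last case it acts irreducibly, so (for $p\geq3$) it is $\PGL_3$ itself or $\Aut(\P^2,\Gamma)\simeq\PGL_2$ for a smooth conic $\Gamma$. First I will run through the connected subgroups of $\PGL_3$ via their parabolic/Levi structure and check that these are the only possibilities. If $H$ has a fixed point $p_0$, case (4) follows formally. Blowing up the fiber $\pi^{-1}(p_0)$ is $\Autz(X)$-equivariant, because $\Autz(X)$ preserves that fiber. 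The induced map $\hat X\to\F_1$, obtained by composing with $\tau$, is a $\P^1$-bundle: locally over $p_0$ we have $X\simeq \P^1\times U$, and the blow-up of $\P^1\times\{p_0\}$ is $\P^1\times \mathrm{Bl}_{p_0}U$. Equivariance of the bottom arrow follows from Blanchard's lemma.

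If $H$ preserves a line $L$ and fixes no point, then $H$ contains the unipotent radical $\G_a^2$ and a Levi part that acts on $L$ without fixed points, hence contains $\SL_2$. So $H=\Aut(\P^2,L)$, or $H$ is the subgroup $\SL_2\ltimes\G_a^2$ up to center. In either case I will restrict $X$ to the open orbit $\P^2\setminus L\simeq\A^2$. There $\G_a^2$ acts simply transitively, so $X|_{\A^2}$ is equivariantly trivialized. The bundle is then determined by a transition datum across $L$, i.e. by the restriction $X|_L$ together with its gluing. Concretely, I will compute $\Autz(X)$-invariant structures using the description $\P(\mathcal E)$, with $\mathcal E$ a rank two bundle whose restriction to lines is governed by the $H$-action. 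Here I would use the standard fact (from the construction in Section~\ref{sec: Ls bundles}) that $\SL_2\ltimes\G_a^2$-equivariant rank-2 bundles correspond to representations of a stabilizer subgroup scheme. The stabilizer of a point of $L$ is a parabolic, and its non-reduced versions produce Frobenius twists. The resulting models are $\L^{s,b}$. If $b<p^s$ we get case (3), and $b=p^s$ gives $\L^s$, whose group is all of $\PGL_3$; this places it in case (1) instead.

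If $H=\PGL_3$ or $H=\PGL_2$ preserving $\Gamma$, then $X$ is a homogeneous or almost homogeneous $\P^1$-bundle. For $H=\PGL_3$, the lift $G\to\Autz(X)$ of a cover of $H$ (after passing to $\SL_3$ as a central extension) makes $\mathcal E$ homogeneous. Such a bundle is $G\times^{P}V$, where $P$ is a parabolic subgroup scheme stabilizing a point and $V$ is a two-dimensional $P$-representation, taken up to twist. Either $V$ is decomposable, giving decomposable bundles, or $V$ is indecomposable; then the unipotent radical acts through a Frobenius kernel, and we get $P_s$ and $\L^s$ (Remark~\ref{rk: Ps}). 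For the conic, I will pass to $\P^2\simeq \mathrm{Sym}^2\P^1$ via the double cover $\P^1\times\P^1\to\P^2$, which is $\PGL_2$-equivariant. Pulling back, $X$ gives a $\PGL_2$-equivariant bundle on $\P^1\times\P^1$ with an involution, whose classification in terms of $\SL_2$-linearized bundles on $\P^1\times\P^1$ leads to $\SS^{b,c}$. The degeneracy $c=3b$ corresponds to the group enlarging to $\PGL_3$.

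The main obstacle will be these homogeneous classifications in positive characteristic. There, stabilizers may be non-reduced and representations of parabolic group schemes are wild. I expect the restriction $p\geq 3$ to enter exactly here, both in excluding exotic irreducible subgroups of $\PGL_3$ and in controlling the extensions of characters of $P_s$ that give the parameters $b,c$.
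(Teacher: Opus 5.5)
Your case division (fixed point, invariant line, conic, all of $\PGL_3$) and your fixed-point case agree with the paper. However, there is a genuine gap in the line case, which is exactly where the new bundles $\L^{s,b}$ come from.

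The correspondence between equivariant bundles and representations of a stabilizer ($X\simeq G\times^{G_x}X_x$) requires a homogeneous base. But $\SL_2\ltimes\G_a^2$ has two orbits on $\P^2$. No such correspondence is established in Section~\ref{sec: Ls bundles}, which defines $\L^{s,b}$ by transition functions.

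Over $\P^2\setminus L$ the data do not distinguish the candidates. By Proposition~\ref{prop: LsbAut}, all $\L^{s,b}$ with the same $s$ carry the same action in the trivialization over $U_2$: the fiber action goes through $F^s$ and does not depend on $b$. Likewise, every $\PP_b$ carries a lift of $\SL_2\ltimes\G_a^2$ that is trivial on the fiber coordinate there. So everything lies in how $X$ extends across $L$. Nothing in your sketch produces exactly the $\L^{s,b}$ with $1\le b\le p^s$, or excludes other extensions.

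The missing idea is the paper's reduction to $\F_1$:
\begin{itemize}
\item Take $p_0\notin L$. The reduced connected stabilizer of $p_0$ in $\Autz(X)$ has image containing $\SL_2$.
\item Blowing up $p_0$ and its fiber gives a $\P^1$-bundle over $\F_1$, which by Lemma~\ref{lem:characterization-Umemura} is decomposable or Umemura.
\item After descent, lines through $p_0$ have preimage $\F_b$, and a line avoiding $p_0$ has preimage $\F_m$, with $m$ read off from the transition matrix.
\item If $m\neq b$, then $H$ fixes $p_0$, which is excluded. If $m=b$, then $\omega$ is supported in index $0$, $c=2p^s$ and $b\le p^s$, and Lemma~\ref{Lem:LsbBlowUp} identifies $X$ with $\L^{s,b}$ (Lemma~\ref{lem: contracting -1 curve yields a fixed point}).
\end{itemize}
Since $\SL_2\ltimes\G_a^2\subseteq\PGL_3$, the same argument also settles $H=\PGL_3$.

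A second problem runs through all your cases: you let $\SL_3$, $\SL_2\ltimes\G_a^2$ or $\PGL_2$ act on $X$ (or on $\mathcal E$). But $H$ is only the image of $\Autz(X)$, and the kernel $\Aut(X)_{\P^2}$ can be nontrivial. In characteristic $p$, an extension of a semisimple group by a unipotent group need not split; Proposition~\ref{prop: UniformCounterexampleNew} is such an example. So these lifts must be proved. The paper never needs them: it works only with $H$, through stabilizers of points in the parameter spaces $\M_a^{b,c}$ (Lemmas~\ref{lemma:horizontal_part_Tbc} and~\ref{lem:characterization-Umemura}).

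Even granting a lift in the $\PGL_3$ case, your mechanism is misdescribed. For the reduced stabilizer $P$ of a base point, the unipotent radical $\G_a^2$ must act trivially on the fiber. Indeed, its image in $\PGL_2$ has dimension at most $1$, so the kernel is a positive-dimensional subgroup of $\G_a^2$ stable under the Levi $\GL_2$. Since $\GL_2$ acts transitively on $\G_a^2\setminus\{0\}$, that kernel is everything. Hence the fiber action factors through the Levi, and $\L^s$ comes from the Frobenius twist $F^s$ of its standard representation. The non-reduced $P_s$ is the stabilizer of a point of $X$, not of $\P^2$.

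In the conic case you still have to do two things. You must exclude a decomposable pullback to $\F_0$, which uses $p\neq2$. You must also explain why $l$ is even; in the paper this comes from smoothness of $\T^{b,c}/\langle\sigma\rangle$ (Lemma~\ref{TbcInvolutionExplicit}).
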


These two classifications (Theorems~\ref{th:P1bundlesFa} and~\ref{th:P1bundlesP2}) identify the families that must
be considered before imposing relative maximality.
The next step is to compare their neutral automorphism groups
under equivariant birational modifications.

The automorphism groups appearing above are described
throughout the article.
For decomposable $\P^1$-bundles, see
Lemma~\ref{lem:aut-group-Fabc} over Hirzebruch surfaces
and Lemma~\ref{lemm:surjectiveDecP2} over $\P^2$.
For Umemura $\P^1$-bundles, Proposition~\ref{ActionSL2GL2Umemura}
and Remarks~\ref{rk: GL2-action on Umemura}
and~\ref{rk aut vert Umemura bundles} describe the lifted
actions and the subgroup acting trivially on the base.
The neutral automorphism groups of $\L^{s,b}$ and $\SS^{b,c}$
are determined in Propositions~\ref{prop: LsbAut}
and~\ref{prop:aut-orbits-Sbc}, respectively.

\smallskip 

Combining these descriptions with the study of equivariant
links yields our global classification.
The following theorem gives both the existence of a relatively
maximal model for every $\P^1$-bundle and the complete list of $\P^1$-bundles
that already have this property.

	\begin{theoremA}[{see Section~\ref{proof: classification rel max} for the proof}]\label{th: classification rel max}
		Assume that $p\geq3$.
		Let $\pi\colon X\to S$ be a $\P^1$-bundle over a smooth projective
		rational surface. Then there exists an $\Autz(X)$-equivariant square
		birational map
		\[
		(X,\pi)\dashrightarrow(X',\pi')
		\]
		such that $\Autz(X')$ is relatively maximal.
		Moreover, $\Autz(X)$ is relatively maximal if and only if
		$(X,\pi)$ is square-isomorphic to one of the following:
		\begin{enumerate}
			\renewcommand{\theenumi}{\alph{enumi}}
			\renewcommand{\labelenumi}{$(\theenumi)$}
			
			\item\label{item:relmax-Fabc}
			A decomposable $\P^1$-bundle $\FF_a^{b,c}\to\F_a$
			$($Definition~$\ref{def Fabc})$, where
			$a,b\geq0$, $a\neq1$, $c\in\Z$, and $c\leq0$ if $b=0$, with
			\[
			a=0,\qquad\text{or}\qquad b=c=0,
			\qquad\text{or}\qquad -a<c<ab.
			\]
			
			\item\label{item:relmax-Pb}
			A decomposable $\P^1$-bundle $\PP_b\to\P^2$
			$($Definition~$\ref{def Pb})$, with $b\geq0$.
			
			\item\label{item:relmax-Umemura}
			An Umemura $\P^1$-bundle
			$\U_{a,\omega}^{b,c}=Z_a^{b,c,P}\to\F_a$
			$($Definition~$\ref{def Umemura bundle})$,
			with $a,b\geq1$ and $c\geq2$, where
			\[
			P=\sum_{i\in I}\lambda_i y_0^iy_1^{b-i}z^{ai+p^{s_i}},
			\qquad ai+2p^{s_i}=c,
			\qquad \lambda_i\in\k^*.
			\]
			Writing $i_0=\min I$, $m=b-i_0$ and $r=p^{s_{i_0}}$,
			we require $m>0$ and
			\[
			\begin{cases}
				r\mid m\text{ and }m\geq2r,&a=1,\\
				a p^{\nu_p(m)}\geq r,&a\geq2.
			\end{cases}
			\]
			
			\item\label{item:relmax-Schwarzenberger}
			A Schwarzenberger $\P^1$-bundle $\SS^{b,c}\to\P^2$,
			with parameters as in Definition~$\ref{def:Schwarz bc}$,
			such that
			\[
			\frac{1}{2}(b+c)\notin\{p^e,2p^e\mid e\geq0\}.
			\]
			
			\item\label{item:relmax-Ls}
			The $\P^1$-bundle
			$\L^s=\P((F^s)^*T_{\P^2})\to\P^2$
			$($Definition~$\ref{def: P1 bundles Lsb})$, with $s\geq0$.
			
			\item\label{item:relmax-V}
			A $\P^1$-bundle $\V_{\omega}^{b,2r}\to\P^2$
			obtained by the contraction of
			Proposition~$\ref{prop:UmeF1Contraction}$ from
			$\U_{1,\omega}^{b,2r}\to\F_1$, where
			\[
			r=p^s,\qquad s\geq0,\qquad r\mid b,\qquad b\geq2r,
			\]
			and $\omega\in\Omega_1^{b,2r}$ has nonzero component
			of index $0$.
		\end{enumerate}
	\end{theoremA}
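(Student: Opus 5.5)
The proof has two parts: existence of a relatively maximal model, and the explicit list. Both reduce to minimal bases. By the descent lemma (Lemma~\ref{lem:descent-lemma}), every $\P^1$-bundle $X\to S$ admits an $\Autz(X)$-equivariant square birational map to a $\P^1$-bundle over $\P^2$ or over some $\F_a$. So it suffices to treat these bases. Over a non-minimal base $S$ the group $\Autz(X)$ is never relatively maximal, except possibly when $S=\F_1$: contracting a $(-1)$-curve fixed by $\Autz(S)$ gives an equivariant model whose neutral automorphism group should be strictly larger, because the contracted base has a larger automorphism group. I would make this strictness explicit using the descriptions of the automorphism groups (Lemmas~\ref{lem:aut-group-Fabc}, \ref{lemm:surjectiveDecP2}).

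Existence is an ascending-chain argument. Apply Theorem~\ref{th:P1bundlesFa} over $\F_a$ and Theorem~\ref{th:P1bundlesP2} over $\P^2$. In case (3) of the first, we pass to a decomposable $\FF_a^{b,c}$ with strictly larger group. In case (4) of the second, we pass to a bundle over $\F_1$ and apply the first theorem. Each step strictly increases the dimension of the neutral automorphism group, or keeps it and lands in one of the named families. The dimension is bounded by the dimension of a maximal connected algebraic subgroup of $\Bir(\P^3)$; alternatively, bound $h^0$ of the relative tangent sheaf in terms of the numerical invariants. Hence the process terminates in a decomposable, Umemura, $\L^{s,b}$, $\SS^{b,c}$, or $\V_\omega$ bundle. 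Within these finitely many families, I would then show that the non-listed members map equivariantly to listed ones with strictly larger groups. The relevant maps are elementary links (blow up an invariant curve or fiber, contract the strict transform) and the $\F_1\leftrightarrow\P^2$ contraction of Proposition~\ref{prop:UmeF1Contraction}. Concrete instances are: $\FF_1^{b,c}\to\P^2$ decomposables become $\PP_b$; $\L^{s,b}$ with $b<p^s$ maps to $\L^s$ or to a decomposable bundle; Schwarzenberger bundles with $\tfrac12(b+c)=p^e$ or $2p^e$ link to $\L^s$ or to $\V$-type bundles; and Umemura bundles violating the divisibility conditions in (c) link to decomposables or to other Umemura bundles.

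For the characterization of the list, I have to show that each listed bundle is relatively maximal. I would classify all $G$-equivariant square birational maps from $X$, with $G=\Autz(X)$. Such a map factors, via a $G$-equivariant Sarkisov-type decomposition for conic bundles (or the square-link factorization of \cite{BFT}), into links centered at $G$-invariant curves in $X$, and at $G$-invariant points or fibers on the base. Using the orbit descriptions (Propositions~\ref{ActionSL2GL2Umemura}, \ref{prop: LsbAut}, \ref{prop:aut-orbits-Sbc}), I would list the invariant curves: sections over invariant curves and the curves $s_{-a}$. I would then check that every such link lands on a bundle whose group equals the conjugate of $G$. The decomposable bounds $-a<c<ab$ should appear exactly as the condition that the elementary links along the invariant sections over $s_{-a}$ do not enlarge the group.

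The main obstacle is the Umemura and $\V_\omega$ case. There one must show that the conditions on $m=b-i_0$ and $r=p^{s_{i_0}}$ are exactly what is needed: no link at the invariant curve over $s_{-a}$ produces a model with a bigger vertical automorphism group. I would first compute the vertical automorphisms (Remark~\ref{rk aut vert Umemura bundles}) in terms of $P$. Then I would track how a link at the invariant section transforms $P$, where the leading term $y_0^{i_0}y_1^m$ dominates. The $p$-adic valuation $\nu_p(m)$ and the divisibility $r\mid m$ should enter through the Frobenius-twisted coordinate changes $z\mapsto z+\lambda y^{\ast}$, which kill terms only when binomial coefficients vanish mod $p$.
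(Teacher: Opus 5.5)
Your overall architecture is the right one: descent to a minimal base, the two classification theorems, explicit enlargements of the non-listed members, and link factorization for the listed ones. But several load-bearing steps fail.

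\emph{Termination.} Your existence argument does not terminate for the reason you give. There is no a priori bound on dimensions of connected algebraic subgroups of $\Bir(\P^3)$; already $\dim\Autz(\F_a\times\P^1)=a+8$ for $a\geq1$. A maximal connected subgroup containing $\Autz(X)$ would not bound the later groups of your chain, and its existence is not known in characteristic $p$. A bound on $h^0$ of the relative tangent sheaf in terms of numerical invariants also fails, because the invariants change along the chain (e.g.\ $\FF_a^{b,c}\dashrightarrow\FF_a^{b+1,c+a}$). Termination has to come from the explicit enlargements, checked family by family:
\begin{itemize}
\item links along $l_{01}$ take three-orbit Umemura bundles to two-orbit ones;
\item $i_0=b$ leads to decomposable bundles;
\item $\FF_1^{b,c}$ goes to $\PP_{b-c}$;
\item the links of Proposition~\ref{prop:MaxDecBundlesFa} move $\FF_a^{b,c}$ into the listed range.
\end{itemize}
Your sentence about the families is the real argument; drop the dimension count.

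\emph{Non-minimal bases.} You reach the right conclusion for the wrong reason. A larger automorphism group of the contracted base says nothing about the bundle's group: the neutral groups of $\U_{1,\omega}^{b,2r}\to\F_1$ and $\V_\omega^{b,2r}\to\P^2$ coincide although $\Aut(\P^2)\supsetneq\Aut(\F_1)$. Also, Lemmas~\ref{lem:aut-group-Fabc} and~\ref{lemm:surjectiveDecP2} only treat decomposable bundles. The missing idea is to use relative maximality itself. The descent map then conjugates $\Autz(X)$ \emph{onto} $\Autz(X_0)$, so $S\to S_0$ is equivariant for the full group of a relatively maximal model and factors into blow-ups of fixed points. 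Among such models only type (f) has a fixed point, namely $q$, and the lifted action on $\F_1$ has none; hence $S\simeq S_0$ or $S\simeq\F_1$.

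You likewise need to show that a relatively maximal bundle over $\P^2$ with a fixed point is of type (f). Its blow-up is a relatively maximal Umemura bundle over $\F_1$, trivial over the exceptional section. So $i_0=0$ and $c=2r$ by Proposition~\ref{prop:UmeF1Contraction}, and then $r\mid b$, $b\geq 2r$. No equivariant Sarkisov program is available; Lemma~\ref{Lemm:DecElLinks} only factors maps over a fixed base, so base changes must be controlled by exactly this fixed-point argument.

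\emph{Umemura case.} The mechanism you propose is not the operative one. Links over $s_{-a}$ (at $l_{00}$, $l_{10}$) always conjugate the full neutral groups, except when $b$ drops to $0$, which is what $m>0$ excludes. The vertical group $\bigoplus_i\k[z_0,z_1]_{ai-c}$ does not even depend on $\omega$. The conditions $r\mid m$ ($a=1$) and $ap^{\nu_p(m)}\geq r$ ($a\geq2$) characterize when the \emph{horizontal} image $H\subseteq\Aut(\F_a)$ contains the translations, i.e.\ acts with two orbits. The binomial coefficients enter through $y_1\mapsto y_1-y_0R(1,z)$, $R\in\k[z_0,z_1]_a$, acting on the class of $P$ in $\M_a^{b,c}$. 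When the condition fails, $H$ preserves the \emph{positive} section $s_a$. The enlargement then comes from links along $l_{01}$ over $s_a$ (Proposition~\ref{prop:case H=GL2/mu_a}), which your plan never considers. The condition $m\geq2r$ for $a=1$ comes instead from the contraction to $\P^2$, since $m=r$ gives $\L^s$ with group $\PGL_3$.

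\emph{Schwarzenberger case.} You give no criterion. First, Theorem~\ref{th:P1bundlesP2} restricts targets to decomposable bundles, the $\L^r$, and Schwarzenberger bundles. The condition $\frac12(b+c)\notin\{p^e,2p^e\}$ then comes from comparing the stabilizer $H_m$ (length $2m$) of the open $\PGL_2$-orbit with $H_{p^r}$ on $\P^1\times\P^2$ (action $F^r\times\psi$) and $H_{2p^r}$ on $\L^r$. These bundles never map to $\V$-type bundles, whose base action fixes a point, unlike $\Aut(\P^2,\Gamma)$. For $\frac12(b+c)=p^e$ the target is $\P^1\times\P^2$.
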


Relative maximality does not exclude nontrivial equivariant
birational maps that preserve the full neutral automorphism group.
Such maps may connect distinct $\P^1$-bundle structures; even for
a stiff $\P^1$-bundle, a birational self-map need not be biregular.
The next theorem determines which $\P^1$-bundles in the classification
are stiff or superstiff and describes all equivariant square
birational maps between them.

\begin{theoremA}[{see Section~\ref{proof: equivariant links} for the proof}]
	\label{th: equivariant links}
	Assume that $p\geq3$.
	Let $(X,\pi)$ be a $\P^1$-bundle in
	Theorem~$\ref{th: classification rel max}$. Then:
	\begin{itemize}
		\item $(X,\pi)$ is superstiff if and only if it belongs to
		case~\eqref{item:relmax-Pb} or~\eqref{item:relmax-Ls}, or to
		case~\eqref{item:relmax-Fabc} with $a=0$ or $b=c=0$;
		
		\item $(X,\pi)$ is stiff if and only if it is superstiff, or
		it belongs to case~\eqref{item:relmax-Schwarzenberger} with
		$p\nmid\frac{b+c}{2}$.
	\end{itemize}
	
	Every $\Autz(X)$-equivariant square birational map to another
	$\P^1$-bundle over a smooth projective rational surface is a
	composition of square isomorphisms and of the maps below and
	their inverses. Each of these maps conjugates the full neutral
	automorphism groups.
	\begin{enumerate}[$(L1)$]   
		\item
		The elementary links between decomposable $\P^1$-bundles over $\F_a$ described in
		Proposition~$\ref{prop:MaxDecBundlesFa}\ref{FFabcOnlyMax}.$
		
		\item
		The elementary links between Umemura $\P^1$-bundles described in
		Proposition~$\ref{prop: description of the equ links Umemura}$,
		items~$\ref{Blowl00egal}$ and~$\ref{Blowl10egal}.$
		
		\item
		The elementary links between Schwarzenberger $\P^1$-bundles above
		the invariant conic $\Gamma$, whose centers are the invariant
		sections described in
		Corollary~$\ref{cor: description of the orbits for Sbc}$,
		as in Lemma~$\ref{Lemm:DecElLinks}$.
		
		\item
		The birational morphisms from Umemura $\P^1$-bundles in
		case~\eqref{item:relmax-Umemura} to bundles in
		case~\eqref{item:relmax-V}, given by
		Proposition~$\ref{prop:UmeF1Contraction}$, with equality of neutral
		automorphism groups as in
		Lemma~$\ref{Lem:UmeF1P2-positive-char}$.
	\end{enumerate}
\end{theoremA}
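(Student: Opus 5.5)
The plan is to organize all equivariant square birational maps through the Sarkisov-type factorization into elementary links, as in \cite{BFT}. Let $\varphi\colon (X,\pi)\dashrightarrow (X',\pi')$ be $\Autz(X)$-equivariant, with $(X,\pi)$ in the list of Theorem~\ref{th: classification rel max}. By relative maximality, $\varphi\Autz(X)\varphi^{-1}=\Autz(X')$, so $G:=\Autz(X)$ acts regularly on both sides. I will use the $G$-equivariant decomposition of square birational maps between $\P^1$-bundles over rational surfaces into elementary links: blow-ups of $G$-invariant curves (sections over invariant curves, or fibers over fixed points) followed by contractions. These are combined with the descent lemma (Lemma~\ref{lem:descent-lemma}) and the contraction $\F_1\to\P^2$, which interchange base models. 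Every link starts from a model with $G$ acting, so each step is determined by the $G$-orbits on the total space and on the base. For each case (a)--(f), I will list the $G$-invariant irreducible curves and the $G$-fixed points of the base and total space. For this I use the orbit descriptions already established: Lemma~\ref{lem:aut-group-Fabc}, Lemma~\ref{lemm:surjectiveDecP2}, Proposition~\ref{ActionSL2GL2Umemura}, Proposition~\ref{prop: LsbAut}, Proposition~\ref{prop:aut-orbits-Sbc} and Corollary~\ref{cor: description of the orbits for Sbc}.

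The key steps, in order, are as follows.

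First, the superstiff cases. For $\PP_b$ and $\L^s$ the group acts on $\P^2$ through all of $\PGL_3$. So there is no invariant curve or fixed point on the base, and a link can only come from an invariant curve in $X$ lying over the whole base, which cannot be blown up as a link center. For $\FF_0^{b,c}$ and $\FF_a^{0,0}$, the only possible centers lie over curves whose blow-up leads to a bundle with strictly larger group, or to no bundle at all. That would contradict relative maximality, so every $\varphi$ is an isomorphism.

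Second, in the remaining decomposable case $-a<c<ab$, the invariant sections over $s_{-a}$ give the links (L1) of Proposition~\ref{prop:MaxDecBundlesFa}. These change $(b,c)$, so the bundle is not stiff. Third, for Umemura bundles, the centers are the invariant curves over $s_{-a}$ of Proposition~\ref{prop: description of the equ links Umemura}, which give (L2). When $a=1$, they also give the contraction to $\V_\omega^{b,2r}$ (L4), using Lemma~\ref{Lem:UmeF1P2-positive-char}. Both change the invariants, so these bundles are not stiff. Fourth, for $\SS^{b,c}$, the only base-invariant curve is the conic $\Gamma$, and the links are those of (L3) above $\Gamma$ along invariant sections. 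The arithmetic condition $p\nmid \frac{b+c}{2}$ should be exactly the condition that the only invariant sections over $\Gamma$ lead back to a square-isomorphic Schwarzenberger bundle, for instance by the symmetry $(b,c)\mapsto$ its partner. If $p\mid\frac{b+c}{2}$, extra invariant sections appear, coming from Frobenius, and lead to non-isomorphic $\SS^{b',c'}$. Finally, the equality of full neutral groups along each link follows from the target again being in the list of Theorem~\ref{th: classification rel max}.

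The main obstacle is the Schwarzenberger case. There I must compute precisely the $\PGL_2$-invariant sections of $\SS^{b,c}|_\Gamma$ in characteristic $p$, identify the targets of the corresponding links, and show that they are square-isomorphic to $\SS^{b,c}$ exactly when $p\nmid\frac{b+c}{2}$. I would first compute the restriction $\SS^{b,c}|_\Gamma\simeq\P(\O(m_1)\oplus\O(m_2))$ with its $\SL_2$-linearization. Invariant sections then correspond to $\SL_2$-stable lines in a Weyl-type module, whose structure depends on $p$-adic digits of $\frac{b+c}{2}$. A secondary difficulty is making sure that no link centered over a fixed point exists in the Umemura and $\V$ cases beyond those listed. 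For this I would use the transitivity on the base of the ruling and the orbit decompositions of Proposition~\ref{ActionSL2GL2Umemura}.
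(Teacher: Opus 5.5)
Your overall architecture matches the paper's: base modifications reduce to blow-ups of fixed points (only $q$ in case (f), which gives (L4)), maps over a fixed base factor into elementary links by Lemma~\ref{Lemm:DecElLinks}, and the decomposable, Umemura and $\V$ cases then follow from the cited results. The genuine gap is the Schwarzenberger stiffness criterion. You only conjecture it, and the mechanism you propose is wrong. By Corollary~\ref{cor: description of the orbits for Sbc}, the number of invariant sections over $\Gamma$ depends on whether $p\mid\binom{m}{l+1}$, where $m=\frac{b+c}{2}$, and not on whether $p\mid m$. For instance, take $p=3$ and $(b,c)=(9,15)$, so $l=2$ and $m=12$. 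Then $\binom{12}{3}=220\equiv1\pmod 3$, so there is a single invariant section and $\pi^{-1}(\Gamma)\simeq\P^1\times\P^1$. Yet $\SS^{9,15}$ is not stiff: it is equivariantly birational to $\SS^{11,13}$, whose restriction over $\Gamma$ is $\F_2$. More fundamentally, listing invariant sections (via Weyl modules or otherwise) does not identify the target of a link. Your plan contains no invariant that distinguishes the possible targets $\SS^{b',c'}$, so neither direction of the criterion can be concluded.

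The missing idea is the open orbit. By Proposition~\ref{prop:aut-orbits-Sbc}\ref{item:open-orbit-Sbc}, $\SS^{b,c}$ has an open orbit $\PGL_2/H_m$, where the subgroup scheme $H_m$ has length $2m$. Any equivariant birational map identifies open orbits, so $m$ is an invariant. Since the base action has only the orbits $\Gamma$ and its complement, the base map is an isomorphism. Theorem~\ref{th:P1bundlesP2} and Lemma~\ref{lem: possible birational models Sbc} (which excludes $\PP_d$ and $\L^r$ because $m\neq p^r,2p^r$) then show that every target is some $\SS^{b',c'}$ with $c'\neq 3b'$ and $m'=m$.

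\begin{itemize}
\item If $p\nmid m$, then $l=l'=0$, so $(b',c')=(m-1,m+1)=(b,c)$; this gives stiffness.
\item If $p\mid m$, write $m=up^t$ with $u\geq3$ and $p\nmid u$. Both $(m-1,m+1)$ and $((u-1)p^t,(u+1)p^t)$ are admissible parameters. The two bundles are not square-isomorphic, since their restrictions to tangent lines of $\Gamma$ are $\F_{m-1}$ and $\F_{m-p^t}$. Identifying open orbits gives equivariant maps from $\SS^{b,c}$ to both, so $\SS^{b,c}$ is not stiff.
\end{itemize}

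A smaller flaw concerns your superstiff cases. Saying that a link ``would contradict relative maximality'' is not a valid argument, because relative maximality does not exclude group-preserving links; those are exactly (L1)--(L4). The correct reason is that there is no admissible center. For $\PP_b$, $\L^s$ and $\FF_0^{b,c}$, the base has no invariant curve and no fixed point. For $\FF_a^{0,0}\simeq\F_a\times\P^1$, the $\PGL_2$-factor acts transitively on the fibres, so $X$ contains no invariant curve.
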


\begin{remark}
	Relative maximality only tests equivariant square birational maps between
	$\P^1$-bundles, so it does not imply maximality in $\Bir(\P^3)$. Let us give two examples.
	\begin{itemize}
		\item In every characteristic, the decomposable $\P^1$-bundle $\PP_1\to\P^2$ is relatively maximal
		(indeed, superstiff; see Lemma~\ref{Lem:DecP2NoCurve}\ref{AutPPbSuperstiff}). However, if $\PP_1$ is viewed as the blow-up
		of $\P^3$ at $[0:0:0:1]$, the blow-down morphism
		$\varphi\colon\PP_1\to\P^3$ satisfies
		\[
		\varphi\Autz(\PP_1)\varphi^{-1}
		\subsetneq\Aut(\P^3)\simeq\PGL_4.
		\]
		
		\item Similarly, if $p\geq5$, then $\Autz(\SS^{2,4})$ is relatively maximal
		by Proposition~\ref{prop: relative max of Sbc}, yet there is a
		$\PGL_2$-equivariant birational map
		\[
		\varphi\colon\SS^{2,4}\dasharrow\P(V_3)\simeq\P^3,
		\]
		where $\PGL_2$ acts naturally on binary cubic forms, such that
		\[
		\varphi\Autz(\SS^{2,4})\varphi^{-1}
		\subsetneq\Aut(\P(V_3))\simeq\PGL_4.
		\]
	\end{itemize}
	In both examples, the target is the Mori fiber space $\P^3\to\mathrm{pt}$,
	which is not among the targets considered in relative maximality.
\end{remark}

\subsection{Brief overview of the proof}
Section~\ref{sec:preliminaries} recalls Blanchard's lemma and the
descent lemma, develops the representation theory of $\SL_2$ and
equivariant homomorphisms between its modules, and reviews Hirzebruch
surfaces and their automorphism groups. The descent lemma reduces the
study of a $\P^1$-bundle over a smooth projective rational surface to
bundles over $\P^2$ or a Hirzebruch surface $\F_a$.

Sections~\ref{sec:P1-bundles-over-Fa} and~\ref{section: P1 bundles over P2}
classify the $\P^1$-bundles over these surfaces. Over $\F_a$, we study
first decomposable $\P^1$-bundles and then indecomposable $\P^1$-bundles with no jumping fibers, including the Umemura $\P^1$-bundles. Over $\P^2$, we use the induced action of $\Autz(X)$ on the
base: if its image fixes a point, blowing up the corresponding fiber
reduces the problem to $\F_1$; otherwise, the possible actions lead to
the decomposable $\P^1$-bundles, the $\P^1$-bundles $\L^{s,b}$, and the generalized
Schwarzenberger $\P^1$-bundles.

Finally, Section~\ref{sec:max and non-max results} determines, case by case, the groups $\Autz(X)$ and their orbits on $X$. Using this orbit information and Lemma~\ref{Lemm:DecElLinks}, we identify the possible equivariant birational links between the $\P^1$-bundles. The final subsections then classify the relatively maximal $\P^1$-bundles and determine which of them are stiff or superstiff.

\subsection{Comparison with the classification in characteristic zero}
Theorems~\ref{th: classification rel max}
and~\ref{th: equivariant links} are the positive-characteristic
counterparts of \cite[Theorems~A and~B]{BFT}, respectively.
Our notion of \emph{relative maximality} is called
\emph{maximality} there: both concern equivariant square
birational maps between $\P^1$-bundles over smooth projective
rational surfaces. The decomposable cases have the same
numerical criteria and stiffness properties in both settings.

For Umemura $\P^1$-bundles in characteristic zero, the defining
polynomial has a single nonzero component, with $c=ai_0+2$.
Writing $m=b-i_0$, relative maximality is equivalent to $m>0$ for
$a\geq2$ and $m\geq2$ for $a=1$, or equivalently
\[
c-ab<2\quad\text{if }a\geq2,
\qquad
c-ab<1\quad\text{if }a=1.
\]
In positive characteristic, several components may occur,
with exponents involving powers of $p$.
Setting $r=p^{s_{i_0}}$, relative maximality is characterized
by Corollary~\ref{Cor:MaxUmeBundlesFa}:
\[
\begin{cases}
	r\mid m\text{ and }
	\bigl(m\geq2r\text{ or }(p=2,\ m=r,\ P_b\neq0)\bigr),
	&a=1,\\
	m>0\text{ and }a p^{\nu_p(m)}\geq r,
	&a\geq2.
\end{cases}
\]
For $r=1$, these conditions have the same numerical form
as in characteristic zero. The contractions $\V_{\omega}^{b,2r}$ are the analogues of the $\P^1$-bundles $\V_b$, which correspond to $r=1$ and
a single-component Umemura polynomial.

The classical Schwarzenberger $\P^1$-bundles correspond to
$\SS^{b,b+2}=\SS_b$, and the projectivized tangent bundle
$\L^0$ is $\SS_1=\P(T_{\P^2})$. 
Positive characteristic also gives
the superstiff Frobenius pullbacks
$\L^s=\P((F^s)^*T_{\P^2})$ for $s>0$ and a broader
Schwarzenberger family $\SS^{b,c}$. 
Writing $M=(b+c)/2$, the latter
is relatively maximal precisely when
$M\notin\{p^e,2p^e\mid e\geq0\}$,
and, among these relatively maximal $\P^1$-bundles, stiff
precisely when $p\nmid M$. Thus relative maximality
no longer implies stiffness for Schwarzenberger $\P^1$-bundles,
whereas all classical Schwarzenberger $\P^1$-bundles are stiff
in characteristic zero.
This accounts for the
six families in our classification, compared with
the five families of \cite[Theorem~A]{BFT}.

A further distinction concerns moduli. In characteristic
zero, all five families are indexed by at most three
integers. In positive characteristic, the Umemura $\P^1$-bundles over $\F_a$ 
and their contractions over $\P^2$ also depend on $\omega$ and can
form positive-dimensional families of pairwise
non-square-isomorphic relatively maximal $\P^1$-bundles with
fixed numerical invariants. Moreover, the parameter spaces
$\Omega_a^{b,c}$ can have arbitrarily large dimension
(see Example~\ref{example: Omegaabc not bounded}).

\subsection{Toward the classification of maximal connected algebraic subgroups of \texorpdfstring{$\Bir(\P^3)$}{Bir(P3)} in positive characteristic}\label{sec: classification max conn subgroups of Cr3}
Beyond its independent interest, the classification of relatively maximal
$\P^1$-bundles over smooth rational surfaces obtained here is motivated by
the broader goal of classifying maximal connected algebraic subgroups of
$\Bir(\P^3)$ in positive characteristic. 

A natural next step will be to adapt the methods and results
of~\cite{BFT-MFS} to positive characteristic and combine them
with the present classification. One important difficulty is that,
although resolution of
singularities is known for threefolds in positive characteristic, no
equivariant resolution theorem for general actions of
connected algebraic groups is currently available. Thus, starting with an
arbitrary connected algebraic subgroup of $\Bir(\P^3)$, we cannot in
general reduce its study to an action on a smooth projective threefold
and then run an MMP. 
Moreover, there is presently no general equivariant Sarkisov program for threefolds in positive characteristic. We therefore lack a general method for controlling equivariant birational maps between Mori fiber spaces and,
in particular, for establishing the maximality of their connected
automorphism groups in $\Bir(\P^3)$.

The positive-dimensional families of relatively maximal Umemura
$\P^1$-bundles over the Hirzebruch surfaces $\F_a$, together with the
corresponding families over $\P^2$ obtained by contraction, are expected
to yield new continuous families of conjugacy classes of maximal
connected algebraic subgroups of $\Bir(\P^3)$ in characteristic
$p\geq3$. This would be a striking new phenomenon: in characteristic
zero, the only continuous family of maximal connected algebraic
subgroups in the classification of~\cite{BFT-MFS} is the one arising
from the Umemura quadric fibrations studied in~\cite[\S~11]{Ume88} and~\cite[\S~4.4]{BFT-MFS}.
These fibrations exist in arbitrary characteristic, and the neutral
automorphism groups in this continuous family are isomorphic to
$\PGL_2$.

\subsection*{Use of generative AI}
Work on this project really began in spring 2021, during a three-month visit of the
second author to the University of Basel, and continued intermittently
over the following years. Most of the article was completed by spring
2025. 
One remaining issue was to understand when a connected algebraic
subgroup of $\Aut(\F_a)$ acting transitively on the base of the
ruling contains a conjugate of
$\Im(\SL_2\to\GL_2/\mu_a)$, and how the answer changes when
the subgroup arises from the automorphism group of a
$\P^1$-bundle.
Discussions with OpenAI's GPT models in recent
weeks helped us to resolve this issue. In particular, they helped to extend to arbitrary prime characteristic a counterexample previously known to the authors in characteristic $2$ (Proposition~\ref{prop: UniformCounterexampleNew}),
and contributed to the proofs of
Lemma~\ref{lem:H-maps-onto-PGL2-odd-a} and
Corollary~\ref{cor:H-maps-onto-PGL2-even-a}.
The authors take full responsibility for the contents of the article.

\subsection*{Acknowledgments} 
The first author acknowledges support of the grant ``Birational transformations of higher dimensional varieties'' $200020\_214999$ of the Swiss National Foundation.
The second author acknowledges the support of the CDP C2EMPI. He also thanks the French State, through the France-2030 programme, the University of Lille, the Initiative of Excellence of the University of Lille, and the European Metropolis of Lille for their funding and support of the R-CDP-24-004-C2EMPI project.

Both authors thank the University of Basel for its hospitality and the Swiss National Science Foundation grant ``Geometrically ruled surfaces'' $200020\_192217$ for its financial support during the second author's three-month visit to the first author in Basel in spring 2021.

\section{Preliminaries} \label{sec:preliminaries}

\subsection{Notation}
In this article, we work over a fixed algebraically closed field $\k$
of characteristic $p>0$. Any restriction on the characteristic is
stated explicitly.
For each nonzero integer $n$, we denote by $\nu_p(n)$
its $p$-adic valuation.

A \emph{variety} is an integral separated scheme of finite type
over $\k$; in particular, varieties are always irreducible. An \emph{algebraic group} is a smooth group scheme of finite type
over $\k$.
By an \emph{algebraic subgroup}, we mean a smooth closed subgroup
scheme.
We also consider subgroup schemes that are not necessarily smooth,
notably kernels and stabilizers; these are always understood
scheme-theoretically.
The identity component of an algebraic group $G$, denoted by
$G^\circ$, is the connected component containing the identity.
It is a normal subgroup scheme of $G$, and the quotient
$G/G^\circ$ is a finite étale group scheme.
 
 For a projective variety $X$, the automorphism group scheme is
 locally of finite type by \cite{MO67}.
 We always endow it with its reduced group scheme structure,
 and we write $\Autz(X)$ for its identity component, also endowed
 with the reduced structure.
 Images of homomorphisms of algebraic groups are understood
 scheme-theoretically.
 Since $\k$ is perfect, the scheme-theoretic image of a homomorphism
 of algebraic groups is again a smooth closed subgroup scheme;
 it is connected if the source is connected.
 In particular, any image of $\Autz(X)$ considered below is a
connected algebraic subgroup.
 
  Throughout this article, a $\P^n$-bundle is always assumed to be
  locally trivial in the Zariski topology.
  Unless otherwise specified, all varieties, rational maps, and algebraic groups are defined over a fixed algebraically closed field $\k$.

\subsection{Blanchard's lemma and descent lemma}
Here we recall two classical results, valid over any algebraically closed field, that will be used throughout this article.

\begin{proposition}\label{prop:blanchard-lemma} \emph{(Blanchard's lemma, \cite[Proposition~4.2.1]{BSU13})}\\ 
Let $f\colon X \to Y$ be a proper morphism between algebraic varieties such that $f_*(\O_X)=\O_Y$. If a connected algebraic group $G$ acts on $X$, then there exists a unique action of $G$ on $Y$ such that $f$ is $G$-equivariant.
\end{proposition}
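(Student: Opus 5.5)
The plan follows the classical route of Blanchard and of Brion--Samuel--Uma: first build the action on the underlying topological space, then check that it is algebraic. The key input is rigidity: a connected family of automorphisms cannot move the fibers of a morphism with connected fibers.

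\emph{Step 1: set-theoretic action.} Since $f$ is proper with $f_*\O_X=\O_Y$, Zariski's main theorem shows that $f$ is surjective with connected fibers. Consider the composite
\[
G\times X\xrightarrow{\ \mathrm{act}\ } X\xrightarrow{\ f\ } Y.
\]
Fix $y\in Y$ and look at the restriction of this map to $G\times f^{-1}(y)$. For each $g\in G$, the map $f\circ g$ restricted to the connected proper fiber $f^{-1}(y)$ should be constant. Argue as follows. The morphism $F=(f\circ\mathrm{act},\,\mathrm{pr}_1)$, restricted to the fibers of $\Id_G\times f$, is a family parametrized by the connected variety $G$. At $g=e$ it contracts every fiber $f^{-1}(y)$ to a point. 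Apply the rigidity lemma, in the form "a proper morphism $G\times X\to G\times Y$ over $G$ that contracts the fibers of $f$ over one point of $G$ contracts them over all of $G$." To do this, pull back an ample divisor on an affine neighbourhood and use that the contracted locus is open and closed. Hence $f(g\cdot x)$ depends only on $(g,f(x))$, which defines a map of sets $G\times Y\to Y$, with the group axioms inherited from $X$.

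\emph{Step 2: algebraicity.} The morphism $\Id_G\times f\colon G\times X\to G\times Y$ is proper and satisfies $(\Id_G\times f)_*\O=\O$ by flat base change, since $G$ is flat over $\k$. Therefore $G\times Y$ is the categorical quotient of $G\times X$ for morphisms constant on fibers. The morphism $f\circ\mathrm{act}$ is constant on these fibers by Step 1, so it factors uniquely through a morphism $G\times Y\to Y$. For the factorization, cover the target by affines and use that $\O_{G\times Y}=(\Id\times f)_*\O_{G\times X}$ to descend functions. The associativity and unit axioms follow from the same uniqueness of factorization through $G\times G\times Y$. Uniqueness of the induced action is immediate because $f$ is surjective.

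\emph{Main obstacle.} The delicate point is the rigidity step in Step 1, namely showing that $f\circ g$ remains constant on every fiber for all $g$, and not just for $g$ near $e$. The plan is to prove that the set of $g$ for which this holds is closed, by properness of the fibers and upper semicontinuity of fiber dimension of the image, and also open, using the affine neighbourhood argument. Connectedness of $G$ then finishes the proof.
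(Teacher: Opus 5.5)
Your architecture is the standard one behind the cited proof of Brion--Samuel--Uma; the paper itself only quotes the result. You first show that every $g\in G$ maps fibers of $f$ into fibers, then factor $G\times X\to Y$ through $\Id_G\times f$ using $(\Id_G\times f)_*\mathcal{O}_{G\times X}=\mathcal{O}_{G\times Y}$. Step 2 and the uniqueness argument are fine.

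The gap is in the rigidity step, which you single out as the crux. The argument you give for closedness actually proves openness. Fix $y$, and let $S_y\subseteq G$ be the set of $g$ such that $f(g\cdot f^{-1}(y))$ is a point. Consider the proper map $G\times f^{-1}(y)\to G\times Y$, $(g,x)\mapsto(g,f(g\cdot x))$. Upper semicontinuity of fiber dimension for this map says that the locus of $g$ where the image of $\{g\}\times f^{-1}(y)$ has positive dimension is closed. These images are connected, so this means $S_y$ is \emph{open}. That is the same conclusion as your affine-neighbourhood argument. As written, both halves of your ``open and closed'' argument give openness and nothing gives closedness; the ample divisor plays no role. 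You must also work with a fixed $y$. The set of $g$ contracting all fibers at once is an infinite intersection of the $S_y$, and its openness is unclear when $Y$ is not complete.

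The repair is short. Closedness of $S_y$ comes from separatedness of $Y$. Write $S_y=\bigcap_{x,x'\in f^{-1}(y)}\{g\in G\mid f(g\cdot x)=f(g\cdot x')\}$. Each set in this intersection is the preimage of the diagonal under the morphism $G\to Y\times Y$, $g\mapsto(f(g\cdot x),f(g\cdot x'))$, so it is closed. Since $e\in S_y$ and $G$ is connected, $S_y=G$.

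Alternatively, argue as in the classical rigidity lemma, which needs no closedness. Let $V\subseteq G$ be the set of $g$ with $f(g\cdot f^{-1}(y))$ contained in a fixed affine neighbourhood of $y$. It is open because $f^{-1}(y)$ is complete, and it contains $e$. Fix $x_0\in f^{-1}(y)$ and compare the two morphisms $G\times f^{-1}(y)_{\mathrm{red}}\to Y$ given by $(g,x)\mapsto f(g\cdot x)$ and $(g,x)\mapsto f(g\cdot x_0)$. They agree on $V\times f^{-1}(y)_{\mathrm{red}}$, which is dense because $G$ is irreducible. Hence they agree everywhere, because $Y$ is separated and the source is reduced. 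With either fix, Step 1, and therefore your whole proof, goes through.
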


\begin{lemma}\label{lem:descent-lemma} \emph{(Descent lemma, \cite[Lemma~2.3.2]{BFT})}\\ 
Let $\eta\colon\hat{S}\to S$ be a birational morphism between two smooth projective surfaces. Let $U\subseteq S$ and $\hat{U}\subseteq \hat{S}$ be two maximal open subsets such that $\eta$ induces an isomorphism $\hat U\iso U$, and $\Omega=S\setminus U$ is finite, and let $\hat\pi\colon \hat{X}\to \hat{S}$ be a $\P^1$-bundle.

Then, there exists a $\P^1$-bundle $\pi\colon X\to S$, and a birational map $\psi\colon \hat{X}\dasharrow X$, such that $\eta\hat\pi=\pi\psi$ $(\psi$ is a square birational map over $\eta)$ and such that $\psi$ induces an isomorphism $\hat\pi^{-1}(\hat{U})\iso \pi^{-1}(U)$. Moreover, $\psi$ is unique, up to composition by an isomorphism of $\P^1$-bundles at the target, and $\psi$ is $\Autz(\hat{X})$-equivariant, which means that $\psi \Autz(\hat{X}) \psi^{-1}$ is a subgroup of $\Autz(X)$.
\end{lemma}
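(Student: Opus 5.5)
The plan is to reduce to the case of a single blow-up, build $X$ by extending a rank $2$ vector bundle across a point, and then get uniqueness and equivariance from Hartogs-type extension. By Zariski's factorization theorem, $\eta$ is a composition $\hat S=S_n\to S_{n-1}\to\cdots\to S_0=S$ of blow-ups of points. Composing square birational maps preserves all the properties in the statement: the isomorphism over the open sets, uniqueness up to isomorphism of $\P^1$-bundles at the target, and equivariance. So an induction on $n$ reduces everything to one blow-up $\eta\colon\hat S\to S$ of a point $q$, with exceptional curve $E$, $U=S\setminus\{q\}$ and $\hat U=\hat S\setminus E$.

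\emph{Existence.} First I write $\hat X=\P(\hat V)$ for a rank $2$ vector bundle $\hat V$ on $\hat S$. For this I use that the Brauer group of the smooth projective rational surface $\hat S$ is trivial. Alternatively, I twist the relative $\O(-K)$: the line bundle $\omega_{\hat X/\hat S}^{-1}$ has degree $2$ on fibres, and for a $\P^1$-bundle with a section one can take the pushforward of $\O(\text{section})$. Let $j\colon U\hookrightarrow S$ be the inclusion, and set $V=j_*(\hat V|_{\hat U})$, where $\hat U$ is identified with $U$ via $\eta$. This $V$ is a reflexive coherent sheaf on the smooth surface $S$, because $\{q\}$ has codimension $2$. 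Hence $V$ is locally free. I then put $X=\P(V)$, and $\psi$ is the obvious isomorphism over $U$, extended as a birational map.

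\emph{Uniqueness.} Suppose $X_1=\P(V_1)$ and $X_2=\P(V_2)$ are $\P^1$-bundles over $S$ with an isomorphism $\P(V_1)|_U\simeq\P(V_2)|_U$ over $U$. Such an isomorphism comes from $V_1|_U\simeq V_2|_U\otimes L$ for some line bundle $L$ on $U$. Since $\Pic(U)=\Pic(S)$, $L$ extends to $S$. The isomorphism of reflexive sheaves then extends across $q$ by Hartogs, so $X_1\simeq X_2$ over $S$, extending the given map.

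\emph{Equivariance.} Let $G=\Autz(\hat X)$. By Blanchard's lemma (Proposition~\ref{prop:blanchard-lemma}), $G$ acts on $\hat S$, and then on $S$ since $\eta_*\O_{\hat S}=\O_S$. Being connected, $G$ preserves $E$ and hence fixes $q$, so it preserves $U$. For each $g\in G$, the map $\psi g\psi^{-1}$ restricts to an automorphism of $\pi^{-1}(U)$ lying above $g|_U$. Comparing $X$ with the pull-back $g_S^*X$ and applying uniqueness, this restriction extends to an automorphism of $X$.

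The main obstacle is to show that this is an \emph{algebraic} action, that is, a morphism $G\to\Aut(X)$, rather than a pointwise statement. I will handle it by redoing the existence and uniqueness arguments in families over the smooth variety $G\times S$. The action gives two $\P^1$-bundles on $G\times S$, namely $\mathrm{pr}_2^*X$ and the pull-back of $X$ by the action map. These are isomorphic over $G\times U$, whose complement $G\times\{q\}$ has codimension $2$. Then the same arguments apply: $\Pic(G\times U)=\Pic(G\times S)$, and reflexive sheaves on $G\times S$ satisfy Hartogs extension. Vector bundles on $G\times S$ need not be reflexive-extendable in general, but here we are extending an isomorphism between given bundles, which is fine. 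Together these give an isomorphism over $G\times S$, that is, an action $G\times X\to X$. Its image is a connected algebraic subgroup, so it lies in $\Autz(X)$, which yields $\psi\Autz(\hat X)\psi^{-1}\subseteq\Autz(X)$.
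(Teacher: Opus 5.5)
Your proof is correct. The paper gives no argument of its own and only cites \cite[Lemma~2.3.2]{BFT}, and your route is the standard argument for this statement: write $\hat X=\P(\hat V)$, extend $\hat V|_{\hat U}$ across the finite set $\Omega$ as a reflexive (hence locally free) sheaf, and get uniqueness from $\Pic(U)=\Pic(S)$ together with Hartogs extension. Your family argument over $G\times S$ is a genuine improvement, since it shows the conjugated action is an honest morphism $G\times X\to X$ rather than a pointwise statement; and the reduction to a single blow-up is unnecessary, because every step works unchanged for an arbitrary finite $\Omega$.
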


\subsection{Equivariant homomorphisms between \texorpdfstring{$\SL_2$}{SL2}-modules}
We begin by describing explicitly the equivariant linear maps
between the classical $\SL_2$-modules $V_m$ and $V_n$ (Definition~\ref{def:Vn}).
In positive characteristic, such maps can exist even when $m\neq n$;
Proposition~\ref{prop: rep SL2 car p} gives a precise criterion
and formula, and Corollary~\ref{cor:case n=0} identifies the
invariant vectors that will be used in the construction of
Umemura $\P^1$-bundles.
These results are classical (see, for example,
\cite[\S II.2]{Jan03}), but we include direct proofs.

\begin{notation}
For each $u\in \Z$, $l\in \N$, we denote by $\binom{u}{l}$ the integer $\frac{\prod_{i=u-l+1}^u i}{l!}$, which is equal to $1$ if $l=0$ and is equal to $0$ if $l\ge 1$ and $u\in \{0,\dots,l-1\}$. 
\end{notation}
\begin{lemma}\label{Lemm:BinomialsEqualModp}
	Let $p$ be a prime number, let $s\geq1$, and let
	$u,t\in\Z$ and $0\leq l<p^s$ be integers.
	If $t\equiv0\pmod{p^s}$, then
	\[
	\binom{u+t}{l}\equiv\binom{u}{l}\pmod p.
	\]
\end{lemma}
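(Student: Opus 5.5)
We prove the congruence through an identity of polynomials over $\F_p$. The quantity $\binom{u}{l}$ is the coefficient of $x^l$ in $(1+x)^u$, where for negative $u$ we use the formal power series in $\Z[[x]]$. This agrees with the Notation, since the generalized binomial series has coefficients $\prod_{i=u-l+1}^u i/l!$ for every $u\in\Z$. We then have $(1+x)^{u+t}=(1+x)^u(1+x)^t$ in $\Z[[x]]$, and we reduce modulo $p$ to work in $\F_p[[x]]$.

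The key step is that $(1+x)^t\equiv 1\pmod{(p,x^{p^s})}$ whenever $p^s\mid t$. Write $t=p^s t'$ with $t'\in\Z$. Since Frobenius is additive in characteristic $p$, we get $(1+x)^{p^s}=1+x^{p^s}$ in $\F_p[x]$. Hence
\[
(1+x)^t=(1+x^{p^s})^{t'}\quad\text{in }\F_p[[x]].
\]
This holds for negative $t'$ as well, because inverses of units are compatible. The right-hand side is a power series in $x^{p^s}$ with constant term $1$, so it is congruent to $1$ modulo $x^{p^s}$.

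It follows that $(1+x)^{u+t}\equiv(1+x)^u \pmod{x^{p^s}}$ in $\F_p[[x]]$. Comparing the coefficients of $x^l$ for $0\le l<p^s$ gives
\[
\binom{u+t}{l}\equiv\binom{u}{l}\pmod p.
\]

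The only point needing care is the identification of the paper's $\binom{u}{l}$ with the series coefficient when $u<0$. An alternative avoids series entirely. For fixed $l$, the function $u\mapsto\binom{u}{l}$ is a polynomial in $u$, so it suffices to prove the congruence for $u\ge0$ with $u+t\ge0$; the general case then follows by shifting $u$ by a large multiple of $p^s$. For such nonnegative values, the argument above uses only the polynomials $(1+x)^u$. I expect no real obstacle here.
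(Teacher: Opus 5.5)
Your argument is correct and is the same as the paper's: in $\mathbb{F}_p[[x]]$ one has $(1+x)^t=(1+x^{p^s})^{t/p^s}\equiv 1 \pmod{x^{p^s}}$. Comparing the coefficients of $x^l$, with $l<p^s$, in $(1+x)^{u+t}$ and $(1+x)^u$ then gives the congruence, and your identification of $\binom{u}{l}$ with the series coefficient for $u<0$ is exactly what the paper uses implicitly.

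Your optional series-free variant is loosely phrased, but the main proof does not depend on it. As written, the shift by $N$ uses the statement itself. To make it non-circular via $l!\binom{u}{l}\in\Z[u]$, you need $p^{\nu_p(l!)+1}\mid N$, not merely $p^s\mid N$.
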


\begin{proof}
	Set $q=p^s$ and write $t=aq$ with $a\in\Z$.
	In $\mathbb F_p[[T]]$, we have
	\[
	(1+T)^t=(1+T^q)^a\equiv1\pmod{T^q}.
	\]
	Here negative powers are well defined because the series involved
	have constant term $1$.
	Consequently,
	\[
	(1+T)^{u+t}\equiv(1+T)^u\pmod{T^q}.
	\]
	Comparing the coefficients of $T^l$ gives the result.
\end{proof}

\begin{remark}
	For $u,t\geq0$, the preceding congruence also follows
	from Lucas's theorem.
\end{remark}

\begin{lemma}\label{lem:simplebin}
\begin{enumerate}
    \item\label{sb1}
    For all $u \in \Z$ and $l \in \N$, we have
    \[
        \binom{u}{l} = (-1)^l \binom{l-u-1}{l}.
    \]
    
    \item\label{sb2}
    Let $p$ be a prime number. For all $u,s,l,v \in \Z$ with $1 \le s$, $0 \le l < p^s$, and
    $u + v + 1 - l \equiv 0 \pmod{p^s}$, we have
    \[
        \binom{u}{l} \equiv (-1)^l \binom{v}{l} \pmod{p}.
    \]
\end{enumerate}
\end{lemma}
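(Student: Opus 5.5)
For part \ref{sb1}, I would argue directly from the definition in the Notation, $\binom{u}{l}=\frac{1}{l!}\prod_{i=u-l+1}^{u} i$. Negating each of the $l$ factors gives
\[
\prod_{i=u-l+1}^{u} i=(-1)^l\prod_{i=u-l+1}^{u}(-i)=(-1)^l\prod_{j=-u}^{l-u-1} j .
\]
Setting $w=l-u-1$, the last product runs over $j$ from $w-l+1=-u$ to $w$. So it is exactly the numerator of $\binom{w}{l}$. Dividing by $l!$ gives the identity. There is no obstacle here; the only point to check is that the index ranges match, which they do by this substitution.

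For part \ref{sb2}, I would combine \ref{sb1} with Lemma~\ref{Lemm:BinomialsEqualModp}. By \ref{sb1}, $\binom{u}{l}=(-1)^l\binom{l-u-1}{l}$. By hypothesis,
\[
v-(l-u-1)=u+v+1-l\equiv 0\pmod{p^s}.
\]
Applying Lemma~\ref{Lemm:BinomialsEqualModp} with base $l-u-1$ and shift $t=v-(l-u-1)$ requires $0\le l<p^s$ and $s\ge1$, and both are among the stated assumptions. That lemma gives $\binom{v}{l}\equiv\binom{l-u-1}{l}\pmod p$. Multiplying by $(-1)^l$ yields $\binom{u}{l}\equiv(-1)^l\binom{v}{l}\pmod p$.

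The only step needing care is confirming that Lemma~\ref{Lemm:BinomialsEqualModp} applies to arbitrary integers $u$, possibly negative. It does, since that lemma is stated for all $u,t\in\Z$ via the power series $(1+T)^u$ in $\F_p[[T]]$. Its coefficients agree with the generalized binomial coefficients of the Notation, because those coefficients are integers given by the same polynomial formula.
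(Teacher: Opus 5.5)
Your proof is correct and follows the paper's argument essentially verbatim. Part (1) negates the $l$ factors and reindexes the product, and part (2) combines (1) with Lemma~\ref{Lemm:BinomialsEqualModp} applied to the shift $v-(l-u-1)=u+v+1-l\equiv 0\pmod{p^s}$.
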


\begin{proof}
\ref{sb1}: Follows from
\[
    l!\binom{u}{l} = \prod_{i=u-l+1}^{u} i
    = \prod_{i=-u}^{l-u-1} (-i)
    = (-1)^l \prod_{i=-u}^{l-u-1} i
    = (-1)^l \, l! \, \binom{l-u-1}{l}.
\]

\ref{sb2}:
By \ref{sb1}, we have
\[
    \binom{u}{l} = (-1)^l \binom{l-u-1}{l}.
\]
Moreover, since $v - (l-u-1) \equiv 0 \pmod{p^s}$, Lemma~\ref{Lemm:BinomialsEqualModp} implies
\[
    \binom{l-u-1}{l} \equiv \binom{v}{l} \pmod{p}.
\]
Combining these, we obtain
\[
    \binom{u}{l} \equiv (-1)^l \binom{v}{l} \pmod{p}.
\]
\end{proof}

\begin{lemma}\label{Lemm:ConsecutiveZeroModp}
Let $p$ be a prime number and let $l,s\ge 1$ be integers such that $p^{s-1}\le l<p^s$. For each integer $k\ge 1$, the following conditions are equivalent:
\begin{enumerate}
\item\label{keqiv0ps}
$k\equiv 0 \pmod{p^s}$.
\item\label{allzeroModp}
$\binom{i}{l}\equiv 0 \pmod{p}$ for each $i\in \{k,\dots,k+l-1\}$.\end{enumerate}
\end{lemma}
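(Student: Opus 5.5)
We prove both implications by working with binomial coefficients modulo $p$ through generating functions in $\F_p[[T]]$, in the spirit of Lemma~\ref{Lemm:BinomialsEqualModp}. Write $q=p^s$.

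For \ref{keqiv0ps}$\Rightarrow$\ref{allzeroModp}, let $k=aq$ and take $i=k+j$ with $0\le j\le l-1<q$. Lemma~\ref{Lemm:BinomialsEqualModp}, applied with $u=j$ and $t=k$ (valid since $l<q$), gives
\[
\binom{k+j}{l}\equiv\binom{j}{l}\pmod p.
\]
The right-hand side is $0$ because $0\le j<l$. This direction is immediate.

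For \ref{allzeroModp}$\Rightarrow$\ref{keqiv0ps}, we argue by contraposition. Suppose $q\nmid k$ and write $k=aq+r$ with $1\le r\le q-1$. By Lemma~\ref{Lemm:BinomialsEqualModp}, for $i=k+j$ with $0\le j\le l-1$,
\[
\binom{k+j}{l}\equiv\binom{r+j}{l}\pmod p.
\]
So it suffices to find $j\in\{0,\dots,l-1\}$ with $\binom{r+j}{l}\not\equiv0\pmod p$. In other words, among any $l$ consecutive integers $r,\dots,r+l-1$ with $1\le r\le q-1$, at least one must have a nonzero coefficient modulo $p$.

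We use Lucas's theorem. Write $l=\sum_{e<s} l_e p^e$ in base $p$. The top digit satisfies $l_{s-1}\ge1$ because $p^{s-1}\le l$. For $n\ge0$, $\binom{n}{l}\not\equiv0$ exactly when every base-$p$ digit of $n$ dominates the corresponding digit of $l$. Hence the condition on $n$ depends only on $n \bmod q$, and it holds precisely when $n \bmod q$ is digitwise $\ge l$. The residues $m\in[0,q)$ that are digitwise $\ge l$ include $m=l,\dots$, and in particular $m=q-1$, whose digits are all $p-1$.

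The main obstacle is to check that no window of $l$ consecutive residues avoids this set. The first approach is the following. Since $r\ge1$, the window $[r,r+l-1]$ either contains a multiple of $q$ minus one, namely $cq-1$, or lies inside $[1,q-1]$.
\begin{itemize}
\item If $r+l-1\ge q-1$, then $q-1$ lies in the window, because $r\le q-1$. Since $q-1$ is digitwise $\ge l$, we are done.
\item Otherwise $r+l\le q-1$, and the window sits inside $[r,q-1)$. Here we look for $m$ in the window with $m\equiv l \pmod{p^{t}}$ digitwise-dominating, proceeding by induction on $s$.
\end{itemize}

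For the inductive step, let $l'=l \bmod p^{s-1}$. The window has length $l\ge p^{s-1}$, so it contains a full block $[cp^{s-1},(c+1)p^{s-1})$, or at least enough of one to realize every lower residue. The choice of the top digit $c\ge l_{s-1}$ is needed as well. Since $r\ge1$ and the window has length at least $l_{s-1}p^{s-1}$, a counting argument should let us choose the block so that its top digit is $\ge l_{s-1}$. Within that block we then pick the lower part equal to $p^{s-1}-1$, which dominates $l'$.

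Making this counting precise is the step we expect to be delicate. If it resists, the fallback is the direct generating-function identity
\[
\sum_{j}\binom{r+j}{l}T^{j}\;\propto\;(1+T)^{r}\bmod T^{q}.
\]
One would then argue that $l$ consecutive vanishing coefficients force $(1+T)^r$ to be a polynomial in $T^q$ modulo the relevant truncation. That in turn forces $q\mid r$, contradicting $1\le r\le q-1$.
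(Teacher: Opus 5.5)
\textbf{There is a genuine gap in \ref{allzeroModp}$\Rightarrow$\ref{keqiv0ps}.} Your direction \ref{keqiv0ps}$\Rightarrow$\ref{allzeroModp}, the reduction to $k\equiv r \pmod{p^s}$ with $1\le r\le p^s-1$, and your first case ($p^s-1$ lies in the window) are all correct. The problem is the remaining case $r+l\le p^s-1$, which is the heart of the lemma. You leave it as an unproved ``counting argument'', and the strategy you sketch does not always work.

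That strategy is to find an element of the window whose lower $s-1$ base-$p$ digits are all $p-1$ and whose top digit is at least $l_{s-1}$. Take $p=3$, $s=2$, $l=6$ (base-$3$ digits $(2,0)$) and $r=1$. You are in your second case, since $r+l-1=6<8$. The window is $\{1,\dots,6\}$. The only elements with last digit $2$ are $2$ (digits $(0,2)$) and $5$ (digits $(1,2)$), and both have top digit $<2$. So no block choice of the kind you describe exists, although $\binom{6}{6}=1$.

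The fallback does not help either. The series $\sum_j\binom{r+j}{l}T^j$ is not proportional to $(1+T)^r$ modulo $T^{p^s}$. The generating function in the upper index is $\sum_n\binom{n}{l}T^n=T^l(1-T)^{-l-1}$. So the claimed reduction to ``$(1+T)^r$ is a polynomial in $T^{p^s}$'' has no basis.

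\textbf{The missing idea} is to split according to whether $r\le l$ or $r>l$, rather than whether $p^s-1$ is in the window.
\begin{itemize}
\item If $1\le r\le l$, then $l$ itself lies in $\{r,\dots,r+l-1\}$, and $\binom{l}{l}=1$. This is what rescues the example above.
\item If $r>l$, then comparing leading digits gives $r_{s-1}\ge l_{s-1}$. Put $n=(r_{s-1}+1)p^{s-1}-1$. Then $0\le n-r\le p^{s-1}-1\le l-1$, so $n$ lies in the window. Its top digit is $r_{s-1}\ge l_{s-1}$ and all its lower digits are $p-1$, so Lucas's theorem gives $\binom{n}{l}\not\equiv 0\pmod p$.
\end{itemize}
This is exactly the paper's argument, and it also makes your first case unnecessary.
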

\begin{proof}
$\ref{keqiv0ps}\Rightarrow\ref{allzeroModp}$: If $k\equiv 0\pmod{p^s}$, then Lemma~\ref{Lemm:BinomialsEqualModp} yields $\binom{i}{l}\equiv \binom{i-k}{l}\pmod{p}$, for each integer $i\in \Z$. This implies that $\binom{i}{l}\equiv 0 \pmod{p}$ for each $i\in \{k,\dots,k+l-1\}$ since $i-k\in \{0,\dots,l-1\}$.

$\ref{allzeroModp}\Rightarrow \ref{keqiv0ps}$: Since $\binom{i}{l}\equiv\binom{i-ap^s}{l} \pmod{p}$ for each $a,i\in \Z$ (Lemma~\ref{Lemm:BinomialsEqualModp}), we can replace $k$ with $k-ap^s$, and assume then that $k\in \{0,\dots,p^s-1\}$. If $l\ge k$, then the fact that $\binom{l}{l}=1\not\equiv 0\pmod{p}$ implies that $l\not\in \{k,\dots,k+l-1\}$ (with \ref{allzeroModp}) and then implies that $l\ge l+k$, which yields $k=0$, as desired. It remains to see that $l<k$ yields a contradiction. Using the fact that $p^{s-1}\le l<k< p^{s}$, we can write
\[\begin{array}{rrrrrrrrr}
k&=& k_{s-1}p^{s-1}&+&\dots&+&k_1p&+&k_0;\ \text{and}\\
l&=& l_{s-1}p^{s-1}&+&\dots &+&l_1p&+&l_0\end{array}\]
where $k_i,l_i\in \{0,\dots,p-1\}$ for each $i=0,\dots,s-1$, and where $k_{s-1}\ge l_{s-1}\ge 1$. 
Set $q=(k_{s-1}+1)p^{s-1}-1$. Then
\[
0\leq q-k\leq p^{s-1}-1\leq l-1,
\]
so $q\in\{k,\dots,k+l-1\}$.
The leading base-$p$ digit of $q$ is
$k_{s-1}\geq l_{s-1}$, and all its lower digits are $p-1$.
Thus each base-$p$ digit of $q$ is at least the corresponding
digit of $l$. Lucas's theorem gives
$\binom{q}{l}\not\equiv0\pmod p$, contradicting
condition~\ref{allzeroModp}.
\end{proof}

\begin{lemma}\label{EqualityQyz}
	Let $K$ be a field of characteristic $p>0$, and let
	$b,l\geq0$ be integers.
	Assume that either $l=0$, or there exists $s\geq1$
	such that $l<p^s$ and $b+l+1\equiv0\pmod{p^s}$.
	Then
	\[
	\left(\sum_{i=0}^b\binom{l+i}{l}y^{b-i}z^i\right)
	(y-z)^{l+1}
	=
	y^{b+l+1}-z^{b+l+1}.
	\]
\end{lemma}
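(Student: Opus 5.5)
The plan is to work with the formal identity coming from the generating function of the binomial coefficients $\binom{l+i}{l}$. The case $l=0$ is immediate, since $\sum_{i=0}^b y^{b-i}z^i\,(y-z)=y^{b+1}-z^{b+1}$ by telescoping. So assume $l\geq1$, $l<p^s$, and $b+l+1\equiv0\pmod{p^s}$. Both sides are homogeneous of degree $b+l+1$, so we may dehomogenize by setting $y=1$; it then suffices to prove the polynomial identity
\[
\Bigl(\sum_{i=0}^b\tbinom{l+i}{l}z^i\Bigr)(1-z)^{l+1}=1-z^{b+l+1}
\]
in $K[z]$, and hence in $\F_p[z]$.

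Over $\Z[[z]]$ we have $\sum_{i\geq0}\binom{l+i}{l}z^i=(1-z)^{-(l+1)}$. Write $N=b+l+1$ and split the series at $i=b$:
\[
\sum_{i=0}^b\tbinom{l+i}{l}z^i=(1-z)^{-(l+1)}-\sum_{i\geq b+1}\tbinom{l+i}{l}z^i .
\]
Multiplying by $(1-z)^{l+1}$ gives
\[
\Bigl(\sum_{i=0}^b\tbinom{l+i}{l}z^i\Bigr)(1-z)^{l+1}=1-T(z)(1-z)^{l+1},
\qquad T(z):=\sum_{i\geq b+1}\tbinom{l+i}{l}z^i .
\]
So the whole proof reduces to showing that $T(z)(1-z)^{l+1}\equiv z^{N}\pmod p$.

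To do this, substitute $i=b+1+j$, so that $l+i=N+j$. Since $p^s\mid N$ and $0\leq l<p^s$, Lemma~\ref{Lemm:BinomialsEqualModp} gives $\binom{N+j}{l}\equiv\binom{j}{l}\pmod p$ for every $j\geq0$. Hence, modulo $p$,
\[
T(z)\equiv z^{b+1}\sum_{j\geq0}\tbinom{j}{l}z^j
= z^{b+1}\,z^{l}\sum_{k\geq0}\tbinom{l+k}{l}z^k
= z^{N}(1-z)^{-(l+1)},
\]
where the middle equality uses $\binom{j}{l}=0$ for $0\leq j<l$ and reindexes $j=l+k$. Therefore $T(z)(1-z)^{l+1}\equiv z^N$, which is exactly the required identity modulo $p$.

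The only point needing care is the application of Lemma~\ref{Lemm:BinomialsEqualModp}. Its hypothesis $0\leq l<p^s$ holds, the shift $t=N$ is divisible by $p^s$, and the base $u=j$ is arbitrary. All other manipulations are valid identities of formal power series over $\Z$ reduced modulo $p$. Since the left-hand side is a polynomial, the resulting equality in $\F_p[[z]]$ is an equality of polynomials, and this gives the statement after rehomogenizing.
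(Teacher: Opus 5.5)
Your proof is correct, but it takes a different route from the paper's.

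The paper argues by divisibility. With $q=p^s$ and $N=b+l+1$, the Frobenius identity $1-T^N=(1-T^{N/q})^q$ shows that $(1-T)^{l+1}$ divides $1-T^N$ (since $l+1\leq q$). So the quotient $A(T)$ is a polynomial of degree $b$. In $K[[T]]$, $A(T)=(1-T^N)(1-T)^{-l-1}$ agrees with $(1-T)^{-l-1}$ in every degree below $N$, and $b<N$. Hence $A$ must equal the truncation $\sum_{i=0}^b\binom{l+i}{l}T^i$.

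You never establish divisibility. Instead you prove the equivalent statement that, modulo $p$, the tail of $(1-z)^{-(l+1)}$ beyond degree $b$ equals $z^N(1-z)^{-(l+1)}$. You do this coefficient by coefficient, using the periodicity Lemma~\ref{Lemm:BinomialsEqualModp} with shift $t=N$.

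Both arguments rest on Frobenius, since the periodicity lemma is itself proved from $(1+T)^t=(1+T^q)^a$. The paper's version explains structurally why the quotient is a polynomial and avoids any coefficient bookkeeping. Yours reuses a lemma the paper already proves, describes the discarded tail explicitly, and needs no degree-counting step.
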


\begin{proof}
	For $l=0$, the assertion follows from the classical identity
		\[
		(y-z)\sum_{i=0}^b y^{b-i}z^i
		=
		y^{b+1}-z^{b+1}.
		\]
		Assume henceforth that $l>0$.
	Set $N=b+l+1$ and $q=p^s$. Since $q\mid N$, we have
	\[
	1-T^N=(1-T^{N/q})^q
	\]
	in $K[T]$. Thus $(1-T)^{l+1}$ divides $1-T^N$,
	because $l+1\leq q$.
	The quotient
	\[
	A(T):=\frac{1-T^N}{(1-T)^{l+1}}
	\]
	is a polynomial of degree $b$.
	On the other hand, in $K[[T]]$,
	\[
	(1-T)^{-l-1}
	=\sum_{i\geq0}\binom{l+i}{l}T^i.
	\]
	Since $N>b$, comparison of the coefficients of degrees at most
	$b$ gives
	\[
	A(T)=\sum_{i=0}^b\binom{l+i}{l}T^i.
	\]
	Homogenizing the identity
	$A(T)(1-T)^{l+1}=1-T^N$ yields the result.
\end{proof}

\begin{definition} \label{def:Vn}
	For each integer $m\geq0$, set $V_m:=\k[x,y]_m$.
	We equip this $(m+1)$-dimensional vector space with the
	following $\GL_2$-action:
 \[\left(\begin{bmatrix}
\alpha & \beta \\ \gamma & \delta
\end{bmatrix},P(x,y)\right)\mapsto P(\alpha x+\gamma y,\beta x+\delta y).\]
Denoting by $V=V_1$ the standard representation of $\GL_2$, we have $V_m=S^m(V)$.
\end{definition}

\begin{remark}\label{rk: def of dual Vm}
Let $m \geq 0$. There exists a unique linear action of $\GL_2$ on $\k[z]_{\le m}$,
which will play an important role later in this article, such that the map
\[
V \longrightarrow \k[z]_{\le m}, \qquad 
(u,v) \longmapsto \sum_{i=0}^m u^i v^{m-i}\, z^i
\]
is $\GL_2$-equivariant, where $V = V_1$ denotes the standard representation of $\GL_2$.

With this action, we obtain an isomorphism of $\GL_2$-modules
\[
\k[z]_{\le m} \simeq S^m(V^*)^*.
\]
In particular, since $V \simeq V^*$ as $\SL_2$-modules, it follows that
\[
\k[z]_{\le m} \simeq S^m(V)^* = V_m^* \qquad\text{as $\SL_2$-modules.}
\] 
\end{remark}

\begin{remark}\label{ExplicitActionSL2km}
Let us compute explicitly the action of Remark~\ref{rk: def of dual Vm},
which corresponds to a group homomorphism
\[
\begin{array}{rcl}
\GL_2 & \longrightarrow & \GL_{m+1}\\
\begin{bsmallmatrix}a & b\\ c & d\end{bsmallmatrix}
& \longmapsto &
R=\begin{bsmallmatrix}
r_{00} & \cdots & r_{0m}\\
\vdots & \ddots & \vdots\\
r_{m0} & \cdots & r_{mm}
\end{bsmallmatrix}
\end{array}, \quad \text{ where } \quad  
\begin{bsmallmatrix}a & b\\ c & d\end{bsmallmatrix}\cdot z^j
=\sum_{i=0}^m r_{ij}z^i.
\]
To compute the matrix $R$, we use the equivariant polynomial map
\[
F_m\colon V\longrightarrow\k[z]_{\leq m},
\qquad
(u,v)\longmapsto\sum_{j=0}^m u^jv^{m-j}z^j
\]
from Remark~\ref{rk: def of dual Vm}.
Its $\GL_2$-equivariance gives
\[
\sum_{i=0}^m(au+bv)^i(cu+dv)^{m-i}z^i
=
\begin{bsmallmatrix}a & b\\ c & d\end{bsmallmatrix}
\cdot
\left(\sum_{j=0}^m u^jv^{m-j}z^j\right)\\[2mm]
=
\sum_{i,j=0}^m u^jv^{m-j}r_{ij}z^i.
\]
Hence, for each $i\in\{0,\ldots,m\}$,
\[
\sum_{j=0}^m r_{ij}u^jv^{m-j}
=(au+bv)^i(cu+dv)^{m-i}.
\]
Comparing the coefficients of $u^jv^{m-j}$ gives
\[
r_{ij}
=
\sum_{e=\max(0,i+j-m)}^{\min(i,j)}
\binom{i}{e}\binom{m-i}{j-e}
a^e b^{i-e}c^{j-e}d^{m+e-i-j}.
\]

In particular, for $1\leq m\leq4$, the corresponding matrices $R$ are
\[
\begin{bsmallmatrix}
d & c\\
b & a
\end{bsmallmatrix},
\qquad
\begin{bsmallmatrix}
d^2 & 2cd & c^2\\
bd & ad+bc & ac\\
b^2 & 2ab & a^2
\end{bsmallmatrix}, \qquad
\begin{bsmallmatrix}
d^3 & 3cd^2 & 3c^2d & c^3\\
bd^2 & 2bcd+ad^2 & bc^2+2acd & ac^2\\
b^2d & b^2c+2abd & 2abc+a^2d & a^2c\\
b^3 & 3ab^2 & 3a^2b & a^3
\end{bsmallmatrix},
\]
and
\[
\begin{bsmallmatrix}
d^4 & 4cd^3 & 6c^2d^2 & 4c^3d & c^4\\
bd^3 & 3bcd^2+ad^3 & 3bc^2d+3acd^2 & bc^3+3ac^2d & ac^3\\
b^2d^2 & 2b^2cd+2abd^2 & b^2c^2+4abcd+a^2d^2
& 2abc^2+2a^2cd & a^2c^2\\
b^3d & b^3c+3ab^2d & 3ab^2c+3a^2bd
& 3a^2bc+a^3d & a^3c\\
b^4 & 4ab^3 & 6a^2b^2 & 4a^3b & a^4
\end{bsmallmatrix}.
\]

For later use, let us also record the action of some particular
elements. For each $m$ and each $i\in\{0,\ldots,m\}$, we have
\[
\begin{bsmallmatrix}a & 0\\ 0 & d\end{bsmallmatrix}\!\cdot z^i
   =a^id^{m-i}z^i,
\qquad
\begin{bsmallmatrix}0 & -1\\ 1 & 0\end{bsmallmatrix}\!\cdot z^i
   =(-z)^{m-i},
\qquad
\begin{bsmallmatrix}1 & 1\\ 0 & 1\end{bsmallmatrix}\!\cdot z^i
   =\sum_{j=i}^m\binom{j}{i}z^j.
\]
\end{remark}

\begin{proposition}\label{prop: rep SL2 car p}
	For all integers $m,n\geq0$, the $\k$-vector space
	$\Hom^{\SL_2}(V_m,V_n)$ has dimension at most $1$.
	More precisely:
	\begin{enumerate}
		\item
		If $m=n$, then $\Hom^{\SL_2}(V_m,V_n)$ is
		one-dimensional and generated by the identity.
		
		\item\label{mnp}
		If $m=n+2l$ with $l>0$, and if
		\[
		n+l+1\equiv0\pmod{p^s},
		\qquad
		p^{s-1}\leq l<p^s,
		\]
		then $\Hom^{\SL_2}(V_m,V_n)$ is one-dimensional
		and generated by the linear map
		$\chi\colon V_m\longrightarrow V_n$ defined by
		\[
		\chi(x^iy^{m-i})=
		\begin{cases}
			\binom{i}{l}x^{i-l}y^{n-i+l},
			& l\leq i\leq n+l,\\
			0, & \text{otherwise}.
		\end{cases}
		\]
		
		\item
		In all other cases, $\Hom^{\SL_2}(V_m,V_n)=\{0\}$.
	\end{enumerate}
\end{proposition}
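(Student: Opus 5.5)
We work with weights and the unipotent subgroups. Let $\phi\colon V_m\to V_n$ be $\SL_2$-equivariant. Equivariance under the torus $t=\mathrm{diag}(\lambda,\lambda^{-1})$, which sends $x^iy^{m-i}$ to $\lambda^{2i-m}x^iy^{m-i}$, forces $\phi$ to map weight spaces to weight spaces. So $\phi(x^iy^{m-i})=c_i\,x^{j}y^{n-j}$ with $2j-n=2i-m$. In particular $\phi=0$ unless $m\equiv n\pmod 2$. Writing $m=n+2l$ with $l\in\Z$, we get $j=i-l$, and $c_i=0$ unless $l\le i\le n+l$.

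Next we impose equivariance under the upper and lower unipotent elements $u_\beta=\begin{bsmallmatrix}1&\beta\\0&1\end{bsmallmatrix}$ and $v_\gamma=\begin{bsmallmatrix}1&0\\\gamma&1\end{bsmallmatrix}$. These generate $\SL_2$ together with the torus. By Definition~\ref{def:Vn}, $u_\beta$ acts by $x^iy^{m-i}\mapsto x^i(\beta x+y)^{m-i}$. Expanding both sides of $\phi(u_\beta\cdot x^iy^{m-i})=u_\beta\cdot\phi(x^iy^{m-i})$ and comparing the coefficients of $\beta^k$ gives
\[
\binom{m-i}{k}c_{i+k}=\binom{n-i+l}{k}c_i\qquad(k\ge0),
\]
and symmetrically for $v_\gamma$ with $i$ replaced by $m-i$. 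Taking $k$ large shows, in every case, that the family $(c_i)$ is determined by a single value, so $\dim\le1$; this step needs care.

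For $l<0$, look at the index $i=l'$, where $l'$ is the smallest index with $c_{l'}\neq0$. Pushing down with $v_\gamma$ gives a nonzero coefficient at an index below the allowed range, which is a contradiction, so $\phi=0$. For $l=0$, taking $k=1$ gives $(m-i)c_{i+1}=(m-i)c_i$, which only links consecutive indices when $p\nmid m-i$. To get all of them we use the full family of relations, with $k$ a suitable power of $p$ chosen via Lucas's theorem; this yields $c_i=c_0$ for all $i$, so $\phi$ is a scalar.

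For $l>0$, the equations are easiest to read through the generating identity: $\phi$ is equivariant precisely when the coefficients satisfy $c_i=\binom{i}{l}c$ up to a global scalar. We verify this directly. Plug $c_i=\binom{i}{l}$ into the $u_\beta$ relation and use the identity $\binom{m-i}{k}\binom{i+k}{l}=\binom{n-i+l}{k}\binom{i}{l}$ mod $p$. This identity should follow from Lemma~\ref{lem:simplebin}\ref{sb2} and Lemma~\ref{Lemm:BinomialsEqualModp} under the congruence $n+l+1\equiv0\pmod{p^s}$. Alternatively, realise $\chi$ as contraction with the $l$-th divided-power derivative $\partial_x^{(l)}$ restricted appropriately, whose equivariance is cleaner. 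The $v_\gamma$ relation follows by the symmetry $x\leftrightarrow y$, using Lemma~\ref{lem:simplebin}\ref{sb2} again.

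Conversely, we show that a nonzero solution forces the congruence. The $u_\beta$ relations propagate $c_l$ (the first possibly nonzero coefficient) to all $c_i=\binom{i}{l}c_l$. The $v_\gamma$ relations, read from the top index $n+l$ downward, then force the indices $i\in\{n+l+1,\dots,n+2l\}$ to give zero. That is, they require $\binom{i}{l}\equiv0$ for $l$ consecutive values $i=n+l+1,\dots,n+2l$. By Lemma~\ref{Lemm:ConsecutiveZeroModp} this is equivalent to $n+l+1\equiv0\pmod{p^s}$ with $p^{s-1}\le l<p^s$. Outside this case $\phi=0$.

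The main obstacle is making the propagation rigorous when many binomial coefficients vanish mod $p$. The resolution is to use all $k$, not just $k=1$, so that consecutive-index links are not the only tool.
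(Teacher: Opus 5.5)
The gap is in the converse for $l>0$, which is the real content of the statement. You assert that the $u_\beta$ relations propagate $c_l$ upward to $c_i=\binom{i}{l}c_l$, and only afterwards extract the congruence. From the source index $l$, however, these relations read $\binom{n+l}{k}c_{l+k}=\binom{n}{k}c_l$.

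This causes two problems. First, they give nothing when $\binom{n+l}{k}=0$ in $\k$. Second, when they can be solved they give $c_{l+k}=\binom{n}{k}\binom{n+l}{k}^{-1}c_l$. That agrees with $\binom{l+k}{l}c_l$ only once $n+l+1\equiv0\pmod{p^s}$ is already known. For instance, with $p=5$ and $n=l=1$, the relation with $k=1$ is $2c_2=c_1$, not $c_2=2c_1$.

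So this step does not go through as described, and ``use all $k$'' does not by itself repair it. In your next sentence the roles of $u_\beta$ and $v_\gamma$ are also swapped: propagation downward from $c_{n+l}$ is what the $u_\beta$ relations do.

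The missing idea is to compare an extreme coefficient, where the binomial factor is $1$. Take the lower unipotent $v_\gamma$ (the paper's $\tau$). The coefficient of $y^n$ in $v_\gamma\cdot\phi(x^iy^{m-i})=c_i(x+\gamma y)^{i-l}y^{n-i+l}$ is $c_i\gamma^{i-l}$, while in $\phi(v_\gamma\cdot x^iy^{m-i})$ it is $\binom{i}{l}\gamma^{i-l}c_l$. Hence $c_i=\binom{i}{l}c_l$ for every $0\le i\le m$, with $c_i=0$ outside $l\le i\le n+l$. This one relation:
\begin{itemize}
\item gives $\dim\le1$;
\item settles $l=0$ at once, with no Lucas chains;
\item for $l>0$, combined with the weight-forced vanishing $c_i=0$ for $n+l<i\le m$, gives $\binom{i}{l}=0$ in $\k$ at $l$ consecutive indices, so Lemma~\ref{Lemm:ConsecutiveZeroModp} yields the congruence.
\end{itemize}
This is exactly the paper's argument. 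The mirror version also works: take $k=n+l-i$ in your $u_\beta$ relation to get $c_i=\binom{m-i}{l}c_{n+l}$, then use the vanishing for $i<l$.

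Your existence half is sound once completed.
\begin{itemize}
\item First use Lemma~\ref{Lemm:ConsecutiveZeroModp} to see that $\binom{i}{l}=0$ in $\k$ for $n+l<i\le m$, so that $c_i=\binom{i}{l}$ describes $\chi$ on all indices.
\item The $v_\gamma$ check is then the exact identity $\binom{i}{k}\binom{k}{l}=\binom{i}{l}\binom{i-l}{k-l}$.
\item For $u_\beta$, apply Lemma~\ref{lem:simplebin}\ref{sb2} to $\binom{i}{l}$ and to $\binom{i+k}{l}$. This reduces your congruence to the exact identity $\binom{M}{k}\binom{M-k}{l}=\binom{M-l}{k}\binom{M}{l}$ with $M=m-i$.
\end{itemize}
The paper instead checks $\tau$ and the Weyl element $\sigma$, which needs Lemma~\ref{lem:simplebin}\ref{sb2} only once. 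Your divided-power alternative is not cleaner: $y^{-l}\partial_x^{(l)}$ commutes with $v_\gamma$ for free, but its $u_\beta$-equivariance is the same congruence check.
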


\begin{proof}
	If $m=n$, the identity is a nonzero equivariant map.
	
	Suppose that the numerical conditions in~\ref{mnp} hold.
	Lemma~\ref{Lemm:ConsecutiveZeroModp} gives
	\[
	\binom{i}{l}=0\quad\text{in }\k
	\qquad\text{for }n+l+1\leq i\leq m.
	\]
	Moreover, $\binom{i}{l}=0$ for $0\leq i<l$.
	The map $\chi$ is nonzero, since
	$\chi(x^ly^{m-l})=y^n$.
	
	We check its equivariance on the elements
	\[
	d_\alpha=
	\begin{bmatrix}\alpha&0\\0&\alpha^{-1}\end{bmatrix}
	\quad(\alpha\in\k^\times),
	\qquad
	\sigma=\begin{bmatrix}0&-1\\1&0\end{bmatrix},
	\qquad
	\tau=\begin{bmatrix}1&0\\1&1\end{bmatrix}.
	\]
	These generate $\SL_2(\k)$: diagonal conjugates of $\tau$
	give all lower unipotent elements, since $\k$ is algebraically
	closed, and conjugation by $\sigma$ gives all upper
	unipotent elements.
	
	The map $\chi$ commutes with $d_\alpha$ because it preserves
	torus weights:
	\[
	2(i-l)-n=2i-m.
	\]
	For $l\leq i\leq n+l$, we have
	\[
	\begin{aligned}
		\chi\bigl(\sigma(x^iy^{m-i})\bigr)
		&=(-1)^{m-i}\binom{m-i}{l}x^{m-i-l}y^{i-l},\\
		\sigma\bigl(\chi(x^iy^{m-i})\bigr)
		&=(-1)^{m-i-l}\binom{i}{l}x^{m-i-l}y^{i-l}.
	\end{aligned}
	\]
	These expressions agree by
	Lemma~\ref{lem:simplebin}\,\ref{sb2}, since
	$m+1-l=n+l+1\equiv0\pmod{p^s}$.
	For the remaining indices, both expressions vanish.
	Thus $\chi$ commutes with $\sigma$.
	
	For $\tau$, we compute
	\[
	\chi\bigl(\tau(x^iy^{m-i})\bigr)
	=
	\sum_{j=l}^{\min(i,n+l)}
	\binom{i}{j}\binom{j}{l}
	x^{j-l}y^{n-j+l}.
	\]
	If $i<l$, this sum is empty and both sides of the desired
	equivariance identity vanish.
	For $i\geq l$, we use the integer identity
	\[
	\binom{i}{j}\binom{j}{l}
	=
	\binom{i}{l}\binom{i-l}{j-l}
	\qquad(l\leq j\leq i).
	\]
	If $i>n+l$, then $\binom{i}{l}=0$ in $\k$, so both sides
	again vanish.
	If $l\leq i\leq n+l$, the binomial theorem gives
	\[
	\chi\bigl(\tau(x^iy^{m-i})\bigr)
	=
	\binom{i}{l}(x+y)^{i-l}y^{n-i+l}
	=
	\tau\bigl(\chi(x^iy^{m-i})\bigr).
	\]
	This proves that $\chi$ is $\SL_2$-equivariant.
	
	It remains to classify all nonzero equivariant maps.
	Let $\varphi\colon V_m\longrightarrow V_n$ be such a map.
	The weights of $x^iy^{m-i}$ and $x^jy^{n-j}$ are
	$2i-m$ and $2j-n$, respectively.
	Since $\varphi$ preserves weights, we must have
	$m\equiv n\pmod2$. Set $l=(m-n)/2$.
	For each $0\leq i\leq m$, there is a scalar $\lambda_i$
	such that
	\[
	\varphi(x^iy^{m-i})
	=
	\lambda_i x^{i-l}y^{m-i-l}
	\]
	whenever the exponents are nonnegative.
	If either exponent is negative, the image is zero,
	and we set $\lambda_i=0$.
	
	Suppose first that $l<0$.
	Every monomial occurring in
	$\varphi\bigl(\tau(x^iy^{m-i})\bigr)$
	has positive $x$-degree, whereas the coefficient of $y^n$
	in $\tau\bigl(\varphi(x^iy^{m-i})\bigr)$ is $\lambda_i$.
	Equivariance therefore gives $\lambda_i=0$ for every $i$,
	a contradiction. Hence $l\geq0$.
	
	Comparing the coefficients of $y^n$ in the same identity
	now gives
	\[
	\lambda_i=\lambda_l\binom{i}{l}
	\qquad(0\leq i\leq m).
	\]
	In particular, $\lambda_l\neq0$.
	If $l=0$, then $m=n$ and $\varphi$ is a scalar multiple
	of the identity.
	
	If $l>0$, then $\lambda_i=0$ for $n+l+1\leq i\leq m$,
	so
	\[
	\binom{i}{l}=0\quad\text{in }\k
	\qquad(n+l+1\leq i\leq m).
	\]
	Let $s\geq1$ be the integer such that
	$p^{s-1}\leq l<p^s$. 
	Lemma~\ref{Lemm:ConsecutiveZeroModp} implies
	$n+l+1\equiv0\pmod{p^s}$.
	Thus the conditions in~\ref{mnp} hold, and
	$\varphi=\lambda_l\chi$.
\end{proof}

\begin{corollary}\label{cor:case n=0}
	For each integer $m\geq0$, we have
	\[
	\dim (V_m^*)^{\SL_2}
	=
	\begin{cases}
		1 & \text{if }m=2(p^s-1)\text{ for some integer }s\geq0,\\
		0 & \text{otherwise}.
	\end{cases}
	\]
	If $m=2l$ with $l=p^s-1$, the invariant line
	$(V_m^*)^{\SL_2}$ is generated by 
	\[
	\varepsilon_l\colon V_{2l}\rightarrow\k,
	\qquad
	\sum_{i=0}^{2l}a_i x^iy^{2l-i}\mapsto a_l.
	\]
	For the action on $\k[z]_{\leq m}$ defined in
	Remark~$\ref{rk: def of dual Vm}$, we have
	\[
	\bigl(\k[z]_{\leq m}\bigr)^{\SL_2}
	=
	\begin{cases}
		\k\cdot z^{p^s-1}
		& \text{if }m=2(p^s-1)\text{ for some integer }s\geq0,\\
		\{0\} & \text{otherwise}.
	\end{cases}
	\]
\end{corollary}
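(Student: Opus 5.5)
The plan is to specialize Proposition~\ref{prop: rep SL2 car p} to $n=0$, using $V_0=\k$ with trivial action, so that $(V_m^*)^{\SL_2}=\Hom^{\SL_2}(V_m,V_0)$. If $m=0$ we get dimension one, generated by the identity, i.e.\ $\varepsilon_0$; this is the case $s=0$, since $2(p^0-1)=0$. If $m>0$, the proposition shows that a nonzero invariant exists only when $m=2l$ with $l>0$, $l+1\equiv0\pmod{p^s}$ and $p^{s-1}\le l<p^s$. These conditions force $l+1=p^s$: indeed $l+1$ is a positive multiple of $p^s$ and $l+1\le p^s$. Conversely, if $l=p^s-1$ with $s\ge1$, then $p^{s-1}\le p^s-1<p^s$, so the hypotheses hold. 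This gives the dimension count.

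Next, I would identify the generator. The map $\chi$ of the proposition with $n=0$ is nonzero only on $x^iy^{m-i}$ with $l\le i\le l$, i.e.\ $i=l$, where it takes the value $\binom{l}{l}=1$. Hence $\chi=\varepsilon_l$.

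For the statement about $\k[z]_{\le m}$, I would use Remark~\ref{rk: def of dual Vm}, which gives $\k[z]_{\le m}\simeq V_m^*$ as $\SL_2$-modules. Thus the invariants form a line exactly when $m=2(p^s-1)$. To locate this line, I would avoid tracing the abstract isomorphism. Instead I would argue directly, using the formulas at the end of Remark~\ref{ExplicitActionSL2km}. The diagonal torus $\mathrm{diag}(a,a^{-1})$ acts on $z^i$ by $a^{2i-m}$. So an invariant must have weight zero and therefore lies in $\k z^{m/2}=\k z^{l}$. Since the invariant space is nonzero when $m=2l$, it must equal $\k z^{p^s-1}$. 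When $m$ is not of this form, the invariants vanish by the isomorphism.

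The only mildly delicate point is the torus-weight argument, which requires checking that a weight-zero vector is unique up to scalar. This holds because distinct $i$ give distinct weights $2i-m$ as characters of the torus: $a^{2i-m}=a^{2j-m}$ for all $a$ forces $i=j$, even in characteristic $p$.
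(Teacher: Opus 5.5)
Your proposal is correct and follows essentially the same route as the paper. Like the paper, you specialize Proposition~\ref{prop: rep SL2 car p} to $n=0$, deduce $l=p^s-1$ from $0<l+1\le p^s$, read off the generator $\varepsilon_l$ from $\binom{l}{l}=1$, and locate the invariant line in $\k[z]_{\le m}$ as the zero-weight space $\k\, z^{l}$ of the diagonal torus, using the isomorphism of Remark~\ref{rk: def of dual Vm}.
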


\begin{proof}
	We identify $(V_m^*)^{\SL_2}$ with
	$\Hom^{\SL_2}(V_m,V_0)$.
	For $m=0$, the assertion is immediate.
	For $m>0$, Proposition~\ref{prop: rep SL2 car p}
	shows that this space is nonzero precisely when
	$m=2l$ and
	\[
	l+1\equiv0\pmod{p^s},
	\qquad p^{s-1}\leq l<p^s.
	\]
	Since $0<l+1\leq p^s$, this is equivalent to
	$l=p^s-1$.
	The generator given by the proposition is then
	$\varepsilon_l$, because its only nonzero value
	on the monomial basis occurs at $i=l$, where
	$\binom{l}{l}=1$.
	
	Finally, Remark~\ref{rk: def of dual Vm} gives an
	$\SL_2$-equivariant isomorphism
	$\k[z]_{\leq m}\simeq V_m^*$.
	When $m=2l$, the diagonal torus acts on $z^i$
	with weight $2i-2l$, so its zero-weight space is
	$\k\cdot z^l$.
	Whenever the $\SL_2$-invariant subspace is nonzero,
	it is therefore equal to this line.
\end{proof}

\subsection{Hirzebruch surfaces and their automorphism groups}
This section recalls some basic facts about Hirzebruch surfaces and their automorphism groups. We also answer a question raised by Laurent in his thesis (see \cite[Remarque~3.5.10]{Lau}) concerning the structure of connected subgroups of $\Aut(\F_n)$ acting transitively on the base $\P^1$ of the fibration $\F_n\to\P^1$; see Proposition~\ref{prop: UniformCounterexampleNew} and Lemma~\ref{lem:H-maps-onto-PGL2-odd-a}.

\begin{definition}
Let $n\in \Z$. The \emph{$n$-th Hirzebruch surface} $\F_n$ is defined to be the quotient of $(\A^2\setminus \{0\})^2$ by the action of $\G_m^2$ given by
\[\begin{array}{ccc}
(\G_m)^2 \times (\A^2\setminus \{0\})^2 & \to & (\A^2\setminus \{0\})^2\\
((\mu,\rho), (y_0,y_1,z_0,z_1))&\mapsto& (\mu\rho^{-n} y_0,\mu y_1,\rho z_0,\rho z_1)\end{array}\]
The class of $(y_0,y_1,z_0,z_1)$ will be written $[y_0:y_1;z_0:z_1]$. The projection 
\[\tau_n\colon\F_n\to\P^1, \ \ [y_0:y_1;z_0:z_1]\mapsto [z_0:z_1]\]
identifies $\F_n$ with $\P(\OP(n) \oplus \OP)$ as a $\P^1$-bundle over $\P^1$.

The disjoint sections $\s{-n},\s{n}\subseteq \F_n$ given by $y_0=0$ and $y_1=0$ have self-intersection $-n$ and $n$ respectively. The fibers $f\subseteq \F_n$ given by $z_0=0$ and $z_1=0$ are linearly equivalent and of self-intersection $0$. Furthermore we have $\Pic(\F_n)=\Z f\bigoplus \Z \s{-n}= \Z f\bigoplus \Z \s{n}$ since $\s{n}\sim\s{-n}+nf$.

As the surface $\F_n$ is naturally isomorphic to $\F_{-n}$ via $[y_0:y_1;z_0:z_1]\mapsto [y_1:y_0;z_0:z_1]$, we will most of the time assume that $n \ge 0$.
\end{definition}

We now recall the description of the automorphism groups of Hirzebruch surfaces.
\begin{itemize}
\item 
If $n=0$, then $\F_n\simeq \F_0$ and the natural action of $(\GL_2)^2$ on $(\A^2\setminus \{0\})^2$ yields a surjective group homomorphism $(\GL_2)^2\to \Autz(\F_0)\simeq (\PGL_2)^2$. In coordinates, this corresponds to the action given by
\[ \left( \left( \begin{bmatrix}
\alpha' & \beta' \\ \gamma' & \delta'
\end{bmatrix}, \begin{bmatrix}
\alpha & \beta \\ \gamma & \delta
\end{bmatrix}\right),[y_0:y_1;z_0:z_1] \right) \mapsto [\alpha' y_0+\beta' y_1:\gamma' y_0+\delta' y_1;\alpha z_0+\beta z_1:\gamma z_0+\delta z_1].\]
The action of $\Autz(\F_0)$ on $\F_0$ is then homogeneous (one orbit). Moreover, $\Aut(\F_0)=\Autz(\F_0)\rtimes \langle \iota\rangle$, with $\iota\colon[y_0:y_1;z_0:z_1]\mapsto [z_0:z_1;y_0:y_1]$. 
\item 
If $n\ge 1$, then the curve $\s{-n}$ given by $y_0=0$ is the unique section of negative self-intersection of $\F_n\to\P^1$ and is thus invariant. Let $\k[z_0,z_1]_{n}\subseteq \k[z_0,z_1]$ be the vector space of homogeneous polynomials of degree $n$. There is an action of $ \k[z_0,z_1]_{n}\rtimes \GL_2$ on $\F_n$ given by
\[ 
\left( \left( P, \begin{bmatrix}
\alpha & \beta \\ \gamma & \delta
\end{bmatrix} \right),
[y_0:y_1;z_0:z_1] \right) \mapsto [y_0:y_1+y_0P(\alpha z_0+\beta z_1,\gamma z_0+\delta z_1);\alpha z_0+\beta z_1:\gamma z_0+\delta z_1],\]
which yields an exact sequence of group schemes
\[1\rightarrow \mu_n \to \k[z_0,z_1]_{n}\rtimes \GL_2\rightarrow \Autz(\F_n)\rightarrow 1,\]
where $\mu_n\subseteq\GL_2$ is the scalar subgroup scheme
defined, for each $\k$-algebra $R$, by
\[
\mu_n(R)=\{\lambda I_2\mid \lambda\in R^\times,\ \lambda^n=1\}.
\]
Moreover, $\Aut(\F_n)=\Autz(\F_n)$ acts on $\F_n$ with two orbits, namely $\s{-n}$ and its complement.
\end{itemize}

\smallskip

It follows from the description of $\Autz(\F_n)$ above that, for each $n\ge 0$, the morphism $\tau_n\colon \F_n\to \P^1$ yields a surjective group homomorphism
\[\Autz(\F_n)\twoheadrightarrow\Aut(\P^1)\simeq \PGL_2.\]
In particular, $\Autz(\F_n)$ acts transitively on the set of fibers of $\tau_n\colon \F_n\to \P^1$.

\begin{remark}\label{StdActionGL2}
In \( \Aut(\F_n) \) for \( n \geq 1 \), the linear \( \GL_2 \)-action on \( \k[z_0, z_1]_n \) is given by
\[
\begin{array}{rcl}
\GL_2 \times \k[z_0, z_1]_n & \longrightarrow & \k[z_0, z_1]_n \\
\left(\begin{bmatrix}
\alpha & \beta \\
\gamma & \delta
\end{bmatrix}^{-1},\, P\right) & \longmapsto & P(\alpha z_0 + \beta z_1,\ \gamma z_0 + \delta z_1).
\end{array}
\]
Note that, for this $\GL_2$-action, we have
\[
\k[z_0, z_1]_n \simeq S^n(V^*) \text{\ \ \ as\ } \text{$\GL_2$-modules,}
\]
 and hence
\[
\k[z_0, z_1]_n \simeq V_n \text{\ \ \ as\ } \text{$\SL_2$-modules}
\]
(see Definition~\ref{def:Vn} for the definition of \( V_n \)). 
An explicit isomorphism of $\SL_2$-modules is given by
\[
\begin{array}{rcl}
\k[z_0, z_1]_n & \longrightarrow & V_n \\
P(z_0, z_1) & \longmapsto & P(-y, x),
\end{array}
\]
which follows from the fact that 
$\begin{bsmallmatrix}
 0 & -1 \\ 1 & 0 \end{bsmallmatrix} \in \SL_2$ conjugates every matrix in $\SL_2$ to the inverse of its transpose.

\textbf{This $\GL_2$-action on \( \k[z_0, z_1]_n \) should not be confused with the \( \GL_2 \)-action on \( \k[z]_{\leq n} \) introduced in Remark \ref{rk: def of dual Vm}.}
\end{remark}

\begin{proposition}[{\cite[Proposition~3.5.7 and Remark~3.5.10(2)]{Lau}}]
	\label{prop:trichotomy for algebraic subgroups of AUt(Fa)}\ \\
	Let $a\geq1$, and assume that either $a\geq2$ or $p\geq 3$.
	Let $H\subseteq\Aut(\F_a)$ be a connected algebraic subgroup.
	Then exactly one of the following holds:
	\begin{enumerate}
		\item
		$H$ is solvable and acts on $\F_a$ with a fixed point.
		\item
		After conjugation in $\Aut(\F_a)$, the subgroup $H$ is equal
		to the image of $\SL_2$ in $\GL_2/\mu_a$ or to
		$\GL_2/\mu_a$. Its three orbits are then the sections
		$s_a$, $s_{-a}$ and their open complement.
		\item
		$H$ acts on $\F_a$ with two orbits:
		the section $s_{-a}$ and its open complement.
	\end{enumerate}
\end{proposition}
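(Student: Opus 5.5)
We work with the image $\bar H$ of $H$ in $\Aut(\P^1)\simeq\PGL_2$ under the homomorphism induced by $\tau_a$. It is a connected algebraic subgroup of $\PGL_2$, so it is trivial, contained in a Borel subgroup (solvable, fixing a point of $\P^1$), or all of $\PGL_2$. The same holds for the image $\bar H'$ of $H$ in $\GL_2/\mu_a$ through the exact sequence
\[1\to \k[z_0,z_1]_a\to \Aut(\F_a)\to \GL_2/\mu_a\to 1.\]
Since $H\cap \k[z_0,z_1]_a$ is unipotent, $H$ is solvable if and only if $\bar H$ is.

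\emph{Solvable case.} Suppose $\bar H$ is solvable. By Borel's fixed point theorem, $H$ fixes a point of the projective surface $\F_a$, which gives~(1). Alternatively, $H$ fixes a point $q\in\P^1$, hence preserves the fiber $\tau_a^{-1}(q)\simeq\P^1$, and Borel applied to this fiber yields the fixed point. In cases (2) and (3) the group $H$ maps onto $\PGL_2$, so it is not solvable and has no fixed point. This makes the three cases mutually exclusive.

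\emph{Non-solvable case.} Suppose $\bar H=\PGL_2$, so $H$ acts transitively on the fibers of $\tau_a$. The section $s_{-a}$ is $\Aut(\F_a)$-invariant. The orbits of $H$ in $\F_a\setminus s_{-a}$ are therefore unions of subsets of fibers, and their structure is governed by $U:=H\cap\k[z_0,z_1]_a$, a closed subgroup of the unipotent radical that is normalized by $H$. Let $H_0=H\cap\Aut(\F_a)_{q}$ be the stabilizer of the fiber over a point $q\in\P^1$. It acts on $\tau_a^{-1}(q)\setminus s_{-a}\simeq\A^1$ through affine transformations $t\mapsto \lambda t+\mu$, with translations coming from $U$ and scalings from the torus part. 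We distinguish whether $U$ acts with translations or not.

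If $U$ acts without translations on that fiber, we want to show that $U$ is trivial. The reason is that a polynomial in $U$ vanishing at $q$ vanishes at every point, by transitivity on $\P^1$. In that case $H$ is a reductive group mapping isomorphically, up to a finite kernel, onto its image in $\GL_2/\mu_a$. We then conjugate $H$ by an element of $\k[z_0,z_1]_a$ into a Levi factor $\GL_2/\mu_a$. This conjugation is the main obstacle, and it is exactly Laurent's statement. The plan is to use that $H^1$-type vanishing: a reductive subgroup of $V\rtimes L$ is conjugate into $L$. In positive characteristic this can fail in general. Here, however, the subgroup $\bar H'$ contains $\Im(\SL_2)$, and we need the cocycle $\SL_2\to\k[z_0,z_1]_a\simeq V_a$ to be a coboundary. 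We would reduce to showing that $H^1(\SL_2,V_a)=0$ under the hypothesis $a\geq2$ or $p\geq3$, using the weights of a maximal torus. If this vanishing is not available in general, we would argue instead via the $\SL_2$-fixed section, namely $s_a$ after conjugation. Once $H$ is conjugate into $\GL_2/\mu_a$, it is either $\Im(\SL_2)$ or $\GL_2/\mu_a$, and its orbits are $s_a$, $s_{-a}$ and their complement. This gives~(2).

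If $U$ translates, then every fiber minus $s_{-a}$ is a single orbit under $H_0$. Combined with transitivity on $\P^1$, this shows that $\F_a\setminus s_{-a}$ is one $H$-orbit, which gives~(3).

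Finally, for $a=1$ and $p=2$ the required vanishing breaks down. This explains why that case is excluded from the statement.
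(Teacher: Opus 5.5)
\textbf{The gap is in the reductive branch.} Your solvable case and the mutual exclusivity of the three cases are fine. The branch in which $U=H\cap\k[z_0,z_1]_a$ translates along the fibres also works. You only need to check that it is $U^\circ$ that translates: a finite étale $U$ would be central in the connected group $H$, hence contained in $\k[z_0,z_1]_a^{S}=0$.

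In the other branch you only know $U(\k)=0$. You note that $\phi\colon H\to\bar H'$ is an isomorphism ``up to a finite kernel'', and then you conjugate $H$ into $\GL_2/\mu_a$ by showing that a cocycle is a coboundary. Write $U_a=\k[z_0,z_1]_a$. A cocycle $\bar H'\to U_a$ exists only if $H$ is a complement of $U_a$, i.e.\ only if the scheme-theoretic kernel $K=H\cap U_a$ of $\phi$ is trivial.

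If $K\neq1$ (necessarily infinitesimal), your desired conclusion is false. Since $U_a$ is normal and commutative, $gHg^{-1}\cap U_a=gKg^{-1}\neq1$ for every $g\in\Aut(\F_a)$, whereas every subgroup of $\GL_2/\mu_a$ meets $U_a$ trivially. This is exactly what happens for $(a,p)=(1,2)$. The group $H\simeq\PGL_2$ of Lemma~\ref{lem: counter example for p=2} satisfies $H(\k)\cap U_1(\k)=\{0\}$, but $H\cap U_1\simeq\alpha_2^2$ (the kernel of the very special isogeny), and $H$ contains no conjugate of $\SL_2$.

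\textbf{Your explanation of the excluded case is wrong, and the hypothesis is never used.} The module $\k[z_0,z_1]_a\simeq S^a(V)$ is the induced module $H^0(a)$. By Kempf vanishing (see \cite[Part~II, Ch.~4]{Jan03}), $H^1(\SL_2,\k[z_0,z_1]_a)=0$ for every $a$ and every $p$, including $a=1$, $p=2$. So your argument never uses the hypothesis ``$a\geq2$ or $p\geq3$''. But the statement fails for $(a,p)=(1,2)$ (see the Remark following it), so the argument cannot be complete as written.

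\textbf{What is missing is a proof that $K=1$ under the hypothesis.} Here is one way to get it.
\begin{enumerate}
\item A maximal torus of $H$ meets $K\subseteq U_a$ trivially, since a group of multiplicative type inside a vector group is trivial. Hence $\operatorname{Lie}(K)=\ker(d\phi)$ lies in the root spaces $\mathfrak g_{\pm\alpha}$ of the reductive group $H$. Use Lemma~\ref{lem:lifttorus} to measure weights with the diagonal torus $T$ of $S$.
\item Since $K$ is normal in $H$, $\operatorname{Lie}(K)$ is an $S$-submodule of $\k[z_0,z_1]_a$. Every nonzero such submodule contains $z_0^a$ and $z_1^a$, of $T$-weights $\pm a$; to see this, apply a unipotent element to a monomial. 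Hence $\{\pm a\}=\{\pm\alpha\}$ and $\mathfrak g_{\pm\alpha}\subseteq\ker(d\phi)$.
\item The restriction $\phi|_{U_\alpha}$ onto a root group of $\bar H'$ (of weight $\pm2$) has the form $x\mapsto cx^q$, with $q$ a power of $p$. The vanishing $d\phi|_{\mathfrak g_\alpha}=0$ forces $q>1$.
\item Comparing weights gives $aq=2$, hence $a=1$ and $q=p=2$.
\end{enumerate}
Only after this step does your cocycle argument apply. Even then, the vanishing of $H^1(\bar H',\k[z_0,z_1]_a)$ must be proved; at present you leave it as a conditional plan (``if this vanishing is not available\dots'').
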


\begin{remark}
	The exclusion of $(a,p)=(1,2)$ is necessary.
	Indeed, in characteristic $2$, the stabilizer
	$H=\Aut(\P^2,\Gamma)\simeq\PGL_2$ of the smooth conic
	$\Gamma=\{xz=y^2\}$ fixes its nucleus $q=[0:1:0]\notin\Gamma$.
	Its action therefore lifts to $\F_1=\mathrm{Bl}_q(\P^2)$, where it has
	three orbits: the exceptional section $s_{-1}$, the strict
	transform $\widetilde\Gamma$, and their open complement.
	Since $\widetilde\Gamma$ is a bisection of self-intersection $4$,
	this subgroup is not conjugate to the standard $\SL_2$ or $\GL_2$.
	Thus none of the three cases above holds.
	\end{remark}

In \cite[Remarque~3.5.10]{Lau}, Laurent asked whether every connected
algebraic subgroup of $\Aut(\F_a)$ acting on $\F_a$ with two orbits is
conjugate to a semidirect product of the form
\[
V\rtimes (\GL_2/\mu_a)
\qquad\text{or}\qquad
V\rtimes S,
\]
where
$S=\Im(\SL_2\to\GL_2/\mu_a)$
and $V\subseteq \k[x,y]_a$ is a $\GL_2$-submodule.

Although we will not use the following result later in the article, we
show that the answer to this question is negative in every positive
characteristic. More precisely, for $a=2p$ we construct a connected
subgroup $H\subseteq\Aut(\F_a)$ whose projection onto $S\simeq\PGL_2$
is a non-split extension (Proposition~\ref{prop: UniformCounterexampleNew}). 
In contrast, when $p\geq3$ and $a$ is odd,
every connected subgroup of $\Aut(\F_a)$ surjecting onto $\PGL_2$
contains a conjugate of the standard subgroup $S$ (Lemma~\ref{lem:H-maps-onto-PGL2-odd-a}). 
Both assumptions are essential: in characteristic $p=2$, an additional counterexample with
$a=1$ will be given in Lemma~\ref{lem: counter example for p=2}. 
We first record two preliminary lemmas. The torus-lifting
lemma will be used in
Lemma~\ref{lem:H-maps-onto-PGL2-odd-a} and
Corollary~\ref{cor:H-maps-onto-PGL2-even-a}.

\begin{lemma}\label{SL2OverPGL2}
	Let $a\geq 1$ be an integer, and let
	$H\subseteq \GL_2/\mu_a$ be a connected algebraic subgroup.
	If the standard projection
	$q\colon \GL_2/\mu_a\to\PGL_2$ restricts to a surjective
	homomorphism on $H$, then either $H=\GL_2/\mu_a$ or $H$ is
	the image of the natural homomorphism
	$\SL_2\to\GL_2/\mu_a$.
\end{lemma}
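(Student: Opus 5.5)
The plan is to study the kernel of $q|_H$ and then compare $H$ with the derived subgroup of $\GL_2/\mu_a$. First I would record the structure of $G:=\GL_2/\mu_a$. The kernel of $q\colon G\to\PGL_2$ is the image $Z$ of the scalar torus $\G_m\subseteq\GL_2$, so $Z\simeq\G_m/\mu_a\simeq\G_m$ is a one-dimensional central torus. Let $S=\Im(\SL_2\to G)$. Then $S$ is the derived subgroup of $G$, because $[\GL_2,\GL_2]=\SL_2$ and images of derived subgroups are derived subgroups. Moreover $G=Z\cdot S$, and $q|_S\colon S\to\PGL_2$ is surjective, since $\SL_2\to\PGL_2$ is surjective.

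Next I would consider $H\cap Z$, or more precisely the reduced identity component of $\Ker(q|_H)$. Since $q|_H$ is surjective, we have $\dim H\in\{3,4\}$.

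In the case $\dim H=4=\dim G$, connectedness of $G$ (a quotient of the connected group $\GL_2$) gives $H=G$.

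In the case $\dim H=3$, I would argue that $H\supseteq S$ and hence $H=S$, since both are connected of dimension $3$. The natural route is through derived subgroups. Since $q(H)=\PGL_2$ is perfect, we get $q([H,H])=[\PGL_2,\PGL_2]=\PGL_2$. Also $[H,H]\subseteq[G,G]=S$. Here $[H,H]$ is a connected algebraic subgroup of $S$ (this holds for smooth connected groups over a perfect field). Its image $q([H,H])$ is $\PGL_2$, so $\dim [H,H]\geq 3=\dim S$. Because $S$ is connected, this forces $[H,H]=S$, and therefore $S\subseteq H$. By dimension and connectedness, $H=S$.

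In fact this argument does not need the case split: for any $H$ as in the statement it shows $S\subseteq H\subseteq G$. Since $G/S\simeq \G_m$ via a determinant-type character, the image of $H$ in $G/S$ is a connected subgroup of $\G_m$, hence either trivial or all of $\G_m$. These two possibilities give $H=S$ and $H=G$ respectively.

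The main point to verify carefully is the scheme-theoretic bookkeeping, in two places.
\begin{itemize}
\item The claim $[G,G]=S$ and $G/S\simeq\G_m$ must hold with reduced structures. Here $S$ is smooth as the image of $\SL_2$ over a perfect field, as recalled in the Notation section. The quotient $G/S$ is a one-dimensional connected commutative quotient of $G/[G,G]$, and it is dominated by $\GL_2/\SL_2\simeq\G_m$, hence is a torus $\G_m$.
\item The commutator subgroup of a smooth connected algebraic group must be a smooth connected closed subgroup, and it must be compatible with images. This is standard.
\end{itemize}
No characteristic assumption enters, which is consistent with the statement holding for all $a\geq1$ and all $p$.
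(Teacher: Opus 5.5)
Your proof is correct and is essentially the paper's argument: both deduce $q([H,H])=\PGL_2$ and $[H,H]\subseteq[G,G]=S$, so that $[H,H]=S\subseteq H$ by dimension. The paper then concludes $H\in\{S,G\}$ from $\dim S=3$, $\dim G=4$ and connectedness, while you use the equivalent observation $G/S\simeq\G_m$; the initial case split on $\dim H$ is unnecessary, as you yourself note.
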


\begin{proof}
	Set $G=\GL_2/\mu_a$, and let $S\subseteq G$ be the image
	of $\SL_2$. Since $q(H)=\PGL_2$, we have
	\[
	q([H,H])=[\PGL_2,\PGL_2]=\PGL_2,
	\qquad
	[H,H]\subseteq[G,G]=S.
	\]
	Thus $[H,H]$ is a closed subgroup of the connected
	three-dimensional group $S$ and has dimension at least
	$\dim\PGL_2=3$. Consequently, $[H,H]=S$, so $S\subseteq H$.
	Since $\dim S=3$ and $\dim G=4$, connectedness of $H$
	and $G$ now implies that either $H=S$ or $H=G$.
\end{proof}

\begin{lemma}\label{lem:lifttorus}
Let $a\geq 1$ be an integer. Write
\[
\Aut(\F_a)\simeq U_a\rtimes(\GL_2/\mu_a),
\qquad
U_a:=\k[z_0,z_1]_a,
\]
and let $H\subseteq\Aut(\F_a)$ be a connected algebraic subgroup whose
image in $\GL_2/\mu_a$ contains the diagonal torus of
$S=\operatorname{Im}\bigl(\SL_2\to\GL_2/\mu_a\bigr)$.
Then there exists $u\in U_a$ such that $uHu^{-1}$ contains the diagonal
torus of $S$.
\end{lemma}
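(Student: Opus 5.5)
The plan is to use the standard fact that a torus acting on a unipotent group (a vector group in characteristic $p$) by linear automorphisms has vanishing first cohomology, so any complement of $U_a\cap H'$ lifting the torus is conjugate by an element of $U_a$ to the standard one. Let $T\subseteq S$ be the diagonal torus (a one-dimensional torus, the image of $\{\mathrm{diag}(t,t^{-1})\}$), and let $\rho\colon \Aut(\F_a)\to\GL_2/\mu_a$ be the projection with kernel $U_a$.

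First, set $H_T:=\rho^{-1}(T)\cap H$. This is an algebraic subgroup (possibly after taking its reduced identity component) surjecting onto $T$, since $T\subseteq\rho(H)$ and $\rho(H)$ is the scheme-theoretic image. Because $T$ is a torus and $H_T$ is a connected solvable group surjecting onto $T$, I expect to find a maximal torus $T'\subseteq H_T^\circ$ with $\rho(T')=T$. This follows from the fact that images of maximal tori under surjective homomorphisms of connected solvable groups are maximal tori (Borel), applied to $H_T^\circ\to T$. Then $\rho|_{T'}\colon T'\to T$ is surjective. Since $T'\cap U_a$ is a subgroup of a torus which is unipotent, it is trivial. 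Hence $T'\to T$ is an isomorphism onto $T$, possibly purely inseparable a priori. I will handle this by noting that $T'$ is a one-dimensional torus in $U_a\rtimes T$ meeting $U_a$ trivially, so $T'=\{(c(t),t)\mid t\in T\}$ for a morphism $c\colon T\to U_a$. I expect the reducedness issue to be harmless here, since $T'$ is the graph of a morphism defined on $T$ after identifying $T'\simeq T$ via $\rho$, and $\rho|_{T'}$ is a bijective homomorphism of tori. If it is a Frobenius-type isogeny, I would instead parametrize $T'$ by itself and argue the same way, since only the image subgroup matters.

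Second, the graph condition makes $c$ a $1$-cocycle: $c(ts)=c(t)+t\cdot c(s)$ for the linear $T$-action on $U_a$. Decompose $U_a=\bigoplus_\chi U_\chi$ into weight spaces for $T$ (monomials $z_0^iz_1^{a-i}$). Write $c=\sum c_\chi$; each $c_\chi\colon T\to U_\chi$ is a regular map with $c_\chi(ts)=c_\chi(t)+\chi(t)c_\chi(s)$. For $\chi$ trivial, $c_\chi$ is a homomorphism $\G_m\to\G_a^k$, hence zero. For $\chi$ nontrivial, symmetry in $t,s$ gives $c_\chi(t)(1-\chi(s))=c_\chi(s)(1-\chi(t))$. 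Fixing $s_0$ with $\chi(s_0)\ne1$ yields $c_\chi(t)=(1-\chi(t))v_\chi$ with $v_\chi=c_\chi(s_0)/(1-\chi(s_0))$. Thus $c(t)=v-t\cdot v$ with $v=\sum v_\chi$.

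Third, conjugate by $u=v$ (up to sign, depending on the convention for the product in $U_a\rtimes(\GL_2/\mu_a)$). A direct computation gives $u(c(t),t)u^{-1}=(c(t)-v+t\cdot v,t)=(0,t)$, so $uT'u^{-1}=T\subseteq uHu^{-1}$. The main obstacle is the first step: producing a torus in $H$ mapping isomorphically onto $T$ and controlling possible non-reduced or isogeny subtleties. The cocycle computation itself is routine.
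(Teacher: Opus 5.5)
Your proof is correct. Its first step matches the paper's: take a maximal torus of $(\rho^{-1}(T)_{\mathrm{red}})^\circ$, which maps onto $T$.

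On the inseparability worry: the observation you already make settles it once read scheme-theoretically. $T'\cap U_a$ is the scheme-theoretic kernel of $T'\to T$. It is a subgroup scheme both of a torus and of a vector group, hence trivial. So $T'\to T$ is a closed immersion onto $T$, i.e.\ an isomorphism, and this is exactly how the paper argues. Your fallback (parametrize $T'$ by itself and twist the $T$-action on $U_a$ by the isogeny) also works, but it is not needed.

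The genuine difference is the second step.
\begin{itemize}
\item The paper views $T'$ and $T$ as two maximal tori of the connected solvable group $U_a\rtimes T$. It then invokes the structure theorem on conjugacy of maximal tori to get $u\in U_a$ with $uT'u^{-1}=T$.
\item You instead prove the needed conjugacy by hand. You write $T'$ as the graph of a $1$-cocycle $c\colon T\to U_a$. A weight-space computation then shows $c(t)=v-t\cdot v$, i.e.\ $H^1(T,U_a)=0$.
\end{itemize}

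The paper's route is shorter and rests on a standard theorem. It leaves implicit why the conjugator can be taken in $U_a$ rather than in all of $U_a\rtimes T$: if $g=tu$ conjugates $T'$ to $T$, then so does $u$.

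Your route is self-contained and produces the conjugating element explicitly. With the law $(u_1,g_1)(u_2,g_2)=(u_1+g_1\cdot u_2,g_1g_2)$, the conjugating element is $-v$. Your argument also makes visible that only linear reductivity of tori is used. This is the same mechanism the paper uses directly in the proof of Proposition~\ref{prop: UniformCounterexampleNew}.
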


\begin{proof}
	Let $T\subseteq S$ be the standard maximal torus, and let
	$
	\rho\colon H\longrightarrow\GL_2/\mu_a
	$
	be the restriction of the second projection.
	Set
	$
	C:=\bigl(\rho^{-1}(T)_{\mathrm{red}}\bigr)^\circ.
	$
	Since $\rho^{-1}(T)\to T$ is surjective and $T$ is connected,
	the restriction $C\to T$ is also surjective.
	Its kernel is contained in $U_a$, so $C$ is solvable.
	
	A maximal torus $\widetilde T\subseteq C$ maps onto $T$.
	Moreover, the scheme-theoretic intersection
	$\widetilde T\cap U_a$ is trivial, since it is both of
	multiplicative type and a subgroup scheme of a vector group.
	Thus $\widetilde T\to T$ is an isomorphism.
	
	The tori $\widetilde T$ and $T$ are maximal tori of the
	connected solvable group $U_a\rtimes T$.
	By conjugacy of maximal tori, there exists $u\in U_a$ such that
	$u\widetilde T u^{-1}=T$.
	Since $\widetilde T\subseteq H$, we obtain
	$T\subseteq uHu^{-1}$.
\end{proof}

\begin{proposition}\label{prop: UniformCounterexampleNew}
Let $\k$ be an algebraically closed field of characteristic $p>0$. Consider the following $\GL_2$-modules, with the action of Remark~$\ref{StdActionGL2}$:
\[
V=\langle z_1^{2p},\,z_0^pz_1^p,\,z_0^{2p}\rangle
\subseteq \k[z_0,z_1]_{2p}
\]
and
\[
E=\langle z_0^iz_1^{2p-2-i}\mid 0\leq i\leq 2p-2,\ i\neq p-1\rangle
\subseteq \k[z_0,z_1]_{2p-2}.
\]
Then the following hold.
\begin{enumerate}
\item\label{kVEiso}
The map  
\[
\begin{array}{rccc}
\pi\colon &\k[z_0,z_1]_{2p}&\longrightarrow&E\\
&\displaystyle\sum_{i=0}^{2p}a_i z_0^iz_1^{2p-i}
&\longmapsto&
\displaystyle\sum_{i=1}^{2p-1} i a_i
z_0^{i-1}z_1^{2p-i-1}
\end{array}
\]

is a surjective morphism of $\SL_2$-modules, but not of $\GL_2$-modules, with kernel~$V$. In particular,
$\k[z_0,z_1]_{2p}/V\simeq E$
as $\SL_2$-modules.

\item\label{ConjtildeS}
Let $h=(z_0z_1)^{p-1}\in\k[z_0,z_1]_{2p-2}$.
The conjugate
\[
\widetilde S=h\SL_2h^{-1}
\subseteq \k[z_0,z_1]_{2p-2}\rtimes\SL_2
\]
of the standard $\SL_2$ is contained in $E\rtimes\SL_2$ and is not
conjugate to the standard $\SL_2$ in $E\rtimes\GL_2$.

\item\label{NonSplitGL2}
Let $H\subseteq \Aut(\F_{2p})
=\k[z_0,z_1]_{2p}\rtimes(\GL_2/\mu_{2p})$ be the image of the inverse
image of $\widetilde S$ under
\[
\pi\times\Id\colon
\k[z_0,z_1]_{2p}\rtimes\SL_2
\longrightarrow E\rtimes\SL_2.
\]
Then $H$ is a connected algebraic subgroup of $\Aut(\F_{2p})$, and its
projection to $\GL_2/\mu_{2p}$ yields a non-split exact sequence
\[
1\longrightarrow V
\longrightarrow H
\longrightarrow
S=\operatorname{Im}\bigl(\SL_2\to\GL_2/\mu_{2p}\bigr)
\simeq\SL_2/\mu_2\simeq\PGL_2
\longrightarrow1.
\] 
In particular, $H$ contains no subgroup conjugate in
$\Aut(\F_{2p})$ to $S$.
Moreover, $H$ acts on $\F_{2p}$ with exactly two orbits:
$s_{-2p}$ and its complement.
\end{enumerate}
\end{proposition}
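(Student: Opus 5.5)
For \ref{kVEiso}, I would identify $\pi$ with the derivation $\partial/\partial z_0$ followed by division by $z_1$. Precisely, $\pi(P)=\frac{1}{z_1}\partial_{z_0}P$ restricted to the span of monomials $z_0^iz_1^{2p-i}$ with $1\le i\le 2p-1$. The coefficients $ia_i$ vanish exactly for $i\in\{p\}$ in that range, and $i=0,2p$ are discarded. The kernel is therefore $\langle z_1^{2p},z_0^pz_1^p,z_0^{2p}\rangle=V$, and the image is spanned by $z_0^{i-1}z_1^{2p-1-i}$ with $i\neq p$, which is $E$. For $\SL_2$-equivariance I would use the same generators as in the proof of Proposition~\ref{prop: rep SL2 car p}: the diagonal torus $d_\alpha$, $\sigma$, and a unipotent element. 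The torus is a weight check: $z_0^iz_1^{2p-i}$ and $z_0^{i-1}z_1^{2p-1-i}$ have the same $T\cap\SL_2$-weight $2i-2p$. For $\sigma$ one compares $i a_i$ with $(2p-i)a_i\equiv -ia_i$, and the sign matches the extra factor $(-1)$. For the unipotent $z_0\mapsto z_0+tz_1$, note that $\partial_{z_0}$ commutes with this substitution and $z_1$ is fixed. For the other unipotent $z_1\mapsto z_1+tz_0$, I would use Euler's identity $z_0\partial_{z_0}+z_1\partial_{z_1}=2p\equiv0$. This gives $\frac1{z_1}\partial_{z_0}=-\frac1{z_0}\partial_{z_1}$ on $\k[z_0,z_1]_{2p}$, which is symmetric. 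Non-$\GL_2$-equivariance follows from a scalar $\lambda I_2$: it scales degree $2p$ and degree $2p-2$ differently, by $\lambda^{\mp2p}$ versus $\lambda^{\mp(2p-2)}$.

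For \ref{ConjtildeS}, I would compute $hgh^{-1}=(h-g\cdot h,\,g)$. I then need $h-g\cdot h\in E$, i.e.\ the coefficient of $(z_0z_1)^{p-1}$ in $g\cdot h$ equals $1$. Writing $g\cdot h=(\alpha' z_0+\beta' z_1)^{p-1}(\gamma' z_0+\delta' z_1)^{p-1}$, the middle coefficient is $\varepsilon_{p-1}$ applied to $g\cdot h$. By Corollary~\ref{cor:case n=0} (with $s=1$, $l=p-1$), $\varepsilon_{p-1}$ is $\SL_2$-invariant, so this coefficient equals its value on $h$, namely $1$. For non-conjugacy, suppose $(e,g_0)\widetilde S(e,g_0)^{-1}=\SL_2$ with $e\in E$ and $g_0\in\GL_2$. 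Then $h+e'$, for some $e'\in E$, would conjugate $\SL_2$ to itself, forcing $h+e'$ to be an $\SL_2$-invariant element of $\k[z_0,z_1]_{2p-2}$. Corollary~\ref{cor:case n=0} and duality show there is no nonzero invariant vector in $V_{2p-2}$ (its invariants would give a submodule $\k\subset V_{2p-2}$, and the invariant functional $\varepsilon_{p-1}$ shows $V_{2p-2}\not\simeq V_{2p-2}^*$ has none). So $h+e'=0$, contradicting that $h\notin E$.

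For \ref{NonSplitGL2}, $H$ is the image of the preimage $(\pi\times\Id)^{-1}(\widetilde S)$. This preimage is an extension of $\widetilde S\simeq\SL_2$ by $V$, so it is smooth and connected of dimension $6$, and its image in $\Aut(\F_{2p})$ is $H$, with $\mu_2$ absorbed. The exact sequence follows. Suppose $H$ contained a conjugate $wSw^{-1}$. Its preimage in $\k[z_0,z_1]_{2p}\rtimes\SL_2$ would give a subgroup lifting $\SL_2$. Applying $\pi\times\Id$, this would yield a conjugate of the standard $\SL_2$ inside $\widetilde S$, hence equal to $\widetilde S$. This contradicts \ref{ConjtildeS}, once I check that conjugation in $\Aut(\F_{2p})$ pushes forward to conjugation in $E\rtimes\GL_2$ up to the scalar issue. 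That check is the main obstacle, since $\pi$ is not $\GL_2$-equivariant. I would first reduce to elements $g_0$ of $S$ times scalars, using that $g_0$ must normalize $S$'s image. Scalars act on $E$ by a character and preserve $\widetilde S$ up to rescaling $h$. Finally, for the orbits: $H$ surjects onto $\PGL_2$ and contains $V$, which moves points off $s_{-2p}$ within a fiber. On the fiber over $[1:0]$, the elements of $V$ translate $y_1/y_0$ by $z_0^{2p}$-values, which are nonzero. Hence each fiber minus $s_{-2p}\cap$fiber is a single orbit, and $s_{-2p}$ is invariant.
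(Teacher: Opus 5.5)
Parts \ref{kVEiso} and \ref{ConjtildeS}, the construction of the exact sequence, and the orbit count are sound. Your Euler-identity trick $\frac{1}{z_1}\partial_{z_0}=-\frac{1}{z_0}\partial_{z_1}$ on degree $2p$ is a neat substitute for the paper's binomial check.

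The gap is in the main assertion of \ref{NonSplitGL2}: you never prove that $1\to V\to H\to S\to 1$ is \emph{non-split}. You only prove the weaker ``in particular'' statement, that $H$ contains no $\Aut(\F_{2p})$-conjugate of $S$.

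A splitting is a subgroup $\{(c(\bar g),\bar g)\}\subseteq H$. Pulled back to $\SL_2$, it is a regular cocycle $c\colon\SL_2\to W=\k[z_0,z_1]_{2p}$, with $c(gg')=c(g)+g\cdot c(g')$ and $\pi(c(g))=h-g\cdot h$. This complement is a conjugate of $S$ precisely when $c$ is a coboundary $g\mapsto u-g\cdot u$; note that $wSw^{-1}=uSu^{-1}$ for $w=(u,g_w)$. Your reduction to \ref{ConjtildeS} handles only this coboundary case: then $\pi(u)-h$ is $\SL_2$-invariant, hence $0$, so $h\in E$, a contradiction. Complements of $W$ in $W\rtimes S$ up to $W$-conjugacy are classified by $H^1(S,W)$, so non-conjugacy alone does not give non-splitting.

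You need an extra input, and there are two ways to supply it.
\begin{enumerate}
\item[(a)] Observe that $W\simeq S^{2p}(V^*)$ is the induced module $H^0(2p)$. Then $H^1(\SL_2,W)=0$, a consequence of Kempf vanishing (see \cite[\S II.4]{Jan03}). Every such $c$ is therefore a coboundary, and your argument closes.
\item[(b)] Argue directly, as the paper does. The diagonal torus $T$ fixes $h$, so $c|_T$ takes values in $V$, and linear reductivity of $T$ lets you assume $c|_T=0$. Set $u_\lambda=\begin{bsmallmatrix}1&-\lambda\\0&1\end{bsmallmatrix}$ and $d_t=\mathrm{diag}(t,t^{-1})$. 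The relation $c(u_{t^2\lambda})=d_t\cdot c(u_\lambda)$ forces $c(u_\lambda)=\sum_{i=0}^{p-1}c_i\lambda^{p-i}z_0^iz_1^{2p-i}$. The condition $\pi(c(u_\lambda))=h-u_\lambda\cdot h$ gives $c_{p-1}=-1$. The $z_1^{2p}$-coefficient of $c(u_{\lambda+\mu})=c(u_\lambda)+u_\lambda\cdot c(u_\mu)$ gives $c_1=\dots=c_{p-1}=0$, a contradiction.
\end{enumerate}

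Two smaller points. First, the ``main obstacle'' you flag is illusory. Because $\GL_2/\mu_{2p}$ normalizes $S$, one has $wSw^{-1}=uSu^{-1}$, so only the $\SL_2$-equivariance of $\pi$ is ever used. Second, your justification that $\k[z_0,z_1]_{2p-2}$ has no nonzero $\SL_2$-invariants is garbled: an invariant functional says nothing about invariant vectors. Instead, note that the zero-weight space is $\k h$ and that $h$ is moved by a unipotent element. Alternatively, apply Proposition~\ref{prop: rep SL2 car p} with $m=0<n=2p-2$.
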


\begin{proof}Recall that $\GL_2$ acts on $\k[z_0,z_1]_{2p}$ and
$\k[z_0,z_1]_{2p-2}$ by
\[
(M\cdot P)(z)=P(M^{-1}z).
\]
The subspace $V\subset\k[z_0,z_1]_{2p}$ is $\GL_2$-invariant, since
$V=\{f^p\mid f\in\k[z_0,z_1]_2\}$.

Consider the linear form
\[
\begin{array}{rccc}
\chi\colon&\k[z_0,z_1]_{2p-2}&\longrightarrow&\k\\
&\displaystyle\sum_{i=0}^{2p-2}a_i z_0^iz_1^{2p-2-i}
&\longmapsto&a_{p-1},
\end{array}
\]
so that $E=\Ker(\chi)$. We claim that
\[
\chi(M\cdot P)=\det(M)^{1-p}\chi(P)
\qquad
\text{for all }M\in\GL_2.
\]
In particular, $E$ is $\GL_2$-invariant, and $\chi$ is
$\SL_2$-invariant.

It is enough to check the formula on the diagonal torus, on
$\begin{bsmallmatrix}0&1\\1&0\end{bsmallmatrix}$, and on the upper
unipotent subgroup. If
$M=\begin{bsmallmatrix}a&0\\0&d\end{bsmallmatrix}$, then
\[
M\cdot (z_0z_1)^{p-1}
=(ad)^{1-p}(z_0z_1)^{p-1},
\]
and hence
\[
\chi(M\cdot P)=\det(M)^{1-p}\chi(P).
\]
For
$w=\begin{bsmallmatrix}0&1\\1&0\end{bsmallmatrix}$, we have
$\det(w)^{1-p}=(-1)^{p-1}=1$, and $\chi(w\cdot P)=\chi(P)$.

Finally, let $t=\begin{bsmallmatrix}1&-\lambda\\0&1\end{bsmallmatrix}$.
Then
\[
t\cdot(z_0^iz_1^{2p-2-i})
=(z_0+\lambda z_1)^iz_1^{2p-2-i}.
\]
If $i<p-1$, the coefficient of $z_0^{p-1}z_1^{p-1}$ is zero; if
$i=p-1$, it is $1$; and if $p-1<i\leq2p-2$, it is
$\lambda^{i-p+1}\binom{i}{p-1}=0$
by Lucas' theorem. Thus $\chi(t\cdot P)=\chi(P)$.
Since these elements generate $\GL_2$, the claim follows.

\ref{kVEiso}
By definition,
\[
\pi(z_0^iz_1^{2p-i})=
\begin{cases}
	i\,z_0^{i-1}z_1^{2p-i-1}, & 1\leq i\leq2p-1,\\
	0, & i=0\text{ or }i=2p.
\end{cases}
\]
Since $i=0$ in $\k$ precisely for $i=0,p,2p$, the map $\pi$ is
surjective onto $E$ and
\[
\Ker(\pi)
=\langle z_1^{2p},z_0^pz_1^p,z_0^{2p}\rangle
=V.
\]

We now prove that $\pi$ is $\SL_2$-equivariant.
Since $V$ is invariant and $\pi$ vanishes on $V$, equivariance
already holds on $V$.
In the computations below, we therefore take
$1\leq i\leq2p-1$.
Equivariance under the diagonal torus is immediate from the weights. For
$s=\begin{bsmallmatrix}0&1\\-1&0\end{bsmallmatrix}$, we have
$s\cdot(z_0^iz_1^{2p-i})
=(-1)^iz_0^{2p-i}z_1^i$,
and therefore
\[
\begin{aligned}
\pi\bigl(s\cdot(z_0^iz_1^{2p-i})\bigr)
&=(2p-i)(-1)^iz_0^{2p-i-1}z_1^{i-1}\\
&=i(-1)^{i-1}z_0^{2p-i-1}z_1^{i-1}\\
&=s\cdot\pi(z_0^iz_1^{2p-i}).
\end{aligned}
\]
For
$t=\begin{bsmallmatrix}1&-\lambda\\0&1\end{bsmallmatrix}$, we have
\[
t\cdot(z_0^iz_1^{2p-i})
=\sum_{j=0}^i\binom{i}{j}\lambda^{i-j}
z_0^jz_1^{2p-j},
\]
and hence
\[
\pi\bigl(t\cdot(z_0^iz_1^{2p-i})\bigr)
=
\sum_{j=1}^i
j\binom{i}{j}\lambda^{i-j}
z_0^{j-1}z_1^{2p-j-1}.
\]
On the other hand,
\[
t\cdot\pi(z_0^iz_1^{2p-i})
=
\sum_{r=0}^{i-1}
i\binom{i-1}{r}\lambda^{i-1-r}
z_0^rz_1^{2p-r-2}.
\]
These expressions coincide after setting $j=r+1$, since
\[
(r+1)\binom{i}{r+1}=i\binom{i-1}{r}.
\]
Thus $\pi$ is $\SL_2$-equivariant, and consequently
\[
\k[z_0,z_1]_{2p}/V\simeq E
\]
as $\SL_2$-modules. It is not $\GL_2$-equivariant, since the scalar
matrices act with different weights on
$\k[z_0,z_1]_{2p}$ and $\k[z_0,z_1]_{2p-2}$.

\ref{ConjtildeS} Put $h=(z_0z_1)^{p-1}$. For every
$\alpha\in\SL_2$, we have
\[
(h,\Id)(0,\alpha)(-h,\Id)
=(h-\alpha(h),\alpha).
\]
Since $\chi$ is $\SL_2$-invariant,
$\chi(h-\alpha(h))=0$,
and hence $h-\alpha(h)\in E$. Therefore
\[
\widetilde S=h\SL_2h^{-1}\subseteq E\rtimes\SL_2.
\]

Suppose that $\widetilde S$ is conjugate to the standard $\SL_2$ in
$E\rtimes\GL_2$. Since $\GL_2$ normalizes $\SL_2$, we may assume that
the conjugating element belongs to $E$, say $e\in E$. Then
$h-e$ normalizes the standard $\SL_2$, which is equivalent to
\[
\alpha(h-e)=h-e
\qquad\text{for all }\alpha\in\SL_2.
\]
Invariance under the diagonal torus gives
$h-e\in\k \left \langle h \right \rangle$.
Since $e\in E$ and $\chi(h)=1$, we obtain $h-e=h$, hence $e=0$. This
would imply that $h$ is $\SL_2$-invariant, which is false, for instance
by applying a nontrivial upper unipotent element. Thus $\widetilde S$
is not conjugate to the standard $\SL_2$ in $E\rtimes\GL_2$.

\ref{NonSplitGL2} Let $H'$ be the inverse image of $\widetilde S$ under
\[
\pi\times\Id\colon
\k[z_0,z_1]_{2p}\rtimes\SL_2
\longrightarrow E\rtimes\SL_2.
\]
Then $H'$ is a closed connected subgroup containing
$V=\Ker(\pi)$ and surjecting onto $\SL_2$. Its image
$H\subset\Aut(\F_{2p})$ is therefore a closed connected subgroup, and
the projection to $\GL_2/\mu_{2p}$ yields an exact sequence
\[
1\longrightarrow V
\longrightarrow H
\longrightarrow
S=\operatorname{Im}\bigl(\SL_2\to\GL_2/\mu_{2p}\bigr)
\simeq\PGL_2
\longrightarrow1.
\]

Suppose that this sequence splits. Pulling a section back
along $\SL_2\to S$ gives a regular cocycle
\[
b\colon\SL_2\longrightarrow W:=\k[z_0,z_1]_{2p}
\]
satisfying
\[
b(gg')=b(g)+g\cdot b(g'),
\qquad
\pi(b(g))=h-g\cdot h.
\]
Let $T\subseteq\SL_2$ be the diagonal torus.
Since $T$ fixes $h$, the restriction $b|_T$ takes values
in $V$.
As $T$ is linearly reductive, conjugating the section
by an element of $V$ allows us to assume that $b|_T=0$.

Put
$u_\lambda=
\begin{bsmallmatrix}1&-\lambda\\0&1\end{bsmallmatrix}$.
The relation
\[
\begin{bsmallmatrix}t&0\\0&t^{-1}\end{bsmallmatrix}
u_\lambda
\begin{bsmallmatrix}t^{-1}&0\\0&t\end{bsmallmatrix}
=u_{t^2\lambda}
\]
and $b|_T=0$ imply that
\[
b(u_\lambda)
=
\sum_{i=0}^{p-1}
c_i\lambda^{p-i}z_0^iz_1^{2p-i}
\]
for some $c_i\in\k$.
Comparing the coefficient of $z_0^{p-2}z_1^p$ in
$\pi(b(u_\lambda))=h-u_\lambda\cdot h$ gives
$c_{p-1}=-1$.

On the other hand, the coefficient of $z_1^{2p}$ in
the cocycle identity
\[
b(u_{\lambda+\mu})
=b(u_\lambda)+u_\lambda\cdot b(u_\mu)
\]
gives
\[
c_0(\lambda+\mu)^p
=
c_0\lambda^p+
\sum_{i=0}^{p-1}c_i\mu^{p-i}\lambda^i.
\]
Since $(\lambda+\mu)^p=\lambda^p+\mu^p$, comparison
of coefficients yields $c_i=0$ for $1\leq i\leq p-1$,
a contradiction.
Thus the exact sequence does not split.

Finally, $S$ acts transitively on the base $\P^1$,
and $V$ acts transitively on the complement of the
negative section in each fiber: evaluation of $V$
at any point of $\P^1$ is nonzero, since $V$ contains
$z_0^{2p}$ and $z_1^{2p}$.
Consequently, $H$ has exactly two orbits, namely
$s_{-2p}$ and its complement.
\end{proof}

For $p\geq3$, the fact that the preceding construction occurs
for an even Hirzebruch index is essential.
Indeed, when $a$ is odd, the opposite phenomenon holds:
Lemma~\ref{lem:H-maps-onto-PGL2-odd-a} below shows that every connected algebraic subgroup of $\Aut(\F_a)$
surjecting onto $\PGL_2$ contains, up to conjugation by the unipotent
radical $U_a$, the standard subgroup $S$. 
In particular, if the image of such a subgroup $H$ in
$\GL_2/\mu_a$ is $S$, then the exact sequence
$1\longrightarrow H\cap U_a\longrightarrow H
\longrightarrow S\longrightarrow 1$
splits.

\begin{lemma}\label{lem:H-maps-onto-PGL2-odd-a}
Assume that $p\geq3$. Let $a\geq1$ be odd.
Write
\[
\Aut(\F_a)\simeq U_a\rtimes(\GL_2/\mu_a),
\qquad
U_a:=\k[z_0,z_1]_a,
\]
and let $q\colon\Aut(\F_a)\to\PGL_2$
be the  standard projection $\Aut(\F_a)\to \GL_2/\mu_a\to \PGL_2$. Set
$S:=\Im\bigl(\SL_2\to\GL_2/\mu_a\bigr)$.
Let $H\subseteq\Aut(\F_a)$ be a connected algebraic subgroup such that
$q(H)=\PGL_2$. Then there exists $u\in U_a$ such that
$uSu^{-1}\subseteq H$.
\end{lemma}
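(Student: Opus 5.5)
The plan is to exploit the fact that, for $a$ odd, the group $S$ contains the central element $-I$, and that $-I$ acts by $-1$ on $U_a$. Conjugating $H$ so that it contains this element itself will force a whole complement of $H\cap U_a$ to lie in the Levi factor $\GL_2/\mu_a$.

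\emph{Step 1: the image of $H$ contains $S$.} Let $\rho\colon\Aut(\F_a)\to\GL_2/\mu_a$ be the projection. Since $q(H)=\PGL_2$, Lemma~\ref{SL2OverPGL2} applied to the connected subgroup $\rho(H)$ gives $S\subseteq\rho(H)$.

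\emph{Step 2: the element $-I$ and its action on $U_a$.} Since $a$ is odd, $\SL_2\cap\mu_a$ is trivial: a scalar $\lambda$ with $\lambda^2=1=\lambda^a$ equals $1$. Hence $S\simeq\SL_2$, and the class $\epsilon$ of $-I$ is a nontrivial central element of $S$. With the action of Remark~\ref{StdActionGL2}, $\epsilon$ acts on $U_a$ as $(-1)^a=-1$. Write $W:=H\cap U_a$. It is normalized by $H$, so it is a $\rho(H)$-submodule of $U_a$.

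\emph{Step 3: conjugate so that $(0,\epsilon)\in H$.} By Step~1, pick $h_0=(u_0,\epsilon)\in H$. Conjugating by $v\in U_a$ gives
\[
(v,1)(u_0,\epsilon)(-v,1)=(u_0+2v,\epsilon).
\]
Since $p\geq3$, the choice $v=-u_0/2$ is possible. Replace $H$ by its conjugate, so that $s:=(0,\epsilon)\in H$; it suffices to show that this conjugate contains $S$.

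\emph{Step 4: the centralizer of $s$.} For any $w\in U_a$, the pair $(w,g)$ commutes with $s$ if and only if $w=-w$, that is, $w=0$. Thus $C(s)=\GL_2/\mu_a$, the Levi factor itself, because $\epsilon$ is central there. It follows that $C_H(s)=H\cap(\GL_2/\mu_a)$, and this group embeds into $\rho(H)$ via $\rho$.

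\emph{Step 5: $C_H(s)$ surjects onto $\rho(H)$.} This is the key step. Let $g\in H$. Since $\rho(s)=\epsilon$ is central, $gsg^{-1}s^{-1}\in H\cap U_a=W$, so $gsg^{-1}=(w,\epsilon)$ for some $w\in W$. By the computation of Step~3, conjugation by $(-w/2,1)\in W\subseteq H$ sends $(w,\epsilon)$ back to $s$. Hence $g':=(-w/2,1)\,g$ lies in $C_H(s)$ and satisfies $\rho(g')=\rho(g)$. Therefore $\rho\colon C_H(s)\to\rho(H)$ is bijective on $\k$-points.

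\emph{Step 6: conclusion.} The group $C_H(s)$ is a closed subgroup of $H$ contained in the Levi factor. By Step~5 its image $\rho(C_H(s))$ is the closed subgroup $\rho(H)$, which contains $S$ by Step~1. Since $\rho$ restricted to $\GL_2/\mu_a$ is the identity, we get $S\subseteq C_H(s)\subseteq H$. Undoing the conjugation of Step~3 gives $u\in U_a$ with $uSu^{-1}\subseteq H$.

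\emph{Expected obstacle.} The only delicate points are scheme-theoretic. Everything above is argued on $\k$-points, and since all groups involved are smooth (reduced), this suffices. One must also check that $C_H(s)$ is closed. The genuinely essential inputs are that $2$ is invertible, since $p\geq3$, and that $\epsilon$ acts by $-1$ on $U_a$, since $a$ is odd. This explains why both hypotheses appear, and why Proposition~\ref{prop: UniformCounterexampleNew} is consistent with the lemma: for $a$ even, the element $-I$ becomes trivial in $S$.
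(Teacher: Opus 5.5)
Your argument is correct, and its core mechanism is the paper's. For $g=(R,\gamma)\in H$ and $s=(0,\epsilon)$ one has $gsg^{-1}s^{-1}=(2R,1)$. So your corrected element $g'=(-w/2,1)g$ is exactly $(0,\gamma)$, which is what the paper extracts from the commutator identity $[(R,g),(0,-I)]=(2R,I)$.

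The genuine difference is how you place $(0,\epsilon)$ in a conjugate of $H$. The paper first applies Lemma~\ref{lem:lifttorus}, which uses conjugacy of maximal tori in the solvable group $U_a\rtimes T$ to move the whole diagonal torus into $H$; it then uses only the element $-I\in T$. You instead conjugate a single lift $(u_0,\epsilon)$ by $(-u_0/2,1)$. This is more elementary: it uses only that $\epsilon$ acts by $-1$ on $U_a$ and that $2$ is invertible, both of which the rest of the argument needs anyway. The paper's detour is natural in context, because Lemma~\ref{lem:lifttorus} is needed again in Corollary~\ref{cor:H-maps-onto-PGL2-even-a}, where $a$ is even and $S$ has no such central element.

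Your centralizer formulation also shows that $C_H(s)$ is a complement to $H\cap U_a$ over the whole image $\rho(H)$, not only over $S$. The paper's computation gives this as well, since $-I$ is central in $\GL_2/\mu_a$, but it records it only on $J=H\cap(U_a\rtimes S)$.

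One step you should state explicitly: $(-w/2,1)\in W$. This holds because $W(\k)$ is an abstract subgroup of the $\F_p$-vector space $U_a(\k)$, hence an $\F_p$-subspace, and $-1/2\in\F_p$ since $p\geq3$. The paper makes the same point by choosing $m\in\Z$ with $2m\equiv1\pmod p$.
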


\begin{proof}
Let $L:=\Im\bigl(H\to\GL_2/\mu_a\bigr).$ 
Since $q(L)=\PGL_2$, Lemma~\ref{SL2OverPGL2} yields
$S\subseteq L$.
Let $T\subseteq S$ be the standard maximal torus. By Lemma~\ref{lem:lifttorus}, we may replace $H$ with a conjugate by an element of $U_a$ and  assume that
$T\subseteq H$. Set
\[
J:=H\cap(U_a\rtimes S).
\]
Then $J\to S$ is surjective and $T\subseteq J$. Since $a$ is odd, the
central element $-I\in T$ acts on $U_a=\k[z_0,z_1]_a$ as multiplication
by $-1$. Thus, for every $(R,g)\in J$, using
$[x,y]=xyx^{-1}y^{-1}$, we obtain
\[
[(R,g),(0,-I)]=(2R,I)\in J\cap U_a.
\]
As $p\neq 2$, choose $m\in\Z$ such that $2m\equiv1\pmod p$. Since
$J\cap U_a$ is a subgroup of the additive group $U_a$, it follows that
\[
(R,I)=m(2R,I)\in J\cap U_a.
\]
Consequently,
\[
(0,g)=(R,I)^{-1}(R,g)\in J.
\]
Since $J\to S$ is surjective, this proves
$S\subseteq J\subseteq H$.
Undoing the initial conjugation gives $uSu^{-1}\subseteq H$ for some
$u\in U_a$.
\end{proof}

\section{\texorpdfstring{$\P^1$}{P1}-bundles over Hirzebruch surfaces}\label{sec:P1-bundles-over-Fa}
Throughout this section, we work in positive characteristic;
any additional restriction is stated explicitly.
	
We study $\P^1$-bundles over Hirzebruch surfaces and the induced
actions of their neutral automorphism groups on the base.
After recalling the decomposable case and the parameter spaces
for $\P^1$-bundles with no jumping fibers, we treat $\F_0$ and introduce
the families $\T^{b,c} \to \P^1 \times \P^1$ and $\U_{a,\omega}^{b,c} \to \F_a$.
Our main result in this section is the trichotomy of Theorem~\ref{th:P1bundlesFa}. 

\subsection{Decomposable \texorpdfstring{$\P^1$}{P1}-bundles over Hirzebruch surfaces}
In this section, we recall the definition of decomposable $\P^1$-bundles over Hirzebruch surfaces and describe their automorphism groups.

\begin{definition} \label{def Fabc}
Let $a,b,c\in \Z$. We define $\FF_a^{b,c}$ to be the quotient of $(\A^2\setminus \{0\})^3$ by the action of $(\G_m)^3$ given by
\[\begin{array}{ccc}
(\G_m)^3 \times (\A^2\setminus \{0\})^3 & \to & (\A^2\setminus \{0\})^3\\
((\lambda,\mu,\rho), (x_0,x_1,y_0,y_1,z_0,z_1))&\mapsto& 
(\lambda\mu^{-b} x_0, \lambda\rho^{-c} x_1,\mu\rho^{-a} y_0,\mu y_1,\rho z_0,\rho z_1)\end{array}\]
The class of $(x_0,x_1,y_0,y_1,z_0,z_1)$ will be written $[x_0:x_1;y_0:y_1;z_0:z_1]$. The projection 
\[\FF_a^{b,c}\to \F_{a}, \ \ [x_0:x_1;y_0:y_1;z_0:z_1]\mapsto [y_0:y_1;z_0:z_1]\]
identifies $\FF^{b,c}_a$ with 
\[\P(\OFa(b \s{a}) \oplus \OFa(c f))=\P(\OFa \oplus \OFa(-b\s{a}+cf))\]
as a $\P^1$-bundle over $\F_a$, where $\s{a},f\subseteq \F_a$ are given by $y_1=0$ and $z_1=0$.

Moreover, every fiber of the composed morphism
$\FF^{b,c}_a\to\F_a\to\P^1$ given by the $z$-projection
is isomorphic to $\F_b$, and the restrictions of
$\FF^{b,c}_a$ to the curves $\s{-a}$ and $\s{a}$,
given by $y_0=0$ and $y_1=0$, are isomorphic to
$\F_c$ and $\F_{c-ab}$, respectively.

Exchanging $y_0$ and $y_1$ yields an isomorphism
\[
\FF_a^{b,c}\iso\FF_{-a}^{b,c-ab}
\]
covering the corresponding isomorphism $\F_a\iso\F_{-a}$.
Thus one can assume that $a\geq0$ without changing the
isomorphism class.
We then observe that the exchange of $x_0$ and $x_1$ yields an isomorphism $\FF^{b,c}_a\iso \FF^{-b,-c}_a$. 
Henceforth, we will usually assume that $a,b\ge 0$. If $b=0$, we can moreover assume that $c\le 0$.
\end{definition}

\begin{lemma}\label{lem:aut-group-Fabc}\emph{(\cite[Lemma~3.1.5 and Remark~3.1.6]{BFT})}\\
For each $i,j \in \Z$, we denote by $\k[y_0,y_1,z_0,z_1]_{i,j}\subseteq \k[y_0,y_1,z_0,z_1]$ the space of homogeneous polynomials of bidegree $(i,j)$, where the variables $y_0,y_1,z_0,z_1$ are of bidegree $(1,-a)$, $(1,0)$, $(0,1)$, and $(0,1)$, respectively. Let $a,b\ge 0$ and $c\in \Z$.
There is an exact sequence of algebraic groups
\[
1 \to \Autz(\FF_a^{b,c})_{\F_a} \to \Autz(\FF_a^{b,c}) \to \Autz(\F_a) \to 1,
\]
with
\[
\Autz(\FF_a^{b,c})_{\F_a}
=\left\{ \begin{bmatrix}
 p_{1,0,0} & p_{2,-b,c} \\
 p_{3,b,-c} & p_{4,0,0}
\end{bmatrix}
\in \PGL_2(\k[y_0,y_1,z_0,z_1])
\ \middle|\
\begin{array}{l}
 p_{k,i,j}\in \k[y_0,y_1,z_0,z_1]_{i,j} \\
 \text{for } k=1,\ldots,4
\end{array}
\right\},
\]
acting on $\FF_{a}^{b,c}$ as follows:
\[
[x_0:x_1; y_0:y_1; z_0:z_1]
\longmapsto
[x_0 p_1+x_1 p_2 : x_0 p_3+x_1 p_4; y_0:y_1; z_0:z_1].
\]
\end{lemma}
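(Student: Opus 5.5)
The plan has three steps: use Blanchard's lemma to get the exact sequence, identify the kernel with automorphisms of the rank-two vector bundle, and produce lifts of $\Autz(\F_a)$ from the toric presentation.

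\emph{Exact sequence.} The projection $\pi\colon\FF_a^{b,c}\to\F_a$ is a $\P^1$-bundle, so it is proper with $\pi_*\cO=\cO$. Proposition~\ref{prop:blanchard-lemma} then gives a homomorphism $\Autz(\FF_a^{b,c})\to\Autz(\F_a)$. Its kernel is the subgroup of automorphisms acting trivially on the base. I identify this kernel with the identity component of
\[
\Aut(\O_{\F_a}(b s_a)\oplus\O_{\F_a}(cf))/\G_m.
\]
Such automorphisms are given by $2\times2$ matrices whose entries are global sections of
\[
\O,\qquad \O(-b s_a+cf),\qquad \O(b s_a-cf),\qquad \O,
\]
taken modulo scalars. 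In the Cox ring of $\F_a$, the sections of $\O(i s_a + j f)$ are exactly the polynomials of bidegree $(i,j)$ for the grading in the statement, with $y_0,y_1,z_0,z_1$ of bidegrees $(1,-a),(1,0),(0,1),(0,1)$. The diagonal entries are sections of $\O$, hence constants; so the kernel is the stated group of classes in $\PGL_2$. Concretely, the action multiplies $(x_0,x_1)$ by the matrix. One checks that this is compatible with the $\G_m^3$-action because the weights match: $x_0$ has weight $\mu^{-b}$ and $x_1$ has weight $\rho^{-c}$, so $p_2$ must have degree $(-b,c)$ and $p_3$ degree $(b,-c)$. The determinant $p_1p_4-p_2p_3$ lies in degree $(0,0)$, so it is a nonzero constant, and the matrix is invertible everywhere. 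This group is connected, since it is a unipotent group extended by a torus, or all of $\PGL_2$ if both off-diagonal spaces vanish appropriately. Hence it coincides with the neutral component of the kernel.

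\emph{Surjectivity.} I lift generators of $\Autz(\F_a)$ explicitly. For $a\ge1$, $\Autz(\F_a)$ is the image of $\k[z_0,z_1]_a\rtimes\GL_2$. Let $\GL_2$ act linearly on $(z_0,z_1)$, and let $P$ act by $y_1\mapsto y_1+y_0P(z)$, with $x_0,x_1$ unchanged. These substitutions preserve the ideal and commute with the $\G_m^3$-action up to reparametrization: $y_1+y_0P$ has the same weight as $y_1$. The $\s{a}$-structure of the quotient is not involved, because the weights of $x_0,x_1$ involve only $\mu$ and $\rho$. The substitutions therefore descend to automorphisms of $\FF_a^{b,c}$ covering the given ones. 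Since the lifted group is connected, its image lies in $\Autz(\FF_a^{b,c})$, and the homomorphism is surjective. For $a=0$, the same argument applies using $\GL_2\times\GL_2$ acting on $(y_0,y_1)$ and $(z_0,z_1)$.

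\emph{Expected obstacle.} The main subtle point is justifying that the vertical automorphism group is exactly given by these matrices, that is, that every vertical automorphism comes from a bundle automorphism. This holds because $\Pic$ of a $\P^1$-bundle $\P(E)$ is generated by $\Pic(\F_a)$ and $\cO_{\P(E)}(1)$, so a vertical automorphism preserves $\cO(1)$ up to a twist from the base. Since it is trivial on the base, the twist must be trivial on the connected component, and the automorphism lifts to $E$ up to scalars. The remaining check is the standard identification of sections of line bundles on $\F_a$ with graded pieces of the Cox ring.
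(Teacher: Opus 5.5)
The paper gives no proof of this lemma; it quotes it from \cite[Lemma~3.1.5 and Remark~3.1.6]{BFT}. Your argument is a correct, self-contained proof along standard lines, with one point to tighten. The three steps are:
\begin{itemize}
\item Blanchard's lemma gives the homomorphism.
\item The vertical automorphisms are read off as $\Aut(E)/\k^*$, with $E=\O_{\F_a}(bs_a)\oplus\O_{\F_a}(cf)$, via the Cox ring of $\F_a$.
\item Surjectivity comes from explicit lifts of $\k[z_0,z_1]_a\rtimes\GL_2$ (resp.\ $\GL_2\times\GL_2$ for $a=0$) that act on $y,z$ and fix $x_0,x_1$.
\end{itemize}
Your weight checks, both for the matrices and for the lifts, are right.

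The step to tighten concerns the kernel. Exactness in the middle needs the \emph{entire} kernel of $\Autz(\FF_a^{b,c})\to\Autz(\F_a)$ to be the matrix group. You only conclude that the matrix group is the neutral component of this kernel. Moreover, the justification you offer (``since it is trivial on the base, the twist must be trivial on the connected component'') is not an argument.

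The fix applies to every automorphism $\phi$ over $\F_a$, not just those in the identity component:
\begin{itemize}
\item Write $\phi^*\O_{\P(E)}(1)\simeq\O_{\P(E)}(1)\otimes\pi^*L$.
\item Push forward to get $E\simeq E\otimes L$.
\item Take determinants to get $L^{\otimes 2}\simeq\O_{\F_a}$.
\item Conclude $L\simeq\O_{\F_a}$, since $\Pic(\F_a)\simeq\Z^2$ is torsion-free.
\end{itemize}
Hence $\Aut(\FF_a^{b,c})_{\F_a}=\Aut(E)/\k^*$ is connected, lies in $\Autz(\FF_a^{b,c})$, and equals the kernel. Once you know this, equality with your matrix group also follows from dimensions alone. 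The matrix group is a closed connected subgroup of $\Aut(E)/\k^*$ of the same dimension, so you need not match the summands of $E$ with $x_0,x_1$.

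A minor slip: the $\PGL_2$ case is $b=c=0$, where both off-diagonal entries are arbitrary constants. When both off-diagonal spaces vanish (e.g.\ $b>0$, $c>ab$), the kernel is just $\G_m$. Connectedness holds in every case.
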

The next result explains why we will subsequently focus on $\P^1$-bundles over Hirzebruch surfaces whose automorphism groups act transitively on the base $\P^1$ of the fibration $\F_a\to\P^1$.

\begin{lemma}\label{Lemm:fiberInvariant}
	\emph{(\cite[Lemma~3.7.1]{BFT})}
	Let $\pi\colon X\to\F_a$ be a $\P^1$-bundle, and let
	$\tau_a\colon\F_a\to\P^1$ be the standard projection.
	If there is a point $p\in\P^1$ such that the surface
	$(\tau_a\circ\pi)^{-1}(p)\subseteq X$ is invariant under
	$\Autz(X)$, then there exist a decomposable $\P^1$-bundle
	$\FF_a^{b,c}\to\F_a$ and a commutative diagram
\[\xymatrix@R=3mm@C=2cm{
    X \ar@{-->}[rr]^{\psi} \ar[rd]_{\pi}  && \FF_a^{b,c} \ar[ld] \\
    & \F_a
  }\]
  where $\psi$ is a birational map satisfying $\psi\Autz(X)\psi^{-1}\subsetneq \Autz(\FF_a^{b,c})$.
\end{lemma}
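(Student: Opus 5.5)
We plan to follow the strategy of \cite[Lemma~3.7.1]{BFT}, which only uses elementary links and Blanchard's lemma and therefore works in any characteristic. Let $F_0=(\tau_a\circ\pi)^{-1}(p)$ be the invariant surface and $f_0=\tau_a^{-1}(p)\subseteq\F_a$ the corresponding fiber. Then $X|_{f_0}\to f_0\simeq\P^1$ is a $\P^1$-bundle over $\P^1$, hence $F_0\simeq\F_n$ for some $n\geq0$, and $\Autz(X)$ acts on $F_0$.

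\textbf{Step 1: reduce to $F_0$ with $n$ small.} Since $F_0$ is $\Autz(X)$-invariant, $\Autz(X)$ preserves a curve on $F_0$. If $n\geq1$, this is the negative section $\sigma\subseteq F_0$. If $n=0$ and the action on $F_0$ fixes no section, the argument below still applies with the roles of sections adjusted. Blowing up $\sigma$ (a smooth curve lying over $f_0$) and contracting the strict transform of $F_0$ gives an $\Autz(X)$-equivariant elementary link $X\dashrightarrow X'$ of $\P^1$-bundles over $\F_a$, an isomorphism outside the fiber over $f_0$. This changes $n$ by one and keeps the invariant surface. By iterating, we obtain a model whose invariant surface is $\F_n$ with a controlled value of $n$, and more importantly with an invariant section to work with at each step.

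\textbf{Step 2: reach a decomposable bundle.} Each such link twists the bundle along the fiber $f$, so it changes the class of the transition data by multiples of $f$. Consider restrictions to the general fibers $\tau_a^{-1}(t)$ of the composed map; these restrictions are $\F_{b(t)}$, with $b(t)$ constant on an open set. The goal is to choose the number of links so that $X'$ restricted to every fiber of $\tau_a\circ\pi$ splits compatibly, i.e.\ $X'\simeq\P(\O\oplus\O(-b s_a+cf))$. Concretely, we realize $X$ as $\P(\mathcal E)$ and perform elementary transformations of $\mathcal E$ along $f_0$ in the direction of invariant quotients, so that $\mathcal E$ eventually acquires two line subbundles. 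Alternatively, as in BFT, we arrange that $\mathcal E\otimes\O(kf)$ has enough sections. Since there is no flexibility in the choice of center, the composed map $\psi$ is automatically $\Autz(X)$-equivariant.

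\textbf{Step 3: strict inclusion.} After Step 2, $\psi\Autz(X)\psi^{-1}\subseteq\Autz(\FF_a^{b,c})$, and this group preserves the surface over $p$. By Lemma~\ref{lem:aut-group-Fabc}, however, $\Autz(\FF_a^{b,c})$ surjects onto $\Autz(\F_a)$, which acts transitively on the base of $\tau_a$. Hence it preserves no fiber surface, so the inclusion is strict.

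The main obstacle is Step 2: showing that finitely many equivariant links over the single fiber $f_0$ suffice to make the bundle decomposable. The natural approach is to prove that $\Autz(X)$ is forced to fix an extra section on $F_0$, or that the transition cocycle can be concentrated on $f_0$. We would first try the vector-bundle argument from \cite{BFT}, which is characteristic-free.
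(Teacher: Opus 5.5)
The paper gives no proof of its own here; the statement is quoted from \cite[Lemma~3.7.1]{BFT}. Your proposal has a genuine gap, at exactly the step you flag: Step~2 is not carried out, and the plan behind it cannot work as stated.

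All the links you use are centred over the single fibre $f_0=\tau_a^{-1}(p)$. So they are isomorphisms over $\tau_a^{-1}(\P^1\setminus\{p\})$ and cannot change $X$ there. But every fibre surface of a decomposable $\FF_a^{b,c}\to\F_a\to\P^1$ is isomorphic to $\F_b$, whereas $X$ may have jumping fibres over points $p'\neq p$. The hypothesis does not exclude this: the jumping locus is a finite $\Autz(X)$-invariant subset of $\P^1$, hence pointwise fixed, so its fibre surfaces are invariant as well.

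For instance, take $\F_0=\P^1\times\P^1$ with $\tau_0=\mathrm{pr}_2$, and a non-split extension $0\to\mathrm{pr}_1^*\O(-1)\otimes\mathrm{pr}_2^*\O(2)\to\mathcal E\to\mathrm{pr}_1^*\O(1)\to0$. Its class lies in $H^1(\P^1,\O(-2))\otimes H^0(\P^1,\O(2))\simeq H^0(\P^1,\O(2))$. If this section vanishes at $t_1\neq t_2$, then $\P(\mathcal E)$ has fibre surfaces $\F_2$ over $t_1,t_2$ and $\F_0$ elsewhere. No sequence of links over $t_1$ alone yields a decomposable bundle.

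Your description of the links is also incorrect. The fibre surface $F_0$ has trivial normal bundle, so blowing up a section $\sigma\subset F_0\simeq\F_n$ and contracting the strict transform of $F_0$ replaces $F_0$ by $\P(N_{\sigma/X})$. Here $N_{\sigma/X}$ is an extension of $\O_{\P^1}$ by $\O_{\P^1}(-n)$, so $n'\le n$ and $n'\equiv n\pmod 2$; $n$ never changes by one.

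The missing argument has two stages.

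First, remove all jumping fibres. Write $X=\P(\mathcal E)$, and let $\O(\alpha)\oplus\O(\beta)$, $\alpha\geq\beta$, be the generic splitting type of $\mathcal E$ on the fibres of $\tau_a$. A jumping fibre surface is some $\F_{n_j}$ with $n_j>b$, and its negative section is invariant. The link centred there is the elementary modification of $\mathcal E$ along $f_j$ with respect to the minimal quotient $\mathcal E|_{f_j}\to\O(\beta_j)$. It lowers the length of the torsion sheaf $R^1(\tau_a)_*\bigl(\mathcal E\otimes\O_{\F_a}(-(\beta+1)s_a)\bigr)$, which is supported exactly at the jumping points, by $\beta-\beta_j\geq1$. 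After finitely many equivariant links there is no jumping fibre. The bundle then has numerical invariants $(a,b,c)$: it is decomposable if $b=0$ or $c\leq1$ (this also settles your vague case $n=0$, which forces $b=0$), and otherwise it is some $Z_a^{b,c,P}$.

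Only now is the invariant fibre over $p$ used. Put $p$ at $z=0$. The link centred at the invariant negative section $\{x_0=z=0\}$ of that fibre surface is $[x_0:x_1]\mapsto[x_0:zx_1]$ on the first chart, and it turns $\nu_{c,P}$ into $\nu_{c-1,P}$. Thus $b$ stays fixed and $c$ drops by one. After finitely many steps the normal form of $P$ (Remark~\ref{rem:normal-form-reduction}) is zero, giving some $\FF_a^{b,c'}$. Your Step~3 then correctly gives the strict inclusion.
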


\subsection{\texorpdfstring{$\P^1$}{P1}-bundles over Hirzebruch surfaces with no jumping fibers}\label{sec:moduli P1 bundles over Fa}
In this section we recall certain results obtained in \cite[\S~3]{BFT}, valid over any algebraically closed base field, that we will need later.

\begin{definition}
Let $a,b,c\in \Z$ such that $a,b\ge 0$ and $c\le0$ if $b=0$.
We say that a $\P^1$-bundle $\pi\colon X\to \F_a$ \emph{has numerical invariants $(a,b,c)$} if it is the projectivization of a rank two vector bundle
$\mathcal{E}$ which fits into a short exact sequence
\[ 0 \to \O_{\F_a} \to \mathcal{E} \to \O_{\F_a}(-b\s{a}+cf) \to 0.\]
\end{definition}

\begin{example}
The $\P^1$-bundle $\FF_a^{b,c} \to \F_a$ has numerical invariants $(a,b,c)$.
\end{example}

Let $\pi\colon X\to \F_a$ be a $\P^1$-bundle; it is said to \emph{have no jumping fiber} if there exists $b \in \Z$ such that $(\tau_a \circ \pi)^{-1}(p) \simeq \F_b$ for every point $p \in \P^1$. Examples of $\P^1$-bundles over $\F_a$ having no jumping fibers are given by the $\P^1$-bundles $\FF_a^{b,c} \to \F_a$.

It follows from \cite[Proposition~3.3.1]{BFT} that a $\P^1$-bundle $\pi\colon X\to \F_a$ with numerical invariants $(a,b,c)$ has no jumping fiber, and that every $\P^1$-bundle $\pi\colon X\to \F_a$ with no jumping fiber has numerical invariants $(a,b,c)$, for some uniquely determined integers $b,c\in \Z$, with $b\ge 0$, and $c\le0$ if $b=0$. In addition, these numerical invariants are invariant under isomorphisms of $\P^1$-bundles. Let us also mention that, if $b=0$ or $c \leq 1$, every $\P^1$-bundle with numerical invariants $(a,b,c)$ is a decomposable $\P^1$-bundle by \cite[Corollary~3.3.7]{BFT}.

\begin{definition}\label{Not:ZabcP}
Let $a,b,c\in \Z$, with $a,b\ge 0$ and $c\le 0$ if $b=0$. For each 
\[
P\in\k\left[y_0,y_1,z, \frac{1}{z}\right]_b:=\left\{f\in \k\left[y_0,y_1,z,\frac{1}{z}\right]\text{ homogeneous of degree $b$ in }y_0,y_1\right\}
\] 
we denote by \[Z_{a}^{b,c,P}\to \F_a\] the $\P^1$-bundle, with numerical invariants $(a,b,c)$, obtained by gluing two copies of $\F_b\times \A^1$  along $\F_b \times \A^1 \setminus \{0\}$ through the automorphism $\nu_{c,P}\in \Aut(\F_b \times \A^1 \setminus \{0\})$ defined by
\[
\nu_{c,P}\colon([x_0:x_1;y_0:y_1],z) \mapsto \left([x_0:x_1z^{c}+x_0P(y_0,y_1,z);y_0z^a:y_1],\frac{1}{z}\right).\] 
The $\P^1$-bundle $Z_{a}^{b,c,P}\to \F_a$ sends  $([x_0:x_1;y_0:y_1],z)\in \F_b\times \A^1$ onto, respectively, $[y_0:y_1;1:z]\in \F_a$ on the first chart and $[y_0:y_1;z:1]\in \F_a$ on the second chart.
\end{definition}

In the following classification, isomorphisms of $\P^1$-bundles
are understood to induce the identity on $\F_a$.
We set $\k[z]_{\leq r}=\{0\}$ and
$\k[z_0,z_1]_r=\{0\}$ for $r<0$.

\begin{proposition}$($\cite[Corollaries~3.3.7 and~3.3.8]{BFT}$)$ \label{prop: paramter space for non-dceomposable P1 bundles}
Let $a,b,c\in \Z$, with $a\ge 0$, $b\ge1$ and $c\ge 2$.
The isomorphism classes of indecomposable $\P^1$-bundles $X \to \F_a$, with numerical invariants $(a,b,c)$, are parametrized by the projective space
\begin{equation*}
\M_{a}^{b,c}=\P \left( \bigoplus\limits_{i=0}^{b} y_0^i y_1^{b-i}\cdot\k[z]_{\leq c-2-ai} \right).
\end{equation*}
(For a finite-dimensional vector space $W$, the notation
$\P(W)$ below denotes the space
of one-dimensional subspaces of $W$.)
Indeed, every $\P^1$-bundle with numerical invariants $(a,b,c)$ is isomorphic to $Z_{a}^{b,c,P}\to \F_a$, where
\begin{equation}\label{eq:def of P}
P(y_0,y_1,z)=\sum_{i=0}^b  {y_0}^i{y_1}^{b-i}P_i(z)z^{ai+1},
\ P_i(z)\in \k[z]_{\le c-2-ai} \text{ for } i=0,\ldots,b.
\end{equation}
Moreover, if $P$ and $Q$ are both in the normal form
\eqref{eq:def of P}, then the $\P^1$-bundles
$Z_a^{b,c,P}\to\F_a$ and $Z_a^{b,c,Q}\to\F_a$
are isomorphic if and only if $Q=\lambda P$ for some
$\lambda\in\k^*$.
For $P$ in this normal form, the $\P^1$-bundle
$Z_a^{b,c,P}\to\F_a$ is decomposable if and only if $P=0$.
\end{proposition}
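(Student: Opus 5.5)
The statement to be justified is Proposition~\ref{prop: paramter space for non-dceomposable P1 bundles}. The plan is to reduce it to extension classes and then normalise the gluing polynomial by coordinate changes. A $\P^1$-bundle with numerical invariants $(a,b,c)$ is $\P(\mathcal{E})$, where $\mathcal{E}$ is an extension of $\O_{\F_a}(-b\s{a}+cf)$ by $\O_{\F_a}$. Since such a $\P(\mathcal E)$ has no jumping fiber, on each fiber $f\simeq\P^1$ of $\tau_a$ the restriction is $\O\oplus\O(-b)$. Hence $\mathcal{E}$ is trivial over the two affine charts $\tau_a^{-1}(\A^1)\simeq\P^1\times\A^1$ of $\F_a$, and the $\P^1$-bundle over each chart is $\F_b\times\A^1$. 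So $X$ is obtained by gluing two copies of $\F_b\times\A^1$ over $\A^1\setminus\{0\}$ by a family of automorphisms of $\F_b$ compatible with the twist of the base $\F_a$. By the description of $\Aut(\F_b)$ recalled in \S2, any such gluing can be brought, after changing coordinates on each chart, to the form $\nu_{c,P}$ with $P\in\k[y_0,y_1,z,1/z]_b$; the $\GL_2$-part and the $x_1$-scaling are absorbed, and the degree of the $x_1$-coefficient is forced to be $z^c$ by the numerical invariant $c$. This shows that every such bundle is $Z_a^{b,c,P}$.

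Next I normalise $P$. Automorphisms of the trivial bundle on the first chart have the form $x_1\mapsto x_1+x_0A(y_0,y_1,z)$ with $A$ polynomial in $z$, and similarly on the second chart in $1/z$. Each also carries a scalar rescaling. Conjugating $\nu_{c,P}$ by these changes $P$ by a coboundary: writing $P=\sum_i y_0^iy_1^{b-i}Q_i(z)$, the modification replaces $Q_i$ by $Q_i+z^{c}A_i(z)+z^{ai}B_i(1/z)$, up to the substitution $y_0\mapsto y_0z^a$. Thus the isomorphism class is governed by the \v{C}ech quotient
\[
\bigoplus_i y_0^iy_1^{b-i}\,\k[z,z^{-1}]\big/\bigl(z^{ai}\k[z^{-1}]+z^{c}\k[z]\bigr),
\]
modulo scalars. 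Each summand has basis $z^{ai+1},\dots,z^{c-1}$, which is exactly $z^{ai+1}\k[z]_{\le c-2-ai}$. This gives the normal form \eqref{eq:def of P}, and shows that $P=0$ gives $\FF_a^{b,c}$, since the gluing is then the standard one.

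It then remains to prove that two normal forms give isomorphic bundles only when they are proportional, and that $P\neq0$ gives an indecomposable bundle. I would argue that any isomorphism of $\P^1$-bundles over $\F_a$ preserves the subbundle $\O_{\F_a}\subset\mathcal E$. Because $b\ge1$, this is the unique section of the fiberwise $\F_b$ with negative self-intersection, so the isomorphism is a morphism of extensions. By the description of the automorphism group of the extension, it is then given exactly by the coboundary changes above together with scalars $x_0\mapsto\lambda x_0$. Consequently $Q=\lambda P$ in the quotient, and hence on normal forms. Indecomposability when $P\ne0$ follows similarly: a splitting would give a second section of the extension, so the class would be zero in $H^1(\F_a,\O(b\s{a}-cf))$. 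This $H^1$ is exactly the quotient space computed above.

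The main obstacle is the first step: justifying rigorously that the gluing automorphism can be put in the form $\nu_{c,P}$, with the $y$-twist $y_0z^a$ and the $x_1$-weight exactly $z^c$. I would first try to handle this by computing transition functions of $\mathcal E$ and of $\O_{\F_a}(-b\s a+cf)$ in the standard toric coordinates of Definition~\ref{def Fabc}, and comparing them.
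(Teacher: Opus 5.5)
Your proposal is correct and is essentially the argument behind the result the paper cites from \cite{BFT}: you split $\mathcal{E}$ over the two charts $\tau_a^{-1}(\A^1)$ so that $X$ is glued from two copies of $\F_b\times\A^1$, reduce $P$ modulo the coboundaries $z^cA_1(y_0,y_1,z)+A_2(y_0z^a,y_1,z^{-1})$ exactly as in Remark~\ref{rem:normal-form-reduction}, and get uniqueness from the fact that $b\geq1$ forces chart isomorphisms to be triangular (as in the proof of Lemma~\ref{AutZaF1}); the exponent $z^c$ you flag as the main obstacle is read off by comparison with the coordinates of $\FF_a^{b,c}$ in Definition~\ref{def Fabc}. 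The one step to tighten is indecomposability, since an isomorphism $X\simeq\P(L_1\oplus L_2)$ does not a priori split your given extension; however, because $b\geq1$, on each $\F_b$-fiber one of the two disjoint sections is the $(-b)$-curve, so by irreducibility of $\P^1$ one of them is the negative section $\{x_0=0\}$ and the other splits the extension (alternatively, combine uniqueness of normal forms with the invariance of $(a,b,c)$ and $\FF_a^{b,c}=Z_a^{b,c,0}$).
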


\begin{remark}\label{rem:normal-form-reduction}
	For a Laurent polynomial
	\[
	A(y_0,y_1,z)
	=
	\sum_{i=0}^b\sum_{j\in\Z}
	a_{ij}y_0^iy_1^{b-i}z^j,
	\]
	define
	\[
	\operatorname{red}_{a,c}(A)
	:=
	\sum_{i=0}^b
	\sum_{\substack{j\in\Z\\ ai<j<c}}
	a_{ij}y_0^iy_1^{b-i}z^j.
	\]
	Then
	\[
	Z_a^{b,c,A}\simeq
	Z_a^{b,c,\operatorname{red}_{a,c}(A)}
	\]
	as $\P^1$-bundles over $\F_a$.
	Indeed, changes of trivialization on the two charts allow
	one to add
	\[
	z^cA_1(y_0,y_1,z)+A_2(y_0z^a,y_1,z^{-1}),
	\]
	where $A_1,A_2\in\k[y_0,y_1,z]$ are homogeneous of degree
	$b$ in $y_0,y_1$; see \cite[Lemma~3.3.6]{BFT}.
	These changes remove precisely the terms with $j\geq c$
	or $j\leq ai$.
	
	Equivalently, if
	\[
	A=\sum_{i=0}^b y_0^iy_1^{b-i}z^{ai+1}A_i(z),
	\qquad A_i\in\k[z,z^{-1}],
	\]
	then reduction consists in retaining in $A_i$ only the
	powers $z^j$ with $0\leq j\leq c-2-ai$.
	Thus all negative powers and all powers exceeding
	$c-2-ai$ are discarded.
\end{remark}

The following lemma generalizes \cite[Remark~3.6.4]{BFT}.
\begin{lemma}\label{AutZaF1}
Let $a,b,c\in \Z$ with $a\ge 0$, $b\ge 1$, and $c\ge 2$, and let $Z_a^{b,c,P}\to\F_a$ be an indecomposable $\P^1$-bundle
with numerical invariants $(a,b,c)$, where $P$ is a
nonzero polynomial in the normal form \eqref{eq:def of P}.

The group of automorphisms of the $\P^1$-bundle $Z_{a}^{b,c,P}\to \F_a$ identifies with the vector group
$\bigoplus_{i=1}^b \k[z_0,z_1]_{ai-c}$, whose action on the first chart is given by
\[
([x_0:x_1;y_0:y_1],z)\longmapsto
\bigl([x_0:x_1+\sum_{i=1}^b x_0\,y_0^i y_1^{\,b-i} F_i(z,1);\,y_0:y_1],z\bigr),
\]
where $F=(F_i)_{i=1,\ldots,b}\in \bigoplus_{i=1}^b \k[z_0,z_1]_{ai-c}$.
\end{lemma}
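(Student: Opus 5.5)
An automorphism of the $\P^1$-bundle $Z_a^{b,c,P}\to\F_a$ induces the identity on $\F_a$. On each chart $\F_b\times\A^1$ it is therefore an automorphism of the trivial family of $\F_b$'s that is the identity on the $(y,z)$-coordinates. So it is given by a matrix in $\PGL_2$ with entries polynomial in $z$, of the shape allowed by Lemma~\ref{lem:aut-group-Fabc} for $\F_b$. On the first chart we write it as
\[
[x_0:x_1]\mapsto[\alpha x_0+\beta x_1:\gamma x_0+\delta x_1],
\]
with $\alpha,\delta\in\k[z]$, $\beta=0$ (no negative degree in $y$ when $b\geq1$), and $\gamma=\sum_{i=0}^b y_0^iy_1^{b-i}\gamma_i(z)$. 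We do the same on the second chart with primed data. The plan is to write out the compatibility condition $\nu_{c,P}\circ g=g'\circ\nu_{c,P}$ on the overlap $z\neq0$ and solve it.

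First I compute the condition. After the substitution $y_0\mapsto y_0z^a$, the map $\nu_{c,P}$ sends $x_1$ to $x_1z^c+x_0P$. Comparing coefficients of $x_0$ and $x_1$ up to a common scalar function of $z$ gives three equations.
\begin{enumerate}
\item From the $x_1$-coefficient: $\delta(z)=\lambda\,\delta'(1/z)$ and $\alpha(z)=\lambda\,\alpha'(1/z)$. Since these are polynomials in $z$ and in $1/z$ respectively, everything is constant, and after normalizing we may take $\alpha=\alpha'=1$ and $\delta=\delta'=d\in\k^*$.
\item From the $x_0$-coefficient:
\[
z^c\gamma(y_0,y_1,z)+dP=P+\gamma'(y_0z^a,y_1,1/z).
\]
\end{enumerate}
Writing $\gamma=\sum y_0^iy_1^{b-i}\gamma_i(z)$, the term $z^c\gamma_i$ contributes powers $z^j$ with $j\geq c$. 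The term $\gamma'_i(1/z)z^{ai}$ contributes powers $z^j$ with $j\leq ai$. The term $(d-1)P$ lives in the window $ai<j<c$, by the normal form \eqref{eq:def of P}.

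Next I extract the consequences, using the disjointness of these exponent ranges as in Remark~\ref{rem:normal-form-reduction}. Comparing the window $ai<j<c$ gives $(d-1)P=0$, hence $d=1$ because $P\neq0$. The remaining equation is $z^c\gamma_i(z)=\gamma'_i(1/z)z^{ai}$ for each $i$. The left side has exponents $\geq c$ and the right side exponents $\leq ai$, so for equality both sides must be a polynomial supported in $c\leq j\leq ai$. This forces $\gamma_i$ to be a polynomial of degree $\leq ai-c$, and then $\gamma'_i$ is determined by $\gamma_i$. It is nonzero only when $ai\geq c$, which in particular excludes $i=0$ since $c\geq2$. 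Homogenizing, $\gamma_i$ is identified with $F_i(z,1)$ for some $F_i\in\k[z_0,z_1]_{ai-c}$. Conversely, each such $F$ gives compatible maps on the two charts, so we obtain an automorphism. Composition adds the $\gamma$'s, so the group is the vector group $\bigoplus_{i=1}^b\k[z_0,z_1]_{ai-c}$, acting as stated.

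The main obstacle is the first step: justifying rigorously that any automorphism of the bundle is given chartwise by matrices of this triangular form with $\beta=0$. This amounts to knowing that the automorphisms of $\F_b\times\A^1$ over $\A^1$ preserving the projection to $\F_1$-coordinates are as in Lemma~\ref{lem:aut-group-Fabc}, applied over $\k[z]$ rather than over $\k$. I would deduce this from Lemma~\ref{lem:aut-group-Fabc} applied to $\FF_0^{b,0}$ restricted to an affine chart, or directly from the fact that the section $x_0=0$, the negative section of each fiber $\F_b$, is preserved when $b\geq1$. The rest of the argument is the bookkeeping with exponent windows.
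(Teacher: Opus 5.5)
Your proposal follows the paper's proof. You write the automorphism on each chart as $x_1\mapsto d\,x_1+x_0\sum_i y_0^iy_1^{b-i}f_i(z)$ with $d\in\k^*$, impose compatibility with $\nu_{c,P}$, and separate the exponent ranges $j\geq c$, $ai<j<c$, $j\leq ai$ to force $d=1$ and $f_i\in\k[z]_{\leq ai-c}$; the paper takes this chartwise form for granted from $b\geq1$, and your remark that the negative sections $x_0=0$ are preserved justifies it.

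One correction: the constancy of $\alpha,\delta$ does not follow from $\alpha(z)=\lambda\,\alpha'(1/z)$. The common factor $\lambda$ may be $\mu z^k$; for example, $\alpha=1+z$, $\alpha'=1+w$, $\lambda=z$ satisfies it. Constancy comes instead from invertibility over $\A^1$: choose a representative in $\GL_2(\k[z])$, so that $\alpha\delta\in\k^*$. This is why the paper takes these coefficients in $\k^*$ from the outset.
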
 

\begin{proof}
We write $P(y_0,y_1,z)=\sum_{i=0}^b P_i(z)\,z^{ai+1} y_0^{i}y_1^{b-i}$,
where $P_i(z)\in \k[z]_{\le c-2-ai}$ as in Proposition~\ref{prop: paramter space for non-dceomposable P1 bundles}. The gluing function is given by
\[
\nu_{c,P}\colon([x_0:x_1;y_0:y_1],z)\longmapsto
\bigl([x_0:x_1z^{c}+x_0P(y_0,y_1,z);\,y_0z^a:y_1],\tfrac{1}{z}\bigr).
\]
As $b\ge 1$, automorphisms of $\P^1$-bundles of $Z_{a}^{b,c,P}\to \F_a$ correspond, on the two charts, to elements $\chi_1,\chi_2\in \Aut(\F_b\times \A^1)$ given by
\[
\begin{array}{rccc}
\chi_1\colon&([x_0:x_1;y_0:y_1],z)&\longmapsto&
([x_0:\alpha x_1+\sum_{i=0}^b x_0 y_0^i y_1^{b-i} f_i(z);\,y_0:y_1],z),\\
\chi_2\colon&([x_0:x_1;y_0:y_1],z)&\longmapsto&
([x_0:\beta x_1+\sum_{i=0}^b x_0 y_0^i y_1^{b-i} g_i(z);\,y_0:y_1],z),
\end{array}
\]
for some $\alpha,\beta\in \k^*$ and $f_0,\ldots,f_b,g_0,\ldots,g_b\in \k[z]$. These satisfy $\nu_{c,P}\circ \chi_1=\chi_2\circ \nu_{c,P}$, which is equivalent to 
\[  \hspace{-7mm}
(\alpha x_1+\sum_{i=0}^b x_0 y_0^i y_1^{b-i} f_i(z))z^c + x_0P(y_0,y_1,z)
= \beta (x_1 z^{c}+x_0P(y_0,y_1,z)) + \sum_{i=0}^b x_0 (y_0z^a)^i y_1^{b-i} g_i\!\left(\tfrac{1}{z}\right).
\] 
The coefficient of $x_1$ yields $\beta=\alpha$. The coefficient of $x_0y_0^iy_1^{b-i}$ gives
\begin{equation}\label{eqfigi}
f_i(z)z^c+P_i(z)z^{ai+1}
=\alpha P_i(z) z^{ai+1}+ z^{ai}g_i\!\left(\tfrac{1}{z}\right).
\end{equation} 
As $P\neq 0$, there exists $i\in \{0,\ldots,b\}$ such that $P_i\neq 0$. For such $i$, we have $0\neq P_i(z)\in \k[z]_{\le c-2-ai}$, hence $P_i(z)z^{ai+1}$ is a sum of monomials $\lambda_j z^j$ with $ai+1\le j\le c-1$. 
Equation~\eqref{eqfigi} then implies $\alpha=1$ and $f_i=g_i=0$.
Now let $i\in \{0,\ldots,b\}$ with $P_i=0$. Then \eqref{eqfigi} becomes $f_i(z)z^c=z^{ai} g_i\!\left(\tfrac{1}{z}\right)$. If $ai<c$, this again gives $f_i=g_i=0$. If $ai\ge c$, then $f_i\in \k[z]_{\le ai-c}$, and every such $f_i$ occurs. This yields the result.
\end{proof}

We now introduce group actions on $\M_{a}^{b,c}$. 
Let $\pi\colon X \to \F_a$ be a $\P^1$-bundle with numerical invariants $(a,b,c)$. Then, for every $ \varphi \in \Autz(\F_a)$, the $\P^1$-bundle $\varphi \circ \pi\colon X\to \F_a$ also has numerical invariants $(a,b,c)$.
 Furthermore, two $\P^1$-bundles $\pi_1$ and $\pi_2$ are isomorphic if and only if $\varphi \circ \pi_1$ and $\varphi \circ \pi_2$ are isomorphic.
Hence, there is a natural $\Autz(\F_a)$-action on $\M_{a}^{b,c}$ given by
\begin{equation}\label{eq:Aut(Fa) action on Mabc}
\Autz(\F_a) \times \M_{a}^{b,c} \to \M_{a}^{b,c},\ 
(\varphi, [\pi] ) \mapsto  [\varphi \circ \pi].
\end{equation}
Also, composing this action with the homomorphism 
\[\GL_2 \to \Autz(\F_a), \ \begin{bmatrix}
\alpha & \beta \\ \gamma & \delta
\end{bmatrix} \mapsto \left( [y_0:y_1;z_0:z_1] \mapsto [y_0:y_1;\alpha z_0+\beta z_1: \gamma z_0+\delta z_1] \right)
\]
yields a $\GL_2$-action on $\M_{a}^{b,c}$. 

On the other hand, for each $r \geq 0$, we defined in Remark~\ref{rk: def of dual Vm} a linear $\GL_2$-action on $\k[z]_{\leq r} \simeq S^r(V^*)^*$. This induces another $\GL_2$-action on
\begin{equation*}
	\M_a^{b,c}
	=
	\P\left(
	\bigoplus_{i=0}^b
	y_0^iy_1^{b-i}\cdot\k[z]_{\leq c-2-ai}
	\right)
	\simeq
	\P\left(
	\bigoplus_{\substack{0\leq i\leq b\\ ai\leq c-2}}
	S^{c-2-ai}(V^*)^*
	\right),
\end{equation*}
by letting $\GL_2$ act trivially on $y_0$ and $y_1$.

\smallskip

The next lemma will be useful later to compute explicitly the $\Autz(\F_a)$-action on $\M_{a}^{b,c}$ defined by \eqref{eq:Aut(Fa) action on Mabc}. It will also imply that the two $\GL_2$-actions on $\M_{a}^{b,c}$ defined above coincide.

\begin{lemma}\label{Lemm:ActionAutFaParSpace} \emph{(\cite[Lemma~3.4.5]{BFT})}
Let $a \ge 0$, $b\ge 1$, and $c\ge 2$. Let $\pi\colon Z_{a}^{b,c,P}\to \F_a$ be the $\P^1$-bundle induced by
\[
P(y_0,y_1,z)=\sum_{i=0}^b y_0^i y_1^{b-i} P_i(z) z^{ai+1}, \qquad P_i\in \k[z]_{\le c-2-ai}
\]
$($see Definition~$\ref{Not:ZabcP})$.
For each $\varphi\in \Autz(\F_a)$, the $\P^1$-bundle $\varphi \circ \pi$ is isomorphic to
$Z_{a}^{b,c,\hat{P}}\to \F_a$, where
\[
\hat{P}(y_0,y_1,z)=\sum_{i=0}^b y_0^i y_1^{b-i} \hat{P}_i(z) z^{ai+1}
\]
is defined as follows.
\begin{enumerate}
\item\label{ActionGL2}
If
$\varphi([y_0:y_1;z_0:z_1])=[y_0:y_1;\alpha z_0+\beta z_1:\gamma z_0+\delta z_1]$
for some $\sigma=\begin{bmatrix}\alpha&\beta\\ \gamma&\delta\end{bmatrix}\in \GL_2$,
then $\hat{P}_i=\sigma \cdot P_i$ for each $i$, where the action of $\GL_2$ on the vector space
$\k[z]_{\le c-2-ai}$ is the one given in Remark~$\ref{rk: def of dual Vm}$.
\item\label{ActionGL20}
If $a=0$ and
$\varphi([y_0:y_1;z_0:z_1])=[\alpha y_0+\beta y_1:\gamma y_0+\delta y_1:z_0:z_1]$
for some $\alpha,\beta,\gamma,\delta\in \k$ with $\alpha\delta-\beta\gamma\not=0$,
then $\hat{P}(y_0,y_1,z)$ satisfies
\[
P(y_0,y_1,z)=\hat{P}(\alpha y_0+\beta y_1,\gamma y_0+\delta y_1,z).
\]
\item\label{ActionFap}    
If $a\geq1$ and $\varphi([y_0:y_1;z_0:z_1])
=
[y_0:y_1+y_0R(z_0,z_1);z_0:z_1]$
for some $R\in\k[z_0,z_1]_a$, then
\[
\hat{P}
=
\operatorname{red}_{a,c}
\bigl(P(y_0,y_1-y_0R(1,z),z)\bigr),
\]
where $\operatorname{red}_{a,c}(-)$ is the normal-form reduction
defined in Remark~$\ref{rem:normal-form-reduction}.$
\end{enumerate}
\end{lemma}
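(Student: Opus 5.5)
The plan is to compute $\varphi\circ\pi$ directly in the two-chart description of Definition~\ref{Not:ZabcP}. A $\P^1$-bundle over $\F_a$ given by gluing two copies of $\F_b\times\A^1$ through $\nu_{c,P}$ becomes, after composing with $\varphi$, a $\P^1$-bundle whose restrictions to the preimages of the two standard charts of $\F_a$ are still trivial in the same coordinates. The new gluing automorphism is therefore obtained by conjugating $\nu_{c,P}$ by the lifts of $\varphi$ to the charts. The resulting transition can then be brought back to the form $\nu_{c,\hat P}$ by changes of trivialization, and finally reduced to normal form using Remark~\ref{rem:normal-form-reduction}. Each of the three cases will be handled by writing an explicit lift of $\varphi$ to $X$ (or to the two trivial pieces) and reading off the transformed Laurent polynomial.

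Case~\ref{ActionGL20} and case~\ref{ActionFap} should be the easy ones, because $\varphi$ preserves the two charts $z_0\neq0$, $z_1\neq0$ of $\F_a$.

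\emph{Case~\ref{ActionGL20}.} For $a=0$, $\varphi$ acts only on $(y_0,y_1)$. The map $([x_0:x_1;y_0:y_1],z)\mapsto([x_0:x_1;\varphi(y)],z)$ is well defined on $\F_b\times\A^1$ since $\F_0$-coordinates are untwisted. Conjugating $\nu_{c,P}$ by this map on both charts replaces $P(y_0,y_1,z)$ by the polynomial $\hat P$ with $\hat P(\alpha y_0+\beta y_1,\gamma y_0+\delta y_1,z)=P(y_0,y_1,z)$. Note that $\hat P$ is automatically in normal form when $a=0$, since the exponent condition $0<j<c$ is independent of $i$.

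\emph{Case~\ref{ActionFap}.} On the first chart (coordinate $z=z_1/z_0$), $\varphi$ is $y_1\mapsto y_1+y_0R(1,z)$. On the second chart, $y_0z^a$ plays the role of $y_0$ and $\varphi$ is $y_1\mapsto y_1+(y_0z^a)R(z^{-1},1)$, consistent since $R$ is homogeneous of degree $a$. Lifting to $\F_b\times\A^1$ on each chart (compatible with the $\F_b$ structure) and conjugating yields the gluing with $P(y_0,y_1-y_0R(1,z),z)$, possibly up to factors absorbed by trivialization changes. Remark~\ref{rem:normal-form-reduction} then gives $\operatorname{red}_{a,c}$ of it.

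\emph{Case~\ref{ActionGL2}.} This is the main obstacle, since a general $\sigma\in\GL_2$ mixes the charts. I would first reduce to generators: diagonal matrices, the swap $\begin{bsmallmatrix}0&1\\1&0\end{bsmallmatrix}$ (or $\begin{bsmallmatrix}0&-1\\1&0\end{bsmallmatrix}$), and upper unipotents. This is legitimate since both sides define actions of $\GL_2$ on $\M_a^{b,c}$; the left side is an action by construction, the right by Remark~\ref{rk: def of dual Vm}.

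For diagonal and swap elements the charts are preserved or exchanged. The transformed gluing is computed directly, and one checks that $P_i(z)\mapsto a^jd^{m-j}$-scaled or $(-z)^{m-j}$-reversed coefficients match Remark~\ref{ExplicitActionSL2km}, with $m=c-2-ai$ and rescaling of $x_1$ absorbing constants.

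For the unipotent $z\mapsto z+\lambda$ on the first chart, the second-chart coordinate becomes $w/(1+\lambda w)$. I would write the new gluing, expand $(1+\lambda w)^{-k}$ as series, and use Remark~\ref{rem:normal-form-reduction} to discard powers outside $ai<j<c$. The surviving coefficients should be exactly the binomial sums $\sum_{j\ge i}\binom{j}{i}\lambda^{j-i}$ of the unipotent action in Remark~\ref{ExplicitActionSL2km}. This is where the care goes: tracking the factor $z^{ai+1}$ and the twist $z^c$ correctly under $z\mapsto z+\lambda$. I expect a cleaner route is to note that the reduction identifies the normal-form space with $H^1$ of $\O(-b s_a+cf)$ restricted fiberwise, i.e. with a dual of $\k[z_0,z_1]_{c-2-ai}$ by Serre duality on $\P^1$. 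That would explain the appearance of $S^{c-2-ai}(V^*)^*$ and give equivariance without explicit expansion.
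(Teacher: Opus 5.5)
Your proposal is correct. For case~(3) it is exactly the justification the paper gives after the statement: lift $y_1\mapsto y_1+y_0R(1,z)$ on the first chart and $y_1\mapsto y_1+y_0R(z,1)$ on the second, use $z^aR(z^{-1},1)=R(1,z)$, then reduce with Remark~\ref{rem:normal-form-reduction}. In fact no extra factors appear: the conjugated gluing is exactly $\nu_{c,Q}$ with $Q=P(y_0,y_1-y_0R(1,z),z)$.

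For cases~(1) and~(2) the paper gives no argument and simply cites \cite[Lemma~3.4.5]{BFT}, whereas you give a self-contained proof. Your case~(2) is complete. In case~(1) the reduction to generators is legitimate, because both sides are left actions of $\GL_2(\k)$ on $\M_a^{b,c}$ and the class of $Z_a^{b,c,P}$ depends only on $[P]$. For diagonal matrices and for either swap, the chart computation does give $\hat P_i=\kappa\,\sigma\cdot P_i$ with $\kappa$ independent of $i$. This $i$-independence is the real content, since $\theta I_2$ acts nontrivially on $\F_a$ (by $y_0\mapsto\theta^a y_0$) with $i$-dependent weights on the summands. The citation buys brevity; your route buys a characteristic-free proof from first principles, at the price of the unipotent case, which you only sketch.

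Three points need fixing in that case.

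First, the element acting by $z\mapsto z+\lambda$, $w\mapsto w/(1+\lambda w)$ is the lower unipotent $\begin{bsmallmatrix}1&0\\ \lambda&1\end{bsmallmatrix}$. By Remark~\ref{ExplicitActionSL2km} it acts by $z^j\mapsto\sum_{i\le j}\binom{m-i}{j-i}\lambda^{j-i}z^i$, not by the upper-unipotent formula you quote. Either unipotent subgroup works as a generator, but you must compare with the matching formula.

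Second, this element does not preserve the second chart. The lift there is given by $y_0\mapsto(1+\lambda w)^ay_0$, $x_1\mapsto(1+\lambda w)^cx_1$ plus a correction term, and it is only defined off $\{1+\lambda w=0\}$. In the target coordinate $t=z+\lambda$, the $i$-th component of the new transition is the rational function $t^c(t-\lambda)^{ai+1-c}P_i(t-\lambda)$, which has a pole at $t=\lambda$. Remark~\ref{rem:normal-form-reduction} only removes Laurent-polynomial terms, so a power-series tail cannot simply be discarded without justification. The clean fix is to expand in $s=t-\lambda$, which gives a finite Laurent polynomial. The terms $s^k$ with $k\le ai$ have the form $s^{ai}\cdot(\text{polynomial in } w=1/s)$, so they are absorbed by the correction term of the lift on the source second chart. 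Then reduce the remaining terms in $t$. Your expansion at $w=0$ yields the same truncation, because the difference is exactly such a term.

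Third, matching the result with the Remark amounts to the Chu--Vandermonde identity
$\sum_{k=0}^{N}(-1)^k\binom{c}{N-k}\binom{ai+1+\mu+k}{k}=\binom{m-\mu}{N}$,
where $m=c-2-ai$, $P_i=z^j$ and $N=j-\mu$. It holds over $\Z$, hence in characteristic $p$.

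Finally, the Serre-duality shortcut does not remove this work. It identifies each summand with the dual module only up to a character of $\GL_2$, and these characters depend on $i$, which is precisely what the lemma determines.
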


In case~\ref{ActionFap}, the automorphism $\varphi$ acts
on the first chart by
\[
(y_0,y_1,z)\longmapsto
(y_0,y_1+y_0R(1,z),z),
\]
and on the second chart by
\[
(y_0,y_1,z)\longmapsto
(y_0,y_1+y_0R(z,1),z).
\]
Since $z^aR(z^{-1},1)=R(1,z)$, compatibility with the
gluing gives the transition polynomial
$P(y_0,y_1-y_0R(1,z),z)$.
Remark~\ref{rem:normal-form-reduction} then gives its
normal representative.

Let us now focus on the case $a=0$. Then there is another $\GL_2$-action on $\M_0^{b,c}$ obtained by composing the action defined by \eqref{eq:Aut(Fa) action on Mabc} with the homomorphism 
\[\GL_2 \to \Autz(\F_a), \ \begin{bmatrix}
\alpha' & \beta' \\ \gamma' & \delta'
\end{bmatrix} \mapsto \left( [y_0:y_1;z_0:z_1] \mapsto [\alpha' y_0+\beta' y_1: \gamma' y_0+\delta' y_1;z_0:z_1] \right).
\]
In fact, by Lemma~\ref{Lemm:ActionAutFaParSpace}\ref{ActionGL20}, this $\GL_2$-action coincides with the $\GL_2$-action obtained by letting $\GL_2$ act linearly on $\k[y_0,y_1]_b$ through
 \[
 \left(\begin{bmatrix}
\alpha' & \beta' \\ \gamma' & \delta'
\end{bmatrix}^{-1},Q(y_0,y_1)\right)\mapsto Q(\alpha' y_0+\beta' y_1,\gamma' y_0+\delta' y_1).
\]
Then $\k[y_0,y_1]_b \simeq S^b(V^*)\simeq V_b$ as $\SL_2$-modules and 
$\M_{0}^{b,c} \simeq \P(V_b^{(1)} \otimes V_{c-2}^{*(2)})$ as $\SL_2^{(1)} \times \SL_2^{(2)}$-varieties (here the superscripts $(1)$ and $(2)$ label the two group
factors and indicate which factor acts on each representation;
they do not denote Frobenius twists). In particular, denoting by \[\Delta:=\{ (g,g),\ g \in \SL_2\} \subseteq \SL_2 \times \SL_2\] the diagonal group (isomorphic to $\SL_2$), we have
\begin{equation}\label{eq:Mobc as Delta variety}
\M_{0}^{b,c} \simeq \P(V_b \otimes V_{c-2}^{*}) \simeq \P(\Hom(V_{c-2},V_b))\ \text{as $\Delta$-varieties.}
\end{equation}

\subsection{Non-maximality of indecomposable \texorpdfstring{$\P^1$}{P1}-bundles over \texorpdfstring{$\F_0$}{F0}}
The main result of this subsection is
Corollary~\ref{cor: non-maximality of indecomposable},
which states that the identity component of the automorphism
group of an indecomposable $\P^1$-bundle over
$\F_0=\P^1\times\P^1$ is birationally conjugate to a proper
subgroup of the identity component of the automorphism group
of some decomposable $\P^1$-bundle over $\F_0$.

\begin{lemma}\label{Lemm:S1S2}
Let $\pi \colon X\to \F_0$ be a $\P^1$-bundle, let $b_1,b_2\ge 0$ be such that for $i=1,2$, every fiber of $\mathrm{pr}_i\circ \pi\colon X\to \P^1$ is isomorphic to $\F_{b_i}$, where  $\mathrm{pr}_i\colon \F_0\to \P^1$ is the $i$-th projection $($no jumping fibers with respect to either ruling$)$. Then, one of the following holds:
\begin{enumerate}
\item
$\pi$ is a decomposable $\P^1$-bundle.
\item
We have $b_1,b_2\geq1$, and for $i=1,2$, the union of the $(-b_{i})$-curves of the fibers of $\mathrm{pr}_i\circ \pi$ is a section $S_i\subseteq X$ of $\pi$. Moreover, $S_1\not=S_2$.
\end{enumerate}
\end{lemma}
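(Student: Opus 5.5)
The plan is to use the first ruling to put $\pi$ in a normal form and then read off the geometry of the second ruling. Since every fiber of $\mathrm{pr}_1\circ\pi$ is $\F_{b_1}$, the bundle has no jumping fibers with respect to $\mathrm{pr}_1$. By \cite[Proposition~3.3.1]{BFT} it therefore has numerical invariants $(0,b_1,c)$ for some $c$. By Proposition~\ref{prop: paramter space for non-dceomposable P1 bundles}, we may write $X\simeq Z_0^{b_1,c,P}$ with $P$ in the normal form \eqref{eq:def of P}.

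If $b_1=0$, or $c\leq 1$, or $P=0$, the bundle is decomposable, and we are in case (1). The same argument with the two factors exchanged (via the automorphism $\iota$ of $\F_0$) shows that $b_2=0$ also forces decomposability. So assume from now on that $b_1,b_2\geq1$, with $X=Z_0^{b_1,c,P}$ and $P\neq0$.

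\emph{Step 1: the section $S_1$.} For $b_1\geq1$, each fiber $\F_{b_1}$ contains a unique $(-b_1)$-curve. In the charts of Definition~\ref{Not:ZabcP}, this curve is given by $x_0=0$ in $\F_{b_1}\times\A^1$. The gluing $\nu_{c,P}$ sends $x_0=0$ to $x_0=0$, so these curves glue to the surface $\{x_0=0\}$. Its restriction to each fiber $\{[y_0:y_1]\}\times\{\text{point}\}$ of $\pi$ is the point $[0:1]$, so it is a section $S_1$ of $\pi$. By symmetry, the same argument applied to the second ruling gives a section $S_2$ (the normal form for $\mathrm{pr}_2$ has different coordinates, so it has to be built in that normal form).

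\emph{Step 2: $S_1\neq S_2$ unless $\pi$ is decomposable.} Suppose $S_1=S_2=:S$. Then the restriction of $S$ to any fiber $F$ of $\mathrm{pr}_2\circ\pi$ is the $(-b_2)$-curve of $F\simeq\F_{b_2}$. Restricting to a fiber of $\mathrm{pr}_2$, which is the curve $\{y\}\times\P^1\subset\F_0$, the bundle is $\P(\O\oplus\O(c))$ in the above coordinates with $y$ fixed. Here $\{x_0=0\}$ is the section coming from the sub-line-bundle.

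I would then compare normal bundles. The normal bundle of $S_1$ in $X$, viewed as a line bundle on $\F_0$, is computed from the extension $0\to\O\to\mathcal E\to\O(-b_1s_0+cf)\to0$, which gives $\O(\pm(b_1,-c))$ up to sign. Requiring that $S$ be the negative section on the fibers of both rulings forces the bidegree to be negative in both directions, i.e.\ $c\leq -b_2<0$. But the indecomposable case needs $c\geq 2$, which gives a contradiction. Alternatively, one can argue directly: if $S_1=S_2$, then $S$ is the negative curve on the $\mathrm{pr}_2$-fibers, forcing the extension class to split over each of these fibers. By a cohomology and base change argument (the class restricting to zero on every fiber lies in $H^1$ of a pullback), the extension splits globally, so $P=0$.

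The main obstacle is making Step 2 rigorous. Specifically, one has to justify the computation of the normal bundle, namely that $S_1$ has self-normal bundle of bidegree $(-b_1,\pm c)$, and to check the sign conventions so that it really contradicts negativity along the second ruling. I would first try the normal-bundle computation, using the charts and the transition $x_1\mapsto x_1z^c+x_0P$ restricted to $x_0=0$.
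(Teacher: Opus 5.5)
Your main route is the paper's argument: put $\pi$ in the normal form $Z_0^{b_1,c,P}$ for the first ruling, take $S_1=\{x_0=0\}$, and show that its restriction to a fiber of $\mathrm{pr}_2\circ\pi$ has positive normal degree, so it cannot be the $(-b_2)$-curve there.

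The step you flag as the main obstacle is a one-line chart computation, and it needs neither the global bidegree nor any sign bookkeeping. Fix $[y_0:y_1]$ to restrict to a fiber of $\mathrm{pr}_2$. Near $S_1$, the local coordinate $t=x_0/x_1$ transforms as
\[
t\longmapsto \frac{t}{z^c+tP(y_0,y_1,z)}\equiv z^{-c}t \pmod{t^2}.
\]
Hence the normal bundle of the restricted section is $\mathcal{O}_{\mathbb{P}^1}(c)$, and $c\geq 2>0>-b_2$, whatever $P$ is.

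Two side remarks in your proposal are wrong and should be dropped.

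First, the restriction of $\pi$ to a fiber of $\mathrm{pr}_2$ is not $\mathbb{P}(\mathcal{O}\oplus\mathcal{O}(c))$ in general. It is the projectivization of an extension of $\mathcal{O}(c)$ by $\mathcal{O}$ whose class is given by $P(y,\cdot)$, and this extension is typically non-split. If it were split, that fiber would be $\mathbb{F}_c$, giving $b_2=c$.

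Second, your alternative argument has its premise backwards. The hypothesis $S_1=S_2$ cannot force the extension to split fiberwise. On a fiber where the extension does split, $\{x_0=0\}$ is the section of self-intersection $+c$, not the negative one. The normal-bundle computation above is the whole argument.
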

\begin{proof}
	Suppose that $\pi$ is indecomposable.
	If $b_i=0$ for either ruling, the classification of $\P^1$-bundles
	with no jumping fibers implies that $\pi$ is decomposable
	by \cite[Corollary~3.3.7]{BFT}. Thus $b_1,b_2\geq1$.
	
	Applying the classification with respect to the first ruling,
	we obtain numerical invariants $(0,b_1,c)$ for some $c\geq2$.
	In the corresponding presentation
	$X\simeq Z_0^{b_1,c,P}$, the equations $x_0=0$ on the two
	charts define a section $S_1$ of $\pi$.
	On every fiber of $\mathrm{pr}_1\circ\pi$, this is the
	unique section of self-intersection $-b_1$.
	Applying the same argument to the second ruling gives
	a section $S_2$ whose restriction to each fiber of
	$\mathrm{pr}_2\circ\pi$ has self-intersection $-b_2$.
	
	To distinguish these sections, restrict the presentation
	$Z_0^{b_1,c,P}$ to a fiber of the second ruling.
	Near $S_1$, with local coordinate $t=x_0/x_1$, the
	transition function is
	\[
	t\longmapsto
	\frac{t}{z^c+tP(y_0,y_1,z)}
	=
	z^{-c}t\pmod{t^2}.
	\]
	Consequently, the normal line bundle of the restricted section
	is $\O_{\P^1}(c)$, so its self-intersection is $c>0$.
	It cannot coincide with the section of self-intersection
	$-b_2<0$.
	Hence $S_1\neq S_2$.
\end{proof} 

The following proposition distinguishes three cases, according to whether the $\P^1$-bundle is decomposable, has no jumping fibers with respect to either ruling of $\F_0$, or has an $\Autz(X)$-invariant fiber over one of the two rulings.

\begin{proposition}\label{prop:three case P1 bundles over F0}
Let $\pi\colon X\to \F_0$ be a $\P^1$-bundle.
Then, one of the following holds.
\begin{enumerate}
\item\label{P1P1dec}
The $\P^1$-bundle $\pi\colon X\to \F_0$ is a decomposable $\P^1$-bundle $\FF^{b,c}_0 \to \F_0$.
\item\label{P1P1sch}
The $\P^1$-bundle $\pi$ is indecomposable, and there exist
$b_1,b_2\geq1$ such that, for $i=1,2$, every fiber of
$\mathrm{pr}_i\circ\pi$ is isomorphic to $\F_{b_i}$.
The union of the $(-b_i)$-curves in these fibers is a
section $S_i\subseteq X$ of $\pi$, and $S_1\neq S_2$.
Moreover, viewed as an effective Cartier divisor on $S_1$,
the scheme-theoretic intersection $S_1\cap S_2$ is equal
to $mC$ for some integer $m\geq1$ and some smooth curve
$C\simeq\P^1$.
\item\label{P1P1Fixfiber}
There exist a closed point $p\in\P^1$ and
$i\in\{1,2\}$ such that
$(\mathrm{pr}_i\circ\pi)^{-1}(p)$ is invariant under
$\Autz(X)$.
\end{enumerate}
Moreover, in cases~\ref{P1P1sch} and~\ref{P1P1Fixfiber}, there is a decomposable $\P^1$-bundle $\FF_0^{b,c}\to \F_0$ and a commutative diagram
\[\xymatrix@R=3mm@C=2cm{
    X \ar@{-->}[rr]^{\psi} \ar[rd]_{\pi} & & \FF_0^{b,c} \ar[ld] \\
    & \F_0
  }\]
  where $\psi$ is a birational map satisfying $\psi\Autz(X)\psi^{-1}\subsetneq \Autz(\FF_0^{b,c})$. Also, in Case~\ref{P1P1sch}, the strict transforms $S_1'$ and $S_2'$ of $S_1$ and $S_2$ respectively are two disjoint sections of $\FF_0^{b,c} \to \F_0$.
\end{proposition}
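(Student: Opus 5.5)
We split according to whether $\Autz(X)$ preserves a fiber of one of the two rulings. If for some $i\in\{1,2\}$ and some closed point $p\in\P^1$ the surface $(\mathrm{pr}_i\circ\pi)^{-1}(p)$ is $\Autz(X)$-invariant, we are in case~\ref{P1P1Fixfiber}. After exchanging the factors if $i=2$, the existence of $\psi$ with $\psi\Autz(X)\psi^{-1}\subsetneq\Autz(\FF_0^{b,c})$ is then exactly Lemma~\ref{Lemm:fiberInvariant} applied with $a=0$.

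Otherwise $\Autz(X)$ acts on $X$, and by Blanchard's lemma also on $\F_0$ and on both factors $\P^1$. For each $i$ the isomorphism type of the fibers of $\mathrm{pr}_i\circ\pi$ is constant on orbits of the induced connected action on $\P^1$. The jumping locus (the set of $p$ where $b$ exceeds its generic value, by upper semicontinuity) is finite and invariant. By connectedness each of its points is fixed, and then its fiber is invariant, which we excluded. Hence there are no jumping fibers for either ruling, and Lemma~\ref{Lemm:S1S2} applies. Either $\pi$ is decomposable, giving case~\ref{P1P1dec} (isomorphic to some $\FF_0^{b,c}$ by \cite[Corollary~3.3.7]{BFT} and the normal form), or we obtain two distinct sections $S_1\neq S_2$ as in case~\ref{P1P1sch}.

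In case~\ref{P1P1sch}, the sections $S_1,S_2$ are intrinsically defined as unions of negative curves, so they are $\Autz(X)$-invariant. Their intersection $S_1\cap S_2$, viewed on $S_1\simeq\F_0$, is an invariant effective divisor $D$. It is nonzero: if $S_1$ and $S_2$ were disjoint, $X$ would be decomposable. The induced group on $\F_0$ is connected and fixes no fiber of either ruling, hence surjects onto each factor $\PGL_2$. I would show that any invariant effective divisor is then a multiple of a single reduced irreducible invariant curve $C$. The invariant curves for a subgroup of $\PGL_2\times\PGL_2$ surjecting on both factors are either nonexistent or, when the group is the graph of an automorphism (up to the diagonal), the diagonal-type graph $C\simeq\P^1$. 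The support of $D$ is therefore $C$, and $D=mC$ with $m\geq1$. This classification of subgroups, using Goursat's lemma and the fact that a proper connected subgroup of $\PGL_2\times\PGL_2$ surjecting on both factors is the graph of an automorphism of $\PGL_2$ (possibly twisted by Frobenius, which still gives a smooth graph curve), is the main obstacle. In characteristic $p$ one must also allow graphs of Frobenius powers, which are still smooth rational curves.

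Finally, to produce $\psi$ in case~\ref{P1P1sch}, I would perform $m$ successive equivariant elementary transformations of $X$ along $C\subset S_1$ (blow up $C$, contract the strict transform of $\pi^{-1}(C)$). Each step lowers the intersection multiplicity of the strict transforms of $S_1$ and $S_2$ by one and preserves invariance, so after $m$ steps the strict transforms $S_1',S_2'$ are disjoint invariant sections. The resulting bundle is then decomposable, namely some $\FF_0^{b,c}$. The map is $\Autz(X)$-equivariant since its centers are invariant. The inclusion is strict because the image group preserves the invariant curve $\pi'^{-1}(C)$ structure (it fixes a curve $C$ in the base), whereas $\Autz(\FF_0^{b,c})$ surjects onto $\Autz(\F_0)=\PGL_2^2$, which acts on $\F_0$ with no invariant curve.
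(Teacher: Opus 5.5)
Your proposal is correct. Its outer structure matches the paper:
\begin{itemize}
\item case~(3) via Lemma~\ref{Lemm:fiberInvariant};
\item transitivity on both factors of $\F_0$, then Lemma~\ref{Lemm:S1S2} and invariance of $S_1,S_2$;
\item $D\neq0$ by indecomposability;
\item $m$ elementary transformations along $C$;
\item strictness, because $\Autz(\FF_0^{b,c})$ is transitive on $\F_0$ while the image of $\Autz(X)$ preserves a curve.
\end{itemize}
Two small remarks. Your jumping-locus detour is valid but redundant, since you use right afterwards that a connected subgroup of $\PGL_2$ without a fixed point is all of $\PGL_2$. Also, one contracts the strict transform of $\pi^{-1}(\pi(C))$, not of $\pi^{-1}(C)$.

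The genuine difference is how you get $D=mC$ with $C\simeq\P^1$.

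\emph{Your route} classifies the image $H\subseteq\PGL_2\times\PGL_2$.
\begin{itemize}
\item Either $H$ is everything, which is excluded since $D\neq0$ is invariant, or $\dim H=3$.
\item In the second case the projection kernels are finite normal subgroups of a connected centerless group, hence trivial on $\k$-points.
\item So $H$ is a Frobenius-twisted graph. Its only invariant curve is, up to automorphisms of the factors, the graph of a Frobenius power in one direction or the other.
\end{itemize}

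\emph{The paper never identifies $H$.} It argues directly on $D_{\mathrm{red}}$.
\begin{itemize}
\item Each component of $D_{\mathrm{red}}$ is invariant and dominates both factors; otherwise some factor has a fixed point.
\item The singular locus of $D_{\mathrm{red}}$ is finite and invariant, hence empty.
\item Two curves dominating both rulings of $\P^1\times\P^1$ meet, so the smooth curve $D_{\mathrm{red}}$ is irreducible.
\item Every $H$-orbit in $D_{\mathrm{red}}$ is one-dimensional, hence open. So $D_{\mathrm{red}}$ is homogeneous under a connected linear group, hence unirational and isomorphic to $\P^1$.
\end{itemize}

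\emph{Comparison.} The paper's argument uses only the fixed-point property of proper connected subgroups of $\PGL_2$ and intersection numbers, and has no characteristic-$p$ subtleties. Yours depends on the fact that bijective isogenies of $\PGL_2$ (equivalently, bijective endomorphisms of $\P^1$) are automorphisms composed with Frobenius powers. That is exactly the step you leave as a sketch. In return it gives more: $H$ has exactly two orbits on $\F_0$, and $\pi(C)$ is explicitly a Frobenius graph.
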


\begin{proof}
	Set $G:=\Autz(X)$.
	By Blanchard's lemma
	(Proposition~\ref{prop:blanchard-lemma}), the action of $G$
	descends to $\F_0$ and to each factor $\P^1$.
	
	Suppose that neither~\ref{P1P1dec} nor~\ref{P1P1Fixfiber}
	holds.
	A proper connected algebraic subgroup of $\PGL_2$ fixes
	a point of $\P^1$.
	Thus $G$ acts transitively on each factor of $\F_0$.
	In particular, all fibers of each morphism
	$\mathrm{pr}_i\circ\pi$ are isomorphic.
	Lemma~\ref{Lemm:S1S2} therefore gives $b_1,b_2\geq1$
	and distinct sections $S_1,S_2$.
	Both sections are $G$-invariant, since the negative
	section of each Hirzebruch fiber is unique.
	
	Let $D\subseteq\F_0$ be the scheme-theoretic image of
	$S_1\cap S_2$ under the isomorphism
	$\pi|_{S_1}\colon S_1\to\F_0$.
	Since $S_1$ and $S_2$ are distinct sections, $D$ is an
	effective Cartier divisor.
	It is nonzero: otherwise the two sections would be
	disjoint, and $\pi$ would be decomposable.
	Moreover, $D$ is $G$-invariant.
	
	Since $G$ is connected, it preserves every irreducible
	component of $D_{\mathrm{red}}$.
	Each such component dominates both factors of $\F_0$;
	otherwise its image in one factor would be a fixed point.
	The singular locus of the reduced curve
	$D_{\mathrm{red}}$ is finite and $G$-invariant.
	It must be empty, since a nonempty finite invariant
	subset would yield a fixed point on each factor.
	Thus $D_{\mathrm{red}}$ is smooth.
	
	Distinct components of a smooth curve are disjoint.
	However, two curves in $\P^1\times\P^1$ that dominate
	both factors have positive intersection number.
	Hence $D_{\mathrm{red}}$ is irreducible.
	
	Let $H$ be the image of $G$ in
	$\Autz(\F_0)=\PGL_2\times\PGL_2$.
	Every $H$-orbit in $D_{\mathrm{red}}$ maps surjectively
	onto each factor $\P^1$, so every such orbit is
	one-dimensional and therefore open in $D_{\mathrm{red}}$.
	Irreducibility implies that there is only one orbit.
	Since $H$ is a connected linear algebraic group,
	$D_{\mathrm{red}}$ is unirational.
	Being smooth and projective, it is therefore isomorphic
	to $\P^1$.
	Consequently, $D=mD_{\mathrm{red}}$ for some $m\geq1$,
	which proves~\ref{P1P1sch}.
	
	We now prove the final assertion.
	In Case~\ref{P1P1Fixfiber}, it follows from
	Lemma~\ref{Lemm:fiberInvariant}, applied to the relevant
	ruling.
	
	Suppose that~\ref{P1P1sch} holds, and write
	$S_1\cap S_2=mC$ as in the statement.
	The sections $S_1,S_2$, the curve $C$, and its image
	$D_0:=\pi(C)$ are $G$-invariant.
	Perform the elementary transformation obtained by blowing
	up $C$ and contracting the strict transform of
	$\pi^{-1}(D_0)$.
	This gives a $G$-equivariant birational map to another
	$\P^1$-bundle over $\F_0$.
	
	This operation decreases the intersection multiplicity
	of the two sections along $D_0$ by one.
	Indeed, locally near $C$, let $f=0$ define $D_0$ and
	choose a fiber coordinate $t$ such that the sections
	are given by
	\[
	t=0,\qquad t=uf^m,
	\]
	where $u$ is a unit.
	The elementary transformation replaces $t$ by $t/f$,
	so the transformed sections are given by
	\[
	t/f=0,\qquad t/f=uf^{m-1}.
	\]
	After $m$ such transformations, their strict transforms
	are disjoint sections.
	The resulting $\P^1$-bundle is therefore decomposable,
	and hence isomorphic to $\FF_0^{b,c}\to\F_0$ for some
	$b,c$.
	We obtain a birational map over $\F_0$ satisfying
	\[
	\psi G\psi^{-1}\subseteq\Autz(\FF_0^{b,c}).
	\]
	
	Finally, the image of $G$ on $\F_0$ preserves the
	nonempty proper curve $D_0$.
	In contrast, $\Autz(\FF_0^{b,c})$ surjects onto
	$\Autz(\F_0)$ and therefore acts transitively on $\F_0$
	by Lemma~\ref{lem:aut-group-Fabc}.
	Since $\psi$ is a birational map over $\F_0$, the
	inclusion above is strict.
\end{proof}

The next result is a partial reformulation of Proposition \ref{prop:three case P1 bundles over F0}.

\begin{corollary}\label{cor: non-maximality of indecomposable}
Let $\pi\colon X\to \F_0$ be an indecomposable $\P^1$-bundle. Then there is a decomposable $\P^1$-bundle $\FF_0^{b,c}\to \F_0$ and a commutative diagram
\[\xymatrix@R=3mm@C=2cm{
    X \ar@{-->}[rr]^{\psi} \ar[rd]_{\pi} & & \FF_0^{b,c} \ar[ld] \\
    & \F_0
  }\]
  where $\psi$ is a birational map satisfying $\psi\Autz(X)\psi^{-1}\subsetneq \Autz(\FF_0^{b,c})$.
\end{corollary}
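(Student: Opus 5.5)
The plan is to derive the corollary directly from Proposition~\ref{prop:three case P1 bundles over F0}. Let $\pi\colon X\to\F_0$ be indecomposable. That proposition says exactly one of cases \ref{P1P1dec}, \ref{P1P1sch}, \ref{P1P1Fixfiber} occurs.

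First, case~\ref{P1P1dec} is impossible. In that case $\pi$ would be isomorphic, as a $\P^1$-bundle over $\F_0$, to some $\FF_0^{b,c}\to\F_0$. Each $\FF_0^{b,c}$ is the projectivization of the split vector bundle $\O_{\F_0}(b\s{0})\oplus\O_{\F_0}(cf)$ (Definition~\ref{def Fabc}). Being decomposable is invariant under isomorphism of $\P^1$-bundles, since an isomorphism carries a pair of disjoint sections to a pair of disjoint sections. This contradicts the hypothesis on $\pi$.

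Hence case~\ref{P1P1sch} or case~\ref{P1P1Fixfiber} holds. The final assertion of Proposition~\ref{prop:three case P1 bundles over F0} then supplies the decomposable bundle $\FF_0^{b,c}\to\F_0$ and the birational map $\psi$ over $\F_0$ with $\psi\Autz(X)\psi^{-1}\subsetneq\Autz(\FF_0^{b,c})$. This is precisely the claimed diagram.

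The argument has essentially no obstacle. The only point to watch is that ``decomposable'' in case~\ref{P1P1dec} means isomorphic over $\F_0$ to some $\FF_0^{b,c}$, so that indecomposability really excludes it. This is the convention used throughout the paper, so no further computation is needed.
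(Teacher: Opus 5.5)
Your proof is correct and is exactly the paper's argument: indecomposability rules out case~\ref{P1P1dec} of Proposition~\ref{prop:three case P1 bundles over F0}, and the final assertion of that proposition then gives the conclusion. One minor slip: the proposition only asserts that at least one of the three cases holds, not exactly one, but your argument uses only the former.
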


\begin{proof}
	Since $\pi$ is indecomposable, Case~\ref{P1P1dec} of
	Proposition~\ref{prop:three case P1 bundles over F0}
	does not occur. The conclusion follows from the final
	assertion of that proposition.
\end{proof}

\subsection{The \texorpdfstring{$\P^1$}{P1}-bundles \texorpdfstring{$\T^{b,c} \to \F_0$}{P1 x P1}}\label{sec: Tbc}
We study a family of indecomposable $\P^1$-bundles over $\F_0 = \P^1 \times \P^1$, whose identity component of the automorphism group is not maximal in $\Bir(\P^3)$ (see Corollary~\ref{cor: non-maximality of indecomposable}), but which will play an important role in Section~\ref{section: P1 bundles over P2} when studying $\P^1$-bundles over $\P^2$.

\begin{definition}\label{def hat Schwrzenberger}
Let $b \geq 1$ and $c \geq 3$ such that $c=b+2(l+1)$ with $l=0$ or satisfying $p^{s-1} \leq l <p^s$ and $b+l+1 \equiv 0\ (\text{mod }p^s)$ for some $s \geq 1$.
We define $\T^{b,c}\to \F_0$ to be the $\P^1$-bundle $Z_{0}^{b,c,P}\to \F_0$, where \[
P(y_0,y_1,z)=\sum_{i=0}^b y_0^iy_1^{b-i} P_i(z)z \text{ \ \  and \ \ } P_i(z)=\binom{l+i}{l} z^{l+i}.\]
\end{definition}

\begin{remark}
	Let us use the notation of Definition~\ref{def hat Schwrzenberger}.
\begin{itemize}
\item The $\P^1$-bundle  $\T^{b,c}\to \F_0$ has numerical invariants $(0,b,c)$, since it is equal to $Z_{0}^{b,c,P}$ for some polynomial $P$. Moreover, it is indecomposable, since $P$ is in normal form
and $P_0(z)=z^l\neq0$.
\item If $l=0$, then we recover the $\P^1$-bundle $\TT_b \to \F_0$ studied in \cite[\S~3.5]{BFT}.
\item
For each integer $i$ with $0\leq i\leq b$, we have
\[
\binom{l+i}{l}
\equiv(-1)^l\binom{b+l-i}{l}\pmod p.
\]
For $l=0$, this is immediate. For $l>0$, it follows from
Lemma~\ref{lem:simplebin}\,\ref{sb2}, since
$b+l+1\equiv0\pmod{p^s}$.
In particular $P_b(z)=(-1)^lz^{b+l}$ and $P_0(z)=z^l$.
\end{itemize}
\end{remark}

The next lemma characterizes the $\P^1$-bundles $\T^{b,c}$
by the action of their automorphism groups on the base.
As before, $[X\to\F_0]$ denotes the isomorphism class
of the $\P^1$-bundle over $\F_0$.

\begin{lemma}\label{lemma:horizontal_part_Tbc} 
Let $X \to \F_0$ be a $\P^1$-bundle, and set 
\[
H := \Im(\Autz(X) \to \Autz(\F_0)).
\]  
Then $H$ contains 
\[
\Delta' := \{ (g,g) \mid g \in \Aut(\P^1) \} \simeq \PGL_2
\] 
if and only if either
\begin{enumerate}
    \item $X \to \F_0$ is decomposable, in which case $H = \Autz(\F_0)$, or
    \item $[X \to \F_0] = [\T^{b,c} \to \F_0]$ for some $b,c \in \N$ satisfying the numerical conditions of Definition~$\ref{def hat Schwrzenberger}$, in which case $H = \Delta'$.
\end{enumerate}
\end{lemma}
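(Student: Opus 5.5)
We prove both directions, handling the decomposable case first and then reducing the indecomposable case to a statement about $\SL_2$-invariants in the parameter space $\M_0^{b,c}$.

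\emph{Decomposable case.} If $X\to\F_0$ is decomposable, Lemma~\ref{lem:aut-group-Fabc} shows that $\Autz(X)\to\Autz(\F_0)$ is surjective. Hence $H=\Autz(\F_0)\supseteq\Delta'$.

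\emph{Reduction for indecomposable $X$.} Assume $X$ is indecomposable and $\Delta'\subseteq H$. Then $H$ acts transitively on both factors of $\F_0$, so case~\ref{P1P1Fixfiber} of Proposition~\ref{prop:three case P1 bundles over F0} cannot occur, and we are in case~\ref{P1P1sch}. In particular $X$ has no jumping fibers for the first ruling. Hence $X\simeq Z_0^{b,c,P}$ with $b\geq1$, $c\geq2$ and $[P]\in\M_0^{b,c}$ nonzero in normal form (Proposition~\ref{prop: paramter space for non-dceomposable P1 bundles}). An element $\varphi\in\Autz(\F_0)$ lies in $H$ exactly when $\varphi\circ\pi\simeq\pi$, i.e.\ when $\varphi$ fixes $[P]\in\M_0^{b,c}$ for the action \eqref{eq:Aut(Fa) action on Mabc}. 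Therefore $\Delta'\subseteq H$ is equivalent to the condition that $[P]$ be fixed by the diagonal group $\Delta$ acting on $\M_0^{b,c}\simeq\P(\Hom(V_{c-2},V_b))$, by Lemma~\ref{Lemm:ActionAutFaParSpace} and \eqref{eq:Mobc as Delta variety}.

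\emph{Invariant lines.} Since $\SL_2$ is perfect, it has no nontrivial characters, so a $\Delta$-fixed line consists of $\Delta$-invariant vectors. These vectors are exactly the elements of $\Hom^{\SL_2}(V_{c-2},V_b)$. Proposition~\ref{prop: rep SL2 car p} then forces $c-2=b+2l$ with either $l=0$ or $l$ satisfying the congruence conditions in~\ref{mnp}. In both cases the invariant line is generated by the identity or by $\chi$. These are precisely the numerical conditions of Definition~\ref{def hat Schwrzenberger}, and the requirement $c\geq3$ is automatic.

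\emph{Main obstacle.} The step I expect to be the main obstacle is the bookkeeping showing that $\chi$ corresponds exactly to the polynomial $P$ with $P_i(z)=\binom{l+i}{l}z^{l+i}$. This requires tracking the identifications $\k[y_0,y_1]_b\simeq V_b$ (Remark~\ref{StdActionGL2}, involving the swap $P(z_0,z_1)\mapsto P(-y,x)$) and $\k[z]_{\le c-2}\simeq V_{c-2}^*$ (Remark~\ref{rk: def of dual Vm}).

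I would sidestep most of the sign issues with a direct check. A torus-weight computation shows that the only candidate fixed lines are spanned by $\sum_i\lambda_i y_0^iy_1^{b-i}z^{i+l+1}$. Invariance under the upper unipotent subgroup, using the explicit formulas of Remark~\ref{ExplicitActionSL2km} on both factors, gives the recursion $\lambda_i=\lambda_0\binom{l+i}{l}$. Invariance under $\sigma$ then holds by Lemma~\ref{lem:simplebin}\ref{sb2}, exactly as in the proof of Proposition~\ref{prop: rep SL2 car p}. Uniqueness of the line shows $[X]=[\T^{b,c}]$, and conversely $[\T^{b,c}]$ is $\Delta$-fixed, so $\Delta'\subseteq H$.

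\emph{Equality $H=\Delta'$.} For $X=\T^{b,c}$, the group $H$ is connected and contains $\Delta'$. The connected subgroups of $\PGL_2\times\PGL_2$ containing the diagonal are $\Delta'$ and $\PGL_2^2$, since the diagonal is a maximal connected subgroup. If $H=\Autz(\F_0)$, then $H$ would act transitively on $\F_0$. This contradicts the $G$-invariant curve $D_0=\pi(S_1\cap S_2)$ produced in Proposition~\ref{prop:three case P1 bundles over F0}\ref{P1P1sch}. Hence $H=\Delta'$.
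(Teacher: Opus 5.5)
Your proposal is correct and follows essentially the same route as the paper. You reduce to $\Delta$-fixed points of $\M_0^{b,c}\simeq\P(\Hom(V_{c-2},V_b))$, invoke Proposition~\ref{prop: rep SL2 car p} using that $\SL_2$ has no nontrivial characters, and then use the nonempty intersection of the invariant sections $S_1,S_2$ to force $H=\Delta'$. Your explicit torus-weight and unipotent check identifying the invariant line with $[\T^{b,c}]$, and your use of the maximality of $\Delta'$ in place of the paper's remark that $\Delta'$ is the stabilizer of the diagonal, are harmless variants that make steps the paper leaves implicit more explicit.
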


\begin{proof}
Assume that $\Delta' \subseteq H$. Then $\Autz(X)$ acts transitively on the set of fibers of each projection $\F_0 \to \P^1$, so $X \to \F_0$ has no jumping fiber. Hence $X \to \F_0$ is either decomposable or isomorphic to $Z_0^{b,c,P} \to \F_0$ for some $b \ge 1$, $c \ge 2$, and $P \in \k[y_0,y_1,z]$ as in \eqref{eq:def of P}.  

If $[X \to \F_0] = [Z_0^{b,c,P} \to \F_0]$, then 
\[
\Delta' \subseteq H \quad \Leftrightarrow \quad \Delta' \cdot [Z_0^{b,c,P} \to \F_0] = [Z_0^{b,c,P} \to \F_0],
\] 
i.e., $[Z_0^{b,c,P} \to \F_0]$ is a fixed point for the $\Delta$-action on $\M_0^{b,c}$ (the $\Delta$-action factorizes through $\Delta'$). 
Since $\SL_2$ has no nontrivial characters, a line in a
rational $\SL_2$-module is stable if and only if it is
generated by an invariant vector.
Thus the fixed points under consideration correspond to
nonzero elements of
$\Hom^{\SL_2}(V_{c-2},V_b)$, up to scalar multiplication.
By Proposition~\ref{prop: rep SL2 car p} and the isomorphism \eqref{eq:Mobc as Delta variety}, the fixed-point set $(\M_0^{b,c})^\Delta$ is either the singleton $[\T^{b,c} \to \F_0]$ (when $b,c$ satisfy Definition~\ref{def hat Schwrzenberger}) or empty. Hence $\Delta' \subseteq H$ implies that $X \to \F_0$ is either decomposable or $[X \to \F_0] = [\T^{b,c} \to \F_0]$.  

Conversely, for a decomposable $\P^1$-bundle, we have
$H=\Autz(\F_0)$ by Lemma~\ref{lem:aut-group-Fabc}.
For $\T^{b,c}$, the preceding computation shows that its
parameter is fixed by $\Delta'$.
Thus $\Delta'\subseteq H$.
Here we use the fact that the image of $\Autz(X)$ is the
reduced identity component of the stabilizer of the
isomorphism class $[X\to\F_0]$.

It remains to determine $H$ in the indecomposable case.
By Lemma~\ref{Lemm:S1S2}, there are two distinct
$H$-invariant sections $S_1,S_2$.
Their intersection is nonempty, since two disjoint sections
would imply that the $\P^1$-bundle is decomposable.
Its image in $\F_0$ is therefore a nonempty proper
$H$-invariant curve.
The diagonal subgroup $\Delta'$ has exactly two orbits on
$\F_0$: the diagonal $D$ and its complement.
Consequently, the reduced image of $S_1\cap S_2$ is $D$.
Since the stabilizer of $D$ in
$\Autz(\F_0)=\PGL_2\times\PGL_2$ is precisely $\Delta'$,
we obtain $H=\Delta'$.
\end{proof}

\begin{proposition}\label{prop:properties of the Tbc}
Let $b\ge1$ and $c\ge 3$ such that $c-2=b+2l$, where either $l=0$ or there exists $s\ge1$ with
$p^{s-1}\le l<p^s$ and $b+l+1\equiv0\pmod{p^s}$. Then:
\begin{enumerate}
\item\label{item vert part trivial}
$\Aut(\T^{b,c})_{\F_0}$ is trivial and $\Autz(\T^{b,c})\simeq\PGL_2$.
\item\label{item open orbit Tbc}
The variety $\T^{b,c}$ contains an open orbit isomorphic to $\PGL_2/H$, where
\[
H=
\left\{
\left[\begin{smallmatrix}t&0\\0&1\end{smallmatrix}\right]
\in\PGL_2
\ \middle|\ t^{b+l+1}=1
\right\}
\simeq\mu_{b+l+1},
\]
as a subgroup scheme.
\item\label{item unique involution Tbc}
There exists a unique involution $\sigma\in\Aut(\T^{b,c})$ such that
\[
\xymatrix@R=3mm{
\T^{b,c}\ar[r]^{\sigma}\ar[d]_{\pi} & \T^{b,c}\ar[d]^{\pi}\\
\F_0\ar[r]^{\tau} & \F_0
}
\]
commutes, where $\tau([y_0:y_1],[z_0:z_1])=([z_0:z_1],[y_0:y_1])$.
\item\label{item: open orbit in the quotient}
The quotient morphism $\T^{b,c}\to Z:=\T^{b,c}/\langle\sigma\rangle$ is
$\Autz(\T^{b,c})$-equivariant, and $Z$ contains an open orbit isomorphic to
$\PGL_2/H'$, where
\[
H'=\left\langle H,\begin{bmatrix}0& 1\\ -1&0\end{bmatrix}\right\rangle,  \text{ as group schemes},
\qquad \text{and}\qquad |H':H|=2.
\]
\end{enumerate}
\end{proposition}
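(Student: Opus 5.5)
For \ref{item vert part trivial}, I will use Lemma~\ref{AutZaF1} with $a=0$: the group of automorphisms of $Z_0^{b,c,P}\to\F_0$ is $\bigoplus_{i=1}^b\k[z_0,z_1]_{-c}$. Since $c\geq3$, this is trivial, so $\Aut(\T^{b,c})_{\F_0}$ is trivial. Hence $\Autz(\T^{b,c})\to H=\Delta'\simeq\PGL_2$ (Lemma~\ref{lemma:horizontal_part_Tbc}) has trivial kernel. To obtain an isomorphism rather than a bijective morphism, I will argue that the kernel is trivial as a group scheme, because the Lie algebra of vertical vector fields vanishes by the same computation. Alternatively, I will exhibit an explicit lift of $\SL_2$ acting through $\PGL_2$.

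For \ref{item open orbit Tbc}, I will produce the lift explicitly. The natural candidate uses the equivariant map $\chi\colon V_{c-2}\to V_b$ of Proposition~\ref{prop: rep SL2 car p}. Via $\M_0^{b,c}\simeq\P(\Hom(V_{c-2},V_b))$, this map is the point $[\T^{b,c}]$. I will describe $\T^{b,c}$ globally: over $\F_0=\P(V)\times\P(V)$, take a rank-two bundle built from $\chi$, for instance as the projectivization of an extension given by $\chi$. Then $\SL_2$ acts linearly, and $-I$ acts trivially on the projectivization. Next I will take a point $x$ over a point $(y,z)$ off the diagonal $D$, for instance $([1:0],[0:1])$. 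The stabilizer of $(y,z)$ in $\PGL_2$ is the diagonal torus. The torus acts on the fiber $\P^1$ by a character. From the transition function $x_1z^c+x_0P$, the weight at the fixed points comes from the exponent $c$ against the $y$-weights, giving $t^{b+l+1}$. I will verify this with Lemma~\ref{EqualityQyz}: near the antidiagonal point, the gluing simplifies to $y^{b+l+1}-z^{b+l+1}$ divided by $(y-z)^{l+1}$. The torus then acts on the fiber by $t\mapsto t^{b+l+1}$, fixing the two points given by the sections $S_1,S_2$ and acting transitively on the complement. So the orbit of a generic point is open and $3$-dimensional, with scheme-theoretic stabilizer $\mu_{b+l+1}$. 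This weight computation via Lemma~\ref{EqualityQyz} is where I expect the main difficulty: the scheme structure must be tracked when $p\mid b+l+1$.

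For \ref{item unique involution Tbc}, the swap $\tau$ normalizes $\Delta'$. Composing $\pi$ with $\tau$ gives a bundle whose image group still contains $\Delta'$. It is indecomposable with the same invariants, since the fibers over both rulings are $\F_b$ (by symmetry $b_1=b_2$, using the torus weight description). Thus $\tau\circ\pi$ is isomorphic to $\T^{b,c}$ by Lemma~\ref{lemma:horizontal_part_Tbc}, so a lift $\sigma$ exists. The lifts of $\tau$ form a torsor under $\Aut(\T^{b,c})_{\F_0}$, which is trivial. Therefore the lift is unique, and $\sigma^2$ lies over the identity and hence equals the identity.

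For \ref{item: open orbit in the quotient}, uniqueness implies that $\sigma$ commutes with $\Autz$, because conjugating $\sigma$ by any $g$ gives another lift of $\tau$ (as $\tau$ commutes with $\Delta'$). Hence the quotient map is equivariant. The open orbit of $Z$ is the image of $\PGL_2/H$, which is $\PGL_2/H'$. Here $H'$ is generated by $H$ and an element $w$ with $w\cdot x=\sigma(x)$. Since $\tau$ swaps $([1:0],[0:1])$, the relevant $w$ is the antidiagonal matrix, normalizing $H$ modulo $\pm$. The index is $2$ because $\sigma$ acts freely on the open orbit: it has no fixed points off the diagonal of the base.
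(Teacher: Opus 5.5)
Your treatment of (1), (2) and (4) follows the paper's route. For (2), the weight you predict is correct. The paper makes it rigorous by writing the lift of the torus on the first chart, $([x_0:x_1;y_0:y_1],z)\mapsto([\alpha^{-c}x_0:x_1;\alpha y_0:\alpha^{-1}y_1],\alpha^{-2}z)$, checking it against $\nu_{c,P}$, and reading off the character $\alpha^{-(b+c)}=\alpha^{-2(b+l+1)}$ on the fiber over $([0:1],[1:0])$. Lemma~\ref{EqualityQyz} is not needed, and the scheme structure is automatic because the stabilizer is the kernel of a character of the torus.

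The genuine gap is in (3), where you conclude that $\tau\circ\pi$ is isomorphic to $\T^{b,c}$. Lemma~\ref{lemma:horizontal_part_Tbc} only gives $\tau\circ\pi\simeq\T^{b',c'}$ for \emph{some} admissible pair $(b',c')$, and you must prove $(b',c')=(b,c)$. Your argument for $b'=b$ (``by symmetry $b_1=b_2$'') is circular: a symmetry between the two rulings is exactly what the existence of $\sigma$ asserts, and nothing established earlier provides it. It has to be computed. The paper restricts to $[y_0:y_1]=[1:0]$, where $P_b(z)=(-1)^lz^{b+l}$ by Lemma~\ref{lem:simplebin}. The transition matrix there is $\begin{bsmallmatrix}1&0\\(-1)^lz^{b+l+1}&z^c\end{bsmallmatrix}$, which changes of trivialization bring to $\mathrm{diag}(z^{b+l+1},z^{l+1})$, giving $\F_b$.

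Even with $b'=b$, you say nothing about $c'$, and $b'$ does not determine it. For fixed $b$, several values of $l$ may be admissible: for $p=3$ and $b=1$, both $l=0$ and $l=1$ are allowed, giving $\T^{1,3}$ and $\T^{1,5}$. The paper settles this with part (2). Since $\tau$ commutes with the diagonal action, $\T^{b,c}$ viewed over $\F_0$ via $\pi$ or via $\tau\circ\pi$ is the same $\PGL_2$-variety. Hence the open-orbit stabilizers $\mu_{b+l+1}$ and $\mu_{b'+l'+1}$ have the same length, so $l'=l$ and $c'=c$.

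An alternative uses the discriminant $c_1^2-4c_2=-2bc$ of the underlying rank-two bundle, which is invariant under twisting and under $\tau$. Together with $b'+c'=b+c$ (from the stabilizer lengths) and $c>b$, $c'>b'$, it recovers $(b',c')=(b,c)$ without the matrix computation.

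With this step supplied, your torsor argument for uniqueness and $\sigma^2=\Id$, and your derivation of (4), go through as in the paper.
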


\begin{proof} 
Applying Lemma~\ref{AutZaF1} with $a=0$, we obtain
\[
\Aut(\T^{b,c})_{\F_0}
\simeq
\bigoplus_{i=1}^b\k[z_0,z_1]_{-c}
=\{0\}.
\]
The chart computation in that lemma also shows that the
scheme-theoretic kernel is trivial.
Together with Lemma~\ref{lemma:horizontal_part_Tbc}, this gives
\[
\Autz(\T^{b,c})\simeq\Delta'\simeq\PGL_2,
\]
proving~\ref{item vert part trivial}. 

\smallskip

Let
\[
K=\left\{\begin{bmatrix}\alpha&0\\0&\alpha^{-1}\end{bmatrix}\mid\alpha\in\k^*\right\}
\simeq\G_m\subset\PGL_2.
\]
Its action on $\F_0$ is given by
\begin{equation}\label{eq:K-action on the basis}
\begin{bmatrix}\alpha&0\\0&\alpha^{-1}\end{bmatrix}\!\cdot\!
([y_0:y_1],[z_0:z_1])
=([\alpha y_0:\alpha^{-1}y_1],[\alpha z_0:\alpha^{-1}z_1]).
\end{equation}
In particular, $K$ stabilizes the two charts $\F_b \times \A^1$ introduced in Definition~\ref{Not:ZabcP} to define the $\P^1$-bundle $\T^{b,c} \to \F_0$. One can check, by using the transition function $\nu_{c,P}$ that defines the gluing between these two charts, that the regular $K$-action defined on the first chart by 
\begin{equation*} \begin{bmatrix} \alpha & 0 \\ 0 & \alpha^{-1} \end{bmatrix} \cdot ([x_0:x_1;y_0:y_1],z)=([\alpha^{-c} x_0:x_1;\alpha y_0:\alpha^{-1} y_1],\alpha^{-2}z).
\end{equation*} 
extends to a regular $K$-action on $\T^{b,c}$. Since $\Aut(\T^{b,c})_{\F_0}$ is trivial, this $K$-action on $\T^{b,c}$ is the unique lift of the $K$-action on the base $\F_0$ defined by \eqref{eq:K-action on the basis}.

Let $p_0=([1:1;0:1],0)$. Its stabilizer $H$ is contained in $K$ and consists of
those elements fixing $p_0$, hence
\[
H=
\operatorname{Im}\left(
\mu_{2(b+l+1)}
\rightarrow\PGL_2,\quad
\alpha\mapsto
\left[\begin{smallmatrix}\alpha&0\\0&\alpha^{-1}\end{smallmatrix}\right]
\right)
\simeq\mu_{b+l+1}.
\]
Since $\dim(\PGL_2\cdot p_0)=3$, this orbit is open, proving~\ref{item open orbit Tbc}.

\smallskip

We now consider the $\P^1$-bundle $X\to\F_0$ obtained by composing
$\pi\colon\T^{b,c}\to\F_0$ with the involution $\tau\colon\F_0\to\F_0$.
Since the actions of $\tau$ and $\Autz(\T^{b,c})\simeq\PGL_2$ on $\F_0$
commute, we have
\[
\Im\bigl(\Autz(X)\to\Autz(\F_0)\bigr)=\Delta'.
\]
By Lemma~\ref{lemma:horizontal_part_Tbc}, it follows that
$X\simeq\T^{b',c'}$ for some $b'\ge1$ and $c'\ge2$ satisfying
$c'-2=b'+2l'$, where either $l'=0$ or there exists $s'\ge1$ such that
$p^{s'-1}\le l'<p^{s'}$ and $b'+l'+1\equiv0\pmod{p^{s'}}$.

We claim that $b'=b$ and $c'=c$.
To compute the Hirzebruch index for the other ruling,
restrict to $[y_0:y_1]=[1:0]$.
The transition function of the resulting ruled surface is
\[
([x_0:x_1],z)\longmapsto
\left(
[x_0:x_1z^c+(-1)^lx_0z^{b+l+1}],
\frac1z
\right),
\]
since $P_b(z)=(-1)^lz^{b+l}$.
It is therefore induced by the matrix
$\begin{bsmallmatrix}
1 & 0 \\
(-1)^l z^{b+l+1} & z^c
\end{bsmallmatrix}$.
The equality
\[
\begin{bsmallmatrix}
0 & (-1)^l \\
-(-1)^l & z^{-b-l-1}
\end{bsmallmatrix}
\begin{bsmallmatrix}
1 & 0 \\
(-1)^l z^{b+l+1} & z^c
\end{bsmallmatrix}
\begin{bsmallmatrix}
1 & -(-1)^l z^{l+1} \\
0 & 1
\end{bsmallmatrix}
=
\begin{bsmallmatrix}
z^{b+l+1} & 0 \\
0 & z^{l+1}
\end{bsmallmatrix}
\]
then shows that $b' = b$.

Moreover, exchanging the two rulings does not change the
$\PGL_2$-action on the total space, since $\tau$ commutes
with the diagonal action.
The stabilizers of points in the open orbit therefore have
the same scheme-theoretic length.
By~\ref{item open orbit Tbc}, this gives
$b+l+1=b'+l'+1$.
Since $b'=b$, we obtain $l'=l$ and hence $c'=c$.

Consequently, there exists an automorphism $\sigma\in\Aut(\T^{b,c})$
such that the diagram
\begin{equation}\label{eq:diagram-commutes-with-sigma}
\xymatrix@R=3mm{
\T^{b,c}\ar[r]^{\sigma}\ar[d]_{\pi} &
\T^{b,c}\ar[d]^{\pi}\\
\F_0\ar[r]^{\tau} & \F_0
}
\end{equation}
commutes.
Since $\sigma^2\in\Aut(\T^{b,c})_{\F_0}=\{\Id\}$, the automorphism $\sigma$
is an involution.
If $\sigma'\in\Aut(\T^{b,c})$ is another involution making
\eqref{eq:diagram-commutes-with-sigma} commute, then
$\sigma\circ\sigma'\in\Aut(\T^{b,c})_{\F_0}=\{\Id\}$, hence $\sigma=\sigma'$.
This proves~\ref{item unique involution Tbc}.

\smallskip

The involution $\sigma$ from~\ref{item unique involution Tbc} defines a
$\Z/2\Z$-action on the projective variety $\T^{b,c}$, hence there exists a
geometric quotient
\[
q\colon\T^{b,c}\to Z:=\T^{b,c}/\langle\sigma\rangle,
\]
where $Z$ is projective. For every $\phi\in\Autz(\T^{b,c})$, we have
$\sigma\phi\sigma\phi^{-1}\in\Aut(\T^{b,c})_{\F_0}=\{\Id\}$; therefore the
actions of $\Autz(\T^{b,c})$ and $\langle\sigma\rangle$ commute. By the
universal property of geometric quotients, $\Autz(\T^{b,c})$ acts on $Z$ and
$q$ is $\Autz(\T^{b,c})$-equivariant.

By~\ref{item open orbit Tbc}, $\T^{b,c}$ contains an open $\PGL_2$-orbit
isomorphic to $\PGL_2/H$. Its image in $Z$ is an open $\PGL_2$-orbit
isomorphic to $\PGL_2/H'$, 
where, up to conjugacy in $\PGL_2$,
\[
H'=
\left\langle
H,
\left[\begin{smallmatrix}0&1\\-1&0\end{smallmatrix}\right]
\right\rangle,
\qquad
H'/H\simeq\Z/2\Z.
\]
Here $H'$ denotes the subgroup scheme generated by the
displayed subgroups.
More explicitly, $H'$ is the stabilizer of the unordered
pair $\{p_0,p_1\}$, where $p_1:=\sigma(p_0)$. As $p_0$ and $p_1$ have the same
stabilizer $H$, we have $H'=\langle H,g\rangle$ for some $g\in\PGL_2$ with
$g\cdot p_0=p_1$.
Since $p_0$ lies over $([0:1],[1:0])$ and $p_1$ over
$([1:0],[0:1])$, the element $g$ exchanges these two points
of the base. Thus
$g=
\left[\begin{smallmatrix}
	0&\beta\\
	-\beta^{-1}&0
\end{smallmatrix}\right]$
for some $\beta\in\k^\times$.
Conjugating by a suitable element of the diagonal torus
transforms $g$ into
$\left[\begin{smallmatrix}0&1\\-1&0\end{smallmatrix}\right]$
and preserves $H$.
This proves~\ref{item: open orbit in the quotient}.
\end{proof}

\begin{lemma}\label{TbcInvolutionExplicit}
Let $b\geq1$ and $l\geq0$ be integers, and set
$c=b+2(l+1)$.
Assume that either $l=0$, or there exists $s\geq1$
such that
\[
p^{s-1}\leq l<p^s,
\qquad b+l+1\equiv0\pmod{p^s}.
\]
We consider the $\P^1$-bundle $\T^{b,c}\to \F_0$ of Definition~$\ref{def hat Schwrzenberger}$. Restricting to the two affine open subsets of $\F_0$ given by \[(y,z)\mapsto [1:y;1:z]\text{ and }(y,z)\mapsto [y:1;z:1]\]
we obtain trivial $\P^1$-bundles, with the transition function $\nu_{b,c}\in \Aut(\P^1 \times (\A^1\setminus \{0\})^2)$ given by
\[
\nu_{b,c}\colon ([x_0:x_1],(y,z))\mapsto 
\Big([y^bx_0:x_1z^{c}+x_0\sum_{i=0}^b y^{b-i} \binom{l+i}{l} z^{l+i+1}],
(\tfrac{1}{y},\tfrac{1}{z})\Big).
\]
The unique involution $\sigma \in \Aut(\T^{b,c})$ of Proposition~$\ref{prop:properties of the Tbc}\ref{item unique involution Tbc}$ corresponds to 
\[
\begin{array}{rcl}
\sigma_1\colon ([x_0:x_1],(y,z)) &\mapsto& 
([x_0-(y-z)^{l+1}x_1 : (-1)^l x_1], (z,y)),\\[1mm]
\sigma_2\colon ([x_0:x_1],(y,z)) &\mapsto& 
([x_0-(z-y)^{l+1}x_1 : (-1)^l x_1], (z,y)).
\end{array}
\]
on both charts. In particular, $\T^{b,c}/\langle\sigma\rangle$ is smooth
	if $p=2$ or $l$ is even.
	If $p\geq 3$ and $l$ is odd, its singular locus consists
	of two disjoint smooth rational curves.
\end{lemma}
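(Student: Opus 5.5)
The proof has three parts: the change of charts, the involution, and the fixed locus.

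\emph{Charts.} Starting from Definition~\ref{Not:ZabcP} with $a=0$, I restrict the two charts $\F_b\times\A^1$ to the affine open subsets $y_0\neq0$ of $\F_b$. On these, the coordinate $[x_0:x_1;y_0:y_1]$ becomes $([x_0:x_1],y)$ with $y=y_1/y_0$. The induced transition on the second chart uses the coordinate $1/y$, and the twist $y^b$ on $x_0$ comes from the $\mu^{-b}$ weight of $x_0$. Dehomogenizing $P=\sum_i y_0^iy_1^{b-i}\binom{l+i}{l}z^{l+i+1}$ then gives the displayed $\nu_{b,c}$. This step is bookkeeping.

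\emph{The involution.} By the uniqueness in Proposition~\ref{prop:properties of the Tbc}\ref{item unique involution Tbc}, it suffices to check two things:
\begin{itemize}
\item $\sigma_1,\sigma_2$ are involutions covering $\tau$;
\item they are compatible with the gluing, i.e.\ $\nu_{b,c}\circ\sigma_1=\sigma_2\circ\nu_{b,c}$.
\end{itemize}
The first is direct. Applying $\sigma_1$ twice gives
\[x_0-(y-z)^{l+1}x_1-(z-y)^{l+1}(-1)^lx_1=x_0,\]
since $(z-y)^{l+1}(-1)^l=-(y-z)^{l+1}$; the second coordinate returns to $x_1$ as $((-1)^l)^2=1$.

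For the gluing, I compare both sides as points of $\P^1$. Write $Q(y,z)=\sum_i y^{b-i}\binom{l+i}{l}z^{l+i+1}$, and note the symmetry $\binom{l+i}{l}\equiv(-1)^l\binom{b+l-i}{l}$ from the remark after Definition~\ref{def hat Schwrzenberger}. The identity to verify reduces to
\[
Q(z,y)\,\cdot(\text{stuff})\ \text{vs}\ Q(y,z),
\]
and, after clearing powers of $y,z$, to
\[
z^{l+1}S(y,z)\,(y-z)^{l+1}\ \text{related to}\ y^{b+l+1}-z^{b+l+1},\qquad S=\sum_{i=0}^b\binom{l+i}{l}y^{b-i}z^i.
\]
This is exactly Lemma~\ref{EqualityQyz}, whose hypotheses match ours. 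This reduction is the main obstacle: the signs and the factors $(-1)^l$ must be tracked carefully. I would first test it on $l=0$, $b=1$, and then do the general case by multiplying out the $2\times2$ matrices representing both composites, as in the proof of Proposition~\ref{prop:properties of the Tbc}.

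\emph{Fixed locus and singularities.} The quotient by an involution is smooth at the image of a fixed point when the fixed locus there is a divisor, or when $p=2$, where I would argue separately that the action is a wild $\Z/2$-action fixing a divisor. When the fixed locus has codimension $2$ and $p\neq2$, the quotient is singular ($A_1$ along a curve). So I compute the fixed points on the chart: they lie over the diagonal $y=z$. There $\sigma_1$ acts by $[x_0:x_1]\mapsto[x_0:(-1)^lx_1]$, with linearization in the normal direction $y-z\mapsto-(y-z)$.
\begin{itemize}
\item If $l$ is even, the fiber over each diagonal point is pointwise fixed, so the fixed locus is the surface $\pi^{-1}(D)$, a divisor, and the quotient is smooth.
\item If $l$ is odd and $p\geq3$, the fixed points are the two curves $x_0=0$ and $x_1=0$ over the diagonal. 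These are disjoint smooth rational curves of codimension $2$, where the involution acts as $-1$ on the normal bundle, producing transversal $A_1$ singularities.
\item If $p=2$, then $(-1)^l=1$ and the fixed locus is again the whole of $\pi^{-1}(D)$, giving smoothness by the divisorial fixed-locus criterion (Chevalley–Shephard–Todd type for pseudo-reflections, valid also for order $p$ via explicit invariants $y+z,\,yz$ and a suitable fiber coordinate).
\end{itemize}
The second chart is handled symmetrically.
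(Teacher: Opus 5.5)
Your plan follows the paper's proof: identify the charts, verify $\nu_{b,c}\circ\sigma_1=\sigma_2\circ\nu_{b,c}$ via Lemma~\ref{EqualityQyz}, invoke uniqueness, and analyse the fixed points over the diagonal. Lemma~\ref{EqualityQyz} supplies both $(y-z)^{l+1}Q(y,z)=y^{b+l+1}-z^{b+l+1}$ and $Q(z,y)=(-1)^lQ(y,z)$ for $Q=\sum_i\binom{l+i}{l}y^{b-i}z^i$. Your tame local forms for $p\geq3$ also match the paper: $(s,t,w)\mapsto(s,-t,w)$ when $l$ is even, and $(s,-t,-w)$ when $l$ is odd. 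Two steps are missing, however.

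\emph{First, the extension across the uncovered fibers.} The two affine charts of $\F_0$ miss the points $([1:0],[0:1])$ and $([0:1],[1:0])$. So formulas $\sigma_1,\sigma_2$ compatible with $\nu_{b,c}$ only define an automorphism of $\pi^{-1}(U)$, where $U=\F_0\setminus\{([1:0],[0:1]),([0:1],[1:0])\}$. The uniqueness in Proposition~\ref{prop:properties of the Tbc}\ref{item unique involution Tbc} concerns elements of $\Aut(\T^{b,c})$. Your claim that your two checks suffice is therefore not correct as stated: you must first extend the glued map across the two missing fibers. The paper does this with \cite[Lemma~2.3.1]{BFT}. Conversely, your first check is unnecessary. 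Any global lift of $\tau$ is automatically an involution, since its square lies in $\Aut(\T^{b,c})_{\F_0}=\{\Id\}$.

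\emph{Second, the case $p=2$.} Here the action is wild, so there is no linearization and the Chevalley--Shephard--Todd reasoning you invoke is unavailable. A divisorial fixed locus is not by itself a smoothness criterion you can cite, so your argument rests entirely on the ``suitable fiber coordinate'' you leave unspecified. The missing idea is an explicit conjugation. With $u=y+z$, the chart automorphism $([x_0:x_1],(y,z))\mapsto([x_0+yu^lx_1:x_1],(y,z))$ conjugates $\sigma$ to the bare swap $([x_0:x_1],(y,z))\mapsto([x_0:x_1],(z,y))$. This holds because $yu^l+zu^l=u^{l+1}$ in characteristic~$2$. Equivalently, on $x_1\neq0$ the fiber coordinate $x_0/x_1+y(y+z)^l$ is invariant. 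Each chart of the quotient is then $\P^1\times\A^2$ via $(y+z,yz)$. Away from $\pi^{-1}(D)$ the quotient map is \'etale, so the quotient is smooth.

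With these two additions, your argument is the paper's.
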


\begin{proof}
Recall that $\T^{b,c}\to \F_0$ is obtained by gluing two copies of $\F_b\times \A^1$ along 
$\F_b\times(\A^1\setminus\{0\})$ via the automorphism 
$\nu_{c,P}\in \Aut(\F_b\times(\A^1\setminus\{0\}))$ defined by 
\[
\begin{array}{c}
\nu_{c,P}\colon([x_0:x_1;y_0:y_1],z) \mapsto 
\left([x_0:x_1z^{c}+x_0P(y_0,y_1,z);y_0:y_1],\frac{1}{z}\right), \text{ where }\\[0.3em]
P(y_0,y_1,z)=\sum_{i=0}^b y_0^iy_1^{b-i} P_i(z)z 
\quad \text{ and } \quad 
P_i(z)=\binom{l+i}{l} z^{l+i}.
\end{array}
\]

We choose two embeddings of $\A^2$ into $\F_0=\P^1\times \P^1$, namely
$(y,z)\mapsto [1:y;1:z]$ and $(y,z)\mapsto [y:1;z:1]$, and obtain two copies of 
$\P^1\times \A^2$ inside $\T^{b,c}$.
The first one is given by the embedding into $\F_b\times \A^1$ on the first chart,
\[
([x_0:x_1],(y,z))\mapsto ([x_0:x_1;1:y],z),
\]
and the second is given by
\[
([x_0:x_1],(y,z))\mapsto ([x_0:x_1;y:1],z)
\]
on the second chart.
As $\nu_{c,P}$ sends $([x_0:x_1;1:y],z)$ onto
\[
([x_0:x_1z^{c}+x_0P(1,y,z);1:y],\tfrac{1}{z})
=([y^bx_0:x_1z^{c}+x_0P(1,y,z);\tfrac{1}{y}:1],\tfrac{1}{z}),
\]
we obtain two copies of $\P^1\times \A^2$, together with the gluing function
$\nu_{b,c}\in \Aut(\P^1 \times (\A^1\setminus \{0\})^2)$ of the statement. To prove that the unique involution $\sigma \in \Aut(\T^{b,c})$ from
Proposition~\ref{prop:properties of the Tbc}\ref{item unique involution Tbc} 
corresponds to the two involutions 
$\sigma_1, \sigma_2 \in \Aut(\P^1 \times \A^2)$ of the statement, one needs to prove  that
\[
\sigma_2 \circ \nu_{b,c} = \nu_{b,c} \circ \sigma_1.
\]
Let
\[
Q(y,z)=\sum_{i=0}^b \binom{l+i}{l} y^{b-i} z^i \in \k[y,z].
\]
By Lemma~\ref{EqualityQyz},
\[
Q(y,z)=\frac{y^{b+l+1}-z^{b+l+1}}{(y-z)^{l+1}},
\]
hence in particular $Q(z,y)=(-1)^l Q(y,z)$.
We obtain
\[
\nu_{b,c}([x_0:x_1],(y,z))
=
\bigl([y^b x_0 : z^{l+1}Q(y,z)x_0 + z^c x_1],(\tfrac{1}{y},\tfrac{1}{z})\bigr),
\]
and therefore
\[
\sigma_2 \circ \nu_{b,c}([x_0:x_1],(y,z))
=
([A:B],(\tfrac{1}{z},\tfrac{1}{y})),
\quad
\nu_{b,c} \circ \sigma_1([x_0:x_1],(y,z))
=
([C:D],(\tfrac{1}{z},\tfrac{1}{y})),
\]
for some $A,B,C,D \in \k[x_0,x_1](y,z)$. We check that $[A:B]=[C:D]$:
\[
\begin{array}{rcl}
[A:B]
&=&
\bigl[
y^b x_0 - \left(\tfrac{1}{z}-\tfrac{1}{y}\right)^{l+1}
\bigl(z^{l+1}Q(y,z)x_0 + z^c x_1\bigr)
:
(-1)^l\bigl(z^{l+1}Q(y,z)x_0 + z^c x_1\bigr)
\bigr]
\\[4pt]
&=&
\bigl[
\frac{z^{b+l+1}}{y^{l+1}} x_0
-
\frac{(y-z)^{l+1} z^{b+l+1}}{y^{l+1}} x_1
:
(-1)^l z^{l+1}Q(y,z)x_0 + (-1)^l z^c x_1
\bigr],
\\[6pt]

[C:D]&=&[z^b (x_0 - (y-z)^{l+1} x_1) : y^{l+1}Q(z,y)(x_0-(y-z)^{l+1}x_1)+y^c(-1)^lx_1]\\ &=&[z^bx_0-(y-z)^{l+1}z^bx_1:(-1)^ly^{l+1} Q(y,z)x_0+(-1)^ly^{l+1} z^{b+l+1} x_1].
\end{array}
\]
Multiplying the displayed coordinates $A,B$ by
$(y/z)^{l+1}$ gives $C,D$, respectively.

The two base charts cover $\F_0$ except for
$([1:0],[0:1])$ and $([0:1],[1:0])$.
By \cite[Lemma~2.3.1]{BFT}, the compatible formulas
extend uniquely to an automorphism of $\T^{b,c}$,
which is the lift $\sigma$.

Suppose first that $p=2$, and put $u=y+z$.
On both charts, the involution is given by
\[
\sigma([x_0:x_1],(y,z))
=
([x_0+u^{l+1}x_1:x_1],(z,y)).
\]
Consider the automorphism
\[
\tau([x_0:x_1],(y,z))
=
([x_0+y u^l x_1:x_1],(y,z)).
\]
Since $y u^l+z u^l=u^{l+1}$, we obtain
\[
\tau^{-1}\sigma\tau([x_0:x_1],(y,z))
=
([x_0:x_1],(z,y)).
\]
Consequently, the quotient on each chart is
$\P^1\times\A^2$, with quotient morphism, in these
new coordinates, given by
\[
([x_0:x_1],(y,z))
\longmapsto
([x_0:x_1],(y+z,yz)).
\]
Thus the quotient is smooth near the image of
$\pi^{-1}(D)$, where $D\subseteq\F_0$ is the diagonal.
Every fixed point lies over $D$, and away from the
fixed locus the quotient morphism is étale.
Hence $\T^{b,c}/\langle\sigma\rangle$ is smooth.

We now assume that $p\geq 3$.
Every fixed point lies over the diagonal
$D\subseteq\F_0$, which is covered by the two charts.
If $l$ is even, the fixed locus is $\pi^{-1}(D)$;
locally at a fixed point, the involution has the form
\[
(s,t,w)\longmapsto(s,-t,w).
\]
Its quotient is therefore smooth.

If $l$ is odd, the fixed locus consists locally of
$y=z$ together with either $x_0=0$ or $x_1=0$.
These define two disjoint sections over $D$:
indeed, $Q(y,y)=0$, so the transition matrix is diagonal
along $D$.
At each fixed point, the involution has local form
\[
(s,t,w)\longmapsto(s,-t,-w),
\]
and the completed local ring of the quotient is
\[
\k[[s,A,B,C]]/(AC-B^2).
\]
Away from the fixed locus, the quotient morphism is
étale. Thus the singular locus consists of the images
of the two fixed sections, which are disjoint smooth
rational curves.
\end{proof}

For $p\geq3$ and even $l$, the following proposition realizes
$\T^{b,c}/\langle\sigma\rangle$ as a $\P^1$-bundle
over $\P^2$. These quotients will be studied in
Section~\ref{sec: Schwarzenbergger bundles}.

\begin{proposition}\label{prop: def of the Sbc}
	Assume that $p\geq 3$.
	Let $b,c,l$ be as in Lemma~$\ref{TbcInvolutionExplicit}$,
	and assume that $l$ is even.
	Then the quotient $\T^{b,c}/\langle\sigma\rangle$ is a
	$\P^1$-bundle over $\P^2$, fitting into the commutative diagram
	\[
	\xymatrix@R=3mm{
		\T^{b,c}\ar[r]\ar[d] &
		\T^{b,c}/\langle\sigma\rangle\ar[d]\\
		\F_0\ar[r]^{\kappa} & \P^2,
	}
	\]
	where $\kappa$ is the double covering
	\[
	\begin{array}{rccc}
		\kappa\colon & \F_0 & \to & \P^2\\
		& ([y_0:y_1],[z_0:z_1])
		& \mapsto &
		[y_0z_0:y_0z_1+y_1z_0:y_1z_1].
	\end{array}
	\]
	
	Set
	\[
	U_0=\{[1:u:v]\mid (u,v)\in\A^2\},
	\qquad
	U_2=\{[v:u:1]\mid (u,v)\in\A^2\}.
	\]
	The restriction of this $\P^1$-bundle to $U_0\cup U_2$ is obtained
	by gluing $\P^1\times U_0$ and $\P^1\times U_2$ over
	$U_0\cap U_2$ via
	\[
	\begin{aligned}
		\nu_{02}\colon\quad
		\P^1\times(U_0\cap U_2)
		&\longrightarrow
		\P^1\times(U_0\cap U_2),\\
		([x_0:x_1],[1:u:v])
		&\longmapsto
		\left(
		[\beta_{11}(u,v)x_0+\beta_{12}(u,v)x_1:
		\beta_{21}(u,v)x_0+\beta_{22}(u,v)x_1],
		[v^{-1}:uv^{-1}:1]
		\right),
	\end{aligned}
	\]
	where $\beta_{ij}\in\k[u,v]$ are uniquely determined by
	\[
	\begin{bmatrix}
		\beta_{11}(y+z,yz)&\beta_{12}(y+z,yz)\\
		\beta_{21}(y+z,yz)&\beta_{22}(y+z,yz)
	\end{bmatrix}
	=
	\begin{bmatrix}
		y^{b+l+1}+z^{b+l+1}
		&
		(y-z)^{2(l+1)}Q(y,z)
		\\
		(yz)^{l+1}Q(y,z)
		&
		(yz)^{l+1}(y^{b+l+1}+z^{b+l+1})
	\end{bmatrix},
	\]
	with
	\[
	Q(y,z)=\sum_{i=0}^b
	\binom{l+i}{l}y^{b-i}z^i.
	\]
	All four entries are symmetric polynomials in $y,z$, and
	\[
	\det(\beta_{ij}(u,v))=4v^c.
	\]
	In particular, this matrix is invertible on $U_0\cap U_2$.
\end{proposition}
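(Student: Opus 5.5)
The plan is to build the quotient chart by chart, starting from the description of $\sigma$ in Lemma~\ref{TbcInvolutionExplicit}. First, $\kappa$ is the quotient of $\F_0$ by the swap $\tau$. In the affine coordinates $(y,z)\mapsto[1:y;1:z]$ it reads $(y,z)\mapsto[1:y+z:yz]\in U_0$. In the second chart $(y,z)\mapsto[y:1;z:1]$ it reads $(y,z)\mapsto[yz:y+z:1]\in U_2$. The two charts of $\F_0$ are $\tau$-stable. Over them $\T^{b,c}$ is $\P^1\times\A^2$, and $\sigma$ acts by $\sigma_1$, respectively $\sigma_2$.

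Since $l$ is even and $p\ge3$, we have $(-1)^l=1$, so $\sigma_1$ acts by the unipotent matrix $\begin{bsmallmatrix}1&-(y-z)^{l+1}\\0&1\end{bsmallmatrix}$ together with the swap. I will linearize this as in the $p=2$ computation of that lemma. The point is to find a coordinate change $g(y,z)\in\GL_2(\k[y,z])$ with $g(z,y)^{-1}\begin{bsmallmatrix}1&-(y-z)^{l+1}\\0&1\end{bsmallmatrix}g(y,z)=\Id$.

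Because $l+1$ is odd, $w:=y-z$ satisfies $w(z,y)=-w$. Hence $g=\begin{bsmallmatrix}1&\tfrac12 w^{l+1}\\0&1\end{bsmallmatrix}$ works: $2$ is invertible, and $\tfrac12 w^{l+1}-\tfrac12(-w)^{l+1}=w^{l+1}$. In the new fibre coordinates $\sigma$ is just $(y,z)\mapsto(z,y)$ on $\P^1\times\A^2$. The quotient of that chart is therefore $\P^1\times U_0$ via $\kappa$, and similarly over $U_2$ with the conjugate of $\sigma_2$.

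The invariant coordinates on the two charts are $x'=g\cdot x$. The transition between them is then $g_2(1/y,1/z)\,\nu_{b,c}\,g_1(y,z)^{-1}$. This matrix is $\tau$-invariant by construction, because it intertwines two trivial actions. So its entries are symmetric rational functions, and up to a scalar power of $yz$ they are polynomials in $u=y+z$, $v=yz$.

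A direct multiplication, using $Q(y,z)(y-z)^{l+1}=y^{b+l+1}-z^{b+l+1}$ (Lemma~\ref{EqualityQyz}) and $Q(z,y)=Q(y,z)$, should produce the displayed matrix $(\beta_{ij})$ after clearing the scalars $2$ and powers of $yz$. The right normalisation will be found by matching the $(1,1)$ entry $y^{b+l+1}+z^{b+l+1}$. This bookkeeping is the main computational obstacle. Symmetry of each entry then follows: either from the invariance argument, or directly by checking that each entry is a product of a symmetric polynomial and an even power of $y-z$. Since symmetric polynomials are polynomials in the elementary symmetric ones, $\beta_{ij}\in\k[u,v]$ exist and are unique.

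For the determinant, I will compute $\det\nu_{b,c}=y^bz^c$ and account for the $g$-factors and scalars. More directly, $\det(\beta_{ij})=(yz)^{l+1}\bigl[(y^{N}+z^{N})^2-(y-z)^{2(l+1)}Q^2\bigr]$ with $N=b+l+1$. Then $(y-z)^{2(l+1)}Q^2=(y^N-z^N)^2$, so the bracket equals $4y^Nz^N$, and the determinant is $4(yz)^{N+l+1}=4v^c$, since $N+l+1=c$. As $p\ge3$ this is nonzero on $U_0\cap U_2$ (where $v\neq0$), so we obtain a $\P^1$-bundle over $U_0\cup U_2$.

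It remains to treat the complement $\P^2\setminus(U_0\cup U_2)=\{[0:1:0]\}$, which is $\kappa$ of the two points $([1:0],[0:1])$ and $([0:1],[1:0])$. These points are outside the diagonal, so $\sigma$ acts freely over a neighbourhood of them. There the quotient map is an étale double cover compatible with the projections to $\F_0$ and $\P^2$, and the quotient is locally isomorphic to $\T^{b,c}$ over a neighbourhood of either point. It is therefore Zariski locally trivial there: trivialise $\T^{b,c}$ over a $\tau$-stable pair of disjoint neighbourhoods swapped by $\tau$. This gives the $\P^1$-bundle structure over all of $\P^2$, and the diagram commutes by construction.
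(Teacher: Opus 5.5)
Your treatment of the two symmetric charts is essentially the paper's own argument. The paper linearizes $\sigma_1$ with $\tau_1=\begin{bsmallmatrix}1&(y-z)^{l+1}\\0&2\end{bsmallmatrix}$, which is your $g$ followed by the constant matrix $\mathrm{diag}(1,2)$. It obtains the quotients $\P^1\times U_0$ and $\P^1\times U_2$, and then computes $\tau_2^{-1}\nu_{b,c}\tau_1=\frac{1}{2y^{l+1}}M(y,z)$, where $M$ is the displayed matrix. Your determinant computation is the same as the paper's.

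There are two small slips in that part.
\begin{itemize}
\item With your normalization $g(z,y)^{-1}Ag(y,z)=\mathrm{Id}$, the invariant fibre coordinates are $x'=g^{-1}x$, not $gx$. The transition is therefore $g_2(1/y,1/z)^{-1}\,\nu_{b,c}\,g_1(y,z)$.
\item This transition matrix is $\tau$-invariant only in $\PGL_2$. The scalar you must remove is $2y^{l+1}$, which is not a power of $yz$.
\end{itemize}
Both slips disappear once you actually carry out the multiplication. You should do so, since the explicit $\beta_{ij}$ are part of the statement.

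The genuine gap is your last paragraph. In the Zariski topology there is no ``$\tau$-stable pair of disjoint neighbourhoods swapped by $\tau$''. Since $\F_0$ is irreducible, every nonempty open $W$ meets $\tau(W)$, and $\kappa^{-1}(V)$ is irreducible for every open $V\ni[0:1:0]$. Freeness of $\sigma$ near the two points only shows that the quotient is \'etale-locally isomorphic to $\T^{b,c}$ near $[0:1:0]$. That makes it an \'etale-locally trivial $\P^1$-fibration there, whereas $\P^1$-bundles in the paper are Zariski locally trivial. Passing from one to the other needs extra input, for instance a Brauer-group argument, and you do not supply it.

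The paper closes this point using the group action instead.
\begin{itemize}
\item Since $\Aut(\T^{b,c})_{\F_0}$ is trivial, the lift to $\T^{b,c}$ of the diagonal $\PGL_2$-action commutes with $\sigma$.
\item It therefore descends to $\T^{b,c}/\langle\sigma\rangle$, and the projection to $\P^2$ is equivariant for the action induced via $\kappa$.
\item The $\PGL_2$-translates of $U_0$ cover $\P^2$, because any unordered pair of points of $\P^1$ can be moved off $[0:1]$.
\item Transporting the trivialization over $U_0$ by this action gives Zariski local triviality everywhere.
\end{itemize}
Replacing your last paragraph by this argument completes the proof.
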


\begin{proof}
	Write
	\[
	h=l+1,\qquad m=b+l+1,
	\]
	so that $h$ is odd and $c=m+h$.
	By Lemma~\ref{EqualityQyz}, we have
	\[
	(y-z)^hQ(y,z)=y^m-z^m.
	\]
	Since $h$ is odd, this identity also gives
	$Q(z,y)=Q(y,z)$.
	
	Consider the two affine open subsets of $\F_0$ parametrized by
	\[
	(y,z)\longmapsto[1:y;1:z],
	\qquad
	(y,z)\longmapsto[y:1;z:1].
	\]
	The $\P^1$-bundle $\T^{b,c}\to\F_0$ is trivial over these open
	subsets, with transition function
	\[
	\nu_{b,c}([x_0:x_1],(y,z))
	=
	\left(
	[y^bx_0:z^hQ(y,z)x_0+z^cx_1],
	(y^{-1},z^{-1})
	\right).
	\]
	By Lemma~\ref{TbcInvolutionExplicit}, the involution
	$\sigma$ is given on these charts by
	\[
	\begin{aligned}
		\sigma_1([x_0:x_1],(y,z))
		&=
		([x_0-(y-z)^hx_1:x_1],(z,y)),\\
		\sigma_2([x_0:x_1],(y,z))
		&=
		([x_0+(y-z)^hx_1:x_1],(z,y)).
	\end{aligned}
	\]
	
	Define $\tau_1,\tau_2\in\Aut(\P^1\times\A^2)$ by
	\[
	\begin{aligned}
		\tau_1([x_0:x_1],(y,z))
		&=
		([x_0+(y-z)^hx_1:2x_1],(y,z)),\\
		\tau_2([x_0:x_1],(y,z))
		&=
		([x_0-(y-z)^hx_1:2x_1],(y,z)).
	\end{aligned}
	\]
	A direct computation gives
	\[
	\tau_1^{-1}\sigma_1\tau_1
	=
	\tau_2^{-1}\sigma_2\tau_2
	=
	\hat{\sigma},
	\qquad
	\hat{\sigma}([x_0:x_1],(y,z))
	=
	([x_0:x_1],(z,y)).
	\]
	Thus the quotient on each chart is $\P^1\times\A^2$,
	with quotient morphism
	\[
	([x_0:x_1],(y,z))
	\longmapsto
	([x_0:x_1],(y+z,yz)).
	\]
	On the base, these are precisely the restrictions of
	$\kappa$ over $U_0$ and $U_2$.
	Consequently, the induced morphism
	\[
	q\colon \T^{b,c}/\langle\sigma\rangle\longrightarrow\P^2
	\]
	is a trivial $\P^1$-bundle over each of these two open subsets.
	
	We now compute the transition function in these
	trivializations. Put $d=(y-z)^h$.
	On the overlap, we have
	\[
	(y^{-1}-z^{-1})^h=-\frac{d}{(yz)^h}.
	\]
	Hence the fiber matrix of
	$\hat{\nu}=\tau_2^{-1}\nu_{b,c}\tau_1$ is
	\[
	\begin{bmatrix}
		1&-\dfrac{d}{2(yz)^h}\\
		0&\dfrac12
	\end{bmatrix}
	\begin{bmatrix}
		y^b&0\\
		z^hQ(y,z)&z^c
	\end{bmatrix}
	\begin{bmatrix}
		1&d\\
		0&2
	\end{bmatrix}
	=
	\frac{1}{2y^h}
	\begin{bmatrix}
		y^m+z^m&d^2Q(y,z)\\
		(yz)^hQ(y,z)&(yz)^h(y^m+z^m)
	\end{bmatrix}.
	\]
	Here we used $dQ(y,z)=y^m-z^m$.
	Since $2y^h$ is invertible on the overlap, the matrix
	\[
	M(y,z)=
	\begin{bmatrix}
		y^m+z^m&d^2Q(y,z)\\
		(yz)^hQ(y,z)&(yz)^h(y^m+z^m)
	\end{bmatrix}
	\]
	represents the same projective transformation.
	
	All entries of $M(y,z)$ are symmetric polynomials.
	Since $\k[y,z]^{\mathfrak S_2}=\k[y+z,yz]$, they uniquely
	determine polynomials $\beta_{ij}(u,v)\in\k[u,v]$.
	Moreover,
	\[
	\begin{aligned}
		\det M(y,z)
		&=(yz)^h
		\bigl((y^m+z^m)^2-d^2Q(y,z)^2\bigr)\\
		&=(yz)^h
		\bigl((y^m+z^m)^2-(y^m-z^m)^2\bigr)\\
		&=4(yz)^{m+h}
		=4(yz)^c.
	\end{aligned}
	\]
	Therefore $\det(\beta_{ij}(u,v))=4v^c$.
	The inversion $(y,z)\mapsto(y^{-1},z^{-1})$ induces
	\[
	(u,v)\longmapsto(u/v,1/v),
	\]
	which gives the stated transition function.
	
	Finally, we verify local triviality over all of $\P^2$.
	The diagonal $\PGL_2$-action on $\F_0$ lifts to
	$\T^{b,c}$ and commutes with $\sigma$, by the previously
	established properties of $\T^{b,c}$.
	It therefore descends to the quotient, making $q$
	equivariant for the action on $\P^2$ induced by the diagonal action
	on $\F_0$ via $\kappa$.
	The $\PGL_2$-translates of $U_0$ cover $\P^2$:
	any unordered pair of points of $\P^1$ can be moved
	into the affine chart where the first homogeneous
	coordinate is nonzero.
	Transporting the trivialization over $U_0$ by this
	action yields a trivialization over each such translate.
	Thus $q$ is a $\P^1$-bundle over all of $\P^2$.
\end{proof}

\begin{remark}
	If $l=0$, then $c=b+2$, and the transition matrix
	$M_b$ of Proposition~\ref{prop: def of the Sbc} satisfies
	\[
	\begin{bmatrix}
		-\frac12 & \frac{y+z}{2yz}\\
		0&1
	\end{bmatrix}
	M_b
	\begin{bmatrix}
		1&\frac{y+z}{2}\\
		0&-\frac12
	\end{bmatrix}
	=
	\frac{yz}{y-z}
	\begin{bmatrix}
		y^b-z^b & yz(y^{b-1}-z^{b-1})\\
		y^{b+1}-z^{b+1} & yz(y^b-z^b)
	\end{bmatrix}.
	\]
	The left and right factors define changes of trivialization
	over $U_2$ and $U_0$, respectively. Up to the invertible
	scalar $yz$, the resulting matrix is that of
	\cite[Lemma~4.2.1]{BFT}, where the quotients by $y-z$
	are polynomials.
	Thus the two $\P^1$-bundles are isomorphic away from $[0:1:0]$.
	This isomorphism extends uniquely across the missing point
	by \cite[Lemma~2.3.1]{BFT}.
	 Hence
	$\T^{b,b+2}/\langle\sigma\rangle\simeq\SS_b$
	as $\P^1$-bundles over $\P^2$.
\end{remark}

\begin{corollary}
	Let $b,c,l$ be as in Lemma~$\ref{TbcInvolutionExplicit}.$
	The morphism
	\[
	\T^{b,c}/\langle\sigma\rangle\longrightarrow\P^2
	\]
	induced by the quotient
	$\kappa\colon\F_0\to\P^2$
	is a $\P^1$-bundle if and only if $p=2$ or $l$ is even.
\end{corollary}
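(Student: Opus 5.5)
The "if" direction is already available. If $p\geq3$ and $l$ is even, Proposition~\ref{prop: def of the Sbc} realizes the quotient as a $\P^1$-bundle over $\P^2$. If $p=2$, I would repeat that argument using the explicit coordinates from the proof of Lemma~\ref{TbcInvolutionExplicit}. On each of the two charts, conjugation by $\tau([x_0:x_1],(y,z))=([x_0+yu^lx_1:x_1],(y,z))$, with $u=y+z$, turns $\sigma$ into the plain swap $([x_0:x_1],(y,z))\mapsto([x_0:x_1],(z,y))$. The quotient over the images $U_0,U_2$ of the charts is therefore $\P^1\times\A^2$, and the quotient map to the base is the restriction of $\kappa$, written $(y,z)\mapsto(y+z,yz)$. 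So the induced morphism $\T^{b,c}/\langle\sigma\rangle\to\P^2$ is a trivial $\P^1$-bundle over $U_0$ and over $U_2$.

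To get local triviality over all of $\P^2$, I would use the same homogeneity argument as at the end of that proof. The diagonal $\PGL_2$-action lifts to $\T^{b,c}$ and commutes with $\sigma$, by Proposition~\ref{prop:properties of the Tbc}\ref{item: open orbit in the quotient}. The action therefore descends to the quotient, and the $\PGL_2$-translates of $U_0$ cover $\P^2$. Transporting the trivialization over $U_0$ along these translates gives local triviality everywhere. Nothing in this step uses $2$ being invertible: the matrices $\tau_1,\tau_2$ involving the factor $2$ belonged to the $p\geq3$ argument, and for $p=2$ the conjugating map $\tau$ above replaces them.

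For the "only if" direction, suppose $p\geq3$ and $l$ is odd. By Lemma~\ref{TbcInvolutionExplicit}, $\T^{b,c}/\langle\sigma\rangle$ is then singular, with singular locus two disjoint rational curves. A $\P^1$-bundle over the smooth surface $\P^2$ is Zariski locally isomorphic to $\P^1\times\A^2$ and hence smooth, so the morphism cannot be a $\P^1$-bundle. This direction is immediate.

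The only step that needs care is the $p=2$ case. One must check that the chart conjugations glue, or else bypass gluing entirely: the local computation already makes the quotient smooth and shows that over $U_0$ it is $\P^1\times U_0$ compatibly with $\kappa$. Equivariance then spreads this trivialization over all of $\P^2$, so no explicit transition matrix is needed.
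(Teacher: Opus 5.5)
Your proposal is correct and follows essentially the same route as the paper. The cases with $p\geq3$ come from Proposition~\ref{prop: def of the Sbc} and from the singular-locus statement of Lemma~\ref{TbcInvolutionExplicit}; for $p=2$ the paper likewise uses the chart conjugation from the proof of that lemma and transports the local trivializations by the group action, phrased as $\SL_2$ acting on $\operatorname{Sym}^2(\P^1)\simeq\P^2$ rather than as $\PGL_2$-translates of $U_0$.
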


\begin{proof}
	For $p\geq3$, this follows from
	Lemma~\ref{TbcInvolutionExplicit} and
	Proposition~\ref{prop: def of the Sbc}.
	
	For $p=2$, the changes of trivialization in the proof of
	Lemma~\ref{TbcInvolutionExplicit} give local trivializations
	over the two standard symmetric charts.
	The $\SL_2$-action on $\T^{b,c}$ commutes with $\sigma$,
	so it descends to the quotient, and the induced morphism
	to $\P^2$ is equivariant.
	Every effective divisor of degree $2$ on $\P^1$
	is contained in the complement of some point.
	Hence the $\SL_2$-translates of these charts cover
	$\operatorname{Sym}^2(\P^1)\simeq\P^2$.
	Transporting the local trivializations by this action
	proves that the induced morphism is a $\P^1$-bundle.
\end{proof}

\subsection{The Umemura \texorpdfstring{$\P^1$}{P1}-bundles over \texorpdfstring{$\F_a$}{FA}}\label{Sec:Umemurabundles}
In this section, we study a family of indecomposable $\P^1$-bundles over $\F_a$ ($a \geq 1$) whose automorphism group acts transitively on the base $\P^1$, the so-called \emph{Umemura $\P^1$-bundles} (see Definition \ref{def Umemura bundle}).

\smallskip

Let $a \geq 1$, $b \geq 1$, and $c \geq 2$. We define 
\[\Omega_{a}^{b,c}:=\P\left(\bigoplus_{i=0}^{b} y_0^i y_1^{b-i} \cdot \k[z]^{\SL_2}_{\leq c-2-ai}\right) \subseteq  \M_{a}^{b,c}\] 
which is a (possibly empty) linear subspace of $\M_{a}^{b,c}$.

Since $\k[z]_{\leq c-2-ai} \simeq V_{c-2-ai}^{*}$ as $\SL_2$-modules, it follows from Corollary~\ref{cor:case n=0} that
\[
\dim(\k[z]_{\leq c-2-ai}^{\SL_2})= \dim((V_{c-2-ai}^{*})^{\SL_2}) \leq 1.
\] 
However, the dimension of $\Omega_{a}^{b,c}$ can be arbitrarily large, depending on $a$, $b$, and $c$. We now give some examples. Fix integers $a, b, c$, and consider the integers $i \in \{0, \ldots, b\}$ such that $\dim\bigl((V_{c-2-ai}^{*})^{\SL_2}\bigr) = 1$. These integers will be called the \emph{Umemura indices}. By Corollary~\ref{cor:case n=0}, this is equivalent to requiring $c - a i = 2 p^s$ for some integer $s \ge 0$.

\begin{example}\label{example: Omegaabc not bounded}
Fix an arbitrary integer $N \ge 1$. 
\begin{enumerate}
\item
Take $a = 1$, $b \ge 2 p^N - 2$, and $c = 2 p^N$. The Umemura indices are the integers $i \in \{0, \ldots, b\}$ such that $i = c - 2 p^s$. The possible values for $s$ are in $\{0, \ldots, N\}$, so $\dim(\Omega_a^{b,c}) = N$.

\item 
Take $a = 2$, $b \ge p^N - 1$, and $c = 2 p^N$. The indices are 
\[
i \in \{0, \ldots, b\} \quad \text{with} \quad i = \frac{c - 2 p^s}{2} = p^N - p^s,
\]
where $s \in \{0, \ldots, N\}$. Again, $\dim(\Omega_a^{b,c}) = N$.

\item 
Take $a=2p$, $b\geq p^N-1$, and $c=2p^{N+1}$.
The Umemura indices are
\[
i=\frac{c-2p^s}{2p}=p^N-p^{s-1},
\qquad s\in\{1,\ldots,N+1\}.
\]
Thus $\dim(\Omega_a^{b,c})=N$.
\end{enumerate}
\end{example}

\smallskip

Assume now that $\Omega_a^{b,c}\neq\varnothing$, and denote by
$I_{a,b,c}$ the set of Umemura indices:
\[
I_{a,b,c}=\{i\in\{0,\ldots,b\}\mid c-ai=2p^s
\text{ for some integer }s\geq0\}.
\]
By Corollary~\ref{cor:case n=0}, for each $i\in I_{a,b,c}$ there is
a unique integer $s_i\geq0$ such that $c-ai=2p^{s_i}$. Set $l_i=p^{s_i}-1$, so that
\begin{equation}\label{eq: relation betweenc and lj}
	c-2-ai=2l_i,
	\qquad l_i=p^{s_i}-1.
\end{equation}
Then $\k[z]_{\leq c-2-ai}^{\SL_2}$ is generated by $z^{l_i}$.
The linear isomorphism
\[
\k^{I_{a,b,c}}\longrightarrow
\bigoplus_{i\in I_{a,b,c}}
y_0^iy_1^{b-i}\cdot\k[z]_{\leq c-2-ai}^{\SL_2},
\qquad
(\lambda_i)_{i\in I_{a,b,c}}\longmapsto
\sum_{i\in I_{a,b,c}}\lambda_i y_0^iy_1^{b-i}z^{l_i}
\]
induces an identification
$\P(\k^{I_{a,b,c}})\simeq\Omega_a^{b,c}$, which we use throughout.
For each $i\in I_{a,b,c}$, denote by $e_i$ the coordinate point
whose only nonzero coordinate is the one indexed by $i$.

\begin{definition}[Umemura $\P^1$-bundles]  \label{def Umemura bundle}
Let $a,b\ge 1, c\ge 2$ such that $\Omega_{a}^{b,c} \neq \varnothing$.
We keep the previous notation. Let $\omega=[\lambda_i]_{i\in I_{a,b,c}} \in \Omega_{a}^{b,c}$.
We call \emph{Umemura $\P^1$-bundle} of type $(a,b,c,\omega)$ the $\P^1$-bundle
defined by 
\[\U_{a,\omega}^{b,c}:=Z_{a}^{b,c,P}\to \F_a \ \text{ with }\ P(y_0,y_1,z)=\sum_{i\in I_{a,b,c}} y_0^{i}y_1^{b-i}\lambda_i z^{c-1-l_i}.\] 
As $c-1-l_i=c-p^{s_i}$, the gluing function
$\nu_{c,P}\in \Aut(\F_b \times (\A^1 \setminus \{0\}))$ is given by 
\begin{center}$
([x_0:x_1;y_0:y_1],z) \mapsto ([x_0:x_1z^{c}+x_0\sum_{i\in I_{a,b,c}} y_0^{i}y_1^{b-i}\lambda_i z^{c-p^{s_i}};y_0z^a:y_1],\frac{1}{z}).$
\end{center} 
For a given parameter $\omega$, we write
\[
I=\{i\in I_{a,b,c}\mid\lambda_i\neq0\}
\]
for its support. Thus $I$ is nonempty, and the sums defining $P$
and the gluing function may equivalently be taken over $I$.
In the notation of \eqref{eq:def of P}, we have
\[
P_i(z)=
\begin{cases}
\lambda_i z^{l_i},&i\in I_{a,b,c},\\
0,&i\notin I_{a,b,c}.
\end{cases}
\]
\end{definition}

\begin{remark}
In characteristic zero, $\Omega_{a}^{b,c}$ is either empty or a point, while in positive characteristic the projective space $\Omega_{a}^{b,c}$ can have arbitrarily large dimension, depending on $(a,b,c)$.
Hence, for a given triplet $(a,b,c)$, there is at most one Umemura $\P^1$-bundle (up to isomorphism over $\F_a$) in characteristic zero, which is generally far from being the case in positive characteristic.
\end{remark}

\begin{proposition}\label{ActionSL2GL2Umemura}
Let $a,b\ge 1$, $c\ge 2$, and $\omega=[\lambda_i]_{i\in I_{a,b,c}}\in\Omega_a^{b,c}\not=\varnothing$. 
\begin{enumerate}\item
\label{SL2actionUmemura}
There is an $\SL_2$-action on the Umemura $\P^1$-bundle
\[\U_{a,\omega}^{b,c}=Z_{a}^{b,c,P}\to \F_a \ \text{ with }\ P(y_0,y_1,z)=\sum_{i\in I_{a,b,c}} y_0^{i}y_1^{b-i}\lambda_i z^{c-p^{s_i}}.\] 
The action of $ \begin{bsmallmatrix} \alpha & \beta \\ \gamma & \delta\end{bsmallmatrix}\in\SL_2$ 
is given on the two charts by 
{\small \[ \hspace{-15mm} \begin{array}{llcccll}
\theta_1\colon ([x_0:x_1;y_0:y_1],z)&\mapsto& ([ x_0:x_1 (\alpha+\beta z)^{c}+x_0\sum\limits_{i\in I_{a,b,c}} y_0^{i}y_1^{b-i}\lambda_i\beta^{p^{s_i}}( \alpha +\beta z)^{c-p^{s_i}};y_0(\alpha +\beta z)^a:y_1],  \frac{ \gamma+\delta z}{\alpha +\beta z} ),\\
\theta_2\colon ([x_0:x_1;y_0:y_1],z)&\mapsto& ([ x_0:x_1 (\gamma z +\delta)^{c}-x_0\sum\limits_{i\in I_{a,b,c}} y_0^{i}y_1^{b-i}\lambda_i\gamma^{p^{s_i}}( \gamma z +\delta)^{c-p^{s_i}};y_0(\gamma z +\delta)^a:y_1],  \frac{ \alpha z +\beta }{\gamma z +\delta} ).
\end{array}
\]}  
\item\label{GL2actionUmemura}
If moreover only one of the coefficients $\lambda_i$ is nonzero, say $\lambda_i$, then the whole group $\GL_2$ acts on $\U_{a,\omega}^{b,c}$. The action of $\begin{bsmallmatrix} \alpha & 0 \\ 0 & \delta \end{bsmallmatrix}\in \GL_2$ is given on the two charts by
\[
\begin{array}{rcl} {
\tilde{\theta}_1}\colon ([x_0:x_1;y_0:y_1],z)
&\mapsto&
([x_0:x_1 \frac{\alpha^{c-p^{s_i}}}{\delta^{p^{s_i}}};\;y_0\alpha^a:y_1],\;\tfrac{\delta}{\alpha}z),\\[6pt]
{\tilde{\theta}_2}\colon ([x_0:x_1;y_0:y_1],z)
&\mapsto&
([x_0:x_1\frac{\delta^{c-p^{s_i}}}{\alpha^{p^{s_i}}};\;y_0\delta^a:y_1],\;\tfrac{\alpha}{\delta}z).
\end{array}
\]
In both cases, the action of $\SL_2$ $($respectively $\GL_2)$ is compatible with the standard action on $\F_a$ via the morphism $\Autz(\U_{a,\omega}^{b,c}) \to \Aut(\F_a)$.
\end{enumerate}
\end{proposition}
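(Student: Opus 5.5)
The plan is to verify directly that the chart formulas define a group action compatible with the gluing. There are three things to check: each $\theta_k$ is a well-defined automorphism on its domain of definition, $\theta_1$ and $\theta_2$ agree under the transition map $\nu_{c,P}$, and the assignment is a homomorphism. Since $\SL_2$ is generated by the diagonal torus, the upper unipotent elements and $w=\begin{bsmallmatrix}0&-1\\1&0\end{bsmallmatrix}$ (as in the proof of Proposition~\ref{prop: rep SL2 car p}), a cleaner route is available. For each generator I would check that the formulas patch to an automorphism lifting the standard action on $\F_a$. Then I would verify the relations, or equivalently observe that the lifts are unique up to the vertical automorphism group.

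The key computation is the compatibility $\nu_{c,P}\circ\theta_1=\theta_2\circ\nu_{c,P}$ where both sides are defined. I would first treat the upper unipotent case $\alpha=\delta=1$, $\gamma=0$, which acts on $z$ by $z\mapsto z/(1+\beta z)$ in the first chart. Expanding both sides reduces the coefficient of $x_0y_0^iy_1^{b-i}$ to a polynomial identity in $z,\beta$. This identity is
\[
\lambda_i\beta^{p^{s_i}}(1+\beta z)^{c-p^{s_i}}z^{c}+\lambda_i z^{c-p^{s_i}}(1+\beta z)^{c}
=\lambda_i (1+\beta z)^{c-p^{s_i}}\, (z+\beta z^2)^{\ldots}\cdots,
\]
and after clearing powers of $(1+\beta z)$ it becomes $(1+\beta z)^{p^{s_i}}=1+\beta^{p^{s_i}}z^{p^{s_i}}$. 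This is the Frobenius additivity in characteristic $p$. This is exactly why only the exponents $c-p^{s_i}$ (Umemura indices, $c-ai=2p^{s_i}$) occur, and it matches the $\SL_2$-invariance of $z^{l_i}$ in Corollary~\ref{cor:case n=0}. The condition $c-ai=2p^{s_i}$ is what makes the $y_0$-weight $(\alpha+\beta z)^{a}$ combine with $(\alpha+\beta z)^{c-p^{s_i}}$ consistently across the gluing. The torus and $w$ are checked similarly and are easier.

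For the homomorphism property, I would use Lemma~\ref{AutZaF1}. The automorphisms of $Z_a^{b,c,P}$ over $\F_a$ form the vector group $\bigoplus_{i\ge1}\k[z_0,z_1]_{ai-c}$. The composition $\theta(g)\theta(h)\theta(gh)^{-1}$ lies there and is given by a polynomial cocycle. Alternatively, and more simply, I would note that the formula for $\theta_1$ is a cocycle: the factors $(\alpha+\beta z)$ are the standard automorphy factors, and $\beta^{p^{s_i}}(\alpha+\beta z)^{-p^{s_i}}$ is the $p^{s_i}$-th power of the additive cocycle $\beta/(\alpha+\beta z)$ for the Möbius action. Its additivity under composition follows from the $\SL_2$ identity, and taking $p^{s_i}$-th powers preserves additivity. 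I expect this bookkeeping to be the main obstacle, and I would handle it with this cocycle observation.

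For \ref{GL2actionUmemura}, with a single $\lambda_i$, I would extend the action by the diagonal torus of $\GL_2$ via $\tilde\theta_k$. Compatibility with $\nu_{c,P}$ is a monomial check. The weights are chosen so that $x_1z^c$ and $x_0y_0^iy_1^{b-i}z^{c-p^{s_i}}$ scale identically, using $c-ai=2p^{s_i}$. This fails for two distinct indices with different $s_i$, which is why one index is needed. Combined with $\SL_2$, the torus generates $\GL_2$. Compatibility with the standard action on $\F_a$ is visible from the base coordinates in all formulas.
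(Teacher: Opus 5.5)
Your key computations are the right ones and coincide with the paper's. The chart compatibility reduces coefficientwise to Frobenius additivity. Your cocycle $\kappa(g,z)=\beta/(\alpha+\beta z)$, once the twist is included, satisfies $\kappa(gg',z)=\kappa(g',z)+(\alpha'+\beta'z)^{-2}\kappa(g,g'z)$, which is exactly the paper's identity $\beta'(\alpha''+\beta''z)+\beta=\beta''(\alpha'+\beta'z)$. Its $q$-th power, $q=p^{s_i}$, matches the factor $(\alpha'+\beta'z)^{ai-c}$ produced by $y_0^i$ precisely because $c-ai=2q$.

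The generator shortcut, however, does not close as written. Uniqueness of lifts up to the vertical group is not equivalent to checking relations. By Lemma~\ref{AutZaF1}, that group $\bigoplus_{i\geq1}\k[z_0,z_1]_{ai-c}$ is nonzero as soon as $ab\geq c$, so knowing that $\theta(g)\theta(h)\theta(gh)^{-1}$ is vertical proves nothing; only your cocycle identity does the job. Verifying $\nu_{c,P}\theta_1=\theta_2\nu_{c,P}$ only on generators does not give the stated formula for $\theta_2(g)$ for every $g$, unless you also prove a cocycle identity for $\theta_2$. For $w$, both displayed formulas are undefined over $z=0$ in their respective charts, so its biregularity needs the cross-chart formulas obtained by composing with $\nu_{c,P}^{\pm1}$. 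You also never check that the action map $\SL_2\times X\to X$ is a morphism. The paper sidesteps all of this. For a general $g$, compatibility reduces to the $q$-th powers of $\beta(\gamma+\delta z)+1=\delta(\alpha+\beta z)$ and $(\gamma+\delta z)-\gamma=\delta z$, which is no harder than your unipotent case. The four chart-to-chart formulas then show directly that the action map is regular.

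The gap in part~\ref{GL2actionUmemura} is more concrete. That the diagonal torus and $\SL_2$ generate $\GL_2$ does not by itself produce a $\GL_2$-action. Writing $\GL_2=\SL_2\rtimes\{\mathrm{diag}(1,\rho)\}$, you must check two things. First, $\tilde\theta$ must agree with $\theta$ on the diagonal torus of $\SL_2$. Second, $\tilde\theta(t)\theta(g)\tilde\theta(t)^{-1}=\theta(tgt^{-1})$ must hold for diagonal $t$ and $g\in\SL_2$. For the reason just given, this is not automatic when vertical automorphisms exist. It does hold: conjugating $\theta_1(g)$ by $t=\mathrm{diag}(\mu,\rho)$ turns the base part into that of $tgt^{-1}$. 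It also multiplies the coefficient $\lambda_i\beta^q$ of the $x_0$-term by $\mu^{c-q-ai}\rho^{-q}=(\mu/\rho)^q$, which is exactly the substitution $\beta\mapsto\mu\beta/\rho$, again because $c-ai=2q$. The paper avoids the issue by defining the action of every $g\in\GL_2$ at once, multiplying the second fiber coordinate of $\theta_1,\theta_2$ by $\det(g)^{-q}$. It then reruns the checks with $\beta(\gamma+\delta z)+\det g=\delta(\alpha+\beta z)$ and $\beta'(\alpha''+\beta''z)+\beta\det(g')=\beta''(\alpha'+\beta'z)$.
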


\begin{proof}
	According to Definition~\ref{Not:ZabcP}, the transition function
	$\nu_{c,P}\in\Aut(\F_b\times(\A^1\setminus\{0\}))$ is defined by
	\[
	\nu_{c,P}\colon([x_0:x_1;y_0:y_1],z)\longmapsto
	\left(
	[x_0:x_1z^c+x_0P(y_0,y_1,z);y_0z^a:y_1],\frac1z
	\right).
	\]
	The $\P^1$-bundle $Z_a^{b,c,P}\to\F_a$ sends
	$([x_0:x_1;y_0:y_1],z)\in\F_b\times\A^1$ to
	$[y_0:y_1;1:z]\in\F_a$ on the first chart and to
	$[y_0:y_1;z:1]\in\F_a$ on the second chart.
	
	The group $\GL_2$ acts on $\F_a$ as
	\begin{equation}\label{GL2Faexpl}
		\begin{bsmallmatrix}\alpha&\beta\\\gamma&\delta\end{bsmallmatrix}
		\cdot[y_0:y_1;z_0:z_1]
		=
		[y_0:y_1;\alpha z_0+\beta z_1:\gamma z_0+\delta z_1].
	\end{equation}
	On the two charts this becomes, respectively,
	\[
	\begin{aligned}
		[y_0:y_1;1:z]
		&\longmapsto
		[y_0:y_1;\alpha+\beta z:\gamma+\delta z]\\
		&=
		[(\alpha+\beta z)^ay_0:y_1;
		1:\tfrac{\gamma+\delta z}{\alpha+\beta z}],\\[6pt]
		[y_0:y_1;z:1]
		&\longmapsto
		[y_0:y_1;\alpha z+\beta:\gamma z+\delta]\\
		&=
		[(\gamma z+\delta)^ay_0:y_1;
		\tfrac{\alpha z+\beta}{\gamma z+\delta}:1].
	\end{aligned}
	\]
	
	\ref{SL2actionUmemura}:
	We choose
	$\begin{bsmallmatrix}\alpha&\beta\\\gamma&\delta\end{bsmallmatrix}
	\in\SL_2$
	and verify that the maps $\theta_1,\theta_2$ define an action
	compatible with that on $\F_a$.
	The map $\theta_1$ is defined where $\alpha+\beta z\neq0$,
	and $\theta_2$ where $\gamma z+\delta\neq0$.
	These formulas describe the action when the source and
	target lie in the same chart. We will also check regularity
	where the image lies in the other chart.
	
	We first check that
	\[
	\theta_2\nu_{c,P}=\nu_{c,P}\theta_1.
	\]
	Indeed, the compositions give
	\[
	\begin{aligned}
		\nu_{c,P}(\theta_1([x_0:x_1;y_0:y_1],z))
		&=
		\left(
		[x_0:x_1(\gamma+\delta z)^c+x_0R_1;
		y_0(\gamma+\delta z)^a:y_1],
		\frac{\alpha+\beta z}{\gamma+\delta z}
		\right),\\[6pt]
		\theta_2(\nu_{c,P}([x_0:x_1;y_0:y_1],z))
		&=
		\left(
		[x_0:x_1(\gamma+\delta z)^c+x_0R_2;
		y_0(\gamma+\delta z)^a:y_1],
		\frac{\alpha+\beta z}{\gamma+\delta z}
		\right),
	\end{aligned}
	\]
	where
	\[
	\begin{aligned}
		R_1
		&=
		\left(
		\sum_{i\in I_{a,b,c}} y_0^{i}y_1^{b-i}
		\lambda_i\beta^{p^{s_i}}
		(\alpha+\beta z)^{c-p^{s_i}}
		\right)
		\left(\frac{\gamma+\delta z}{\alpha+\beta z}\right)^c\\
		&\qquad+
		P\left(
		y_0(\alpha+\beta z)^a,y_1,
		\frac{\gamma+\delta z}{\alpha+\beta z}
		\right),\\[6pt]
		R_2
		&=
		\left(\frac{\gamma}{z}+\delta\right)^cP(y_0,y_1,z)
		-
		\sum_{i\in I_{a,b,c}}
		(y_0z^a)^{i}y_1^{b-i}
		\lambda_i\gamma^{p^{s_i}}
		\left(\frac{\gamma}{z}+\delta\right)^{c-p^{s_i}}.
	\end{aligned}
	\]
	
	Recall that
	\[
	P(y_0,y_1,z)
	=
	\sum_{i\in I_{a,b,c}}
	y_0^{i}y_1^{b-i}\lambda_i z^{c-p^{s_i}}.
	\]
	Using $ai=c-2p^{s_i}$ and $\alpha\delta=\beta\gamma+1$,
	the coefficient of $y_0^{i}y_1^{b-i}$ in $R_1$ is
	\[
	\begin{aligned}
		&\lambda_i\left(
		\beta^{p^{s_i}}(\alpha+\beta z)^{-p^{s_i}}
		(\gamma+\delta z)^c
		+
		(\alpha+\beta z)^{ai}
		\left(\frac{\gamma+\delta z}{\alpha+\beta z}\right)^{c-p^{s_i}}
		\right)\\
		&\qquad=
		\lambda_i(\alpha+\beta z)^{-p^{s_i}}
		(\gamma+\delta z)^{c-p^{s_i}}
		\left(\beta^{p^{s_i}}(\gamma+\delta z)^{p^{s_i}}+1\right)\\
		&\qquad=
		\lambda_i\delta^{p^{s_i}}
		(\gamma+\delta z)^{c-p^{s_i}}.
	\end{aligned}
	\]
	Similarly, the coefficient of $y_0^{i}y_1^{b-i}$ in $R_2$ is
	\[
	\begin{aligned}
		&\lambda_i\left(
		\left(\frac{\gamma}{z}+\delta\right)^cz^{c-p^{s_i}}
		-
		z^{ai}\gamma^{p^{s_i}}
		\left(\frac{\gamma}{z}+\delta\right)^{c-p^{s_i}}
		\right)\\
		&\qquad=
		\lambda_i z^{-p^{s_i}}(\gamma+\delta z)^{c-p^{s_i}}
		\left((\gamma+\delta z)^{p^{s_i}}-\gamma^{p^{s_i}}\right)\\
		&\qquad=
		\lambda_i\delta^{p^{s_i}}
		(\gamma+\delta z)^{c-p^{s_i}}.
	\end{aligned}
	\]
	Thus $\theta_2\nu_{c,P}=\nu_{c,P}\theta_1$.
	
	To check regularity everywhere, put
	\[
	A=\alpha+\beta z,\qquad B=\gamma+\delta z.
	\]
	For a point in the first chart with $B\neq0$, the image
	in the second chart is
	\[
	\left(
	\left[
	x_0:
	x_1B^c+x_0\sum_{i\in I_{a,b,c}}
	y_0^{i}y_1^{b-i}\lambda_i
	\delta^{p^{s_i}}B^{c-p^{s_i}};
	y_0B^a:y_1
	\right],
	\frac AB
	\right).
	\]
	Similarly, putting $E=\alpha z+\beta$ and $C=\gamma z+\delta$,
	the image of a point in the second chart with $E\neq0$
	is given in the first chart by
	\[
	\left(
	\left[
	x_0:
	x_1E^c-x_0\sum_{i\in I_{a,b,c}}
	y_0^{i}y_1^{b-i}\lambda_i
	\alpha^{p^{s_i}}E^{c-p^{s_i}};
	y_0E^a:y_1
	\right],
	\frac CE
	\right).
	\]
	These expressions follow by composing with $\nu_{c,P}$
	or its inverse.
	Since $A,B$ cannot vanish simultaneously, and neither
	can $C,E$, the four formulas cover all points.
	Moreover, $c-p^{s_i}=ai+p^{s_i}\geq1$.
	Thus they define a morphism
	$\SL_2\times\U_{a,\omega}^{b,c}\to\U_{a,\omega}^{b,c}$.
	
	It remains to check that, for
	\[
	g=\begin{bsmallmatrix}\alpha&\beta\\\gamma&\delta\end{bsmallmatrix},
	\qquad
	g'=\begin{bsmallmatrix}\alpha'&\beta'\\\gamma'&\delta'\end{bsmallmatrix}
	\in\SL_2,
	\]
	the composition $\theta_1\circ\theta_1'$ equals
	$\theta_1''$ associated with the product
	\[
	gg'
	=
	\begin{bsmallmatrix}
		\alpha\alpha'+\beta\gamma'&\alpha\beta'+\beta\delta'\\
		\gamma\alpha'+\delta\gamma'&\gamma\beta'+\delta\delta'
	\end{bsmallmatrix}
	=
	\begin{bsmallmatrix}
		\alpha''&\beta''\\\gamma''&\delta''
	\end{bsmallmatrix}.
	\]
	The last coordinates of
	$\theta_1(\theta_1'([x_0:x_1;y_0:y_1],z))$ are
	\[
	\begin{aligned}
		&\left(
		\left[\cdots;
		y_0(\alpha'+\beta'z)^a
		\left(\alpha+\beta\frac{\gamma'+\delta'z}{\alpha'+\beta'z}\right)^a
		:y_1\right],
		\frac{\gamma+\delta\frac{\gamma'+\delta'z}{\alpha'+\beta'z}}
		{\alpha+\beta\frac{\gamma'+\delta'z}{\alpha'+\beta'z}}
		\right)\\
		&\qquad=
		\left(
		[\cdots;y_0(\alpha''+\beta''z)^a:y_1],
		\frac{\gamma''+\delta''z}{\alpha''+\beta''z}
		\right).
	\end{aligned}
	\]
	The first coordinate is $x_0$, and the second one is
	\[
	\begin{aligned}
		&
		\left(
		x_1(\alpha'+\beta'z)^c
		+x_0\sum_{i\in I_{a,b,c}} y_0^{i}y_1^{b-i}
		\lambda_i(\beta')^{p^{s_i}}
		(\alpha'+\beta'z)^{c-p^{s_i}}
		\right)
		\left(
		\alpha+\beta\frac{\gamma'+\delta'z}{\alpha'+\beta'z}
		\right)^c\\
		&\quad+
		x_0\sum_{i\in I_{a,b,c}}
		\bigl(y_0(\alpha'+\beta'z)^a\bigr)^{i}
		y_1^{b-i}\lambda_i\beta^{p^{s_i}}
		\left(
		\alpha+\beta\frac{\gamma'+\delta'z}{\alpha'+\beta'z}
		\right)^{c-p^{s_i}}\\
		&\qquad=
		x_1(\alpha''+\beta''z)^c
		+x_0\sum_{i\in I_{a,b,c}}
		y_0^{i}y_1^{b-i}\lambda_i\Delta_i,
	\end{aligned}
	\]
	where, using $ai=c-2p^{s_i}$,
	\[
	\begin{aligned}
		\Delta_i
		&=
		(\beta')^{p^{s_i}}(\alpha'+\beta'z)^{c-p^{s_i}}
		\left(
		\alpha+\beta\frac{\gamma'+\delta'z}{\alpha'+\beta'z}
		\right)^c\\
		&\quad+
		\beta^{p^{s_i}}(\alpha'+\beta'z)^{ai}
		\left(
		\alpha+\beta\frac{\gamma'+\delta'z}{\alpha'+\beta'z}
		\right)^{c-p^{s_i}}\\
		&=
		(\alpha'+\beta'z)^{-p^{s_i}}
		(\alpha''+\beta''z)^{c-p^{s_i}}
		\left(
		(\beta')^{p^{s_i}}(\alpha''+\beta''z)^{p^{s_i}}
		+\beta^{p^{s_i}}
		\right)\\
		&=
		(\beta'')^{p^{s_i}}
		(\alpha''+\beta''z)^{c-p^{s_i}}.
	\end{aligned}
	\]
	For the last equality, we used
	\[
	\beta'(\alpha''+\beta''z)+\beta
	=
	\beta''(\alpha'+\beta'z),
	\]
	which follows from $\alpha'\delta'-\beta'\gamma'=1$.
	Hence $\theta_1\circ\theta_1'=\theta_1''$ on their common
	domain. By regularity, the group law holds everywhere.
	Since the identity matrix acts trivially, these formulas
	define an $\SL_2$-action.
	
\ref{GL2actionUmemura}:
We now assume that only one coefficient, say $\lambda_i$, is nonzero.
To extend the $\SL_2$-action to $\GL_2$, put $q=p^{s_i}$.
For
$g=\begin{bsmallmatrix}\alpha&\beta\\\gamma&\delta\end{bsmallmatrix}
\in\GL_2$,
set $D=\det(g)$ and modify the formulas for
$\theta_1,\theta_2$ by multiplying the entire second
fiber coordinate by $D^{-q}$.
For instance, the first formula becomes
\[ \hspace{-20mm}
([x_0:x_1;y_0:y_1],z)\longmapsto
\left(
\left[
x_0:
D^{-q}\left(
x_1(\alpha+\beta z)^c+
x_0y_0^{i}y_1^{b-i}\lambda_i
\beta^q(\alpha+\beta z)^{c-q}
\right);
y_0(\alpha+\beta z)^a:y_1
\right],
\frac{\gamma+\delta z}{\alpha+\beta z}
\right).
\]
The compatibility and regularity checks above apply with
\[
\beta(\gamma+\delta z)+D
=
\delta(\alpha+\beta z),
\qquad
\gamma(\alpha z+\beta)+D
=
\alpha(\gamma z+\delta).
\]
Likewise, the group-law calculation applies using
\[
\beta'(\alpha''+\beta''z)+\beta\det(g')
=
\beta''(\alpha'+\beta'z)
\]
and $\det(gg')=\det(g)\det(g')$.
Thus these formulas define a $\GL_2$-action extending
the $\SL_2$-action.
For diagonal matrices, they recover
$\tilde{\theta}_1,\tilde{\theta}_2$, since
\[
D^{-q}\alpha^c=\frac{\alpha^{c-q}}{\delta^q},
\qquad
D^{-q}\delta^c=\frac{\delta^{c-q}}{\alpha^q}.
\]
The action is compatible with the standard action
on $\F_a$ given by \eqref{GL2Faexpl}.
\end{proof}

\begin{lemma}\label{lem:characterization-Umemura}
	Let $a\geq1$, and let $\pi\colon X\to\F_a$ be an
	indecomposable $\P^1$-bundle.
	Then $\pi$ is isomorphic over $\F_a$ to an Umemura
	$\P^1$-bundle if and only if
	\[
	\Im\bigl(\Autz(X)\to\Aut(\F_a)\bigr)
	\supseteq
	\Im\bigl(\SL_2\to\GL_2/\mu_a\bigr).
	\]
	In particular, the neutral automorphism group of every
	Umemura $\P^1$-bundle acts transitively on the base
	$\P^1$ of the ruling $\F_a\to\P^1$.
\end{lemma}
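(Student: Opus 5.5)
The "only if" direction follows from Proposition~\ref{ActionSL2GL2Umemura}\ref{SL2actionUmemura}. That result gives an $\SL_2$-action on $\U_{a,\omega}^{b,c}$ compatible with the standard action on $\F_a$. Since $\SL_2$ is connected, its image in $\Aut(\U_{a,\omega}^{b,c})$ lies in $\Autz$. Its image in $\Aut(\F_a)$ is therefore $S:=\Im(\SL_2\to\GL_2/\mu_a)$. The condition is invariant under isomorphism of $\P^1$-bundles over $\F_a$, so it holds for any $X$ isomorphic to an Umemura bundle. The final assertion follows at once, because $S$ surjects onto $\PGL_2$ and hence acts transitively on the base $\P^1$.

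For the "if" direction, assume $S\subseteq H:=\Im(\Autz(X)\to\Aut(\F_a))$. The first step is to show that $X$ has no jumping fibers. Indeed, $\Autz(X)$ acts transitively on the fibers of $\tau_a\circ\pi$, so all of these fibers are isomorphic Hirzebruch surfaces. Hence $X\to\F_a$ has numerical invariants $(a,b,c)$. Since $X$ is indecomposable, \cite[Corollary~3.3.7]{BFT} gives $b\geq1$ and $c\geq2$. By Proposition~\ref{prop: paramter space for non-dceomposable P1 bundles}, we may write $X\simeq Z_a^{b,c,P}$ with $P$ in normal form, and $[P]\in\M_a^{b,c}$ is well defined. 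Since $H$ is the image of $\Autz(X)$, every element of $H$ fixes the class $[X\to\F_a]$ under the action \eqref{eq:Aut(Fa) action on Mabc}. In particular, $S$ fixes $[P]$. Composing with $\SL_2\to S$, Lemma~\ref{Lemm:ActionAutFaParSpace}\ref{ActionGL2} shows that this action is the linear $\SL_2$-action on $\bigoplus_i y_0^iy_1^{b-i}\k[z]_{\le c-2-ai}$ of Remark~\ref{rk: def of dual Vm}, with $\SL_2$ acting trivially on the $y$'s.

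The second step is to turn the fixed line into invariant vectors. Since $\SL_2$ has no nontrivial characters, a stable line is spanned by an invariant vector, as in the proof of Lemma~\ref{lemma:horizontal_part_Tbc}. Because the action preserves the decomposition indexed by $i$, each component $P_i$ lies in $\k[z]^{\SL_2}_{\le c-2-ai}$. By Corollary~\ref{cor:case n=0}, $P_i$ vanishes unless $c-ai=2p^{s_i}$, in which case $P_i=\lambda_i z^{l_i}$. Thus $[P]\in\Omega_a^{b,c}$. Moreover, $P=\sum_i\lambda_i y_0^iy_1^{b-i}z^{ai+1+l_i}$, and $ai+1+l_i=c-2p^{s_i}+p^{s_i}=c-p^{s_i}$. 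This is exactly the polynomial of Definition~\ref{def Umemura bundle}, so $X\simeq\U^{b,c}_{a,\omega}$.

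The point needing most care is that the hypothesis only says that $H$ \emph{contains} $S$, in $\Aut(\F_a)$ and not merely up to conjugacy. The deduction that $S$ fixes $[X]$ uses only that elements of $H$ lift to automorphisms of $X$ commuting with $\pi$ up to the base automorphism. That is precisely the definition of the action \eqref{eq:Aut(Fa) action on Mabc}, so no splitting issue in the style of Proposition~\ref{prop: UniformCounterexampleNew} arises. The remaining subtlety is a scheme-theoretic one: working with $\k$-points of $\SL_2$ suffices, since the invariants are computed on points and $\k$ is algebraically closed.
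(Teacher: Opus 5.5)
Your proposal is correct and follows essentially the same route as the paper. The ``only if'' direction comes from Proposition~\ref{ActionSL2GL2Umemura}\ref{SL2actionUmemura}. The ``if'' direction goes through transitivity on the base of the ruling, hence no jumping fibers and numerical invariants $(a,b,c)$ with $b\geq1$, $c\geq2$; then the normal-form point of $\M_a^{b,c}$, Lemma~\ref{Lemm:ActionAutFaParSpace}\ref{ActionGL2}, the absence of nontrivial characters of $\SL_2$, and Corollary~\ref{cor:case n=0}. Your checks that $ai+1+l_i=c-p^{s_i}$ and that lifts of elements of $H$ fix the class $[X\to\F_a]$ only make explicit steps the paper leaves implicit.
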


\begin{proof}
	For an Umemura $\P^1$-bundle, the inclusion follows
	from Proposition~\ref{ActionSL2GL2Umemura}
	\ref{SL2actionUmemura}.
	
	Conversely, suppose that the inclusion holds.
	The induced action on the base of the ruling is
	transitive, so $\pi$ has no jumping fibers.
	Since $\pi$ is indecomposable, the results of
	Section~\ref{sec:moduli P1 bundles over Fa} imply that
	it has numerical invariants $(a,b,c)$ with $b\geq1$
	and $c\geq2$.
	By Proposition~\ref{prop: paramter space for non-dceomposable P1 bundles},
	it therefore determines a point of $\M_a^{b,c}$.
	Lemma~\ref{Lemm:ActionAutFaParSpace}\ref{ActionGL2}
	shows that this point is fixed by $\SL_2$.
	Since $\SL_2$ has no nontrivial characters, every
	$\SL_2$-stable line in a rational representation is
	fixed pointwise.
	Thus the parameter belongs to $\Omega_a^{b,c}$,
	and $\pi$ is an Umemura $\P^1$-bundle.
\end{proof} 

\begin{remark} \label{rk: GL2-action on Umemura}
	Let $X \to \F_a$ be an Umemura $\P^1$-bundle. Then $\Im(\Autz(X) \to \Aut(\F_a))$ contains $\GL_2/\mu_a$ if and only if all $P_i$ vanish except for a single index $i_0\in \llbracket 0, b \rrbracket$. Indeed, by Lemma~\ref{Lemm:ActionAutFaParSpace}\ref{ActionGL2}, a scalar matrix $\lambda I_2 \in \GL_2$ acts on $P$ by sending $P_i$ to $\lambda^{c-2-ai}P_i$.
	Since $a\geq1$, these weights are pairwise distinct.
		Thus a line in the invariant subspace is fixed by
		the scalar matrices if and only if it is a coordinate
		line.
\end{remark}

\begin{remark} \label{rk aut vert Umemura bundles} 
\begin{enumerate}
\item With the notation of Definition~\ref{def Umemura bundle}, the group of automorphisms over $\F_a$ of
$\U_{a,\omega}^{b,c}$ identifies with the vector group $\bigoplus_{i=1}^{b} \k[z_0,z_1]_{ai-c}$ whose action on $\U_{a,\omega}^{b,c}$ can be described on the first chart by the following regular action 
\[\begin{array}{ccc}
\F_b\times \A^1& \to & \F_b\times \A^1\\
([x_0:x_1; y_0:y_1],z)&\mapsto& \left([x_0:x_1+x_0 \sum_{i=1}^{b} y_0^i y_1^{b-i} q_i(z,1); y_0:y_1],z\right),\end{array}\]
where $(q_i)_{i=1,\ldots,b} \in \bigoplus_{i=1}^{b} \k[z_0,z_1]_{ai-c}$.
This follows from Lemma~\ref{AutZaF1}.
The action on $\pi^{-1}(s_{-a})$, which corresponds to $y_0=0$ on both charts, is then trivial.
Furthermore, the group is nontrivial if and only if $ab\ge c$. In that case, the action on $\pi^{-1}(s_{a})$, which corresponds to $y_1=0$ on both charts, is not trivial.

\smallskip

\item 
The automorphism group $\Autz(\U_{a,\omega}^{b,c})$ fits into a short exact sequence
\[
1 \longrightarrow \bigoplus_{i=1}^{b} \k[z_0,z_1]_{ai-c}
\longrightarrow \Autz(\U_{a,\omega}^{b,c})
\longrightarrow H \longrightarrow 1,
\]
where
\[
H=\Im\bigl(\Autz(\U_{a,\omega}^{b,c})\to\Aut(\F_a)\bigr) \subseteq  \Aut(\F_a) \simeq  \k[z_0,z_1]_a \rtimes \GL_2/\mu_a
\]
is a connected algebraic subgroup containing the standard copy of $\Im(\SL_2 \to \GL_2/\mu_a)$.

\end{enumerate}
\end{remark}

\subsection{Classification of the \texorpdfstring{$\P^1$}{P1}-bundles over Hirzebruch surfaces}\label{sec: classification over Fa}
Before turning to $\P^1$-bundles over $\P^2$, we summarize the different
possibilities for $\P^1$-bundles over Hirzebruch surfaces $\F_a$,
with $a\geq 0$, that will be relevant in the sequel. The main result of this section is Theorem~\ref{th:P1bundlesFa}.

\smallskip

Recall that two $\P^1$-bundles $\pi\colon X\to\F_a$ and
$\pi'\colon X'\to\F_a$ are \emph{square-isomorphic} if there exist
isomorphisms $\varphi\colon X\to X'$ and $\psi\in\Aut(\F_a)$ such that
\[
\pi'\circ\varphi=\psi\circ\pi.
\]
We first introduce a normal form for indecomposable $\P^1$-bundles whose
automorphism group acts transitively on the base of the ruling.

\begin{definition}
	Let $a\geq1$. An indecomposable $\P^1$-bundle $X\to\F_a$ is said to be
	\emph{monomial} if it is
	$	Z_a^{b,c,P}\to\F_a$ 
	for some $b\geq1$ and $c\geq2$, where
	\[
	P(y_0,y_1,z)
	=
	\sum_{i=0}^b y_0^iy_1^{b-i}P_i(z)z^{ai+1},
	\qquad
	P_i\in\k[z]_{\leq c-2-ai},
	\]
	and where, for each $i\in\{0,\ldots,b\}$, either $P_i=0$, or
	$c-2-ai$ is even and
	$P_i=\lambda_i z^{(c-2-ai)/2}$ for some $\lambda_i\in\k^*$.
\end{definition}

\begin{lemma}\label{lemmaMonom}
	Let $\pi\colon X\to\F_a$ be an indecomposable $\P^1$-bundle
	with $a\geq1$ and no jumping fibers. Write
	\[
	H=\Im\bigl(\Autz(X)\to\Aut(\F_a)\bigr)
	\subseteq\k[z_0,z_1]_a\rtimes(\GL_2/\mu_a),
	\qquad
	S=\Im\bigl(\SL_2\to\GL_2/\mu_a\bigr),
	\]
	and assume that the projection of $H$ to $\GL_2/\mu_a$
	contains $S$. Then the following are equivalent:
	\begin{enumerate}
		\item
		The $\P^1$-bundle $\pi\colon X\to\F_a$ is isomorphic,
		over $\F_a$, to a monomial $\P^1$-bundle.
		\item
		The diagonal torus $T\subseteq S$ is contained in $H$.
	\end{enumerate}
\end{lemma}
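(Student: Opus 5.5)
The plan is to work entirely on the parameter space $\M_a^{b,c}$. Since $\pi$ is indecomposable with no jumping fibers, it has numerical invariants $(a,b,c)$ with $b\geq1$ and $c\geq2$. By Proposition~\ref{prop: paramter space for non-dceomposable P1 bundles}, we may assume $X=Z_a^{b,c,P}$ with $P$ in the normal form \eqref{eq:def of P} and $P\neq0$. As in the proof of Lemma~\ref{lemma:horizontal_part_Tbc}, $H$ is the reduced identity component of the stabilizer of the point $[P]\in\M_a^{b,c}$ for the action \eqref{eq:Aut(Fa) action on Mabc}. Since $T$ is connected, $T\subseteq H$ if and only if $T$ fixes $[P]$. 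I will compute this $T$-action with Lemma~\ref{Lemm:ActionAutFaParSpace}\ref{ActionGL2} and Remark~\ref{ExplicitActionSL2km}. The element $\operatorname{diag}(t,t^{-1})$ sends $y_0^iy_1^{b-i}z^jz^{ai+1}$, with $z^j\in\k[z]_{\leq c-2-ai}$, to $t^{\,2j-(c-2-ai)}$ times itself. So each monomial of the normal form is a $T$-weight vector of weight $w(i,j)=2j-(c-2-ai)$. The monomial condition says exactly that every nonzero $P_i$ is a multiple of the monomial with $w(i,j)=0$.

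(1)$\Rightarrow$(2): If $X$ is monomial, every component of $P$ has weight $0$. Hence $T$ fixes $P$ itself, and therefore fixes $[P]$. Being connected, $T$ lies in the identity component of the stabilizer, so $T\subseteq H$.

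(2)$\Rightarrow$(1): If $T$ fixes $[P]$, the line $\k P$ is $T$-stable, and a $T$-stable line in a rational $T$-module is spanned by a weight vector. So all monomials occurring in $P$ have a common weight $w$, and it remains to show $w=0$; this is the main step. I will use the Weyl element. Because the projection of $H$ to $\GL_2/\mu_a$ contains $S$, there is some $h=(R,g)\in H$ whose image $g$ is the class of $n=\begin{bsmallmatrix}0&-1\\1&0\end{bsmallmatrix}$ modulo $T$. The element $h$ fixes $[P]$. I will compute $h\cdot P$ by composing the two pieces of Lemma~\ref{Lemm:ActionAutFaParSpace}:
\begin{itemize}
\item By part~\ref{ActionFap}, the unipotent part $R\in U_a$ acts by $P\mapsto \operatorname{red}_{a,c}(P(y_0,y_1-y_0R(1,z),z))$. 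This substitution only adds terms of strictly larger $y_0$-degree. So if $i_0$ is the smallest index with $P_{i_0}\neq0$, the component of index $i_0$ is unchanged, and no new components of smaller index appear.
\item The $\GL_2$ part (a torus element times $n$) preserves each index $i$. By the formula $n\cdot z^j=(-z)^{m-j}$ in Remark~\ref{ExplicitActionSL2km}, $n$ sends weight $w$ to weight $-w$.
\end{itemize}
Hence, in whichever order the two factors act, the component of index $i_0$ of $h\cdot P$ is a nonzero vector of weight $-w$. Since $h\cdot P\in\k^*P$, this component must also have weight $w$, so $w=0$. Therefore each nonzero $P_i$ is a multiple of $z^{(c-2-ai)/2}$ (in particular $c-2-ai$ is even), and $X$ is monomial.

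The delicate points are bookkeeping ones. First, one must check that the lift $h$ exists in $H$ itself, which holds because $H\to\GL_2/\mu_a$ has image containing $S$. Second, one must check that the reduction $\operatorname{red}_{a,c}$ does not disturb the lowest-index component; it does not, since $\operatorname{red}_{a,c}$ acts index by index.
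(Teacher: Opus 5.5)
Your proof is correct. It shares the paper's key observation that a translation $R\in\k[z_0,z_1]_a$ leaves the lowest nonzero component $P_{i_0}$ unchanged. Where it differs is in how the weight of $P_{i_0}$ is forced to be $0$.

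The paper uses lifts $(R,\alpha)\in H$ of \emph{every} $\alpha\in S$ to show that the line $\k P_{i_0}$ is $\SL_2$-stable. Since $\SL_2$ has no nontrivial characters, $P_{i_0}$ is then $\SL_2$-invariant, in particular of $T$-weight $0$. Only after that does the paper invoke $T\subseteq H$, to force every other component to carry the same $T$-scalar as $P_{i_0}$.

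You reverse the order. First, $T\subseteq H$ makes $P$ a $T$-weight vector of some weight $w$. Then a single lift of the Weyl element, which sends weight $w$ to $-w$ component by component, forces $w=-w$.

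Your route uses less of the hypothesis: it needs only a lift of $n$ modulo $T$, so only the normalizer of $T$ in $S$ inside the projection of $H$, rather than all of $S$. It also avoids the character argument. The paper's route gives a stronger by-product: $P_{i_0}$ is $\SL_2$-invariant, i.e.\ $c-2-ai_0=2(p^{s}-1)$ for some $s\geq0$. That is the kind of information exploited immediately afterwards in Proposition~\ref{propMonom2}.
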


\begin{proof}
	If $\pi\colon X=Z_a^{b,c,P}\to\F_a$ is monomial, the action of $T$
	fixes each $P_i$, so $T$ is contained in $H$.
	Conversely, assume that $T\subseteq H$.
	
	As $\pi$ has no jumping fibers, we may apply
	Proposition~\ref{prop: paramter space for non-dceomposable P1 bundles}
	and write $X\simeq Z_a^{b,c,P}\to\F_a$ for some
	$b\geq1$ and $c\geq2$, where
	\[
	P(y_0,y_1,z)
	=
	\sum_{i=0}^b y_0^iy_1^{b-i}P_i(z)z^{ai+1},
	\qquad
	P_i\in\k[z]_{\leq n_i},
	\qquad
	n_i=c-2-ai.
	\]
	Since $P\neq0$, the index
	$i_0=\min\{i\mid P_i\neq0\}$ is well defined.
	
	By hypothesis, for each $\alpha\in S$, there exists
	$R\in\k[z_0,z_1]_a$ such that $(R,\alpha)\in H$.
	This element fixes the point corresponding to $P$ in
	\[
	\M_a^{b,c}
	=
	\P\left(
	\bigoplus_{i=0}^b y_0^iy_1^{b-i}\k[z]_{\leq n_i}
	\right).
	\]
	By Lemma~\ref{Lemm:ActionAutFaParSpace}, the action of $R$
	is induced by replacing $y_1$ with $y_1-y_0R(1,z)$
	and leaves the first nonzero component $P_{i_0}$ unchanged.
	The action of $\alpha$ on this component is the one described
	in Remark~\ref{rk: def of dual Vm}.
	Consequently, the line $\k P_{i_0}$ is $\SL_2$-stable.
	Since $\SL_2$ has no nontrivial characters, $P_{i_0}$ is
	$\SL_2$-invariant, and hence $T$-invariant.
	
	As $T\subseteq H$, it fixes the point corresponding to $P$.
	Since $P_{i_0}$ is $T$-invariant and nonzero, the scalar by which
	each element of $T$ acts on this representative is $1$.
	Thus every $P_i$ is $T$-invariant.
	By Remark~\ref{rk: def of dual Vm}, this means that either $P_i=0$,
	or $n_i$ is even and
	$P_i=\lambda_i z^{n_i/2}$ for some $\lambda_i\in\k^*$.
	Hence $X\to\F_a$ is monomial.
\end{proof}

For a monomial normal form, we now study the action of translations
on its components. This gives a normalization under which $H$ splits
over its image in $\GL_2/\mu_a$.

\begin{proposition}\label{propMonom2}
	Let $\pi\colon X=Z_a^{b,c,P}\to\F_a$ be a monomial
	$\P^1$-bundle given by
	\[
	P(y_0,y_1,z)
	=
	\sum_{i=0}^b y_0^iy_1^{b-i}P_i(z)z^{ai+1},
	\qquad
	P_i\in\k[z]_{\leq n_i},
	\qquad
	n_i=c-2-ai,
	\]
	where each nonzero $P_i$ has the form
	$P_i=\lambda_i z^{n_i/2}$.
	Let
	\[
	H=\Im\bigl(\Autz(X)\to\Aut(\F_a)\bigr),
	\qquad
	G=\mathrm{pr}_{\GL_2/\mu_a}(H),
	\qquad
	S=\Im\bigl(\SL_2\to\GL_2/\mu_a\bigr).
	\]
Assume that the induced morphism $H\to\Aut(\P^1)$ is surjective
and that either $p=2$ or $p$ is odd and $a$ is not a power of $p$.
	
	If $P_i=0$ for every $i<b$, then $H=\Aut(\F_a)$.
	Otherwise, set
	\[
	u=\min\bigl\{i+p^{\nu_p(b-i)}\mid 0\leq i<b,\ P_i\neq0\bigr\}
	\in\{1,\ldots,b\}.
	\]
	
	If $n_u<0$, then $H=\Aut(\F_a)$ or
	$H=\k[z_0,z_1]_a\rtimes S$.
	If $n_u\geq0$, then:
	\begin{enumerate}
		\item\label{LiftS1}
		For every $i<u$ with $P_i\neq0$, the monomial $P_i$ is
		$\SL_2$-invariant; equivalently,
		$n_i=2(p^{s_i}-1)$ for some $s_i\geq0$.
		
		\item\label{LiftS2}
		Let $i_1<i_2<b$ be such that $P_{i_1}$ and $P_{i_2}$ are
		nonzero and $\SL_2$-invariant, and set
		$v_\ell=\nu_p(b-i_\ell)$ for $\ell=1,2$.
		If $n_{i_2+p^{v_2}}\geq0$, then $v_1\leq v_2$.
		
		\item\label{LiftS3}
		There exists a unique $i_0<b$ such that
		$P_{i_0}\neq0$ and $i_0+p^{\nu_p(b-i_0)}=u$.
		Moreover,
		$		i_0=\min\{i\mid P_i\neq0\}$.
		Set $q=u-i_0=p^{\nu_p(b-i_0)}$.
		
		\item\label{LiftS4}
		If $a$ is even, an automorphism of $\F_a$ of the form
		\[
		[y_0:y_1;z_0:z_1]\longmapsto
		[y_0:y_1+\lambda y_0(z_0z_1)^{a/2};z_0:z_1]
		\]
		transforms $P$ into another monomial normal form with the same
		$u$ and with $P_u=0$.
		
		\item\label{LiftS5}
		If $P_u=0$, then $H=K\rtimes G$, where
		\[
		K=
		\left\{
		\sum_{t=0}^a\mu_t z_0^tz_1^{a-t}
		\ \middle|\
		\mu_t=0\text{ whenever }|2t-a|\leq\frac{n_u}{q}
		\right\}
		\subseteq\k[z_0,z_1]_a.
		\]
	\end{enumerate}
\end{proposition}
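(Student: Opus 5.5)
The plan is to describe $H$ as the reduced identity component of the stabilizer, in $\Aut(\F_a)=\k[z_0,z_1]_a\rtimes(\GL_2/\mu_a)$, of the point $[P]\in\M_a^{b,c}$. This uses the identification recalled in the proof of Lemma~\ref{lemma:horizontal_part_Tbc}, applied over $\F_a$. We then compute this stabilizer explicitly with Lemma~\ref{Lemm:ActionAutFaParSpace}.

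\textbf{Action of translations.} By Lemma~\ref{SL2OverPGL2}, the surjectivity of $H\to\Aut(\P^1)$ gives $G\supseteq S$. Since $P$ is monomial, the diagonal torus fixes every $P_i$, so $T\subseteq H$ by Lemma~\ref{lemmaMonom}. For a translation $R\in\k[z_0,z_1]_a$, Lemma~\ref{Lemm:ActionAutFaParSpace}\ref{ActionFap} says we expand
\[
y_1^{b-i}\mapsto\sum_k\binom{b-i}{k}(-1)^ky_0^ky_1^{b-i-k}R(1,z)^k
\]
and then reduce with $\operatorname{red}_{a,c}$. By Lucas' theorem, the first $k>0$ with $\binom{b-i}{k}\not\equiv0$ is $k=p^{\nu_p(b-i)}$. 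Thus the component $P_i$ first feeds into the index $i+p^{\nu_p(b-i)}$, with contribution
\[
-R(1,z)^{p^{\nu_p(b-i)}}P_i(z)z^{ai+1}
\]
after reduction, that is, after keeping only the exponents of $z$ strictly between $a(i+q)$ and $c$. If $n_j<0$, then the reduction kills every contribution to component $j$. So $u$ is exactly the smallest index that can be modified, to first order, by a translation. This gives the two easy cases. If $P_i=0$ for all $i<b$, nothing is ever modified, so every $R$ stabilizes $[P]$, and the linear part is handled by $T\subseteq H$, $G\supseteq S$ and the scalar weights; hence $H=\Aut(\F_a)$. If $n_u<0$, then all contributions vanish, every translation lies in $H$, and $H=\k[z_0,z_1]_a\rtimes G$ with $G\in\{S,\GL_2/\mu_a\}$.

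\textbf{Items~\ref{LiftS1}--\ref{LiftS3}.} Assume $n_u\ge0$. For $g\in S$, pick $(R_g,g)\in H$. The components $P_i$ with $i<u$ are untouched by translations, so $g$ must preserve each of them up to the common scalar. Normalizing by the first nonzero component, exactly as in Lemma~\ref{lemmaMonom}, every $P_i$ with $i<u$ becomes $\SL_2$-invariant, and Corollary~\ref{cor:case n=0} gives $n_i=2(p^{s_i}-1)$. This proves~\ref{LiftS1}.

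For~\ref{LiftS2} and~\ref{LiftS3}, we compare the equations at index $u$ (and at $i_2+p^{v_2}$). The $\SL_2$-equivariance defect of $g\cdot P_u-P_u$ must be compensated by the sum of the terms $R_g^{q_i}P_i$ over all $i$ with $i+q_i=u$. We study these as cocycles $S\to\k[z]_{\le n_u}$. A contribution $R\mapsto R^{q}z^{l_i}$ with fixed $q$ is additive of a fixed Frobenius twist, and two such maps with different twists cannot cancel. This should force uniqueness of $i_0$. It should also force $i_0=\min\{i\mid P_i\ne0\}$: a smaller nonzero index would propagate to a lower value of $u$. We then get~\ref{LiftS2} by comparing $p$-adic valuations. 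The hypothesis that $a$ is not a power of $p$ (for odd $p$) enters here. It rules out the degenerate case where $R^{q}$ lands in the Frobenius-twisted submodule, as in Proposition~\ref{prop: UniformCounterexampleNew}.

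\textbf{Items~\ref{LiftS4} and~\ref{LiftS5}.} For~\ref{LiftS4}, the $T$-invariant translation $R=\lambda(z_0z_1)^{a/2}$ acts on component $u$ by adding $-\lambda^q z^{aq/2}P_{i_0}z^{ai_0+1}$. Its reduction is a nonzero multiple of the middle monomial $z^{n_u/2}$, up to the shift $z^{au+1}$, which is where $P_u$ sits. Choosing $\lambda$ appropriately kills $P_u$. The components below $u$ are unchanged, so $u$ is preserved, and torus invariance keeps the form monomial.

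For~\ref{LiftS5}, once $P_u=0$, the element $(R,g)$ stabilizes $[P]$ only if the index-$u$ component of $R^q\cdot P_{i_0}$ reduces to zero. Writing $R=\sum_t\mu_tz_0^tz_1^{a-t}$, we get $R(1,z)^q=\sum_t\mu_t^qz^{qt}$. A monomial $z^{qt}$ survives the reduction exactly when $|2t-a|\le n_u/q$, which yields $K$. The subgroup $G$ itself, with $R=0$, fixes $[P]$ by invariance of the $P_i$, giving the splitting $H=K\rtimes G$. We must still check that higher-index components impose no further conditions; for this we would argue via~\ref{LiftS2} and a dimension count.

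The main obstacle is~\ref{LiftS2}--\ref{LiftS3}: controlling the cancellation between Frobenius-twisted cocycle contributions.
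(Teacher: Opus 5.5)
Your setup, item~\ref{LiftS1}, the two preliminary cases, and the kernel computation in item~\ref{LiftS5} agree with the paper. The genuine gap is items~\ref{LiftS2}--\ref{LiftS3}, and the mechanism you propose for them is not what makes them true.

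\textbf{Items (2)--(3).} Item~\ref{LiftS2} is purely arithmetic: it uses only the invariance from item~\ref{LiftS1}. Item~\ref{LiftS3} is deduced from it, and so is the uniqueness of the contributor to index $u$ that your item~\ref{LiftS4} relies on. Your order is therefore backwards.

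Your sentence ``a smaller nonzero index would propagate to a lower value of $u$'' is exactly item~\ref{LiftS2}. It fails when $a$ is a power of $p$, even though $S\subseteq H$. Take $p=3$, $a=1$, $b=27$, $c=18$, and the Umemura bundle with support $\{0,12\}$. Then $n_0=2(9-1)$ and $n_{12}=2(3-1)$, while $0+3^{\nu_3(27)}=27$ and $12+3^{\nu_3(15)}=15$. So $u=15$ and $n_{15}=1\geq0$, but the minimizer $12$ is not $\min\{i\mid P_i\neq0\}=0$. No stabilizer or cocycle argument can exclude this configuration; the hypothesis on $a$ enters only through $p$-adic valuations, not through ``Frobenius-twisted submodules''.

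The argument you need runs as follows.
\begin{itemize}
\item Suppose $v_2<v_1$. Then $\nu_p(i_2-i_1)=v_2$.
\item Write $n_{i_\ell}=2(p^{r_\ell}-1)$. Then $a(i_2-i_1)=2(p^{r_1}-p^{r_2})$ with $r_1>r_2$.
\item Write $a=p^ta_1$ with $p\nmid a_1$. Valuations give $t+v_2=r_2$ for $p$ odd and $t+v_2=r_2+1$ for $p=2$.
\item Hence $n_{i_2+p^{v_2}}+2$ equals $p^{r_2}(2-a_1)$, respectively $2^{r_2+1}(1-a_1)$. Nonnegativity forces $p$ odd and $a_1=1$, contradicting the hypothesis on $a$.
\end{itemize}
Item~\ref{LiftS3} then follows. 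Two minimizers $i_0<i_1$ would satisfy $\nu_p(b-i_0)>\nu_p(b-i_1)$ with $n_{i_1+p^{\nu_p(b-i_1)}}=n_u\geq0$, contradicting item~\ref{LiftS2}. A nonzero $P_{i_1}$ with $i_1<i_0$ has $\nu_p(b-i_1)\leq\nu_p(b-i_0)$ by item~\ref{LiftS2}, hence $i_1+p^{\nu_p(b-i_1)}<u$, contradicting minimality.

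\textbf{Item (5).} Let $\delta_{i,j}(R)$ be the contribution of $P_i$ to the $j$-th component, namely the reduction of $\binom{b-i}{j-i}R(1,z)^{j-i}P_i(z)z^{-a(j-i)}$. You only show that the translation part of each element of $H$ lies in $K$. Since $\delta_{i_0,u}$ involves $\mu_t^q$, this step also uses that $H$ is reduced.

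The inclusion $K\subseteq H$ is left open. Saying that $(0,g)$ fixes $[P]$ ``by invariance of the $P_i$'' presupposes invariance of the components of index at least $u$, which item~\ref{LiftS1} does not give. A dimension count does not show that higher components impose no condition.

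Here is what is needed.
\begin{itemize}
\item If $aq\leq n_u$, then $K=0$ and $H=G$.
\item Otherwise, prove by induction on $i$ that $P_i$ is $\SL_2$-invariant and that $\delta_{i,j}(R)=0$ for all $j>i$ and $R\in K$.
\item Invariance: by induction, translations in $K$ do not change the $i$-th component. So lifts $(R,g)\in H$ with $R\in K$ make $\k P_i$ an $\SL_2$-stable line.
\item Vanishing: take $j>i$ with $n_j\geq0$ and $\binom{b-i}{j-i}\neq0$. Lucas gives $j-i=mp^v$ with $v=\nu_p(b-i)$. Item~\ref{LiftS2}, applied to $i_0<i$, gives $q\leq p^v$, hence $j\geq u$.
\item If $m=1$, the kernel formula together with $p^v\geq q$ and $n_j\leq n_u$ gives $K\subseteq\ker\delta_{i,j}$. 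If $m\geq2$, then $n_j\leq n_u-aq<0$, so there is nothing to check.
\end{itemize}
Thus $K\subseteq H$. For $g\in G$, any lift $(R,g)\in H$ has $R\in K$, so $(0,g)\in H$ and $H=K\rtimes G$.
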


\begin{proof}
	Write $U=\k[z_0,z_1]_a$, so that
	$\Aut(\F_a)=U\rtimes(\GL_2/\mu_a)$.
	By assumption, $G$ surjects onto $\PGL_2$.
	Lemma~\ref{SL2OverPGL2} implies that $S\subseteq G$;
	since $(\GL_2/\mu_a)/S$ is a one-dimensional torus and $G$ is
	connected, $G$ is either $S$ or $\GL_2/\mu_a$.
	
	The group $H$ is the reduced identity component of the stabilizer
	of the point corresponding to $P$ in
	\[
	\M_a^{b,c}
	=
	\P\left(
	\bigoplus_{i=0}^b y_0^iy_1^{b-i}\k[z]_{\leq n_i}
	\right).
	\]
	Since $P$ is monomial, the diagonal torus $T\subseteq S$
	fixes every $P_i$, and hence $T\subseteq H$.
	
	For each $j$, identify $\k[z]_{\leq n_j}$ with
	\[
	V_j=
	\k[z,z^{-1}]
	\big/
	\bigl(z^{n_j+1}\k[z]+z^{-1}\k[z^{-1}]\bigr),
	\]
	where $V_j=0$ if $n_j<0$.
	We denote the class of a Laurent polynomial $f$ in $V_j$ by $[f]$.
	By Lemma~\ref{Lemm:ActionAutFaParSpace}, the action of
	$-R\in U$ is induced by replacing $y_1$ with $y_1+y_0R(1,z)$.
	Thus its $j$-th component is
	\begin{equation}\tag{$\heartsuit$}
		P_j^R=P_j+\sum_{i=0}^{j-1}\delta_{i,j}(R),
	\end{equation}
	where
	\[
	\delta_{i,j}(R)
	=
	\left[
	\binom{b-i}{j-i}R(1,z)^{j-i}P_i(z)z^{-a(j-i)}
	\right]\in V_j.
	\]
	If $P_i\neq0$, then $P_i=\lambda_i z^{n_i/2}$ and
	$n_i=n_j+a(j-i)$, so
	\begin{equation}\tag{$\spadesuit$}\label{deltaij}
		\delta_{i,j}(R)
		=
		\left[
		\lambda_i\binom{b-i}{j-i}
		R(1,z)^{j-i}z^{(n_j-a(j-i))/2}
		\right].
	\end{equation}
	
	If $P_i=0$ for every $i<b$, then $U$ fixes $P$.
	Since $G$ contains $S$, the line $\k P_b$ is $\SL_2$-stable,
	and hence fixed, as $\SL_2$ has no nontrivial characters.
	Homotheties also preserve this line, so the whole group
	$\Aut(\F_a)$ fixes the point corresponding to $P$.
	Thus $H=\Aut(\F_a)$.
	
	We now assume that $P_i\neq0$ for some $i<b$.
	For each such $i$, set $u_i=i+p^{\nu_p(b-i)}$.
	Lucas's theorem gives
	\[
	\binom{b-i}{j-i}=0\quad\text{in }\k
	\qquad\text{if }i<j<u_i.
	\]
	Consequently, $U$ leaves every component $P_i$ with $i<u$
	unchanged. Since every element of $S$ has a lift to $H$,
	each nonzero line $\k P_i$ with $i<u$ is $\SL_2$-stable,
	and therefore fixed.
	Corollary~\ref{cor:case n=0} proves~\ref{LiftS1}.
	
	If $n_u<0$, then $V_j=0$ for every $j\geq u$.
	Thus $U$ fixes $P$, and $H=U\rtimes G$, giving the two
	possibilities stated above.
	Henceforth assume that $n_u\geq0$.
	
	To prove~\ref{LiftS2}, suppose that $v_2<v_1$.
	Then
	\[
	\nu_p(i_2-i_1)
	=
	\nu_p\bigl((b-i_1)-(b-i_2)\bigr)=v_2.
	\]
	Since $P_{i_1}$ and $P_{i_2}$ are nonzero and invariant, we may
	write $n_{i_\ell}=2(p^{r_\ell}-1)$, for some $r_1,r_2\ge 0$.
	We have $r_1>r_2$ and
	\[
	a(i_2-i_1)=2(p^{r_1}-p^{r_2}).
	\]
	Writing $a=p^ta_1$ with $p\nmid a_1$, comparison of valuations gives
	$t+v_2=r_2$ if $p$ is odd, and $t+v_2=r_2+1$ if $p=2$.
	For $j_2=i_2+p^{v_2}$, it follows that
	\[
	n_{j_2}+2=
	\begin{cases}
		p^{r_2}(2-a_1),&p\text{ odd},\\
		2^{r_2+1}(1-a_1),&p=2.
	\end{cases}
	\]
	Since $n_{j_2}\geq0$, the second case is impossible.
	In the first case, necessarily $a_1=1$, contradicting the
	assumption that $a$ is not a power of $p$. 
	This proves~\ref{LiftS2}.
	
	For~\ref{LiftS3}, suppose that $i_0<i_1$ both attain the minimum $u$.
	Then $i_0,i_1<u$, so both corresponding components are invariant
	by~\ref{LiftS1}. Writing $v_\ell=\nu_p(b-i_\ell)$, the equality
	$i_0+p^{v_0}=i_1+p^{v_1}$ gives $v_0>v_1$, contradicting
	\ref{LiftS2}, since $n_{i_1+p^{v_1}}=n_u\geq0$.
	Thus the minimizer $i_0$ is unique.
	If $i_1<i_0$ and $P_{i_1}\neq0$, then~\ref{LiftS2} gives
	$\nu_p(b-i_1)\leq\nu_p(b-i_0)$, and hence $u_{i_1}<u$,
	a contradiction. This proves~\ref{LiftS3}.
	
	Suppose that $a$ is even and take
	$R=\mu(z_0z_1)^{a/2}$.
	This element commutes with $T$, so its action preserves
	the monomial normal form.
	By uniqueness in~\ref{LiftS3},
	\[
	P_u^R
	=
	P_u+
	\lambda_{i_0}\binom{b-i_0}{q}\mu^qz^{n_u/2}.
	\]
	The binomial coefficient is nonzero in $\k$ by Lucas's theorem.
	Since $\k$ is algebraically closed, we can choose $\mu$ so that
	$P_u^R=0$.
	The components below $u$ are unchanged, so $u$ is unchanged.
	This proves~\ref{LiftS4}.
	
	We now prove~\ref{LiftS5}, assuming $P_u=0$.
	If $P_i\neq0$, $j-i=p^e$, and
	$\binom{b-i}{p^e}\neq0$ in $\k$, then $\delta_{i,j}$ is a
	group homomorphism. For
	$R=\sum_{t=0}^a\mu_tz_0^tz_1^{a-t}$, formula~\eqref{deltaij} gives
	\[
	\delta_{i,j}(R)
	=
	\left[
	\lambda_i\binom{b-i}{p^e}
	\sum_{t=0}^a\mu_t^{p^e}
	z^{(n_j+(a-2t)p^e)/2}
	\right].
	\]
	The exponents are distinct, and the terms surviving in $V_j$
	are precisely those with $|2t-a|p^e\leq n_j$.
	Consequently,
	\begin{equation}\tag{$\diamondsuit$}\label{Kerdelta}
		\ker(\delta_{i,j})_{\mathrm{red}}
		=
		\left\{
		\sum_{t=0}^a\mu_tz_0^tz_1^{a-t}
		\ \middle|\
		\mu_t=0\text{ whenever }|2t-a|\leq\frac{n_j}{p^e}
		\right\}.
	\end{equation}
	
	For every $(R,g)\in H$, the element $g$ preserves the line
	corresponding to the first nonzero component $P_{i_0}$.
	Since $P_u=0$, comparison of the $u$-th components gives
	$\delta_{i_0,u}(R)=0$.
	Thus $R\in K=\ker(\delta_{i_0,u})_{\mathrm{red}}$.
	Since $H$ is reduced, the translation-coordinate morphism
	factors through the reduced kernel $K$.
	If $aq\leq n_u$, formula~\eqref{Kerdelta} gives $K=0$;
	then $H=G$, as required.
	We may therefore assume that $aq>n_u$.
	
	We prove by induction on $i$ that $P_i$ is $\SL_2$-invariant
	and that
	\begin{equation}\tag{$\clubsuit$}\label{reciK}
		\delta_{i,j}(R)=0
		\qquad\text{for every }j>i\text{ and }R\in K.
	\end{equation}
	There is nothing to prove if $P_i=0$.
	Otherwise $i\geq i_0$.
	For $i=i_0$, invariance follows from~\ref{LiftS1}.
	For $i>i_0$, the induction hypothesis implies that translations
	by elements of $K$ leave the $i$-th component unchanged.
	Since every element of $S$ has a lift $(R,g)\in H$ with $R\in K$,
	the line $\k P_i$ is $\SL_2$-stable, and hence $P_i$ is invariant.
	
	To prove~\eqref{reciK}, we may assume that $n_j\geq0$ and
	$\binom{b-i}{j-i}\neq0$ in $\k$.
	Set $v=\nu_p(b-i)$.
	Lucas's theorem implies that $j-i=mp^v$ for some $m\geq1$.
	Since $n_{i+p^v}\geq n_j\geq0$, assertion~\ref{LiftS2}
	gives $q\leq p^v$ when $i>i_0$; this also holds when $i=i_0$.
	Moreover, $j\geq i+p^v\geq u$.
	
	If $m=1$, formula~\eqref{Kerdelta}, together with
	$p^v\geq q$ and $n_j\leq n_u$, gives
	$K\subseteq\ker(\delta_{i,j})_{\mathrm{red}}$.
	If $m\geq2$, then
	\[
	j\geq i+p^v+p^v\geq u+q,
	\qquad
	n_j\leq n_u-aq<0,
	\]
	a contradiction.
	This completes the induction.
	
	Thus $K$ fixes $P$, so $K\subseteq H$.
	For every $g\in G$, choose a lift $(R,g)\in H$.
	Since $R\in K\subseteq H$, we obtain $(0,g)\in H$.
	Hence $H=K\rtimes G$, proving~\ref{LiftS5}.
\end{proof}
\begin{corollary}\label{cor:H-maps-onto-PGL2-even-a}
	Let $a\geq2$ be even.
	Let $\pi\colon X\to\F_a$ be a $\P^1$-bundle, and set
	\[
	H=\Im\bigl(\Autz(X)\to\Aut(\F_a)\bigr),
	\qquad
	U_a=\k[z_0,z_1]_a,
	\qquad
	S=\Im\bigl(\SL_2\to\GL_2/\mu_a\bigr).
	\]
	Let $q\colon\Aut(\F_a)\to\PGL_2$ be the homomorphism induced
	by the ruling $\F_a\to\P^1$.
	Then $q(H)=\PGL_2$ if and only if
	$uSu^{-1}\subseteq H$ for some $u\in U_a$.
\end{corollary}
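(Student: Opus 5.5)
The implication from right to left is immediate, so the work is in the converse. Suppose $uSu^{-1}\subseteq H$ for some $u\in U_a$. Then
\[
q(H)\supseteq q(uSu^{-1})=q(S)=\PGL_2,
\]
because $q$ is trivial on $U_a$ and $S$ surjects onto $\PGL_2$.

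For the converse, assume $q(H)=\PGL_2$. I will reduce, by conjugations inside $U_a$, to a situation where Proposition~\ref{propMonom2} describes $H$ explicitly. Throughout I use one observation. If $\varphi\in\Aut(\F_a)$, then $\varphi\circ\pi\colon X\to\F_a$ is again a $\P^1$-bundle, and its image group is $\varphi H\varphi^{-1}$. Also, replacing $\pi$ by an isomorphic $\P^1$-bundle over $\F_a$ does not change $H$. Conjugation by elements of $U_a$ does not change $q(H)$, so the hypothesis survives each modification, and the conjugating elements simply compose to a single $u\in U_a$.

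First I dispose of easy cases. Since $q(H)=\PGL_2$, the group $\Autz(X)$ acts transitively on the base of the ruling, so $\pi$ has no jumping fibers. If $\pi$ is decomposable, Lemma~\ref{lem:aut-group-Fabc} gives $H=\Aut(\F_a)\supseteq S$, and we are done. Assume now that $\pi$ is indecomposable. The projection of $H$ to $\GL_2/\mu_a$ still surjects onto $\PGL_2$, so by Lemma~\ref{SL2OverPGL2} it contains $S$, in particular the diagonal torus $T$ of $S$. Lemma~\ref{lem:lifttorus} then provides $u_1\in U_a$ with $T\subseteq u_1Hu_1^{-1}$. Replacing $\pi$ by $u_1\circ\pi$, we may assume $T\subseteq H$. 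Lemma~\ref{lemmaMonom} now shows that $\pi$ is isomorphic over $\F_a$ to a monomial $\P^1$-bundle $Z_a^{b,c,P}$, and this leaves $H$ unchanged.

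Next I apply Proposition~\ref{propMonom2}, so its hypotheses must be checked. The morphism $H\to\Aut(\P^1)$ is surjective by assumption. The arithmetic condition holds because $a$ is even: if $p=2$ it is automatic, and if $p$ is odd then $a$ cannot be a power of $p$. The proposition gives three cases.
\begin{itemize}
\item If $P_i=0$ for all $i<b$, then $H=\Aut(\F_a)\supseteq S$.
\item If $n_u<0$, then $H$ is $\Aut(\F_a)$ or $\k[z_0,z_1]_a\rtimes S$, and either contains $S$.
\item If $n_u\geq0$, item~\ref{LiftS4} applies, since $a$ is even. It gives $u_2=\lambda(z_0z_1)^{a/2}\in U_a$ such that $u_2\circ\pi$ is again monomial, with the same $u$ and with $P_u=0$. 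Item~\ref{LiftS5} then gives $u_2Hu_2^{-1}=K\rtimes G$ with $G\supseteq S$, so $S\subseteq u_2Hu_2^{-1}$.
\end{itemize}
Undoing the conjugations yields $u\in U_a$ with $uSu^{-1}\subseteq H$.

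The only delicate point is this bookkeeping. One must confirm that each modification stays within conjugation by $U_a$, which is true because the automorphism of item~\ref{LiftS4} is a translation. One must also confirm that the hypotheses of Lemma~\ref{lemmaMonom} and Proposition~\ref{propMonom2}, namely no jumping fibers, surjectivity onto $\PGL_2$ and containment of $T$, hold for the conjugated bundle. The genuinely hard content, the splitting $H=K\rtimes G$, is already supplied by Proposition~\ref{propMonom2}.
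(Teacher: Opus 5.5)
Your proposal is correct and follows the paper's proof essentially step for step. You dispose of the decomposable case, lift the torus with Lemma~\ref{lem:lifttorus}, get a monomial form from Lemma~\ref{lemmaMonom}, and then apply Proposition~\ref{propMonom2}, using assertion~\ref{LiftS4} to achieve $P_u=0$ and assertion~\ref{LiftS5} to obtain $H=K\rtimes G$ with $S\subseteq G$. Your one addition is to invoke Lemma~\ref{SL2OverPGL2} explicitly to check the hypothesis of the torus-lifting lemma, a step the paper leaves implicit.
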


\begin{proof}
	The reverse implication is immediate.
	Assume that $q(H)=\PGL_2$.
	Then $X\to\F_a$ has no jumping fiber.
	If it is decomposable, Lemma~\ref{lem:aut-group-Fabc}
	gives $H=\Aut(\F_a)$.
	
	Otherwise, we first apply Lemma~\ref{lem:lifttorus} and may assume,
	after conjugating $H$ by an element of $U_a$, that $H$ contains
	the diagonal torus of $S$. By Lemma~\ref{lemmaMonom}, this implies
	that $X\to\F_a$ is monomial.
	If $p=2$, Proposition~\ref{propMonom2} applies directly.
	If $p$ is odd, it applies because the even integer $a$
	is not a power of $p$.
	If only $P_b$ is nonzero, or if $n_u<0$, it gives $S\subseteq H$.
	Otherwise, assertion~\ref{LiftS4} allows us, after another conjugation
	by an element of $U_a$, to assume that $P_u=0$.
	Assertion~\ref{LiftS5} then gives $H=K\rtimes G$ with $S\subseteq G$.
	Undoing the conjugations proves the claim.
\end{proof}

Under the hypotheses of the following theorem,
the preceding results show that every indecomposable
$\P^1$-bundle over $\F_a$ either admits an equivariant
birational map to a decomposable $\P^1$-bundle with strictly
larger neutral automorphism group, or, for $a\geq1$,
is square-isomorphic to an Umemura $\P^1$-bundle.
We therefore obtain the following classification.

	\begin{theorem}\label{th:P1bundlesFa}
Let $a\geq0$, and let $\pi\colon X\to\F_a$ be a
$\P^1$-bundle.
Assume that $p\geq3$ or that $a$ is even.
Then exactly one of the following holds:
\begin{enumerate}
	\item\label{item i: Prop:P1bundlesFa}
	$X\to\F_a$ is decomposable, hence isomorphic to
	$\FF_a^{b,c}\to\F_a$ for some $b,c\in\Z$
	$($Definition~$\ref{def Fabc})$;
	
	\item\label{item ii: Prop:P1bundlesFa}
	$a\geq1$ and $X\to\F_a$ is square-isomorphic to an Umemura
	$\P^1$-bundle $\U_{a,\omega}^{b,c}\to\F_a$, for some
	$b,c,\omega$ as in Definition~$\ref{def Umemura bundle}$;
	
	\item\label{item iv: Prop:P1bundlesFa}
	$X\to\F_a$ is indecomposable, and either $a=0$, or $a\geq1$
	and $\Autz(X)$ fixes a point on the base of the ruling $\F_a\to\P^1$.
	In both cases, there exists a decomposable $\P^1$-bundle
	$\FF_a^{b,c}\to\F_a$ and a commutative diagram
	\[
	\xymatrix@R=3mm@C=2cm{
		X \ar@{-->}[rr]^{\psi} \ar[rd]_{\pi}
		&& \FF_a^{b,c} \ar[ld] \\
		& \F_a
	}
	\]
	where $\psi$ is a birational map satisfying
	$\psi\Autz(X)\psi^{-1}\subsetneq\Autz(\FF_a^{b,c})$.
\end{enumerate}
	\end{theorem}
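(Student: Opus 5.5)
The plan is to split according to whether $\pi$ is decomposable, and then, for $a\geq1$, according to how $\Autz(X)$ acts on the base $\P^1$ of the ruling $\tau_a\colon\F_a\to\P^1$.

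\emph{Decomposable or $a=0$.} If $\pi$ is decomposable, it is isomorphic to some $\FF_a^{b,c}$ by Definition~\ref{def Fabc}, and we are in case~\ref{item i: Prop:P1bundlesFa}. If $a=0$ and $\pi$ is indecomposable, Corollary~\ref{cor: non-maximality of indecomposable} gives the diagram of case~\ref{item iv: Prop:P1bundlesFa} directly.

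\emph{Indecomposable, $a\geq1$.} By Blanchard's lemma (Proposition~\ref{prop:blanchard-lemma}), $G:=\Autz(X)$ acts on $\F_a$ and on $\P^1$. Its image in $\PGL_2$ is either all of $\PGL_2$ or a proper connected subgroup. A proper connected subgroup fixes a point $p\in\P^1$; then $(\tau_a\circ\pi)^{-1}(p)$ is $G$-invariant, and Lemma~\ref{Lemm:fiberInvariant} gives case~\ref{item iv: Prop:P1bundlesFa}. So assume $q(H)=\PGL_2$, where $H=\Im(G\to\Aut(\F_a))$. The goal is to show that, after conjugating by an element $u\in U_a$ (an automorphism of $\F_a$), $H$ contains the standard $S=\Im(\SL_2\to\GL_2/\mu_a)$:
\begin{itemize}
\item for $a$ odd and $p\geq3$, this is Lemma~\ref{lem:H-maps-onto-PGL2-odd-a};
\item for $a$ even, this is Corollary~\ref{cor:H-maps-onto-PGL2-even-a}.
\end{itemize}
Replacing $\pi$ by $u^{-1}\circ\pi$ is a square isomorphism, and conjugates $H$ by $u^{-1}$. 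Hence the new image contains $S$. Lemma~\ref{lem:characterization-Umemura} then shows that $u^{-1}\circ\pi$ is isomorphic over $\F_a$ to an Umemura $\P^1$-bundle, which is case~\ref{item ii: Prop:P1bundlesFa}.

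\emph{Exclusivity.} Umemura bundles are indecomposable by Proposition~\ref{prop: paramter space for non-dceomposable P1 bundles}, since their polynomial is nonzero in normal form; decomposability is preserved by square isomorphisms. So~\ref{item i: Prop:P1bundlesFa} excludes the other two cases, because~\ref{item iv: Prop:P1bundlesFa} also requires indecomposability. For~\ref{item ii: Prop:P1bundlesFa} versus~\ref{item iv: Prop:P1bundlesFa}, note that case~\ref{item ii: Prop:P1bundlesFa} has $a\geq1$. By Lemma~\ref{lem:characterization-Umemura}, $\Autz$ of an Umemura bundle acts transitively on the base of the ruling. This property transfers along a square isomorphism, because any automorphism of $\F_a$ preserves the unique ruling when $a\geq1$. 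Hence there is no fixed point on the base, and case~\ref{item iv: Prop:P1bundlesFa} fails.

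The main point requiring care is the transitive case: checking that the hypotheses ($p\geq3$ or $a$ even) match those of the two conjugation results. There is one overlap to handle. When $a$ is even and $p\geq3$, Corollary~\ref{cor:H-maps-onto-PGL2-even-a} requires $a\geq2$, which holds automatically for $a$ even with $a\geq1$. The remaining bookkeeping is to check that conjugation by $U_a$ really corresponds to composing $\pi$ with an automorphism of $\F_a$, so that the conclusion is up to square isomorphism rather than isomorphism over $\F_a$.
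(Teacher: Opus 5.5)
Your proposal is correct and follows essentially the same route as the paper. You treat the decomposable case directly, the indecomposable $a=0$ case via Corollary~\ref{cor: non-maximality of indecomposable}, and for $a\geq1$ the fixed-point case via Lemma~\ref{Lemm:fiberInvariant}. In the transitive case you use Lemma~\ref{lem:H-maps-onto-PGL2-odd-a} or Corollary~\ref{cor:H-maps-onto-PGL2-even-a} followed by Lemma~\ref{lem:characterization-Umemura}, and exclusivity comes from indecomposability and transitivity on the base of the ruling. Your explicit checks spell out steps the paper leaves implicit: that Umemura bundles are indecomposable because their polynomial is a nonzero normal form, and that composing $\pi$ with $u^{-1}$ conjugates $H$ by $u^{-1}$.
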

	
	\begin{proof}
		If $X\to\F_a$ is decomposable, we obtain
		case~\ref{item i: Prop:P1bundlesFa}.
		We may therefore assume that it is indecomposable.
		If $a=0$, Corollary~\ref{cor: non-maximality of indecomposable}
		gives case~\ref{item iv: Prop:P1bundlesFa}.
		Henceforth assume that $a\geq1$. 
		
		Set $H=\Im(\Autz(X) \to \Aut(\F_a))$.
		Since $H$ is connected, its image in $\Aut(\P^1)=\PGL_2$
		either fixes a point or is the whole group.
		In the first case, Lemma~\ref{Lemm:fiberInvariant} gives
		case~\ref{item iv: Prop:P1bundlesFa}.
		In the second case, Lemma~\ref{lem:H-maps-onto-PGL2-odd-a}
		and Corollary~\ref{cor:H-maps-onto-PGL2-even-a}, according to
		the parity of $a$, show that $H$ contains a conjugate of
		$\Im(\SL_2\to\GL_2/\mu_a)$.
		After replacing $\pi$ by a square-isomorphic $\P^1$-bundle,
		Lemma~\ref{lem:characterization-Umemura} therefore gives
		case~\ref{item ii: Prop:P1bundlesFa}.
		
		The first case is disjoint from the other two by
		indecomposability. The second and third cases are disjoint
		because Umemura $\P^1$-bundles are defined for $a\geq1$ and their
		neutral automorphism groups act transitively on the base
		of the ruling, by Lemma~\ref{lem:characterization-Umemura}.
	\end{proof}
	
The following example shows that the case $(a,p)=(1,2)$
cannot be included in Theorem~\ref{th:P1bundlesFa}
without changing its conclusion: an indecomposable
$\P^1$-bundle over $\F_1$ can have an automorphism group
acting transitively on the base $\P^1$ without being
square-isomorphic to an Umemura $\P^1$-bundle.

\begin{lemma}\label{lem: counter example for p=2}
Assume that $p=2$, and consider
\[
X=Z_1^{2,8,P}\to\F_1,
\qquad
P(y_0,y_1,z)=y_1^2z^4+y_0^2z^5.
\]
Then $X$ is an indecomposable $\P^1$-bundle and
$H=\Im\bigl(\Autz(X)\to\Aut(\F_1)\bigr)$ is scheme-theoretically equal to
\[
\left\{
\left(
\lambda z_0+\mu z_1,
\begin{bsmallmatrix}a&b\\c&d\end{bsmallmatrix}
\right)
\in \k[z_0,z_1]_1\rtimes\SL_2
\ \middle|\
\lambda^2=cd,\ \mu^2=ab
\right\}.
\]
The group $H$ is isomorphic to $\PGL_2$. Its action on the base $\P^1$
corresponds to the Frobenius morphism $\PGL_2\to\PGL_2$, so $H$ acts
transitively on $\P^1$. However, $H$ does not contain a conjugate of the
standard $\SL_2\subset\Aut(\F_1)$.
\end{lemma}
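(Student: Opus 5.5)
The strategy is to compute $H$ as the reduced identity component of the stabilizer of the point $[P]\in\M_1^{2,8}$ for the action of $\Aut(\F_1)=\k[z_0,z_1]_1\rtimes\GL_2$ described in Lemma~\ref{Lemm:ActionAutFaParSpace}, as in the proof of Lemma~\ref{lemma:horizontal_part_Tbc}. Since $\Aut(X)_{\F_1}$ only contributes a kernel, the image $H$ is exactly this stabilizer. Then I would exhibit an explicit isomorphism $\PGL_2\to H$.

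\textbf{Normal form.} First I check that $P$ is in the normal form \eqref{eq:def of P}. With $a=1$, $b=2$, $c=8$ one has $P_0=z^3\in\k[z]_{\le6}$, $P_1=0$ and $P_2=z^2\in\k[z]_{\le4}$. Since $P\neq0$, Proposition~\ref{prop: paramter space for non-dceomposable P1 bundles} shows that $X$ is indecomposable.

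\textbf{Stabilizer equations.} Next I compute the action of $(R,g)$ with $R=\lambda z_0+\mu z_1$ and $g\in\GL_2$. By Lemma~\ref{Lemm:ActionAutFaParSpace}\ref{ActionFap}, the translation replaces $y_1$ by $y_1+y_0(\lambda+\mu z)$, with signs irrelevant in characteristic $2$. The cross term $2y_0y_1$ vanishes, so
\[
(y_1+y_0(\lambda+\mu z))^2z^4=y_1^2z^4+y_0^2(\lambda^2z^4+\mu^2z^6).
\]
No reduction is needed, so $P_0=z^3$ is unchanged and $P_2$ becomes $\lambda^2z+z^2+\mu^2z^3$. Then $g$ acts through Remark~\ref{ExplicitActionSL2km} with $m=6$ on $P_0$ and $m=4$ on $P_2$. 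By Corollary~\ref{cor:case n=0} with $p^s=4$, the vector $z^3$ spans $(\k[z]_{\le6})^{\SL_2}$, and scalar matrices act with distinct weights on the two components. Hence the stabilizer condition forces $g\in\SL_2$ up to $\mu_a$ (trivial here) and
\[
g\cdot(\lambda^2z+z^2+\mu^2z^3)=z^2 .
\]
Reducing the explicit $5\times5$ matrix for $m=4$ modulo $2$, the columns of $z,z^2,z^3$ have few nonzero entries. Comparing coefficients of $z$, $z^2$, $z^3$ should give, using $ad+bc=1$, exactly $\lambda^2=cd$ and $\mu^2=ab$. I would do this comparison scheme-theoretically, that is, over an arbitrary $\k$-algebra, to obtain the stated equations. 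I expect this bookkeeping to be the main (though routine) obstacle, together with checking that the resulting scheme is reduced.

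\textbf{Isomorphism with $\PGL_2$ and conclusions.} Reducedness and the identification follow from an explicit parametrization. Consider
\[
\phi\colon\PGL_2\to H,\qquad [A]\mapsto\Bigl(\tfrac{cd}{\det A}z_0+\tfrac{ab}{\det A}z_1,\ \tfrac{1}{\det A}\begin{bsmallmatrix}a^2&b^2\\c^2&d^2\end{bsmallmatrix}\Bigr),
\qquad A=\begin{bsmallmatrix}a&b\\c&d\end{bsmallmatrix}.
\]
This map is well defined on classes, has image in $\SL_2$ because $\det A^{(2)}=(\det A)^2$, and satisfies the equations since $c^2d^2/\det^2=(cd/\det)^2$. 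It is a homomorphism by the semidirect-product law, since $R\mapsto R\circ g$ is compatible with the Frobenius.

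To see that $\phi$ is an isomorphism, I would write down the inverse on coordinates. Namely, $a^2/\det$, $b^2/\det,\dots$ together with $ab/\det=\mu$ and $cd/\det=\lambda$ recover all monomials $x_ix_j/\det$ of degree two in the entries, using $ad+bc=\det$ for the mixed ones. These functions generate the coordinate ring of $\PGL_2$.

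Composing $\phi$ with the projection to $\Aut(\P^1)$ gives $[A]\mapsto[A^{(2)}]$, which is the Frobenius; in particular, $H$ acts transitively on $\P^1$. Finally, suppose a conjugate of the standard $\SL_2$ lay in $H$. Since both are connected of dimension $3$, it would equal $H\simeq\PGL_2$, which is impossible because $\SL_2\not\simeq\PGL_2$ in characteristic $2$ (the former has nontrivial centre $\mu_2$).
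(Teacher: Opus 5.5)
Your overall route is the paper's: compute the stabilizer of $[P]$ in $\M_1^{2,8}$ scheme-theoretically, then identify it with $\PGL_2$. Your $\phi$ is exactly the paper's closed immersion $\PGL_2\to\GL_3$, read in the coordinates $(\lambda,\mu,g)$.

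\textbf{The stabilizer step fails as written, because of the order of composition.} In the paper's semidirect product, $(R,g)$ acts on $\F_1$ as $g$ followed by the translation. Also, $\Aut(\F_1)$ acts on $\M_1^{2,8}$ by $[\pi]\mapsto[\varphi\circ\pi]$. Hence $(R,g)\cdot[P]=R\cdot(g\cdot[P])$. You translate first and then apply $g$, which tests the element $g\circ R$ instead. Concretely, modulo $2$ one has $g\cdot z=\delta(d^2z+b^2z^3)$ and $g\cdot z^3=\delta(c^2z+a^2z^3)$ in $\k[z]_{\le 4}$. So your equation $g\cdot(\lambda^2z+z^2+\mu^2z^3)=z^2$ yields
\[
\lambda^2d^2+\mu^2c^2=cd,\qquad \lambda^2b^2+\mu^2a^2=ab,
\]
that is, $\lambda^2=ac$ and $\mu^2=bd$. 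This is not the stated set, and your $\phi$ does not take values in it.

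The fix is to apply $g$ first: $(P_0,P_2)=(z^3,z^2)\mapsto(\delta^3z^3,\ \delta cd\,z+\delta^2z^2+\delta ab\,z^3)$. The translation then adds $\delta^3(\lambda^2z+\mu^2z^3)$ to the second component. This gives $\delta=1$, $\lambda^2=cd$, $\mu^2=ab$, as in the paper.

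\textbf{A smaller gap in the isomorphism step.} Surjectivity of $\phi^*$ only makes $\phi$ a closed immersion. To conclude that the a priori non-reduced stabilizer equals its image, you must check that your inverse formulas respect the relations of $\k[\PGL_2]$ modulo $\lambda^2-c'd'$, $\mu^2-a'b'$ and $a'd'+b'c'-1$. Here $a',b',c',d'$ are the $\SL_2$-coordinates on $H$, and the formulas are $ad/\det\mapsto a'd'+\lambda\mu$, $bc/\det\mapsto b'c'+\lambda\mu$, $ac/\det\mapsto a'\lambda+c'\mu$, $bd/\det\mapsto b'\lambda+d'\mu$. They do respect the relations. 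This makes explicit a point the paper passes over when it asserts what the image of its closed immersion is.

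\textbf{The last assertion is where you genuinely differ.} The paper writes a conjugate as $\{(u-\alpha(u),\alpha)\}$ and substitutes into $\lambda^2=cd$, $\mu^2=ab$. Specializing to diagonal $\alpha$ forces $u=0$, which gives a contradiction. This uses only the $\k$-points of $H$.

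Your argument is that a conjugate of $\SL_2$ in $H$ is a closed connected subgroup of the same dimension, hence equal to $H$, while $\SL_2\not\simeq\PGL_2$. It is correct and shorter, and it proves more: $H$ contains no algebraic subgroup isomorphic to $\SL_2$. Its cost is that it needs $H\simeq\PGL_2$ as group schemes, not just on points. In characteristic $2$, $\mu_2(\k)$ is trivial and the central isogeny $\SL_2\to\PGL_2$ is bijective on $\k$-points. So the distinction must come from the scheme-theoretic centre, or from the Lie algebras: that of $\SL_2$ is nilpotent in characteristic $2$, that of $\PGL_2$ is not. This step therefore depends on carrying out the scheme-theoretic isomorphism above in full.
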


\begin{proof}
	The group $H$ is the reduced identity component of the
	stabilizer group scheme $G$ of
\[
[Q]=[y_1^2z^3+y_0^2z^2]\in\M_1^{2,8}
=\P\left(
y_1^2\k[z]_{\leq6}\oplus
y_0y_1\k[z]_{\leq5}\oplus
y_0^2\k[z]_{\leq4}
\right)
\]
(see Proposition~\ref{prop: paramter space for non-dceomposable P1 bundles}).
In particular, $X$ is indecomposable.

To calculate $G\subseteq\Aut(\F_1)=\k[z_0,z_1]_1\rtimes\GL_2$, we first
consider $g=\begin{bsmallmatrix}a&b\\c&d\end{bsmallmatrix}\in\GL_2$.
By Lemma~\ref{Lemm:ActionAutFaParSpace}\ref{ActionGL2}, we have
\[
g(Q)=[y_1^2g(z^3)+y_0^2g(z^2)].
\]
Using the action of $\GL_2$ on $\k[z]_{\leq6}$ and $\k[z]_{\leq4}$
described in Remark~\ref{rk: def of dual Vm} and made explicit in
Remark~\ref{ExplicitActionSL2km}, and writing
$\delta=ad+bc=\det(g)$, we obtain
\[
g(Q)
=[y_1^2\delta^3z^3+
y_0^2(\delta cdz+\delta^2z^2+\delta abz^3)].
\]

We next calculate the action of $R\in\k[z_0,z_1]_1$.
Lemma~\ref{Lemm:ActionAutFaParSpace}\ref{ActionFap} yields
$R(Q)=\hat Q$, where $\hat Q$ corresponds to
$\hat X=Z_1^{2,8,\hat P}$, with
\[
\hat P(y_0,y_1,z)=P(y_0,y_1+y_0R(1,z),z).
\]
Writing $R=\lambda z_0+\mu z_1$, we obtain
\begin{align*}
\hat P(y_0,y_1,z)
 &=y_1^2z^4+y_0^2\bigl(R(1,z)^2z^4+z^5\bigr),\\
R(Q)
&=[y_1^2z^3+y_0^2(\lambda^2z+z^2+\mu^2z^3)].
\end{align*} 
The action of $R$ does not change the coefficients of $y_1^2z^3$ and
$y_0^2z^2$. Hence, if $(R,g)$ stabilizes $[Q]$, comparing with a common
projective scalar gives $\delta^3=\delta^2$. Since $\delta$ is
invertible, this is scheme-theoretically equivalent to $\delta=1$.
Comparing the remaining coefficients then gives
$\lambda^2=cd$ and $\mu^2=ab$.
Thus, scheme-theoretically,
\[
G=
\left\{
\left(
\lambda z_0+\mu z_1,
\begin{bsmallmatrix}a&b\\c&d\end{bsmallmatrix}
\right)
\in\k[z_0,z_1]_1\rtimes\SL_2
\ \middle|\ 
\lambda^2=cd \text{ and } \mu^2=ab.
\right\}.
\]

We now prove that $G\simeq\PGL_2$. Consider the homomorphism
\[
\psi\colon\PGL_2\longrightarrow\GL_3,\qquad
\begin{bsmallmatrix}\alpha&\beta\\ \gamma&\delta\end{bsmallmatrix}
\longmapsto
\frac{1}{\alpha\delta+\beta\gamma}
\begin{bsmallmatrix}
\alpha^2&\alpha\beta&\beta^2\\
0&\alpha\delta+\beta\gamma&0\\
\gamma^2&\gamma\delta&\delta^2
\end{bsmallmatrix},
\]
induced by the action of $\PGL_2$ on $\P(V_2)$. 
The homomorphism $\psi$ is a closed immersion: it is
injective on geometric points and its differential is injective.
Its image is
\[
\left\{
\begin{bsmallmatrix}
	a&\mu&b\\
	0&1&0\\
	c&\lambda&d
\end{bsmallmatrix}
\ \middle|\
\begin{bsmallmatrix}a&b\\c&d\end{bsmallmatrix}\in\SL_2,\
\lambda^2=cd,\ \mu^2=ab
\right\},
\]
which is naturally isomorphic to $G$. Thus $G\simeq\PGL_2$ is smooth and connected, and therefore $H=G$.

More precisely, under this isomorphism the projection
$G\to\SL_2$ is the very special isogeny
\[
\pi\colon\PGL_2\to\SL_2,\qquad
\begin{bsmallmatrix}\alpha&\beta\\ \gamma&\delta\end{bsmallmatrix}
\longmapsto
\frac{1}{\alpha\delta+\beta\gamma}
\begin{bsmallmatrix}
\alpha^2&\beta^2\\
\gamma^2&\delta^2
\end{bsmallmatrix},
\]
whose kernel is isomorphic to $\alpha_2^2$. If
$q\colon\SL_2\to\PGL_2$ denotes the central quotient, then
$q\circ\pi=F_{\PGL_2}$. Consequently, the action of
$H\simeq\PGL_2$ on the base $\P^1$ corresponds to the Frobenius
morphism $F_{\PGL_2}\colon\PGL_2\to\PGL_2$. In particular, it is
surjective, and its kernel fits into an exact sequence
\[
1\to\alpha_2^2\to\ker(F_{\PGL_2})\to\mu_2\to1.
\]
Thus $H$ acts transitively on $\P^1$.

It remains to prove that $H$ does not contain a conjugate of the standard
$\SL_2$. Such a conjugate is of the form
\[
u\SL_2u^{-1}
=\{(u-\alpha(u),\alpha)\mid\alpha\in\SL_2\}
\]
for some $u=u_0z_0+u_1z_1\in\k[z_0,z_1]_1$. For
$\alpha=\begin{bsmallmatrix}a&b\\c&d\end{bsmallmatrix}$, we have
\[
\alpha(u)
=u_0(dz_0+bz_1)+u_1(cz_0+az_1),
\]
and hence
\[
u-\alpha(u)
=\bigl(u_0(1+d)+cu_1\bigr)z_0
+\bigl(bu_0+u_1(1+a)\bigr)z_1.
\]
Thus, if $u\SL_2u^{-1}\subseteq H$, then
\[
cd=u_0^2(1+d^2)+c^2u_1^2,
\qquad
ab=b^2u_0^2+u_1^2(1+a^2)
\]
for every $\begin{bsmallmatrix}a&b\\c&d\end{bsmallmatrix}\in\SL_2$.
Taking $b=c=0$ and $d=a^{-1}$ with $a$ general gives
$u_0=u_1=0$. The two identities would then give $ab=cd=0$ for every
element of $\SL_2$, a contradiction. Hence $H$ contains no conjugate of
the standard $\SL_2$.
\end{proof}

\section{\texorpdfstring{$\P^1$}{P1}-bundles over the projective plane} \label{section: P1 bundles over P2} 
The results concerning decomposable $\P^1$-bundles and
the bundles $\L^{s,b}$ hold in arbitrary positive characteristic.
The Schwarzenberger results, Lemma~\ref{SubgroupsPGL3},
and Theorem~\ref{th:P1bundlesP2} require $p\geq3$.
	
We study decomposable $\P^1$-bundles, the $\P^1$-bundles $\L^{s,b}$,
and the generalized Schwarzenberger $\P^1$-bundles over $\P^2$,
describing their automorphism groups and their restrictions
to lines.
For $p\geq3$, we then prove Theorem~\ref{th:P1bundlesP2}:
every $\P^1$-bundle over $\P^2$ belongs to one of these families,
up to square isomorphism, or its neutral automorphism group
fixes a point on the base.

\subsection{Decomposable \texorpdfstring{$\P^1$}{P1}-bundles over \texorpdfstring{$\P^2$}{P2}}\label{sec: dec P1 bundles over P2}
In this section we recall some properties of the decomposable $\P^1$-bundles over $\P^2$ that we will use later.

\begin{definition} \label{def Pb}
Let $b\in \Z$. Define $\PP_b$ to be the quotient of $(\A^2\setminus \{0\})\times (\A^3\setminus \{0\})$ by the action of $(\G_m)^2$ given by
\[\begin{array}{ccc}
(\G_m)^2 \times (\A^2\setminus \{0\})\times (\A^3\setminus \{0\}) & \to & (\A^2\setminus \{0\})\times (\A^3\setminus \{0\})\\
((\mu,\rho), (y_0,y_1;z_0,z_1,z_2))&\mapsto& (\mu\rho^{-b} y_0,\mu y_1;\rho z_0,\rho z_1,\rho z_2)\end{array}\]
The class of $(y_0,y_1,z_0,z_1,z_2)$ will be written $[y_0:y_1;z_0:z_1:z_2]$. The projection 
\[\PP_b\to \P^2, \ \ [y_0:y_1;z_0:z_1:z_2]\mapsto [z_0:z_1:z_2]\]
identifies $\PP_b$ with 
\[\P(\OPP(b) \oplus \OPP)=\P(\OPP \oplus \OPP(-b))\]
as a $\P^1$-bundle over $\P^2$. As before, we get an isomorphism of $\P^1$-bundles $\PP_{b}\simeq \PP_{-b}$ by exchanging $y_0$ with $y_1$, and will then often assume that $b\ge0$ in the sequel.
\end{definition}

\begin{lemma}\label{lemm:surjectiveDecP2}
	For each $i\geq0$, denote by $\k[z_0,z_1,z_2]_i$ the
	vector space of homogeneous polynomials of degree $i$.
	For each $b\geq1$, the group $\Autz(\PP_b)$ consists of
	all transformations of the form
	\[
	[y_0:y_1;z_0:z_1:z_2]\longmapsto
	\left[
	\begin{bmatrix}1&0\\F&\lambda\end{bmatrix}
	\begin{bmatrix}y_0\\y_1\end{bmatrix};
	A\begin{bmatrix}z_0\\z_1\\z_2\end{bmatrix}
	\right],
	\]
	where $\lambda\in\k^*$, $F\in\k[z_0,z_1,z_2]_b$,
	and $A\in\GL_3(\k)$.
	Moreover, $\PP_b$ is toric, and the projection
	$\PP_b\to\P^2$ induces a surjective group homomorphism
	\[
	\rho\colon\Autz(\PP_b)\twoheadrightarrow
	\Aut(\P^2)=\PGL_3.
	\]
\end{lemma}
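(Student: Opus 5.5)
The argument follows the pattern of Lemma~\ref{lem:aut-group-Fabc}: first write down the displayed transformations, then show they give all of $\Autz(\PP_b)$. For the first part, observe that the displayed formula respects the $(\G_m)^2$-action of Definition~\ref{def Pb}. If $(y_0,y_1;z)$ is replaced by $(\mu\rho^{-b}y_0,\mu y_1;\rho z)$, the image becomes $(\mu\rho^{-b}y_0,\ \mu F(z)\rho^{-b}\rho^b y_0/\rho^{b}\cdots)$. More precisely, the new second coordinate is $F(\rho z)\mu\rho^{-b}y_0+\lambda\mu y_1=\mu(F(z)y_0+\lambda y_1)$, because $F$ is homogeneous of degree $b$. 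So the map descends to $\PP_b$, commutes with the projection to $\P^2$, and covers $[z]\mapsto[Az]$. These maps form a connected algebraic group $G\simeq(\k[z_0,z_1,z_2]_b\rtimes\G_m)\times\GL_3$ modulo the kernel of its action, and $G\subseteq\Autz(\PP_b)$. Since $A$ is arbitrary, $\rho$ is surjective on $G$. The diagonal torus $\G_m\times T_{\GL_3}$ acts with a dense orbit on $\{y_0y_1z_0z_1z_2\neq0\}$, so $\PP_b$ is toric.

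For the reverse inclusion, Blanchard's lemma (Proposition~\ref{prop:blanchard-lemma}) gives a homomorphism $\rho\colon\Autz(\PP_b)\to\PGL_3$. Its image already contains $\rho(G)=\PGL_3$. It therefore suffices to show that the kernel $\Autz(\PP_b)_{\P^2}$ of automorphisms over $\P^2$ lies in $G$. Such an automorphism is an automorphism of the projectivized bundle $\P(\OPP\oplus\OPP(-b))$ over $\P^2$. It therefore comes from an element of $\Aut(\OPP\oplus\OPP(-b))/\k^*$, exactly as in Lemma~\ref{lem:aut-group-Fabc}, with $\P^2$ in place of $\F_a$.

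This identification is the step that needs care, and I would handle it concretely. On each standard chart $\{z_j\neq0\}$, the bundle is trivial. A fibrewise automorphism is a $\PGL_2$-valued function of the form $\begin{bsmallmatrix}p_1&p_2\\p_3&p_4\end{bsmallmatrix}$. Compatibility with the gluing forces $p_1,p_4$ to be global sections of $\O$ (constants) and $p_3\in H^0(\OPP(b))=\k[z_0,z_1,z_2]_b$. It forces $p_2\in H^0(\OPP(-b))=0$, since $b\geq1$. The determinant $p_1p_4$ is then a nonzero constant. Normalizing $p_1=1$ gives precisely the matrices $\begin{bsmallmatrix}1&0\\F&\lambda\end{bsmallmatrix}$.

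Alternatively, the same conclusion follows from an invariance argument. The section $y_0=0$ is the unique section with negative normal bundle. It is therefore invariant, so the vertical automorphisms preserve the subbundle, giving the lower-triangular shape. The kernel is thus contained in $G$, and an element of $\Autz(\PP_b)$ can be written as $g\cdot k$ with $g\in G$ chosen to have the same image under $\rho$. This yields $\Autz(\PP_b)=G$.
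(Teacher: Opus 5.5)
Your proposal is correct and is essentially the paper's proof. Surjectivity of $\rho$ comes from the $\GL_3$-action on $z_0,z_1,z_2$, toricity from the dense open torus $\{y_0y_1z_0z_1z_2\neq0\}$, and the kernel is handled by the invariance of the negative section $\{y_0=0\}$: your ``alternative'' argument is exactly the paper's, and your chart computation using $H^0(\OPP(-b))=0$ is an equivalent variant. The only slip is cosmetic: since $(F,\lambda,A)\circ(F',\lambda',A')$ corresponds to $(F(A'z)+\lambda F'(z),\lambda\lambda',AA')$, the group $G$ is a quotient of $\k[z_0,z_1,z_2]_b\rtimes(\G_m\times\GL_3)$ rather than of a direct product with $\GL_3$, and this does not affect the argument.
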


\begin{proof}
	The existence of $\rho$ follows from Blanchard's lemma
	(Proposition~\ref{prop:blanchard-lemma}).
	The natural $\GL_3$-action on $z_0,z_1,z_2$, leaving
	$y_0,y_1$ unchanged, descends to $\PP_b$ and proves
	the surjectivity of $\rho$.
	
	Every automorphism over $\P^2$ preserves the section
	$y_0=0$, since its restriction over each line in $\P^2$
	is the unique negative section of $\F_b$.
	Its action on the complement, which is the total space
	of $\OPP(b)$, is therefore
	\[
	y_1/y_0\longmapsto\lambda(y_1/y_0)+F,
	\qquad
	\lambda\in\k^*,\quad F\in H^0(\P^2,\OPP(b)).
	\]
	Conversely, these formulas define a connected group
	of automorphisms over $\P^2$.
	Together with the $\GL_3$-action, this gives the
	asserted description.
	Finally, the locus where all five coordinates are
	nonzero is a dense torus $(\G_m)^5/(\G_m)^2$
	whose action extends to $\PP_b$, so $\PP_b$ is toric.
\end{proof}

\subsection{The \texorpdfstring{$\P^1$}{P1}-bundles \texorpdfstring{$\L^{s,b} \to \P^2$}{P2}}\label{sec: Ls bundles}
We introduce a family of $\P^1$-bundles
$\L^{s,b}\to\P^2$ whose neutral automorphism groups
map onto the stabilizer of a line when $b<p^s$,
and onto $\PGL_3$ when $b=p^s$
(Proposition~\ref{prop: LsbAut}).

The case $b<p^s$ exhibits a phenomenon absent in
characteristic zero: by \cite[Proposition~4.3.4]{BFT},
the image of the neutral automorphism group of a
$\P^1$-bundle over $\P^2$ cannot then be the full
stabilizer of a line.

\begin{definition} \label{def: P1 bundles Lsb}
Let $s \ge 0$ and $1 \le b \le p^s$. 
Let $U_0=\{[y_0:y_1:y_2]\mid y_0\neq0\}$ and
$U_1=\{[y_0:y_1:y_2]\mid y_1\neq0\}$.
We denote by $\L^{s,b} \to \P^2$ the $\P^1$-bundle obtained by gluing 
$\P^1\times U_0$ and $\P^1\times U_1$ along $\P^1\times (U_0\cap U_1)$
via the map
\[
\begin{array}{rccc}
\nu_{01}\colon \P^1\times U_0 & \dasharrow & \P^1\times U_1 \\
\left(\begin{bmatrix} x_0 \\ x_1 \end{bmatrix},[y_0:y_1:y_2]\right) 
& \longmapsto &
\left(
\begin{bmatrix}
y_0^{2p^s-b} & 0\\
y_0^{p^s-b} y_1^{p^s-b} y_2^b & y_1^{2p^s-b}
\end{bmatrix}
\begin{bmatrix} x_0 \\ x_1 \end{bmatrix},
[y_0:y_1:y_2]
\right).
\end{array}
\]
We will also write $\L^{s}$ for $\L^{s,p^s}$.
\end{definition}

\begin{remark}\label{rk: theta02}
In the previous definition we implicitly used that a $\P^1$-bundle over 
$\P^2\setminus\{[0:0:1]\}$ extends uniquely, up to isomorphism, to a $\P^1$-bundle over $\P^2$; see Lemma~\ref{lem:descent-lemma}.

The transition functions $\nu_{02}$ and $\nu_{12}$
can be determined from $\nu_{01}$ using the matrix identity
 that holds in $\GL_2\!\left(\k[y_0^{\pm 1},y_1^{\pm 1},y_2^{\pm 1}]\right)$:
\[ \begin{bmatrix}
-\frac{{y_2}^{p^s}}{{y_1}^{p^s}} & \frac{{y_0}^{p^s}{y_2}^{p^s-b}}{y_1^{2p^s-b}}\\
0 & \frac{{y_2}^{p^s-b}}{{y_1}^{p^s-b}}
\end{bmatrix}
\begin{bmatrix}
1 & 0\\
\frac{{y_2}^b{y_1}^{p^s-b}}{y_0^{p^s}} & \frac{{y_1}^{2p^s-b}}{y_0^{2p^s-b}}
\end{bmatrix}=
\begin{bmatrix}
0 & \frac{y_2^{p^s-b}}{y_0^{p^s-b}}\\
\frac{y_2^{p^s}}{y_0^{p^s}} & \frac{y_1^{p^s}y_2^{p^s-b}}{y_0^{2p^s-b}}
\end{bmatrix}.\]
This corresponds to the cocycle relation
\[
\nu_{12}\circ\nu_{01}=\nu_{02}.
\]
Hence, $\nu_{12}$ may be defined by
\[\begin{aligned}
\nu_{12}\colon & \P^1\times U_1 \dashrightarrow \P^1\times U_2 \\
&\left(
\begin{bmatrix}x_0\\x_1\end{bmatrix},
\left[y_0:y_1:y_2\right]
\right)
\longmapsto
\left(
\begin{bmatrix}
-{y_1}^{p^s-b}{y_2}^{b} & {y_0}^{p^s}\\
0 & {y_1}^{p^s}
\end{bmatrix}
\begin{bmatrix}x_0\\x_1\end{bmatrix},
\left[y_0:y_1:y_2\right]
\right).
\end{aligned}
\]
This choice implies that $\nu_{02}=\nu_{12}\circ\nu_{01}$ is given by
\[
\begin{aligned}
\nu_{02}\colon & \P^1\times U_0 \dashrightarrow \P^1\times U_2 \\
&\left(
\begin{bmatrix}x_0\\x_1\end{bmatrix},
[y_0:y_1:y_2]
\right)
\longmapsto
\left( 
\begin{bmatrix}
0 &  y_0^{p^s}\\
y_0^{p^s-b} y_2^{b} & y_1^{p^s}
\end{bmatrix} 
\begin{bmatrix}x_0\\x_1\end{bmatrix},
\left[y_0:y_1:y_2\right]
\right).
\end{aligned}
\]\end{remark}

\begin{remark}\label{Rem:ExchangeLsb}
The automorphism 
\[
\begin{array}{rccc}
\tau_2\colon & \P^1\times U_2 &\longrightarrow & \P^1\times U_2 \\
&([x_0:x_1],
[y_0:y_1:y_2])&
\longmapsto
&([x_1:x_0],
[y_1:y_0:y_2])
\end{array}
\]
satisfies
\[\tau_2\circ  \nu_{02}=\nu_{12}\circ \tau_2'\]
where $\tau_2'$ is the isomorphism
\[
\begin{array}{rccc}
\tau_2'\colon & \P^1\times U_0 &\longrightarrow & \P^1\times U_1 \\
&([x_0:x_1],
[y_0:y_1:y_2])&
\longmapsto
&([-x_0:x_1],
[y_1:y_0:y_2])
\end{array}
\]
This follows from the matrix identity
\[\begin{bmatrix} 0 & 1 \\ 1 & 0\end{bmatrix}\begin{bmatrix}
0 &  y_0^{p^s}\\
y_0^{p^s-b} y_2^{b} & y_1^{p^s}
\end{bmatrix} = \begin{bmatrix}
-{y_0}^{p^s-b}{y_2}^{b} & {y_1}^{p^s}\\
0 & {y_0}^{p^s}
\end{bmatrix} \begin{bmatrix} -1 & 0 \\ 0 & 1\end{bmatrix}.\]
Hence, $\tau_2$ extends to a unique biregular automorphism of $\L^{s,b}$.
\end{remark}

We first describe the case $b=p^s$, which is obtained
from the incidence $\P^1$-bundle by Frobenius pullback on
the base.

\begin{lemma}\label{Lem:LsPGL3}
Let $s\ge 0$. The $\P^1$-bundle $\L^s=\L^{s,p^s}\to \P^2$ is isomorphic to
\[
Z_s:=\left\{(x,y)\in \P^2 \times \P^2 \;\middle|\; \sum_{i=0}^2 x_i y_i^{p^s}=0\right\},
\]
which is a $\P^1$-bundle over $\P^2$ via the projection onto the second factor.
In particular, $\L^0 \simeq \P(T_{\P^2})$ is the projectivization of the tangent bundle of $\P^2$.
 
The group $\Aut(\P^2)\simeq \PGL_3$ acts biregularly and transitively on $\L^s$ as follows:
\[
\left(\begin{bmatrix}x_0\\x_1\\ x_2\end{bmatrix},
\begin{bmatrix}y_0\\y_1\\ y_2\end{bmatrix}\right)
\mapsto
\left(
\begin{bmatrix}
a_{00}^{p^s} & a_{10}^{p^s} & a_{20}^{p^s}\\
a_{01}^{p^s} & a_{11}^{p^s} & a_{21}^{p^s}\\
a_{02}^{p^s} & a_{12}^{p^s} & a_{22}^{p^s}
\end{bmatrix}^{-1}
\begin{bmatrix}x_0\\x_1\\ x_2\end{bmatrix},
\begin{bmatrix}
a_{00} & a_{01} & a_{02}\\
a_{10} & a_{11} & a_{12}\\
a_{20} & a_{21} & a_{22}
\end{bmatrix}
\begin{bmatrix}y_0\\y_1\\ y_2\end{bmatrix}
\right).
\]
\end{lemma}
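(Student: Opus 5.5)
I would prove Lemma~\ref{Lem:LsPGL3} by direct computation: show that $Z_s$ is a $\P^1$-bundle whose trivializations over $U_0,U_1,U_2$ have exactly the transition functions $\nu_{01}$ and $\nu_{02}$ of Definition~\ref{def: P1 bundles Lsb} and Remark~\ref{rk: theta02}.

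\textbf{Step 1: $Z_s$ is a $\P^1$-bundle.} For fixed $y$, the fiber of $Z_s\to\P^2$ is the line $\{x\mid \sum x_iy_i^{p^s}=0\}$ in $\P^2$. It is a line because $(y_0^{p^s},y_1^{p^s},y_2^{p^s})\neq0$. Equivalently, $Z_s=\P\bigl((F^s)^*\mathcal K\bigr)$, where $\mathcal K$ is the kernel bundle of the evaluation map.

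\textbf{Step 2: local trivializations.} Over $U_0$ I would parametrize the fiber by $[x_0:x_1]\mapsto$ a point of the line. I choose two vectors spanning the kernel of $(y_0^{p^s},y_1^{p^s},y_2^{p^s})$, normalized by powers of $y_0$. Over $U_0$ these could be $(-y_1^{p^s},y_0^{p^s},0)$ and $(-y_2^{p^s},0,y_0^{p^s})$, suitably rescaled; over $U_1$ I choose analogous vectors. I then compute the $2\times2$ change-of-basis matrix. After a change of trivialization on each chart, I expect it to match
\[
\begin{bmatrix} y_0^{p^s} & 0\\ y_2^{p^s} & y_1^{p^s}\end{bmatrix},
\]
which is $\nu_{01}$ for $b=p^s$. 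This is a routine $2\times2$ adjustment: permute and rescale the basis vectors so that the entries become monomials. I expect it to be the only fiddly part. Because of Lemma~\ref{lem:descent-lemma} (uniqueness of extension across $[0:0:1]$, as in Remark~\ref{rk: theta02}), agreement over $U_0\cup U_1$ already gives the isomorphism $\L^s\simeq Z_s$. For $s=0$, the incidence variety is $\P(T_{\P^2})$, via the standard identification of tangent directions with lines through a point.

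\textbf{Step 3: the $\PGL_3$-action.} For $A\in\GL_3$, write $A^{(p^s)}$ for the entrywise Frobenius power. Then $(Ay)^{p^s}=A^{(p^s)}y^{p^s}$, so
\[
x^T y^{p^s}\mapsto \bigl((A^{(p^s)})^{T}\bigr)^{-1}x\cdot A^{(p^s)}y^{p^s}=x^Ty^{p^s}.
\]
The displayed formula therefore preserves $Z_s$. Scalars act trivially up to projective equivalence, so the action factors through $\PGL_3$. It is biregular since it is the restriction of an automorphism of $\P^2\times\P^2$.

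\textbf{Step 4: transitivity.} $\PGL_3$ is transitive on $\P^2$, so it suffices to check that the stabilizer of $y=[1:0:0]$ is transitive on its fiber $\{x_0=0\}$. Matrices $A$ fixing $e_0$ have first column $(\lambda,0,0)^T$ and an arbitrary invertible lower-right $2\times2$ block $B$. The induced action on $[x_1:x_2]$ is through $\bigl((B^{(p^s)})^T\bigr)^{-1}$, and it is transitive on $\P^1$ because Frobenius is surjective on $\GL_2(\k)$.
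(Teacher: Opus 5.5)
Your proposal is correct and follows essentially the same route as the paper. The paper uses exactly your kernel vectors over $U_0$ (in swapped order) and their analogues over $U_1$, identifies the resulting transition with that of $\L^s$, and extends the isomorphism across $[0:0:1]$. It then proves transitivity through the stabilizer of $[1:0:0]$ and the surjectivity of Frobenius on $\GL_2$.

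The one computation you left as an expectation does work out. With $y=[1:u:v]$ and your vectors, the transition matrix is $\begin{bsmallmatrix}-u^{p^s}&-v^{p^s}\\0&1\end{bsmallmatrix}$. Reordering the basis vectors on both charts and changing one sign turns it into $\begin{bsmallmatrix}1&0\\v^{p^s}&u^{p^s}\end{bsmallmatrix}$, which is $\nu_{01}$ for $b=p^s$.
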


\begin{proof}
	Writing homogeneous coordinates as column vectors,
	let $F_s$ denote coordinatewise $p^s$-th powers,
	also applied entrywise to matrices.
	Then
	\[
	Z_s=\{(x,y)\in\P^2\times\P^2
	\mid {}^tx\,F_s(y)=0\},
	\]
	and $\PGL_3$ acts on $Z_s$ by
	\[
	(A,(x,y))\longmapsto
	\bigl(F_s({}^tA^{-1})x,Ay\bigr).
	\]
	This action is transitive: $\PGL_3$ acts transitively
	on the second factor, and the stabilizer of
	$[1:0:0]$ acts transitively on the fiber
	$\{[0:x_1:x_2]\}\simeq\P^1$, since the Frobenius
	homomorphism of $\GL_2$ is surjective.
	
We now consider the projection $\pi$ onto the second factor. We obtain isomorphisms
\[
\begin{array}{ccc}
\P^1\times U_0&\to& \pi^{-1}(U_0)\\
([x_0:x_1], [1:u:v]) & \mapsto & ([-x_0v^{p^s}-x_1u^{p^s}:x_1:x_0],[1:u:v])\\[0.3em]
\P^1\times U_1&\to& \pi^{-1}(U_1)\\
([x_0:x_1], [u:1:v]) & \mapsto & ([-x_1:-x_0v^{p^s}+x_1u^{p^s}:x_0],[u:1:v])
\end{array}
\]  
which yield the transition function
\[
\begin{array}{ccc}
\P^1\times U_0&\dasharrow &\P^1 \times U_1\\
([x_0:x_1], [1:u:v])  & \mapsto & ([x_0:x_0v^{p^s}+x_1u^{p^s}],[\tfrac{1}{u}:1:\tfrac{v}{u}])
\end{array}
\] 
which coincides with the transition function of $\L^s$ given in Definition~\ref{def: P1 bundles Lsb}.

The resulting isomorphism over $U_0\cup U_1$ extends
uniquely over $\P^2$.
For $s=0$, the equation defines the incidence variety
of points and lines in $\P^2$, hence
$Z_0\simeq\P(T_{\P^2})$.
\end{proof}

\begin{remark} \label{rk: Ps}
For each $s \geq 0$, the $\P^1$-bundle $\L^s \to \P^2$ identifies with the flag variety $\PGL_3/P_s$, together with the natural projection $\PGL_3/P_s \to \PGL_3/P$, where the parabolic subgroup schemes $P_s$ and $P$ are defined by
\[
P_s=
\left\{
\begin{bsmallmatrix}
	*&*&*\\
	0&*&*\\
	0&t&*
\end{bsmallmatrix}
\ \middle|\ t^{p^s}=0
\right\}
\subseteq\PGL_3,
\qquad
P=
\left\{
\begin{bsmallmatrix}
	*&*&*\\
	0&*&*\\
	0&*&*
\end{bsmallmatrix}
\right\}
\subseteq\PGL_3.
\]
Indeed, $P_s$ is the stabilizer of
$([0:0:1],[1:0:0])\in Z_s$, while $P$ is the
stabilizer of its image $[1:0:0]$ in the base.
(Note that $P_s$ is non-smooth when $s>0$.) 
This implies that $\Autz(\L^s) \simeq \PGL_3$ (see e.g.~\cite[Theorem~A]{Mac25}), a result that also follows from Proposition~\ref{prop: LsbAut} below.
\end{remark} 

\begin{lemma}\label{Lem:LsbBlowUp}
Let $s\ge 0$ and $1\le b\le p^s$. We consider the $\P^1$-bundle $\L^{s,b} \to \P^2$ of Definition~$\ref{def: P1 bundles Lsb}$. Then the blow-up of $p_0=[0:0:1]\in \P^2$ and of its fiber in $\L^{s,b}$ yields a commutative diagram
\[
\xymatrix@R=4mm@C=2cm{ Z_{1}^{b,c,P} \ar[r] \ar[d] & \L^{s,b} \ar[d] \\ \F_1 \ar[r]^{\eta} & \P^2 }
\]
where $Z_{1}^{b,c,P}\to \F_1$ is the Umemura $\P^1$-bundle
\[\U_{1,\omega}^{b,c}=Z_{1}^{b,c,P}\to \F_1\ \text{ with}\  P(y_0,y_1,z)= y_1^b z^{p^s} \ \text{and}\ c=2p^s,\] 
and $\eta([y_0:y_1; z_0:z_1])= [y_0 z_0 : y_0 z_1 : y_1]$.
\end{lemma}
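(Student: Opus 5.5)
We work directly with charts and transition functions. The blow-up $\eta\colon\F_1\to\P^2$ of $p_0=[0:0:1]$ is given by $[y_0:y_1;z_0:z_1]\mapsto[y_0z_0:y_0z_1:y_1]$. Blowing up the fibre $\pi^{-1}(p_0)\subset\L^{s,b}$ produces the $\P^1$-bundle $\eta^*\L^{s,b}=\L^{s,b}\times_{\P^2}\F_1\to\F_1$. Indeed, the exceptional divisor is $\pi^{-1}(p_0)\times E$, and the blow-up of a fibre of a smooth morphism coincides with the base change along the blow-up of the point. It therefore suffices to show that the pullback $\eta^*\L^{s,b}$ is isomorphic over $\F_1$ to $Z_1^{b,2p^s,P}$ with $P=y_1^bz^{p^s}$. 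We must also check that this $Z$ is the Umemura bundle of the statement, i.e. that the index $i=0$ is an Umemura index for $(a,b,c)=(1,b,2p^s)$. This holds because $c-a\cdot0=2p^s$, and then $l_0=p^s-1$ and $c-1-l_0=p^s$. So the polynomial of Definition~\ref{def Umemura bundle} with $\omega=e_0$ is exactly $y_1^bz^{p^s}$.

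To compare the two bundles, pull back the trivializations of $\L^{s,b}$ over $U_0=\{y_0\neq0\}$ and $U_1=\{y_1\neq0\}$ (coordinates of $\P^2$). They become trivializations of $\eta^*\L^{s,b}$ over $\eta^{-1}(U_0)$ and $\eta^{-1}(U_1)$. In the coordinates of $\F_1$ these are the open sets $\{y_0z_0\neq0\}$ and $\{y_0z_1\neq0\}$, which are the two standard charts $z_0\neq0$ and $z_1\neq0$ of $Z_1^{b,c,P}$ with the section $s_{-1}=\{y_0=0\}$ removed. Substituting $[y_0z_0:y_0z_1:y_1]$ into the matrix $\nu_{01}$ of Definition~\ref{def: P1 bundles Lsb} gives a transition matrix. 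Dehomogenize with $z_0=1$, $z_1=z$ on the first chart and $z_0=z$, $z_1=1$ on the second, and divide by $y_0^{2p^s-b}$, which is allowed projectively. The matrix becomes
\[
\begin{bmatrix}1&0\\ z^{p^s-b}(y_1/y_0)^b & z^{2p^s-b}\end{bmatrix}.
\]
Multiplying the second fibre coordinate on the first chart by $z^{b}$-type factors, or equivalently rescaling $x_1$ by a power of $y_1/y_0$ via the $\F_b$-coordinates $[x_0:x_1;y_0:y_1]$, turns this into $([x_0:x_1z^{c}+x_0y_1^bz^{p^s}],1/z)$. Because $y_0$ is a unit here, the twisting $y_0\mapsto y_0z$ in the base coordinates of $\nu_{c,P}$ is absorbed by the Hirzebruch fibre structure. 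Matching the $\F_b$-fibre coordinates needs care: $y_1/y_0$ on $\F_1$ has weight $-1$ in $z$. We must keep track of which factor $y_0^{\pm b}$ or $z^{\pm b}$ is absorbed into $x_1$, so that the result is exactly $\nu_{c,P}$ and not a version with the roles of $y_0,y_1$ swapped.

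This yields an isomorphism of $\P^1$-bundles between $\eta^*\L^{s,b}$ and $Z_1^{b,c,P}$ over $\F_1\setminus s_{-1}$. Over all of $\F_1$, both sides are $\P^1$-bundles. The complement of $\F_1\setminus s_{-1}$ is a curve, not finitely many points, so we cannot simply invoke \cite[Lemma~2.3.1]{BFT}. Instead we redo the comparison using a third chart of $\L^{s,b}$, namely $U_2$ with $\nu_{02},\nu_{12}$ from Remark~\ref{rk: theta02}. It pulls back to a neighbourhood of $s_{-1}=\eta^{-1}(p_0)$ minus a point. We then check that the isomorphism constructed above extends regularly across $s_{-1}$ in these coordinates, and extend across the two remaining points with \cite[Lemma~2.3.1]{BFT}.

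The main obstacle is this bookkeeping: we must choose the local trivializations so that the pulled-back cocycle literally equals $\nu_{c,P}$ with $P=y_1^bz^{p^s}$, with the exponent $c=2p^s$ coming out of the $z^{2p^s-b}$ entry combined with the $z^b$ gained by the $\F_b$ twist. We must also verify that the extension across $s_{-1}$ holds. I would first carry out the computation of $\eta^*\nu_{02}$ near $s_{-1}$, since the triangular shape of $\nu_{02}$ should make regularity there transparent.
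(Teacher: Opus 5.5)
Your proposal follows the paper's proof: identify $\eta^*\L^{s,b}$ with $Z_1^{b,2p^s,y_1^bz^{p^s}}$ by comparing $\nu_{01}$ with $\nu_{c,P}$ over $\eta^{-1}(U_0\cup U_1)$, then extend across $s_{-1}$ via the $U_2$-trivialization, and conclude because blow-ups commute with flat base change. No rescaling of $x_1$ is needed if both $\F_b$-charts are normalized by $y_0=1$: the factor $z^b$ comes from rewriting $[\,\cdot\,;z:w]$ as $[\,\cdot\,;1:w/z]$ in $\F_b$, where $w=y_1/y_0$.

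One small correction: $\eta^{-1}(U_2)=\F_1\setminus s_1$ contains all of $s_{-1}$, not $s_{-1}$ minus a point. In the normalization $y_1=1$ the comparison there is $[x_0:x_1]\mapsto[x_1:x_0+z^{p^s}x_1]$, which is invertible along $y_0=0$. So no points are left over, and the final appeal to \cite[Lemma~2.3.1]{BFT} is unnecessary.
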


\begin{proof}
As in Definition~\ref{def: P1 bundles Lsb}, we denote by $U_0,U_1 \subseteq \P^2$ the affine charts
\[
U_0=\{[x:y:z]\mid x\neq 0\},\qquad
U_1=\{[x:y:z]\mid y\neq 0\}.
\]
The preimages by $\eta$ of $[1:u:v]\in U_0$ and $[u:1:v]\in U_1$ correspond to $[1:v;1:u]$ and $[1:v;u:1]$, respectively.
According to Definition~\ref{Not:ZabcP}, the transition function $\nu_{c,P}\in \Aut(\F_b \times \A^1 \setminus \{0\})$ of the $\P^1$-bundle $Z_{1}^{b,c,P}\to \F_1$ is
\[
\nu_{c,P}\colon([x_0:x_1;y_0:y_1],z) \mapsto ([x_0:x_1z^{c}+x_0y_1^bz^{p^s};y_0z:y_1],{ \tfrac{1}{z}}).
\]
Moreover, the $\P^1$-bundle $Z_{1}^{b,c,P}\to \F_1$ sends $([x_0:x_1;y_0:y_1],z)\in \F_b\times \A^1$ onto $[y_0:y_1;1:z]\in \F_1$ on the first chart and $[y_0:y_1;z:1]\in \F_1$ on the second chart. Hence the points $([1:u:v],[x_0:x_1])\in U_0\times \P^1$ and $([u:1:v],[x_0:x_1])\in U_1\times \P^1$ both correspond to $([x_0:x_1;1:v],u)\in \F_b\times \A^1$. We then compute
\[
\begin{array}{rcl}
\nu_{c,P}([x_0:x_1;1:v],u)
&=&([x_0:x_1u^{c}+x_0v^bu^{p^s};u:v],\tfrac{1}{u})\\[2mm]
&=&([x_0:x_1u^{c-b}+x_0v^bu^{p^s-b};1:\tfrac{v}{u}],\tfrac{1}{u}).
\end{array}
\]
Since $c=2p^s$, we obtain the transition function
\[
\begin{array}{rcccccc}
\nu_{01} \colon & \P^1 \times U_0 &\dasharrow &\P^1 \times U_1 \\
&\left(\begin{bmatrix} x_0 \\ x_1 \end{bmatrix} ,[1:u:v] \right) 
&\longmapsto&
\left(
\begin{bmatrix} 1 & 0 \\ v^b u^{p^s-b} & u^{2p^s-b} \end{bmatrix}
\begin{bmatrix} x_0 \\ x_1 \end{bmatrix},
\left[\dfrac{1}{u}:1:\dfrac{v}{u}\right]
\right).
\end{array}
\]
This is precisely the transition function of the $\P^1$-bundle 
$\L^{s,b}\to\P^2$ of Definition~\ref{def: P1 bundles Lsb}.

It remains to extend this identification over the exceptional
curve. Put $q=p^s$. Over $\eta^{-1}(U_2)$, use the two
base charts $[u:1;1:z]$ and $[u:1;z:1]$.
The fiber coordinates in the trivialization over $U_2$
are respectively
\[
[x_1:x_0+z^q x_1]
\qquad\text{and}\qquad
[-x_0+z^q x_1:x_1].
\]
These formulas are compatible with $\nu_{c,P}$ and with
the transition functions of Remark~\ref{rk: theta02}.
Their matrices are invertible even along $u=0$.
Thus
\[
Z_1^{b,c,P}\simeq
\F_1\times_{\P^2}\L^{s,b}.
\]
Since blowing up commutes with flat base change,
the induced morphism to $\L^{s,b}$ is the blow-up
of the fiber over $p_0$.
\end{proof}

\begin{proposition}\label{prop: LsbAut} 
Let $s\ge 0$ and $1\le b\le p^s$. Consider the $\P^1$-bundle $\pi\colon \L^{s,b} \to \P^2$ of Definition~$\ref{def: P1 bundles Lsb}$. Then:

\begin{enumerate}
\item\label{AutLp2trivial}
The group $\Aut(\L^{s,b})_{\P^2}$ of $\P^1$-bundle automorphisms over $\P^2$ is trivial.
\item\label{item: iso in LsbAut}
The action of $\Autz(\L^{s,b})$ on the base yields
\[
\Autz(\L^{s,b}) \simeq 
\begin{cases}
\Aut(\P^2,L) & \text{if } b < p^s,\\
\Aut(\P^2) & \text{if } b = p^s,
\end{cases}
\]
where $\Aut(\P^2,L) \simeq \GL_2\ltimes \G_a^2$ is the parabolic subgroup preserving the line $L=\{y_2=0\}$. The lifted action of $\Aut(\P^2,L)$ on
$\pi^{-1}(U_2)=\P^1\times U_2$ is given by
\[
\begin{array}{ccc}
 \P^1\times U_2 &\longrightarrow & \P^1\times U_2 \\
\left(
\begin{bmatrix}x_0\\x_1\end{bmatrix},
[y_0:y_1:y_2]
\right)&
\longmapsto
&\left(
\begin{bmatrix}
\alpha^{p^s} & \beta^{p^s}\\
\gamma^{p^s} & \delta^{p^s}
\end{bmatrix}
\begin{bmatrix}x_0\\x_1\end{bmatrix},
[\alpha y_0+\beta y_1+\epsilon_1 y_2:\gamma y_0+\delta y_1+\epsilon_2 y_2:y_2]
\right).
\end{array}
\]
\item\label{LSblines}
The preimage of $L$ in $\L^{s,b}$ is $\F_{2p^s - b}$, while the preimage of any other line in $\P^2$ is $\F_b$.
\end{enumerate}
\end{proposition}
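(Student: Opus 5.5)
I would prove the three assertions in the order \ref{LSblines}, \ref{AutLp2trivial}, \ref{item: iso in LsbAut}, working throughout with the explicit transition functions $\nu_{01},\nu_{02},\nu_{12}$ of Definition~\ref{def: P1 bundles Lsb} and Remark~\ref{rk: theta02}.

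\textbf{Restrictions to lines.} For \ref{LSblines}, restrict to $L=\{y_2=0\}$. There $\nu_{01}$ becomes $\mathrm{diag}(y_0^{2p^s-b},y_1^{2p^s-b})$, which up to the common factor $y_0^{2p^s-b}$ is $\mathrm{diag}(1,(y_1/y_0)^{2p^s-b})$. This is the transition matrix of $\F_{2p^s-b}$. Any line other than $L$ passes through $p_0=[0:0:1]$, so it is cleaner to use Lemma~\ref{Lem:LsbBlowUp}. A line through $p_0$ pulls back in $\F_1$ to a fibre of $\F_1\to\P^1$, and the fibres of $Z_1^{b,2p^s,P}$ over the base of the ruling are $\F_b$ by the numerical invariants $(1,b,2p^s)$. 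Once the group action in \ref{item: iso in LsbAut} is available, every line $\neq L$ is a translate of a line through $p_0$, since $\Aut(\P^2,L)$ acts transitively on $\P^2\setminus L$ and hence on lines $\neq L$. To avoid circularity, I would instead argue directly: all lines through $p_0$ give $\F_b$, and the lines $\neq L$ missing $p_0$ are handled by the explicit $\Aut(\P^2,L)$-action constructed independently in the next step.

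\textbf{Vertical automorphisms.} For \ref{AutLp2trivial}, pull back along $\eta$ via Lemma~\ref{Lem:LsbBlowUp}. An automorphism over $\P^2$ lifts to an automorphism of $Z_1^{b,2p^s,P}$ over $\F_1$. By Lemma~\ref{AutZaF1}, with $P\ne0$ in normal form, that group is $\bigoplus_{i=1}^b\k[z_0,z_1]_{i-2p^s}$, which is zero because $i\le b\le p^s<2p^s$. The same chart computation also gives triviality of the scheme-theoretic kernel.

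\textbf{The group action.} For \ref{item: iso in LsbAut}, first verify that the displayed formula on $\P^1\times U_2$ extends to a regular action. Conjugating by $\nu_{02}$ and $\nu_{12}$, one checks that the transformed matrices on $U_0$ and $U_1$ are regular. The key identity is that the entries involve $y_0^{p^s},y_1^{p^s}$, which transform linearly under Frobenius-twisted matrices, so the $p^s$-th powers $\alpha^{p^s},\dots$ absorb the change. Missing codimension-two loci are filled using Lemma~\ref{lem:descent-lemma} or \cite[Lemma~2.3.1]{BFT}. This gives $\Aut(\P^2,L)\subseteq\Im(\Autz(\L^{s,b})\to\PGL_3)$. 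Combined with \ref{AutLp2trivial}, the map $\Autz(\L^{s,b})\to\PGL_3$ is injective, so it remains to determine its image $H$. The image is connected and contains the maximal parabolic $\Aut(\P^2,L)$, so $H$ is either $\Aut(\P^2,L)$ or $\PGL_3$.

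If $b=p^s$, Lemma~\ref{Lem:LsPGL3} gives $H=\PGL_3$. If $b<p^s$, then $2p^s-b\ne b$, so by \ref{LSblines} the line $L$ is distinguished among lines and must be $H$-invariant; hence $H=\Aut(\P^2,L)$.

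\textbf{Main obstacle.} I expect the hardest step to be the regularity check of the lifted action across the charts. In particular, one must confirm that for $b<p^s$ the translations $\epsilon_1,\epsilon_2$ and the non-diagonal part of $\GL_2$ really glue, with the lower-left entry $y_0^{p^s-b}y_2^b$ transforming correctly. I would verify this on generators: diagonal matrices, elementary matrices, the swap $\tau_2$ of Remark~\ref{Rem:ExchangeLsb}, and the translations.
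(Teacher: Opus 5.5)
Your proposal is correct and follows essentially the same route as the paper. Vertical automorphisms are killed by lifting to the pullback $Z_1^{b,2p^s,P}$ of Lemma~\ref{Lem:LsbBlowUp} and applying Lemma~\ref{AutZaF1}. The lift of $\Aut(\P^2,L)$ is checked on triangular elements and on the exchange $\tau_2$, then extended over $[0:1:0]$ by \cite[Lemma~2.3.1]{BFT}. Restricting $\nu_{01}$ to $y_2=0$ gives $\F_{2p^s-b}$, and transitivity on lines $L'\neq L$ gives $\F_b$. Finally, $\F_{2p^s-b}\not\simeq\F_b$ for $b<p^s$ (respectively Lemma~\ref{Lem:LsPGL3} for $b=p^s$) determines the image.

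The only slip is your sentence claiming that every line other than $L$ passes through $p_0$, which is false. Your subsequent use of the $\Aut(\P^2,L)$-action to move such lines onto lines through $p_0$ is exactly what the paper does, and it repairs that slip.
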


\begin{proof}
\ref{AutLp2trivial}:
Consider the birational morphism $\eta\colon Z_{1}^{b,c,P}\to \L^{s,b}$ of Lemma~\ref{Lem:LsbBlowUp}, with $c=2p^s$. Since $\eta$ is the blow-up of the fiber over $[0:0:1]\in \P^2$, which is invariant under $\Aut(\L^{s,b})_{\P^2}$, we get an embedding 
\[
\Aut(\L^{s,b})_{\P^2} \hookrightarrow \Aut(Z_{1}^{b,c,P})_{\F_1}.
\] 
Since $a=1$, $b\le p^s$, and $c=2p^s$, we have
$ai-c<0$ for all $i\in\{0,\ldots,b\}$.
Lemma~\ref{AutZaF1} therefore gives
$\Aut(Z_1^{b,c,P})_{\F_1}=\{1\}$.
The same calculation over arbitrary $\k$-algebras shows
that this automorphism group scheme is trivial.
As every automorphism of $\L^{s,b}$ over $\P^2$ lifts
to its pullback $Z_1^{b,c,P}$ over $\F_1$,
the scheme-theoretic kernel of $\rho$ is trivial.
This proves~\ref{AutLp2trivial}.

\medskip
\ref{item: iso in LsbAut} and \ref{LSblines}:
By Blanchard's lemma (Proposition~\ref{prop:blanchard-lemma}), the $\Autz(\L^{s,b})$-action on the base defines an injective homomorphism 
\[
\rho\colon \Autz(\L^{s,b}) \hookrightarrow \Aut(\P^2).
\] 
Let $H = \operatorname{Im}(\rho)$. We show $H = \Aut(\P^2,L)$ if $b<p^s$ and $H = \Aut(\P^2)$ if $b=p^s$.  

An element of $\Aut(\P^2,L)$ has the form
\[
[x:y:z]\mapsto [\alpha x+\beta y+\epsilon_1 z : \gamma x+\delta y+\epsilon_2 z : z], 
\quad \text{where}\ 
\begin{bmatrix}\alpha & \beta \\ \gamma & \delta\end{bmatrix}\in \GL_2\ \text{and}\ \ (\epsilon_1,\epsilon_2)\in \G_a^2,  
\]
with action on $\P^1\times U_2$ given by
\[
\left(\begin{bmatrix}x_0\\x_1\end{bmatrix},[y_0:y_1:y_2]\right)
\longmapsto
\left(
\begin{bmatrix}\alpha^{p^s} & \beta^{p^s}\\ \gamma^{p^s} & \delta^{p^s}\end{bmatrix} 
\begin{bmatrix}x_0\\x_1\end{bmatrix},
[\alpha y_0+\beta y_1+\epsilon_1 y_2:\gamma y_0+\delta y_1+\epsilon_2 y_2:y_2]
\right).
\]

For $\beta=\epsilon_1=0$ (so $U_0$ is invariant), conjugation via the transition function $\nu_{02}$ (Remark~\ref{rk: theta02}) yields
\[
\left(\begin{bmatrix}x_0\\x_1\end{bmatrix},[y_0:y_1:y_2]\right) 
\mapsto
\left(
\begin{bmatrix}\delta^{p^s}& -\epsilon_2^{p^s}(y_2/y_0)^{p^s-b} \\ 0 & \alpha^{p^s-b}\end{bmatrix} 
\begin{bmatrix}x_0\\x_1\end{bmatrix}, 
[\alpha y_0 : \gamma y_0+\delta y_1+\epsilon_2 y_2 : y_2]
\right).
\]
Indeed, this follows from the matrix identity 
\[\ \begin{array}{lll} &&\begin{bmatrix} \alpha^{p^s} & 0\\ \gamma^{p^s} & \delta^{p^s} \end{bmatrix} \begin{bmatrix} 0 & y_0^{p^s}\\ y_0^{p^s-b} y_2^{b} & y_1^{p^s} \end{bmatrix}=\begin{bmatrix} 0 & (\alpha y_0)^{p^s}\\ \delta^{p^s}y_0^{p^s-b} y_2^{b} & (\gamma y_0+\delta y_1)^{p^s} \end{bmatrix}\vspace{0.2cm}\\ &=& \begin{bmatrix} 0 & (\alpha y_0)^{p^s}\\ (\alpha y_0)^{p^s-b} y_2^{b} & (\gamma y_0+\delta y_1+\epsilon_2 y_2)^{p^s} \end{bmatrix} \begin{bmatrix} \delta^{p^s}& -\epsilon_2^{p^s} (\frac{y_2}{y_0})^{p^s-b}\\ 0 & \alpha^{p^s-b} \end{bmatrix}\frac{1}{\alpha^{p^s-b}}\end{array}.\]

The formulas above define the triangular action over
$U_0\cup U_2$. They extend across the missing point
$[0:1:0]$ by \cite[Lemma~2.3.1]{BFT}.
The exchange automorphism of Remark~\ref{Rem:ExchangeLsb}
also lifts.
Their induced transformations on the base generate
$\Aut(\P^2,L)$, so
$\Aut(\P^2,L)\subseteq H$.

Since $\Aut(\P^2,L)$ acts transitively on lines $L'\neq L$, the preimage of any such $L'$ in $\L^{s,b}$ is $\F_b$; this holds in particular for lines through $p_0=[0:0:1]$ by Lemma~\ref{Lem:LsbBlowUp}.  
Restricting the transition function of Definition~\ref{def: P1 bundles Lsb} to $y_2=0$, the preimage of $L$ is $\F_{2p^s-b}$.  
Hence \ref{LSblines} is proven. Moreover, if $b<p^s$, then $\F_{2p^s-b}\not\simeq \F_b$, forcing $H\subseteq  \Aut(\P^2,L)$, which establishes \ref{item: iso in LsbAut}. 
If $b=p^s$, then $H=\Aut(\P^2)$ by Lemma~\ref{Lem:LsPGL3}, completing the proof.
\end{proof}

\subsection{The Schwarzenberger \texorpdfstring{$\P^1$}{P1}-bundles over \texorpdfstring{$\P^2$}{P2}}\label{sec: Schwarzenbergger bundles}
We assume that $p \geq 3$ and now turn our attention to (generalized) Schwarzenberger $\P^1$-bundles over $\P^2$.

\begin{definition}[Schwarzenberger $\P^1$-bundles]\label{def:Schwarz bc}
Let $b \geq 1$ and $c \geq 3$ be such that
$c=b+2(l+1)$,
where $l=0$, or $l$ satisfies
\[
p^{s-1}\le l<p^s
\quad\text{and}\quad
b+l+1\equiv 0 \pmod{p^s}
\]
for some $s\ge 1$. Assume moreover that $l$ is even.

The \emph{Schwarzenberger $\P^1$-bundle}
\[
\SS^{b,c}\to \P^2
\]
is the $\P^1$-bundle defined as the quotient of the $\P^1$-bundle
$\T^{b,c}\to \F_0$
by the unique involution $\sigma\in\Aut(\T^{b,c})$ such that
\[
\xymatrix@R=3mm{
\T^{b,c}\ar[r]^{\sigma}\ar[d] & \T^{b,c}\ar[d]\\
\F_0\ar[r]^{\tau} & \F_0
}
\]
commutes, where $\tau([y_0:y_1],[z_0:z_1])=([z_0:z_1],[y_0:y_1])$; see Proposition~\ref{prop: def of the Sbc}.
\end{definition}

\begin{remark}\label{rk: kappa and Gamma}
Let 
$\kappa\colon \F_0 \to \P^2$
be the $(2\!:\!1)$-cover defined by
\[
\begin{array}{rccc}
\kappa\colon & \F_0 & \to & \P^2\\
& ([y_0:y_1],[z_0:z_1]) 
& \mapsto 
& [y_0 z_0 : y_0 z_1 + y_1 z_0 : y_1 z_1],
\end{array}
\]
whose ramification locus is the diagonal $\Delta \subseteq \F_0$, and whose branch locus is the smooth conic
\begin{equation*}
\Gamma=\{[X:Y:Z]\mid Y^2=4XZ\}\subseteq \P^2.
\end{equation*}
Let $m \ge 1$ be an integer. The $\P^1$-bundle defined by
\[
\SS_m=\P\bigl(\kappa_* \O_{\F_0}(-m-1,0)\bigr)\to \P^2
\]
and studied in \cite[Section~4.2]{BFT} coincides with the $\P^1$-bundle $\SS^{m,m+2} \to \P^2$. This justifies our use of the same name and notation as in Definition~\ref{def:Schwarz bc}.
\end{remark}

\begin{proposition}\label{prop:aut-orbits-Sbc}
Assume that $p \geq 3$.
Let $b,c,l$ be as in Definition~$\ref{def:Schwarz bc}$, and let
$\pi\colon \SS^{b,c}\to\P^2$
be the corresponding Schwarzenberger $\P^1$-bundle.
Let $\Gamma \subseteq \P^2$ be the smooth conic which is the
branch locus of the $(2\!:\!1)$-cover
$\kappa\colon\F_0\to\P^2$ $($see Remark~$\ref{rk: kappa and Gamma})$.
\begin{enumerate}
\item\label{FnLineSchwarz}
If $L\subseteq \P^2$ is a line, the $\P^1$-bundle $\pi^{-1}(L)\stackrel{\pi}{\to} L\simeq \P^1$ is isomorphic to
\[\begin{array}{lll}
\F_b\to \P^1 & \text{ if $L$ is a line tangent to $\Gamma$;}\\
\F_0\to \P^1 & \text{ if $L$ is a line not tangent to $\Gamma$ and $b+l+1$ is odd; or}\\
\F_{l+1}\to \P^1 & \text{ if $L$ is a line not tangent to $\Gamma$ and $b+l+1$ is even.}\end{array}
\]

\item\label{item:aut-Sbc} We have $\Aut(\SS^{b,c})_{\P^2}=\{\Id\}$. Moreover:
\begin{itemize}
\item If $c=3b$, then $b=p^s$ for some $s \geq 0$ $($note that the case $s=0$ corresponds to $l=0)$. 
In this case, the $\P^1$-bundle $\SS^{b,c}\to\P^2$ identifies with the flag variety $\PGL_3/P_s \simeq \L^s$ of Remark~$\ref{rk: Ps}$, and hence
\[
\Autz(\SS^{b,c}) \simeq \PGL_3.
\]
\item  
If $c \neq 3b$, then the action on the base induces an isomorphism
\[
\Autz(\SS^{b,c})
\xrightarrow{\ \sim\ }
\Aut(\P^2,\Gamma)
\simeq\PGL_2.
\]
\end{itemize} 
\item\label{item:open-orbit-Sbc}
Set $m=b+l+1$.
Then $\SS^{b,c}$ contains an open $\PGL_2$-orbit isomorphic to
$\PGL_2/H'$,
where
\[
H'
=
\left\langle
\left\{
\begin{bmatrix}
\alpha&0\\
0&\alpha^{-1}
\end{bmatrix}
\ \middle|\
\alpha^{2m}=1
\right\},
\begin{bmatrix}
0&1\\
-1&0
\end{bmatrix}
\right\rangle.
\]
Here the matrices are viewed in $\PGL_2$, and
$H'$ denotes the closed subgroup scheme generated by
the image of $\mu_{2m}$ and the displayed involution.
\end{enumerate}
\end{proposition}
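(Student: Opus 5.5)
The plan is to work upstairs on $\T^{b,c}\to\F_0$ and descend along the double cover $q\colon\T^{b,c}\to\SS^{b,c}$ lying over $\kappa$, using the explicit transition function $\nu_{b,c}$ of Lemma~\ref{TbcInvolutionExplicit} and the matrix $M(y,z)$ of Proposition~\ref{prop: def of the Sbc}.

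For~\ref{FnLineSchwarz}, I would first split lines by their preimage under $\kappa$. A tangent line to $\Gamma$ pulls back to the union of a fiber of one ruling and a fiber of the other, meeting on the diagonal. Each of these restricts $\kappa$ isomorphically onto $L$, so $\pi^{-1}(L)$ is isomorphic to a fiber of $\mathrm{pr}_1\circ\pi$, which is $\F_b$. A non-tangent line pulls back to a smooth conic of bidegree $(1,1)$ mapping $2\!:\!1$ onto $L$. By $\PGL_2$-homogeneity of non-tangent lines, I would choose the line $\{Y=0\}$, which in the coordinates $u=y+z$ of Proposition~\ref{prop: def of the Sbc} is $u=0$, i.e.\ $z=-y$. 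Restricting $M(y,z)$ there, with $v=yz=-y^2$ and $m=b+l+1$, gives the entries
\[
y^m+(-y)^m,\qquad (2y)^{2(l+1)}Q(y,-y),\qquad (-y^2)^{l+1}Q(y,-y),
\]
together with the lower right entry $(-y^2)^{l+1}(y^m+(-y)^m)$. Here $Q(y,-y)=(y^m-(-y)^m)/(2y)^{l+1}$. If $m$ is odd the diagonal entries vanish and the antidiagonal entries are powers of $v$ with equal exponents $v^{(c)/2}$-type balance, giving $\F_0$. If $m$ is even the matrix is diagonal with entries $2v^{m/2}$ and $2v^{l+1+m/2}$, giving $\F_{l+1}$. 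I will track degrees carefully after converting to a transition matrix over $L\simeq\P^1$.

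For~\ref{item:aut-Sbc}, triviality of $\Aut(\SS^{b,c})_{\P^2}$ should follow by pulling back along $\kappa$. Any vertical automorphism lifts to $\T^{b,c}=\SS^{b,c}\times_{\P^2}\F_0$ (normalization, identical away from ramification). Such a lift is vertical over $\F_0$, hence trivial by Proposition~\ref{prop:properties of the Tbc}\ref{item vert part trivial}. So $\Autz(\SS^{b,c})$ embeds in $\PGL_3$ and contains $\Aut(\P^2,\Gamma)\simeq\PGL_2$, which descends from the diagonal action.

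The connected subgroups of $\PGL_3$ containing $\PGL_2$ are $\PGL_2$ and $\PGL_3$, since $p\geq3$ (I would invoke Lemma~\ref{SubgroupsPGL3}). If the image is $\PGL_3$, then all lines give isomorphic restrictions. By~\ref{FnLineSchwarz} this forces $b=0$, which is excluded, or $b=l+1$ with $m$ even, i.e.\ $c=3b$. Then $b=l+1$ together with $p^{s}\mid 2b$ gives $b=p^s$, or $l=0$ and $b=1$. Conversely, for $c=3b$ with $b=p^s$, the open orbit computation of~\ref{item:open-orbit-Sbc} shows the stabilizers match those of $\L^s$. I would instead identify $\SS^{p^s,3p^s}$ with $\L^s$ by comparing transition functions after a change of trivialization: $\L^s$ restricted over $\Gamma$-adapted charts is the Frobenius pullback of the $s=0$ case, and $\SS^{1,3}=\SS_1=\P(T_{\P^2})$. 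This identification is the main obstacle. I would try to show that $(F^s)^*$ of the $l=0$ matrix $M_1$ agrees with $M(y,z)$ for $l=p^s-1$, $b=p^s$ up to changes of trivialization, using $Q(y,z)=(y-z)^{p^s-1}$. That last identity follows from Lemma~\ref{EqualityQyz}, since $y^{2p^s}-z^{2p^s}=(y-z)^{p^s}(y+z)^{p^s}$.

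For~\ref{item:open-orbit-Sbc}, this is Proposition~\ref{prop:properties of the Tbc}\ref{item: open orbit in the quotient}: the open orbit $\PGL_2/H'$ of $\T^{b,c}/\langle\sigma\rangle$ is exactly $\SS^{b,c}$. The generators of $H$ are the image of $\mu_{2m}$, as displayed there.
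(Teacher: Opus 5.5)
Your treatment of part (1) (two fibers over a tangent line, and the restriction of the transition matrix to $u=0$ for the line $Y=0$), the triviality of $\Aut(\SS^{b,c})_{\P^2}$ by lifting to $\T^{b,c}=\SS^{b,c}\times_{\P^2}\F_0$, and part (3) via Proposition~\ref{prop:properties of the Tbc}\ref{item: open orbit in the quotient} all follow the paper's argument. Two steps are organized differently.

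For $c\neq 3b$, the paper does not invoke Lemma~\ref{SubgroupsPGL3}. Since $b\geq 1$ when $m$ is odd, and $b\neq l+1$ when $m$ is even, part (1) shows that the lines with $\pi^{-1}(L)\simeq\F_b$ are exactly the tangent lines to $\Gamma$. Hence $\Autz(\SS^{b,c})$ preserves $\Gamma$ directly. Your overgroup dichotomy combined with ``$\PGL_3$ is transitive on lines'' is an equally valid route to the same conclusion.

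For $c=3b$, the paper conjugates the explicit transition matrix into that of $\L^s$ from Remark~\ref{rk: theta02}. You instead want to exhibit $\SS^{p^s,3p^s}$ as a Frobenius pullback of $\SS^{1,3}$.

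That Frobenius route is sound, but the identity you plan to use is false. $Q(y,z)=(y-z)^{p^s-1}$ cannot hold, since $Q$ is homogeneous of degree $b=p^s$. With this wrong $Q$, your comparison would fail. The factorization you quote gives instead $Q(y,z)=(y^{2p^s}-z^{2p^s})/(y-z)^{p^s}=(y+z)^{p^s}$.

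With the correct $Q$, the matrix $M(y,z)$ for $(b,l)=(p^s,p^s-1)$ is \emph{entrywise} the $p^s$-th power of the matrix for $(b,l)=(1,0)$, whose $Q$ is $y+z$. Since $(y+z)^{p^s}=y^{p^s}+z^{p^s}$ and $(yz)^{p^s}=y^{p^s}z^{p^s}$, the polynomials $\beta_{ij}(u,v)$ are those of $\SS^{1,3}$ evaluated at $(u^{p^s},v^{p^s})$. So no change of trivialization is needed at all. Over $U_0\cup U_2$, $\SS^{p^s,3p^s}$ is the pullback of $\SS^{1,3}$ by $F^s\colon\P^2\to\P^2$, and this extends across $[0:1:0]$ by uniqueness.

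To conclude, you then need two facts: $\SS^{1,3}\simeq\SS_1\simeq\P(T_{\P^2})=\L^0$, and that $Z_s$ in Lemma~\ref{Lem:LsPGL3} is the $F^s$-pullback of $Z_0$. This is more conceptual than the paper's computation, whose matrix identity is precisely the $F^s$-twist of the $s=0$ identity. However, it relies on identifying $\SS^{1,3}$ with $\P(T_{\P^2})$, which the paper's direct comparison with $\L^s$ avoids.
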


\begin{proof}
The pull-back of $\pi\colon \SS^{b,c}\to\P^2$ by the $(2\!:\!1)$-cover
$\kappa\colon\F_0\to\P^2$ is precisely
$\hat \pi\colon \T^{b,c}\to\F_0$. 
Hence an automorphism of $\SS^{b,c}$ inducing the
identity on $\P^2$ lifts to an automorphism of $\T^{b,c}$ inducing the
identity on $\F_0$. By
Proposition~\ref{prop:properties of the Tbc}\ref{item vert part trivial},
the latter is trivial. Therefore
\[
\Aut(\SS^{b,c})_{\P^2}=\{\Id\}.
\]
The same argument over arbitrary $\k$-algebras, using
the calculation in Lemma~\ref{AutZaF1}, shows that
the scheme-theoretic kernel is also trivial.

Let
$q\colon\T^{b,c}\longrightarrow\SS^{b,c}=\T^{b,c}/\left\langle \sigma \right \rangle $
be the quotient by the involution $\sigma$ of
Proposition~\ref{prop:properties of the Tbc}. 
By Proposition~\ref{prop:properties of the Tbc}\ref{item vert part trivial},
we have $\Autz(\T^{b,c})\simeq\PGL_2$,
and its action on the base $\F_0=\P^1\times\P^1$ is the diagonal one.
Moreover, the action commutes with $\sigma$, so it descends through $q$.
Under
$\kappa\colon\F_0\longrightarrow\P^2$,
the diagonal action of $\PGL_2$ induces the usual action of
$\Aut(\P^2,\Gamma)\simeq\PGL_2$.
By Blanchard's lemma
(Proposition~\ref{prop:blanchard-lemma}), every element
of $\Autz(\SS^{b,c})$ induces an automorphism of $\P^2$.
We thus obtain inclusions
\[
\PGL_2 \simeq \Aut(\P^2,\Gamma)
\subseteq  \Autz(\SS^{b,c}) \subseteq  \Aut(\P^2) \simeq \PGL_3.
\]  
Taking a line $L\subseteq \P^2$ tangent to $\Gamma$, the preimage $\kappa^{-1}(L)\subseteq \P^1\times \P^1$ consists of two fibers $f_1,f_2$ of the two projections, exchanged by the involution associated to the double cover $\kappa$. In particular, $\pi^{-1}(L)$ is isomorphic to $\hat\pi^{-1}(f_1)\simeq \hat\pi^{-1}(f_2)\simeq \F_b$ since $\T^{b,c}$ has numerical invariants $(0,b,c)$.

We now take a line $L\subseteq \P^2$ not tangent to $\Gamma$ and show that $\pi^{-1}(L)$ is isomorphic to $\F_0$ or $\F_{l+1}$, depending on whether $b+l+1$ is odd or even. Using the action of $\Autz(\SS^{b,c})$, we can choose the line of equation $Y=0$. Using the notation of Proposition~\ref{prop: def of the Sbc}, the restriction of $\SS^{b,c}\to \P^2$ on $\P^2 \setminus \{ [0:1:0] \}$ is given by gluing $\P^1 \times U_0$ and $\P^1 \times U_2$ with
\[
\begin{array}{rcccccc}
\nu_{02} \colon & \P^1 \times U_0 & \dasharrow & \P^1 \times U_2 \\
& \left(\begin{bmatrix} x_0 \\ x_1 \end{bmatrix}, [1:u:v] \right) 
& \longmapsto &
\left(
\begin{bmatrix} \beta_{11}(u,v) & \beta_{12}(u,v)\\ \beta_{21}(u,v) & \beta_{22}(u,v) \end{bmatrix}
\begin{bmatrix} x_0 \\ x_1 \end{bmatrix},\;
\left[ \frac{1}{v} : \frac{u}{v} : 1 \right]
\right),
\end{array}
\]
where $\beta_{11}, \beta_{12}, \beta_{21}, \beta_{22} \in \k[u,v]$ are the polynomials such that
\[ \hspace{-15mm}
\begin{bmatrix} 
\beta_{11}(y+z,yz) & \beta_{12}(y+z,yz)\\ 
\beta_{21}(y+z,yz) & \beta_{22}(y+z,yz) 
\end{bmatrix}
=
\frac{1}{(y-z)^{l+1}}
\begin{bmatrix}
(y-z)^{l+1} (y^{b+l+1}+z^{b+l+1}) & (y-z)^{2(l+1)}(y^{b+l+1}-z^{b+l+1})\\
(yz)^{l+1}(y^{b+l+1}-z^{b+l+1}) & (y-z)^{l+1} (yz)^{l+1} (y^{b+l+1}+z^{b+l+1})
\end{bmatrix}.
\]
We then observe that $y+z$ divides $y^{b+l+1}+z^{b+l+1}$ if $b+l+1$ is odd, and divides $y^{b+l+1}-z^{b+l+1}$ if $b+l+1$ is even, since in each case the corresponding polynomial vanishes when $z=-y$. Put $n=b+l+1$. On $L=\{Y=0\}$, we have $u=0$, hence $z=-y$ and
$v=yz=-y^2$. If $n$ is odd, then
\[
y^n+(-y)^n=0,
\qquad
y^n-(-y)^n=2y^n,
\]
whereas, if $n$ is even,
\[
y^n-(-y)^n=0,
\qquad
y^n+(-y)^n=2y^n.
\]
It follows that, in $\PGL_2(\k(v))$,
\[
\begin{bmatrix}
\beta_{11}(0,v) & \beta_{12}(0,v)\\
\beta_{21}(0,v) & \beta_{22}(0,v)
\end{bmatrix}
=
\begin{cases}
\begin{bmatrix}
0 & (-4)^{l+1}\\
1 & 0
\end{bmatrix}
& \text{if $b+l+1$ is odd},\\[4mm]
\begin{bmatrix}
1 & 0\\
0 & v^{l+1}
\end{bmatrix}
& \text{if $b+l+1$ is even}.
\end{cases}
\]
Indeed, in the first case,
\[
\frac{\beta_{12}(0,v)}{\beta_{21}(0,v)}
=
\frac{(2y)^{2(l+1)}}{(-y^2)^{l+1}}
=
(-4)^{l+1},
\]
while in the second case
\[
\frac{\beta_{22}(0,v)}{\beta_{11}(0,v)}
=
(-y^2)^{l+1}
=
v^{l+1}.
\]
In the first case the transition function is constant, so
$\pi^{-1}(L)\simeq\P^1\times\P^1=\F_0$. In the second case the transition
function is $[x_0:x_1]\longmapsto[x_0:v^{l+1}x_1]$,
which is the standard transition function of $\F_{l+1}$. Hence
\[
\pi^{-1}(L)\simeq
\begin{cases}
\F_0 & \text{if $b+l+1$ is odd},\\
\F_{l+1} & \text{if $b+l+1$ is even}.
\end{cases}
\]
This proves~\ref{FnLineSchwarz}.

\smallskip

We now use~\ref{FnLineSchwarz} to prove~\ref{item:aut-Sbc}.
Suppose that $c\neq 3b$, or equivalently, $b\neq l+1$.
By~\ref{FnLineSchwarz}, the lines $L$ for which
$\pi^{-1}(L)\simeq\F_b$ are precisely the tangent lines
to $\Gamma$. Thus $\Autz(\SS^{b,c})$ preserves this
family of lines, and hence preserves $\Gamma$.
Together with the inclusion proved above, this gives
\[
\Autz(\SS^{b,c})
\simeq \Aut(\P^2,\Gamma)
\simeq \PGL_2.
\]

We now assume that $c=3b$, so $b=l+1$.
If $l=0$, then $b=1=p^0$.
Otherwise, let $s\geq1$ be as in
Definition~\ref{def:Schwarz bc}.
Since $p^{s-1}\leq l<p^s$ and
$2(l+1)=b+l+1\equiv0\pmod{p^s}$,
the assumption $p \geq 3$ gives $b=l+1=p^s$.
Thus, in both cases, $b=l+1=p^s$ for some $s\geq0$.
The transition function $\nu_{02}$ is then given by
\begin{align*}
\hspace{-15mm} 
\begin{bmatrix} 
\beta_{11}(y+z,yz) & \beta_{12}(y+z,yz)\\ 
\beta_{21}(y+z,yz) & \beta_{22}(y+z,yz) 
\end{bmatrix}
&=
\frac{1}{(y-z)^{p^s}}
\begin{bmatrix}
(y-z)^{p^s} (y^{2{p^s}}+z^{2{p^s}}) & (y-z)^{2{p^s}}(y^{2{p^s}}-z^{2{p^s}})\\
(yz)^{p^s}(y^{2{p^s}}-z^{2{p^s}}) & (y-z)^{p^s} (yz)^{p^s} (y^{2{p^s}}+z^{2{p^s}})
\end{bmatrix}\\
&=
\begin{bmatrix}
 (y^2+z^2)^{p^s} & (y-z)^{2p^s}(y+z)^{p^s}\\
(yz)^{p^s}(y+z)^{p^s} & (yz)^{p^s} (y^2+z^2)^{p^s}
\end{bmatrix}
\end{align*}
from which we deduce
\[
\begin{bmatrix} 
\beta_{11}(u,v) & \beta_{12}(u,v)\\ 
\beta_{21}(u,v) & \beta_{22}(u,v) 
\end{bmatrix}
=
\begin{bmatrix}
 (u^2-2v)^{p^s} & (u^2-4v)^{p^s}u^{p^s}\\
u^{p^s}v^{p^s} & v^{p^s} (u^2-2v)^{p^s}
\end{bmatrix}.
\]
In $\GL_2(\k[u,v,1/v])$, we have the identity
\[ \hspace{-15mm}
\begin{bmatrix} 
-1 & \left(\frac{u}{v}\right)^{p^s}
\\[2mm]
0 & 2
\end{bmatrix}
\begin{bmatrix}
 (u^2-2v)^{p^s} & (u^2-4v)^{p^s}u^{p^s}\\
u^{p^s}v^{p^s} & v^{p^s} (u^2-2v)^{p^s}
\end{bmatrix}
\begin{bmatrix} 
u^{p^s} & 2\\ 
-1 & 0 
\end{bmatrix}
= 4v^{p^s}
\begin{bmatrix} 
0 & 1\\ 
v^{p^s} & u^{p^s} 
\end{bmatrix}.
\]
The matrices on the left and right define changes of
trivialization on $U_2$ and $U_0$, respectively.
After removing the invertible scalar $4v^{p^s}$,
the resulting matrix is the transition matrix of $\L^s$
from Remark~\ref{rk: theta02}.
The resulting isomorphism extends across $[0:1:0]$
by \cite[Lemma~2.3.1]{BFT}.
Hence $\SS^{b,3b}\simeq\L^s\simeq\PGL_3/P_s$,
and $\Autz(\SS^{b,3b})\simeq\PGL_3$.
This proves~\ref{item:aut-Sbc}.

\smallskip

Assertion~\ref{item:open-orbit-Sbc} is exactly
Proposition~\ref{prop:properties of the Tbc}\ref{item: open orbit in the quotient}, since
$\SS^{b,c}=\T^{b,c}/\langle\sigma\rangle$.
\end{proof}

The following result completes our description of the geometry of
$\SS^{b,c}$ by describing its orbits under $\Autz(\SS^{b,c})$.

\begin{corollary}\label{cor: description of the orbits for Sbc}
Recall that we assume $p \geq 3$.
Suppose that $c\neq 3b$. The action of $\Autz(\SS^{b,c})\simeq\PGL_2$ on $\SS^{b,c}$ has four or five orbits:
one open orbit of dimension $3$, two orbits of dimension $2$, and one or two
closed orbits of dimension $1$. More precisely:
\begin{itemize}
\item
If $p\mid \binom{l+b+1}{l+1}$, then $\Autz(\SS^{b,c})$ has exactly two
invariant curves $\ell$ and $\ell'$. The preimage of the conic
$\Gamma\subset\P^2$ in $\SS^{b,c}$ is isomorphic to
$\F_{2(l+1)}$, and $\ell$ and $\ell'$ are sections of
self-intersection $2(l+1)$ and $-2(l+1)$, respectively.

\item
If $p\nmid \binom{l+b+1}{l+1}$, then $l+1=p^s$ for some $s\geq 0$, and
$\Autz(\SS^{b,c})$ has exactly one invariant curve $\ell$. The preimage
of the conic $\Gamma\subset\P^2$ in $\SS^{b,c}$ is isomorphic to
$\P^1\times\P^1$, endowed with the Frobenius-twisted diagonal
$\PGL_2$-action
\[
g\cdot(x,y)=\bigl(F^s(g)\cdot x,g\cdot y\bigr),
\]
where $F^s\colon\PGL_2\to\PGL_2$ denotes the $s$-th Frobenius
morphism. Under this identification, $\ell$ is the graph of the
$s$-th Frobenius morphism $F^s\colon\P^1\to\P^1$, and hence has
bidegree $(1,p^s)=(1,l+1)$.
\end{itemize}
\end{corollary}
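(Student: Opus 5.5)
The plan is to compute everything upstairs on $\T^{b,c}$ and descend through the quotient $q\colon\T^{b,c}\to\SS^{b,c}=\T^{b,c}/\langle\sigma\rangle$. By Proposition~\ref{prop:aut-orbits-Sbc}\ref{item:aut-Sbc}, the group $\Autz(\SS^{b,c})\simeq\PGL_2$ is the image of the diagonal $\PGL_2$ acting on $\T^{b,c}$. By Blanchard's lemma every orbit maps onto an orbit of $\Aut(\P^2,\Gamma)$ on $\P^2$. These base orbits are $\Gamma$ and $\P^2\setminus\Gamma$, so I treat the two preimages separately.

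\emph{Over $\P^2\setminus\Gamma$.} This set is the image of $\F_0\setminus\Delta$, which is homogeneous, $\simeq\PGL_2/T$ with $T$ the diagonal torus. The stabilizer of $([0:1],[1:0])$ in $\PGL_2$ is $T$. It acts on the fiber through the torus $K$ of the proof of Proposition~\ref{prop:properties of the Tbc}, by $[x_0:x_1]\mapsto[\alpha^{-c}x_0:x_1]$. Since $c\neq0$, this action has exactly two fixed points $x_0=0$ and $x_1=0$, and one open orbit. Over $\F_0\setminus\Delta$ this gives the open $3$-dimensional orbit of Proposition~\ref{prop:properties of the Tbc}\ref{item open orbit Tbc} and two $2$-dimensional orbits, namely the $\PGL_2$-sweeps of these two points. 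The element $\left[\begin{smallmatrix}0&1\\-1&0\end{smallmatrix}\right]\in H'$ normalizes $T$ and inverts $\alpha$. Together with the explicit formulas of Lemma~\ref{TbcInvolutionExplicit}, I will check that $\sigma$ exchanges the two $2$-dimensional orbits. Thus, in $\SS^{b,c}$, the preimage of $\P^2\setminus\Gamma$ consists of exactly one open orbit and one $2$-dimensional orbit.

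\emph{Over $\Gamma$.} The reduced preimage $\kappa^{-1}(\Gamma)=\Delta$ maps isomorphically onto $\Gamma$. By Lemma~\ref{TbcInvolutionExplicit}, $\sigma$ restricted over $\Delta$ is $[x_0:x_1]\mapsto[x_0:(-1)^lx_1]$, which is the identity since $l$ is even. Hence $\pi^{-1}(\Gamma)\simeq\T^{b,c}|_\Delta$, equivariantly. I restrict the transition function $\nu_{b,c}$ of Lemma~\ref{TbcInvolutionExplicit} to $y=z$. Since $Q(y,y)=\sum_i\binom{l+i}{l}y^b=\binom{l+b+1}{l+1}y^b=:Ny^b$, the transition matrix becomes
\[
\begin{bmatrix}y^b&0\\ Ny^{b+l+1}&y^{c}\end{bmatrix}.
\]
\begin{itemize}
\item If $N=0$ in $\k$, this matrix is diagonal. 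The surface is then $\F_{c-b}=\F_{2(l+1)}$, and the sections $x_1=0$ and $x_0=0$ are invariant, of self-intersection $\pm2(l+1)$. The group $\PGL_2$ acts on this $\F_{2(l+1)}$ over the transitive action on $\Gamma$. By the description of $\Aut(\F_n)$ orbits (Proposition~\ref{prop:trichotomy for algebraic subgroups of AUt(Fa)}, or directly, since the stabilizer of a point of $\Gamma$ acts on the fiber with two fixed points), the orbits are $\ell$, $\ell'$ and their open complement. This case therefore gives five orbits in total.
\item If $N\neq0$, Lucas's theorem applies. Since $b+l+1\equiv0\pmod{p^s}$ and $l+1\leq p^s$, the condition $N\not\equiv0$ forces $l+1=p^s$. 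A change of trivialization analogous to the one used for $\F_b$ in Proposition~\ref{prop:properties of the Tbc} then shows that the bundle is $\F_0$: the elementary matrix identity splits it as $\O(b+l+1)\oplus\O(b+l+1)$ up to twist. The invariant section $x_0=0$ survives, becoming the graph of $z\mapsto z^{p^s}$. This case gives four orbits in total.
\end{itemize}

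For the last bullet of the statement, I identify the action as follows. The $\PGL_2$-action on $\P^1\times\P^1$ preserves both rulings, by Blanchard's lemma applied to both projections. On the factor corresponding to $\Gamma$ it is standard. On the other factor it is given by some homomorphism $\PGL_2\to\PGL_2$, hence is a power $F^e$ of Frobenius up to conjugation. The invariant curve $\ell$ is a graph of bidegree $(1,p^s)$, so $e=s$, and $\ell$ is the graph of $F^s$. Conversely, for the Frobenius-twisted diagonal action, the orbits are exactly this graph and its complement, since the stabilizer of $y$ acts transitively on $\P^1\setminus\{F^s(y)\}$. This yields exactly one invariant curve.

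The main obstacle is twofold. First, the splitting computation in the case $N\neq0$ must be carried out carefully, showing that the bundle is really $\F_0$ with $x_0=0$ becoming the Frobenius graph. Second, one must verify that $\sigma$ swaps, rather than preserves, the two $2$-dimensional orbits over $\F_0\setminus\Delta$. For the latter I would first test it directly on the point $([1:0],([0:1],[1:0]))$ using $\sigma_1$.
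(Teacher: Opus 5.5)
Your proposal is correct and follows essentially the same route as the paper: you split over $\Gamma$ and $\P^2\setminus\Gamma$, restrict the transition function to the diagonal using $\sum_i\binom{l+i}{l}=\binom{l+b+1}{l+1}$, and then argue by cases — Proposition~\ref{prop:trichotomy for algebraic subgroups of AUt(Fa)} when $p$ divides this coefficient, and Lucas's theorem together with the Frobenius-graph argument otherwise.

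One small correction concerns your proposed test of the swap. That point lies over $([0:1],[1:0])$, which is one of the two points of $\F_0$ not covered by the charts on which $\sigma_1,\sigma_2$ are defined. It is simpler to argue as the paper does. The two $2$-dimensional orbits upstairs are $S_1\setminus C$ and $S_2\setminus C$, for the invariant sections of Proposition~\ref{prop:three case P1 bundles over F0}\ref{P1P1sch}, and $\sigma$ exchanges $S_1$ and $S_2$ because it exchanges the two rulings. This also justifies the invariance of the section $x_0=0$ over $\Delta$, which you use without comment.
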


\begin{proof}
The diagonal $\PGL_2$-action on
$\F_0=\P^1\times\P^1$ has exactly two orbits, namely the diagonal
$\Delta$ and its complement. Their images under $\kappa$ are respectively
$\Gamma$ and $\P^2\setminus\Gamma$.
By Proposition~\ref{prop:three case P1 bundles over F0}
\ref{P1P1sch}, the $\P^1$-bundle
$\T^{b,c}\to\F_0$ admits two $\PGL_2$-invariant
sections $S_1,S_2\simeq\P^1\times\P^1$ with
reduced intersection
$C=(S_1\cap S_2)_{\mathrm{red}} \simeq \P^1$.
Since $C$ is $\PGL_2$-invariant, its image in
$\F_0$ is the diagonal $\Delta$, and
$C\to\Delta$ is an isomorphism.
Moreover, the involution $\sigma$ exchanges
$S_1$ and $S_2$, since it exchanges the two
rulings of $\F_0$.
Let
\[
q\colon\T^{b,c}\longrightarrow
\SS^{b,c}=\T^{b,c}/\langle\sigma\rangle
\]
be the quotient morphism.
Since $\sigma$ exchanges $S_1$ and $S_2$, their common
image
\[
S=q(S_1)=q(S_2)
\]
is an invariant surface whose normalization is
$q|_{S_1}\colon S_1\to S$.
This morphism restricts to an isomorphism
$S_1\setminus C\simeq S\setminus\ell$,
where $\ell=q(C)$.
Moreover, $\pi\circ q|_{S_1}$ identifies with $\kappa$.
In particular, $S\setminus\ell$ is a single
two-dimensional $\PGL_2$-orbit.

Over $\P^2\setminus\Gamma$, the stabilizer of a point of the base is the
normalizer of a maximal torus of $\PGL_2$. Its action on the corresponding
$\P^1$-fiber has exactly two orbits: an open orbit and an orbit consisting
of the images of two distinct points. Hence
$\pi^{-1}(\P^2\setminus\Gamma)$ is the disjoint union of an open
$\PGL_2$-orbit and the two-dimensional orbit $S\setminus\ell$.

On the other hand, $\kappa\colon\P^1\times\P^1\to\P^2$ restricts to a
$\PGL_2$-equivariant isomorphism from the diagonal
$\Delta\subset\P^1\times\P^1$ onto the conic $\Gamma\subset\P^2$.
Therefore, it remains to determine the orbit decomposition of
\[
V:=\pi'^{-1}(\Delta)\simeq\pi^{-1}(\Gamma),
\]
where $\pi'\colon\T^{b,c}\to\P^1\times\P^1$ is the structure morphism.

Restricting the transition function of $\T^{b,c}$ from
Definition~\ref{def hat Schwrzenberger} to $V$, and using
\[
\sum_{i=0}^{b}\binom{l+i}{l}
=\binom{l+b+1}{l+1}
\qquad\text{and}\qquad
c=b+2(l+1),
\]
we obtain
\[
([x_0:x_1;1:z],z)
\longmapsto
\left(
\left[
x_0:
x_1z^c+x_0z^{b+l+1}\binom{l+b+1}{l+1};
1:z
\right],
\frac1z
\right)
\]
\[
=
\left(
\left[
x_0:
x_1z^{2(l+1)}
+x_0z^{l+1}\binom{l+b+1}{l+1};
\frac1z:1
\right],
\frac1z
\right).
\]
The curve $\ell$ corresponds to $x_0=0$ in both charts. We now distinguish
two cases, according to whether or not
$p$ divides $\binom{l+b+1}{l+1}$.

\begin{itemize}
\item Suppose first that $p$ divides $\binom{l+b+1}{l+1}$. In this case, the transition function becomes
\[
([x_0:x_1;1:z],z)
\longmapsto
\left([x_0:x_1z^{2(l+1)};z^{-1}:1],z^{-1}\right),
\]
which shows that $V\simeq \F_{2(l+1)}$, via the isomorphism given on the two standard charts by
\[
\begin{array}{rcl}
([x_0:x_1;1:z],z)
&\longmapsto&
[x_1:x_0;1:z],\\[2mm]
([x_0:x_1;z:1],z)
&\longmapsto&
[x_1:x_0;z:1].
\end{array}
\]
Under this isomorphism, the curve $\ell$ is sent onto the section of self-intersection $2(l+1)$. 
Since the $\PGL_2$-action on $V\simeq\F_{2(l+1)}$
is transitive on the base $\P^1$ and preserves
the section $\ell$ of positive self-intersection,
Proposition~\ref{prop:trichotomy for algebraic subgroups of AUt(Fa)}
implies that $V$ has exactly three $\PGL_2$-orbits:
$\ell$, the unique section $\ell'$ of
self-intersection $-2(l+1)$, and their complement.

\smallskip

\item  Suppose now that
$p\nmid t:=\binom{l+b+1}{l+1}$.
The following formulas define an isomorphism
$V\simeq\P^1\times\P^1$ over $\Delta$:
\[\begin{array}{rcl}
([x_0:x_1;1:z],z)&\mapsto& ([x_1:tx_0+x_1z^{l+1}],[1:z])\\
([x_0:x_1;z:1],z)&\mapsto &([-tx_0+x_1z^{l+1}:x_1],[z:1]).
\end{array}\]

Let us check that $l+1=p^s$ for some $s\geq 0$.
If $l=0$, there is nothing to prove, since $l+1=p^0$.
Assume therefore that $l>0$. By
Definition~\ref{def:Schwarz bc}, there exists $s\geq 1$ such that
\[
p^{s-1}\leq l<p^s
\qquad\text{and}\qquad
p^s\mid b+l+1.
\]
Write the base-$p$ expansions
\[
b+l+1=\sum_{i\geq 0}a_i p^i,
\qquad
l+1=\sum_{i\geq 0}c_i p^i.
\]
Since $p^s\mid b+l+1$, we have
$a_0=\cdots=a_{s-1}=0$. By Lucas' theorem,
\[
\binom{l+b+1}{l+1}
\equiv
\prod_{i\geq0}\binom{a_i}{c_i}
\pmod p.
\]
As this binomial coefficient is nonzero modulo $p$, each factor is
nonzero. Hence $c_0=\cdots=c_{s-1}=0$, and therefore
$p^s\mid l+1$. Since $l<p^s$, we obtain $l+1=p^s$.

Under the isomorphism $V\simeq\P^1\times\P^1$,
the second projection is the morphism $V\to\Delta\simeq\P^1$.
Consequently, the action of $\PGL_2$ on the second factor is the
standard one. Moreover, every automorphism of
$\P^1\times\P^1$ preserving the second projection acts on the first
factor by an element of $\PGL_2$. Hence the action is of the form
\[
g\cdot(x,y)=\bigl(\varphi(g)\cdot x,g\cdot y\bigr)
\]
for some homomorphism
$\varphi\colon\PGL_2\to\PGL_2$.
Since $\ell$ is $\PGL_2$-invariant and, under the above identification,
is the graph of
$F^s\colon\P^1\to\P^1$,
we have
\[
\varphi(g)\cdot F^s(y)
=
F^s(g\cdot y)
=
F^s(g)\cdot F^s(y)
\]
for every $g\in\PGL_2$ and $y\in\P^1$. It follows that
$\varphi(g)=F^s(g)$,
and therefore the action on $V$ is the Frobenius-twisted diagonal
action
\[
g\cdot(x,y)=\bigl(F^s(g)\cdot x,g\cdot y\bigr).
\]
This action has exactly two orbits: $\ell$ and $V \setminus \ell$. 
In particular, $\ell$ is the unique
$\PGL_2$-invariant curve in $V$.
\end{itemize} 
This concludes the proof of the result.
\end{proof}

\subsection{Classification of the \texorpdfstring{$\P^1$}{P1}-bundles over the projective plane}
\label{sec: classification over P2}
The main result of this section is Theorem~\ref{th:P1bundlesP2}.
Before proving it, we establish two preliminary results concerning the possible actions
of $\Autz(X)$ on the base.

\smallskip

We first recall a classical result concerning the classification of connected algebraic subgroups of $\PGL_3$.

\begin{lemma} \emph{(\cite[Lemma~4.3.3]{BFT})}\label{SubgroupsPGL3}
Assume that $p \geq 3$. If $G\subsetneq \Aut(\P^2)=\PGL_3$ is a proper connected algebraic subgroup that does not fix any point of $\P^2$, then one of the following holds. Either
\begin{enumerate}
\item \label{PGL2case}
$G=\Aut(\P^2,\Gamma)\simeq \PGL_2$, where $\Gamma\subseteq \P^2$ is a smooth conic; or
\item\label{SL2GL2case}
$G$ is a subgroup of $\Aut(\P^2,L)\simeq \GL_2 \ltimes \G_a^2 $, where $L\subseteq \P^2$ is a line, equal either to $\Aut(\P^2,L)$ or to $\SL_2\ltimes \G_a^2\subsetneq \Aut(\P^2,L)$.
\end{enumerate} 
Moreover, in both cases $G$ acts on $\P^2$ with two orbits: the smooth conic $\Gamma$ in Case~\ref{PGL2case} or the line $L$ in Case~\ref{SL2GL2case}, and its open complement.
\end{lemma}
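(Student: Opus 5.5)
The plan is to use the structure theory of connected linear algebraic groups, working in $\PGL_3$ over $\k$ with $p\geq3$. Let $G\subsetneq\PGL_3$ be connected with no fixed point on $\P^2$. First split according to whether $G$ is reductive. If $G$ is solvable, the Borel fixed point theorem applied to $\P^2$ gives a fixed point, which is excluded. Thus $G$ is non-solvable, and its derived group contains a semisimple subgroup of rank at most $2$. The proper semisimple subgroups of $\PGL_3$ are of type $A_1$ (since $A_1\times A_1$ and $B_2$, $G_2$ do not embed properly in dimension $8$ with rank $2$ except in $\PGL_3$ itself). Any such subgroup is the image of $\SL_2$ or $\PGL_2$ under a three-dimensional projective representation.

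\textbf{Reductive case.} I would classify homomorphisms $\SL_2\to\GL_3$ up to twist. A $3$-dimensional representation is either $V_1\oplus V_0$ (possibly Frobenius-twisted), which fixes the point $\P(V_0)$, or irreducible. By Steinberg's tensor product theorem with $p\geq3$, the irreducible $3$-dimensional modules are Frobenius twists of $L(2)=V_2$; here $p\geq3$ is essential, because $V_2$ is irreducible only in that case. Non-split extensions of $V_0$ by $V_1$ do not occur, since the weights have different parity. The image of $V_2$ is $\SO_3\simeq\Aut(\P^2,\Gamma)$ with $\Gamma$ the conic of isotropic vectors. Adding a central torus is impossible in $\PGL_3$ without enlarging to a Levi $\GL_2$, which fixes a point. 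This gives case \ref{PGL2case}, with orbits $\Gamma$ and its complement.

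\textbf{Non-reductive case.} Let $R_u\neq1$ be the unipotent radical. The fixed subspace of $R_u$ on $\k^3$ is a nonzero $G$-stable subspace $W$, and its image in $\P^2$ is a $G$-invariant point or line. A point is excluded, so $W$ corresponds to a line $L$, and $G\subseteq\Aut(\P^2,L)\simeq\GL_2\ltimes\G_a^2$. Non-solvability forces the image of $G$ in $\GL_2$ to contain $\SL_2$, so this image is $\SL_2$ or $\GL_2$. The group $R_u$ is then a nonzero $G$-stable subgroup of $\G_a^2$, which is the standard module $V_1$ of the Levi. Since $V_1$ is irreducible and $R_u$ is smooth and connected, $R_u=\G_a^2$, and hence $G=\SL_2\ltimes\G_a^2$ or $G=\Aut(\P^2,L)$.

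It remains to split $G\to\GL_2$ for the Levi part. I would do this as in Lemma~\ref{lem:H-maps-onto-PGL2-odd-a}, using the central element $-I$, which acts by $-1$ on $\G_a^2$, together with $2$ being invertible since $p\geq3$. For the orbits: $\G_a^2$ acts transitively on $\P^2\setminus L$, and $\SL_2$ acts transitively on $L$.

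The main obstacle is the irreducible-representation step, that is, excluding exotic embeddings of $\SL_2$ in characteristic $p$. I would invoke Steinberg's theorem together with the standard $\mathrm{Ext}$ computation for low-weight $\SL_2$-modules.
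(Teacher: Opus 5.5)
Your argument is correct in outline. The paper gives no proof of this lemma at all: it imports it from \cite[Lemma~4.3.3]{BFT} and adds the hypothesis $p\geq3$. So your self-contained proof is a genuinely different route. It uses Borel's fixed point theorem to force non-solvability. In the reductive case, Steinberg's tensor product theorem and the central element $-I$ show that a $3$-dimensional projective representation of $\SL_2$ is either a Frobenius twist of $V_2$ or of the form $V_1^{(r)}\oplus V_0$, and the latter fixes a point. In the non-reductive case, the fixed subspace of the unipotent radical produces an invariant line. What this buys over the citation is that it shows exactly where $p\geq3$ is used: for the irreducibility of $V_2$, and for the splitting of any module with composition factors $V_1^{(r)}$ and $V_0$, since $-I$ acts on them by $-1$ and $+1$. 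This is consistent with the paper's remark on $p=2$. There, the module underlying the conic has a trivial submodule (the nucleus) with quotient $V_1^{(1)}$, so $\Aut(\P^2,\Gamma)$ fixes a point. Note also that Frobenius twists do not change the image in $\PGL_3$, so the irreducible case always yields $\Aut(\P^2,\Gamma)$.

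A few steps need tightening, none of them seriously.
\begin{itemize}
\item The exclusion of type $A_1\times A_1$ is not a dimension count, since its dimension is $6$. It holds because, in a $3$-dimensional representation with finite kernel, both simple factors would need nontrivial composition factors, which forces dimension at least $4$.
\item In the reductive case, the precise reason the radical is trivial is Schur's lemma. The centralizer in $\PGL_3$ of an irreducibly acting $\SL_2$ is finite, because $\SL_2$ has no nontrivial characters.
\item In the non-reductive case, the inclusion $R_u\subseteq\G_a^2$ requires noting that the image of $R_u$ in $\GL_2$ is a connected unipotent normal subgroup of $\SL_2$ or $\GL_2$, hence trivial.
\item Irreducibility of $V_1$ is not the right reason for $R_u=\G_a^2$. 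In characteristic $p$, a smooth connected subgroup of $\G_a^2$ need not be a linear subspace, e.g.\ $\{(t,t^p)\}$. Use instead that $\SL_2(\k)$ acts transitively on $\k^2\setminus\{0\}$ by conjugation.
\item The splitting step you borrow from Lemma~\ref{lem:H-maps-onto-PGL2-odd-a} is superfluous. Once $\G_a^2\subseteq G$, the group $G$ is the full preimage of its image in $\GL_2$, so $G=\SL_2\ltimes\G_a^2$ or $G=\Aut(\P^2,L)$ directly.
\item No separate $\mathrm{Ext}$ computation is needed for the ``exotic embedding'' step. Steinberg's theorem together with the action of $-I$ already settles it.
\end{itemize}
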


The next result will also be an essential ingredient in the proof of Theorem~\ref{th:P1bundlesP2}.

\begin{lemma} \label{lem: contracting -1 curve yields a fixed point}
Let $\hat{\pi}\colon \hat{X} \to \F_1$ be a $\P^1$-bundle such that 
$\hat{H} = \Im(\Autz(\hat{X}) \to \Aut(\F_1))$ contains a conjugate of $\SL_2\subseteq \Aut(\F_1)$.  
Let $\pi\colon X \to \P^2$ be the $\P^1$-bundle obtained from $\hat{\pi}$ 
by contracting the $(-1)$-section of $\F_1$ to a point $p_0 \in \P^2$ 
$($see Lemma~$\ref{lem:descent-lemma})$.
Let $H=\Im(\Autz(X)\to\Aut(\P^2))$. Then we have the following distinct possibilities:
\begin{enumerate}
\item \label{item 0: contracting -1 curve yields a fixed point}the $\P^1$-bundle $X\to\P^2$ is  decomposable, and $H=\Aut(\P^2)$;
\item the $\P^1$-bundle $X\to\P^2$ is square-isomorphic to the $\P^1$-bundle
$\L^{s,b}\to\P^2$ of Definition~$\ref{def: P1 bundles Lsb}$, for some $s \geq 0$ and $1 \leq b \leq p^s$, and $H$ contains $\Aut(\P^2,L)$; or
\item \label{item ii: contracting -1 curve yields a fixed point}$H$ fixes the point $p_0$.
\end{enumerate}
\end{lemma}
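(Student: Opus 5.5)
Here is the plan. I will first pin down $\hat X$ and then read off the possibilities for $H$. Since $\hat H$ contains a conjugate of $\SL_2$, it surjects onto $\Aut(\P^1)$, so $\hat\pi$ has no jumping fibers. If $\hat X$ is decomposable, it is some $\FF_1^{b,c}$. Otherwise, after a square isomorphism (conjugating by an element of $U_1$), Lemma~\ref{lem:characterization-Umemura} shows that $\hat X$ is an Umemura $\P^1$-bundle $\U_{1,\omega}^{b,c}$ with $c-i=2p^{s_i}$ on the support $I$. By Lemma~\ref{lem:descent-lemma}, the map $\hat X\dashrightarrow X$ is $\Autz(\hat X)$-equivariant. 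Hence $H$ contains the image of $\hat H$ under $\Aut(\F_1)\to\Aut(\P^2)$, which fixes $p_0$ and contains the image of $\SL_2$, i.e. the $\GL_2$-part of the stabilizer of $p_0$ up to scalars. So $H$ is connected, contains $\GL_2$ acting on lines through $p_0$, and either fixes $p_0$ (case~\ref{item ii: contracting -1 curve yields a fixed point}) or not.

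Assume now that $H$ does not fix $p_0$. Then $H$ fixes no point at all: any $H$-fixed point is fixed by this $\GL_2$, and the only such point is $p_0$. By Lemma~\ref{SubgroupsPGL3}, either $H=\PGL_3$, or $H=\Aut(\P^2,\Gamma)$, or $H$ is $\Aut(\P^2,L)$ or $\SL_2\ltimes\G_a^2$ inside $\Aut(\P^2,L)$. The conic case is impossible: $\Aut(\P^2,\Gamma)\simeq\PGL_2$ has no fixed point for $p\ge3$ and does not contain the $4$-dimensional stabilizer piece (it only has dimension $3$). In the line case, $H$ contains the $\GL_2$ fixing $p_0$, which forces $p_0\notin L$ and the full $\GL_2$ on $L$, so $H=\Aut(\P^2,L)$. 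In all remaining cases, $H\supseteq\Aut(\P^2,L)$ for some line $L$ not through $p_0$.

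Next I identify $X$. For $H=\PGL_3$, I want to show that $X$ is decomposable or $\L^s$. For $H\supseteq\Aut(\P^2,L)$, I pull back to $\F_1$ at $p_0$. The group fixing $p_0$ inside $\Aut(\P^2,L)$ is exactly $\GL_2$, and the unipotent translations $\G_a^2$ move $p_0$. The key computation is that the $\G_a^2$-translates force the restrictions of $X$ to all lines through $p_0$ to agree with the restriction to lines not through $p_0$, except possibly to $L$. From this, $\hat X$ has restrictions to $s_1$ and to fibers compatible only with a single Umemura term $i=b$, $P=y_1^bz^{p^s}$, $c=2p^s$, with $b\le p^s$, or with a decomposable bundle. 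Concretely, I would use Proposition~\ref{prop: LsbAut}\ref{LSblines}-type invariants: the preimage of a general line is $\F_b$, while $\pi^{-1}(s_1)$ in $\hat X$ is $\F_{c-b}$. Moreover, Lemma~\ref{Lem:LsbBlowUp} says exactly that $\U_{1}^{b,2p^s}$ with $P=y_1^bz^{p^s}$ blows down to $\L^{s,b}$.

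The main obstacle is this step: showing that the existence of extra automorphisms moving $p_0$ forces the Umemura parameter to be $\omega=e_b$ with $c=2p^s$ and $b\le p^s$, or else decomposability. I would try the following. Since $H$ moves $p_0$, the point $p_0$ is not special, so the blow-up of any other point $p_0'\in\P^2\setminus L$ yields the same bundle $\hat X$ up to square isomorphism, and all fibers over $\P^2\setminus L$ are alike. In particular, a line through $p_0$ must have preimage isomorphic to the preimage of a line not through $p_0$. Now compare the jumping data along $s_1$, which corresponds to a line missing $p_0$, with the fibers $\F_b$ through $p_0$: this gives $c-ab$-type equalities that kill every index $i<b$ in $I$ and force $c=2p^{s_b}$. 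Finally, when $H=\PGL_3$ the line $L$ is not preserved, which forces $b=p^s$ (giving $\L^s$) or the decomposable case, where $\hat X=\FF_1^{b,b}$ descends to $\PP_b$ and Lemma~\ref{lemm:surjectiveDecP2} gives $H=\Aut(\P^2)$. The three possibilities are distinct: in the first two $H$ contains $\Aut(\P^2,L)$ and so cannot fix $p_0$, and $\L^{s,b}$ is indecomposable, since Proposition~\ref{prop: LsbAut}\ref{AutLp2trivial} shows its vertical automorphism group is trivial, whereas decomposable bundles have nontrivial vertical automorphisms.
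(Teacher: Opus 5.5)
You have the right dichotomy (either $H$ fixes $p_0$, or the restrictions of $X$ to all lines other than $L$ agree), but there is a genuine gap at the decisive step, the one you flag yourself. That step is not carried out, and the comparison you propose would fail.

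\textbf{The wrong curve.} You compare the fibers $\F_b$ with the restriction over $s_1$. However, $\tau(s_1)=L$ is exactly the line preserved by $H$ in the case $H\supseteq\Aut(\P^2,L)$: any invariant line is invariant under the standard $\SL_2$, hence equals $\tau(s_1)$. The preimage of this line is allowed to differ. For $\L^{s,b}$ it is $\F_{2p^s-b}$ by Proposition~\ref{prop: LsbAut}, so demanding agreement there would exclude every $\L^{s,b}$ with $b<p^s$. Moreover, over $y_1=0$ only the top component survives, $P(y_0,0,z)=y_0^bP_b(z)z^{b+1}$, so this restriction cannot eliminate the other indices of $I$.

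\textbf{The computation that is needed.} It must be done over a line $L'\neq L$ missing $p_0$, e.g.\ $L'=\{y=z\}$, whose strict transform is $y_1=y_0z$. There every component contributes, and the transition matrix is $\begin{bsmallmatrix} z^b&0\\ Q(z)&z^c\end{bsmallmatrix}$ with $Q(z)=\sum_{i\in I}\lambda_i z^{b+p^{s_i}}$. Writing $i_+=\max I$, one gets $m=|b-c|$ if $c\le b+p^{s_{i_+}}$ and $m=b-c+2p^{s_{i_+}}$ otherwise. Then $m=b$, combined with $c=i+2p^{s_i}$, forces $I=\{0\}$, $c=2p^s$ and $b\le p^s$. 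The surviving index is $i=0$, not $i=b$. Your endpoint ``single term $i=b$, $c=2p^{s_b}$, $P=y_1^bz^{p^s}$'' is inconsistent, since $i=b$ would give $c=b+2p^{s_b}$ and the monomial $y_0^bz^{b+p^{s_b}}$.

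\textbf{Missing reduction to the pullback.} The invariant $b$ of the given $\hat X$ describes the restriction of $X$ to lines through $p_0$ only if $\hat X\simeq X\times_{\P^2}\F_1$. The same holds for using Lemma~\ref{Lem:LsbBlowUp} to identify $X$. In general $\hat X$ differs from this pullback by elementary links centered over $s_{-1}$, which change $(b,c)$. For instance, an Umemura bundle with $P_0=0$ is never a pullback, since its restriction to $s_{-1}$ is $\F_c$. You must first replace $\hat X$ by the pullback and check that the hypothesis on $\hat H$ is preserved, as the paper does.

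\textbf{Smaller points.}
\begin{itemize}
\item Routing through Lemma~\ref{SubgroupsPGL3} restricts you to $p\ge3$. The paper's contrapositive avoids this: if $m\neq b$, the lines with preimage $\F_b$ are the pencil through $p_0$ (possibly together with $L$), so $H$ fixes $p_0$. This is characteristic-free and needs no a priori knowledge of $H$.
\item $\hat H$ only provides a conjugate of $\SL_2$, not $\GL_2$. So you cannot exclude $H=\SL_2\ltimes\G_a^2$ at that stage; this is only excluded a posteriori by Proposition~\ref{prop: LsbAut}.
\item The conic case should be excluded differently: a $3$-dimensional $H$ containing this $\SL_2$ must equal it, and hence fixes $p_0$.
\end{itemize}
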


\begin{proof}
Let $\tau\colon\F_1\to\P^2$ be the contraction.
By the equivariance in Lemma~\ref{lem:descent-lemma},
$H$ contains the image of the given $\SL_2$-subgroup,
which fixes $p_0$.
The corresponding connected group acting on $X$
also acts on $X\times_{\P^2}\F_1$, with horizontal
image containing the same $\SL_2$-subgroup.
We may therefore replace $\hat X$ by this pullback:
the hypothesis is preserved, and its descent is still $X$.
	
The assumption that $\hat{H}$ contains a conjugate of $\SL_2$ implies that the $\P^1$-bundle $\hat{\pi}\colon \hat{X} \to \F_1$ is either decomposable or square-isomorphic to an Umemura $\P^1$-bundle (see Definition~\ref{def Umemura bundle}). In the decomposable case, it follows from the descent lemma (see Lemma~\ref{lem:descent-lemma}) that $X \to \P^2$ is a decomposable $\P^1$-bundle. In this situation, we have $H = \Aut(\P^2)$ by Lemma~\ref{lemm:surjectiveDecP2} (and directly for $\P^1\times\P^2$), which yields Case~\ref{item 0: contracting -1 curve yields a fixed point}.

We can therefore assume that $\hat{X} = \U_{1,\omega}^{b,c} \to \F_1$ for some $(b,c,\omega)$ as in Definition~\ref{def Umemura bundle}. 
We fix homogeneous coordinates $[y_0:y_1;z_0:z_1]$ on $\F_1$ and $[x:y:z]$ on $\P^2$.  
The blow-up of the point $p_0 = [0:0:1]$ is given by
\[
[y_0:y_1; z_0:z_1] \mapsto [y_0 z_0 : y_0 z_1 : y_1].
\]  
The standard $\SL_2$-subgroup acts on $\F_1$ with three orbits: 
the $(-1)$-section $s_{-1}=\{y_0=0\}$, 
the $1$-section $s_1=\{y_1=0\}$, and the open complement of these two disjoint sections.  
We denote by $L = \{z=0\}$ the image of $s_1$ in $\P^2$.

The restriction of $X \to \P^2$ to a line through $p_0$ is then isomorphic to the restriction of $\hat{X} \to \F_1$ over a fiber of $\F_1 \to \P^1$, i.e., to $\F_b$. On the other hand, if $L' \neq L$ is a line not passing through $p_0$, one can use the transition function of $\U_{1,\omega}^{b,c}$ to determine the restriction of $X \to \P^2$ to $L'$. Since $\SL_2$ acts transitively on lines distinct from $L$ and not passing through $p_0$, we may take $L'=\{y=z\}$ in $\P^2$,
whose strict transform is given on the first chart
by $y_1=y_0z$.
 Denote by $\F_m$ the Hirzebruch surface obtained by restricting the $\P^1$-bundle $X \to \P^2$ to $L'$. If $m\neq b$, the lines whose inverse images are
 isomorphic to $\F_b$ form the pencil through $p_0$,
 possibly together with the isolated line $L$.
 Thus $H$ preserves this pencil and fixes $p_0$,
 giving Case~\ref{item ii: contracting -1 curve yields a fixed point}. Therefore, it remains to consider the case where $m=b$. We now compute the integer $m$.

The transition function of $\U_{1,\omega}^{b,c}$ on the overlap of the two charts restricts as
\[
([x_0:x_1;1:z], z) \mapsto ([z^b x_0: z^c x_1 + Q(z)x_0; 1:1], \frac{1}{z}), \quad 
Q(z) = \sum_{i\in I} \lambda_i z^{b + p^{s_i}},
\]
where $I=\{i\in I_{1,b,c}\mid\lambda_i\neq0\}$ is the support of $\omega$.
Set $i_+=\max I$. Then $s_i>s_{i_+}$ for every $i\in I\setminus\{i_+\}$. 
This corresponds to the matrix
\[
M:=\begin{bmatrix} z^b & 0 \\ Q(z) & z^c \end{bmatrix} \in \PGL_2(\k[z,1/z]).
\]
It follows from the definition of Umemura $\P^1$-bundles that we always have $c \leq b+2p^{s_{i_+}}$.
We distinguish several cases:
\begin{itemize}
\item
If $c \le b + p^{s_{i_+}}$, then $z^{-c} Q(z) \in \k[z]$. Hence we may replace $M$ with
\[
M \cdot 
\begin{bsmallmatrix}
1 & 0 \\
- z^{-c} Q(z) & 1
\end{bsmallmatrix}
=
\begin{bsmallmatrix}
z^b & 0 \\
0 & z^c
\end{bsmallmatrix},
\]
which corresponds to the transition function of $\F_{\lvert b - c\rvert}$.

    \item If $b + p^{s_{i_+}} < c \le b + 2p^{s_{i_+}}$, then since
\[
2p^{s_{i_+}} \le p^{1+s_{i_+}} \le p^{s_i}
\quad \text{for all } i\in I\setminus\{i_+\},
\]
we have $b - c + p^{s_i} \ge 0$.
Thus $\lambda_i z^{b-c+p^{s_i}} \in \k[z]$, and therefore we may replace $M$ with 
\[
M \cdot 
\begin{bsmallmatrix}
1 & 0 \\
-\sum_{i\in I\setminus\{i_+\}} \lambda_i z^{b-c+p^{s_i}} & 1
\end{bsmallmatrix}
=
\begin{bsmallmatrix}
z^b & 0 \\
\lambda_{i_+} z^{b+p^{s_{i_+}}} & z^c
\end{bsmallmatrix} 
=
\begin{bsmallmatrix}
1 & 0 \\
\lambda_{i_+} z^{p^{s_{i_+}}} & z^{c-b}
\end{bsmallmatrix} \in \PGL_2(\k[z,1/z]).
\]
After constant changes of trivialization, the remaining
matrix is
\[
N=\begin{bmatrix}1&0\\ z^q&z^d\end{bmatrix},
\qquad q=p^{s_{i_+}},\quad d=c-b,
\]
where $q<d\leq2q$.
The identity
\[
\begin{bmatrix}1&-z^{-q}\\0&1\end{bmatrix}
N
\begin{bmatrix}z^{d-q}&1\\-1&0\end{bmatrix}
=
\begin{bmatrix}z^{d-q}&0\\0&z^q\end{bmatrix}
\]
uses changes of trivialization regular on the target
and source charts, respectively.
Hence $m=2q-d=b-c+2p^{s_{i_+}}$.
\end{itemize}  

\smallskip

Recall also that (see \eqref{eq: relation betweenc and lj}):
\[\forall i\in I,\ c = i + 2p^{s_i}.\]
\begin{itemize}
\item
If $c \le b$, then $m=b-c<b$, which is  impossible.
\item If $b < c \le b + p^{s_{i_+}}$, then $m = c - b$, hence $c = 2b$ and $b \le p^{s_{i_+}}$. The equality $i + 2p^{s_i} = c$ gives $i + 2p^{s_i} = 2b \le 2p^{s_{i_+}} \le 2p^{s_i}$. 
We find $i=0$ for every $i\in I$, so $I=\{0\}$.

\item If $b + p^{s_{i_+}} < c \le b + 2p^{s_{i_+}}$, then $m = b - c + 2p^{s_{i_+}}$, hence $c = 2p^{s_{i_+}}$ and $b < p^{s_{i_+}}$. The equality $i + 2p^{s_i} = c$ gives $i + 2p^{s_i} = 2p^{s_{i_+}}\le 2p^{s_i}$. Thus $i=0$ and $s_i=s_{i_+}$ for every $i\in I$,
so again $I=\{0\}$.
\end{itemize}

Therefore, under the assumption $m=b$, we obtain $I=\{0\}$, hence $c=2p^s$, with $s=s_0$. We moreover have 
\[s \geq 0, \ c=2p^s,\ \text{and}\ 1 \leq b \leq p^s.\] 
After normalizing the nonzero coefficient $\lambda_0$,
we obtain
\[\hat{X}=Z_{1}^{b,c,P}\to \F_1 \ \text{ with }\ P(y_0,y_1,z)= y_1^b z^{p^s}.\]  
Lemma~\ref{Lem:LsbBlowUp} then proves that $X\to \P^2$ is the $\P^1$-bundle
$\L^{s,b} \to \P^2$ of Definition~\ref{def: P1 bundles Lsb}.
Then, Proposition~\ref{prop: LsbAut} shows that $H$ contains $\Aut(\P^2,L)$.
\end{proof}

We can now state and prove the main result of Section~\ref{section: P1 bundles over P2}.

\begin{theorem}\label{th:P1bundlesP2}
Assume that $p \geq 3$.
Let $\pi\colon X\to\P^2$ be a $\P^1$-bundle and let $H:=\Im(\Autz(X)\to\Aut(\P^2))$. Then one of the following holds:
\begin{enumerate}
\item \label{item i: Prop:P1bundlesP2} $H=\Aut(\P^2)$, and $\pi$ is either a decomposable $\P^1$-bundle or square-isomorphic to the $\P^1$-bundle $\L^{s} \to \P^2$ $($Definition~$\ref{def: P1 bundles Lsb})$, for some $s \geq 0$;
\item \label{item ii: Prop:P1bundlesP2} $H=\Aut(\P^2,\Gamma)\simeq\PGL_2$, where $\Gamma\subset\P^2$ is a smooth conic, and $\pi$ is square-isomorphic to a Schwarzenberger $\P^1$-bundle $\SS^{b,c} \to \P^2$, for some $b \geq 1$ and $c \geq 3$ as in Definition~$\ref{def:Schwarz bc}$ with $c \neq 3b$;
\item \label{item iii: Prop:P1bundlesP2} $H = \Aut(\P^2,L)\simeq \GL_2\ltimes\G_a^2$, where $L\subset\P^2$ is a line, and $\pi$ is square-isomorphic to the $\P^1$-bundle $\L^{s,b}\to\P^2$ $($Definition~$\ref{def: P1 bundles Lsb})$, for some $s\ge1$ and $1\le b< p^s$; or
\item \label{item iv: Prop:P1bundlesP2} there exists an $H$-fixed point $p_0\in\P^2$, and the blow-up $\eta\colon \hat X\to X$
of the fiber $\pi^{-1}(p_0)$ yields a commutative diagram of $\Autz(X)$-equivariant maps
\[
\xymatrix@R=4mm@C=2cm{
\hat X \ar[r]^{\eta} \ar[d]_{\hat\pi} & X \ar[d]^\pi \\
\F_1 \ar[r]_{\tau} & \P^2
}
\]
where $\hat{X}\to \F_1$ is a $\P^1$-bundle and $\tau$ is the blow-up of $p_0$.
\end{enumerate}
\end{theorem}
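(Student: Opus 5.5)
The plan is a case split on the connected algebraic subgroup $H\subseteq\PGL_3$, which acts on $\P^2$ by Blanchard's lemma (Proposition~\ref{prop:blanchard-lemma}). Either $H$ fixes a point, or $H=\PGL_3$, or $H$ is one of the proper subgroups listed in Lemma~\ref{SubgroupsPGL3}: the stabilizer $\Aut(\P^2,\Gamma)\simeq\PGL_2$ of a smooth conic, or $\Aut(\P^2,L)$ or $\SL_2\ltimes\G_a^2$ for a line $L$.

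\emph{Fixed point.} If $H$ fixes $p_0$, then the fiber $\pi^{-1}(p_0)$ is $\Autz(X)$-invariant. The blow-up $\eta$ of this fiber is therefore $\Autz(X)$-equivariant. Since blowing up commutes with the flat base change $\F_1\to\P^2$, we get $\hat X=X\times_{\P^2}\F_1$, which is a $\P^1$-bundle over $\F_1$. This gives case~\ref{item iv: Prop:P1bundlesP2}.

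\emph{$H$ contains the stabilizer of a line.} This covers both $\Aut(\P^2,L)$ and $\SL_2\ltimes\G_a^2$. Choose $p_0\notin L$. Its stabilizer in $H$ contains a copy of $\SL_2$ (or $\GL_2$) fixing $p_0$ and acting linearly on $L$. Pull back $X$ along the blow-up $\tau\colon\F_1\to\P^2$ of $p_0$. This stabilizer lifts to $\hat X=X\times_{\P^2}\F_1$, so the image of $\Autz(\hat X)$ in $\Aut(\F_1)$ contains the standard $\SL_2$. The descent of $\hat X$ is $X$, so Lemma~\ref{lem: contracting -1 curve yields a fixed point} applies.
\begin{itemize}
\item Its alternative ``$H$ fixes $p_0$'' is impossible, because $H$ has no fixed point.
\item Hence $X$ is either decomposable, giving $H=\Aut(\P^2)$, or square-isomorphic to some $\L^{s,b}$.
\item In the second case, Proposition~\ref{prop: LsbAut} gives $H=\Aut(\P^2)$ when $b=p^s$, which is case~\ref{item i: Prop:P1bundlesP2} with $\L^s$. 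It gives $H=\Aut(\P^2,L)$ when $b<p^s$; this forces $s\ge1$ and is case~\ref{item iii: Prop:P1bundlesP2}.
\end{itemize}
In particular the subgroup $\SL_2\ltimes\G_a^2$ never occurs as the full image.

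\emph{$H=\PGL_3$.} Take any point $p_0$. Its stabilizer $P\subseteq\PGL_3$ contains $\GL_2$ fixing $p_0$, and the same pullback argument applies. Lemma~\ref{lem: contracting -1 curve yields a fixed point} then shows that $X$ is decomposable or some $\L^{s,b}$. Since $H=\PGL_3$, Proposition~\ref{prop: LsbAut} forces $b=p^s$. This gives case~\ref{item i: Prop:P1bundlesP2}.

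\emph{Conic case, $H=\Aut(\P^2,\Gamma)$.} Pull $X$ back along the double cover $\kappa\colon\F_0\to\P^2$ branched along $\Gamma$. The diagonal $\PGL_2$ then lifts to $X\times_{\P^2}\F_0$ after passing to a suitable cover. Lemma~\ref{lemma:horizontal_part_Tbc} shows that this pullback is either decomposable or some $\T^{b,c}$.
\begin{itemize}
\item Suppose the pullback is $\T^{b,c}$. Then $X$ is its quotient by the lift of $\tau$. By uniqueness of the involution in Proposition~\ref{prop:properties of the Tbc}, $X=\SS^{b,c}$. Lemma~\ref{TbcInvolutionExplicit} together with the corollary following Proposition~\ref{prop: def of the Sbc} forces $l$ even.
\item Since $H\neq\PGL_3$, Proposition~\ref{prop:aut-orbits-Sbc} forces $c\neq3b$. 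This is case~\ref{item ii: Prop:P1bundlesP2}.
\item Suppose the pullback is decomposable. Then the restrictions of $X$ to the tangent lines of $\Gamma$ and to the other lines should coincide. The plan is to derive a contradiction, or $H$ larger than $\Aut(\P^2,\Gamma)$, by producing a fixed point or showing $X$ is decomposable over $\P^2$.
\end{itemize}

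The main obstacle is the conic case. Two steps need care there. First, the lift of the diagonal action to the pullback must be made rigorous: $\kappa$ is not flat-étale, and $\Autz(X)\to H$ may be non-smooth. Second, the decomposable-pullback subcase must be excluded. I would first try to argue via the two invariant sections, or the absence of them, exchanged by the deck involution.
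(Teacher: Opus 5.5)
You have the same case split as the paper. Your treatment of the fixed-point case, of the cases $H\supseteq\SL_2\ltimes\G_a^2$ (via Lemma~\ref{lem: contracting -1 curve yields a fixed point} and Proposition~\ref{prop: LsbAut}), and of the $\T^{b,c}$ subcase of the conic case matches the paper's argument. Your worry about lifting the action is unfounded. $\Autz(X)$ acts on $X\times_{\P^2}\F_0$ directly: through its own action on $X$, and, via $\Autz(X)\to H\simeq\PGL_2$, through the diagonal action on $\F_0$, for which $\kappa$ is equivariant. No cover is needed.

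The genuine gap is the subcase where $\hat X=X\times_{\P^2}\F_0$ is decomposable, which you leave open. Without it you cannot conclude that $H=\Aut(\P^2,\Gamma)$ forces $X\simeq\SS^{b,c}$. The paper closes it by the argument of \cite[Lemma~4.3.1]{BFT}, valid for $p\neq2$.

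Your suggested route through restrictions to lines cannot work. Write $\hat X\simeq\P(\O\oplus M)$; compatibility with $\tau$ forces $M\simeq\O(\alpha,\alpha)$ or $M\simeq\O(\alpha,-\alpha)$. In the second case, $X$ would restrict to $\F_{|\alpha|}$ over the tangent lines of $\Gamma$ and to $\F_0$ over the other lines. The restrictions do not coincide, and this is exactly the pattern of a conic-preserving action, so no contradiction arises there.

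The missing idea is that the deck involution $\sigma=\Id_X\times\tau$ of $\hat X\to X$ fixes $\hat\pi^{-1}(\Delta)$ pointwise.
\begin{itemize}
\item If $M\simeq\O(\alpha,-\alpha)$ with $\alpha\neq0$, the two disjoint sections are the unique negative sections along the two rulings. Hence $\sigma$ exchanges them, and over a point of $\Delta$ it would swap two distinct points of a fiber it fixes pointwise. This is impossible.
\item If $M\simeq\O(\alpha,\alpha)$ with $\alpha>0$, the negative section is $\sigma$-invariant. $\sigma$ then acts by an affine involution on the affine space of sections disjoint from it. This involution has a fixed point because $p\neq2$. The two resulting disjoint $\sigma$-invariant sections descend to $X$.
\item If $\alpha=0$, then $\sigma=\tau\times\Id$, since morphisms $\F_0\to\PGL_2$ are constant, so $X\simeq\P^2\times\P^1$.
\end{itemize}
In every surviving case $X$ is decomposable, so $H=\PGL_3$ by Lemma~\ref{lemm:surjectiveDecP2}. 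This contradicts $H\simeq\PGL_2$.
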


\begin{proof}
Let $H\subseteq \PGL_3$ be the image of the natural homomorphism $\Autz(X)\to \Aut(\P^2)$.
If $H$ fixes a point $p_0\in \P^2$, we blow up simultaneously $p_0$ in $\P^2$ and $f=\pi^{-1}(p_0)$ in $X$. Computing explicitly in a trivializing local chart containing $p_0$, we see that $\hat{X}\to\F_1$ is again a $\P^1$-bundle. Moreover, since $p_0$ and $f$ are $\Autz(X)$-stable in $\P^2$ and $X$, respectively, the group $\Autz(X)$ acts on $\hat{X}$. This yields Case~\ref{item iv: Prop:P1bundlesP2}.

Suppose now that $H$ fixes no point.
If $H=\PGL_3$, it contains
$\SL_2\ltimes\G_a^2\subseteq\Aut(\P^2,L)$.
If $H\neq\PGL_3$, Lemma~\ref{SubgroupsPGL3} applies.
Thus there are two possibilities:
\begin{itemize}
\item
$H$ contains $\SL_2\ltimes \G_a^2\subseteq  \Aut(\P^2,L)=\GL_2\ltimes \G_a^2$.  In this case, let $p_0\in \P^2\setminus L$, and
set
\[
G_0:=\bigl((G_{p_0})_{\mathrm{red}}\bigr)^0,
\qquad
H_0:=\bigl((H_{p_0})_{\mathrm{red}}\bigr)^0,
\qquad G:=\Autz(X).
\]
Then $G_0$ and $H_0$ are connected algebraic groups,
the induced homomorphism $G_0\to H_0$ is surjective,
and $H_0$ contains $\SL_2$.
Blowing up $p_0$ and its fiber yields a $\P^1$-bundle $\hat{X}\to\F_1$ with a $G_0$-action.  
Lemma~\ref{lem: contracting -1 curve yields a fixed point} implies that either $\pi\colon X \to \P^2$ is decomposable, in which case $H = \Aut(\P^2)$ and we obtain Case~\ref{item i: Prop:P1bundlesP2}, or $\pi$ is square-isomorphic to the $\P^1$-bundle $\L^{s,b} \to \P^2$ for some $s \ge 0$ and $1 \le b \le p^s$. If $b<p^s$, then $s\ge 1$ and Proposition~\ref{prop: LsbAut} yields $H=\Aut(\P^2,L)$, so we get Case~\ref{item iii: Prop:P1bundlesP2}.
If $b=p^s$, $\L^{s,b}=\L^{s}$ and Proposition~\ref{prop: LsbAut} yields $H=\Aut(\P^2)$, so we get Case~\ref{item i: Prop:P1bundlesP2}.

\item Otherwise, $H$ is conjugate to $\Aut(\P^2,\Gamma)\simeq\PGL_2$, where $\Gamma\subset\P^2$ is a smooth conic. 
As $p \geq 3$, we change coordinates and assume that $\Gamma$ is given by
\[
\{[x:y:z]\in \P^2 \mid 4xz = y^2\}.
\]
We consider the double covering
\[
\begin{array}{rccc}
\kappa \colon & \F_0 & \to & \P^2 \\
& ([y_0:y_1],[z_0:z_1]) & \mapsto & [y_0 z_0 : y_0 z_1 + y_1 z_0 : y_1 z_1],
\end{array}
\]
branched along $\Gamma$. We consider the pullback $\P^1$-bundle
\[
\hat{\pi}\colon \hat{X} := X \times_{\P^2} \F_0 \to \F_0.
\]
The group $\Autz(X)$ acts biregularly on $\hat{X}$, and its induced action on $\F_0$ is the diagonal action of $\PGL_2$.
By Lemma~\ref{lemma:horizontal_part_Tbc}, either $\hat{X} \to \F_0$ is decomposable, or $\hat{X} \simeq \T^{b,c} \to \F_0$ for some $b,c \in \N$ satisfying Definition~\ref{def hat Schwrzenberger}.

We first assume that $\hat{X} \to \F_0$ is decomposable. Arguing as in the proof of \cite[Lemma~4.3.1]{BFT}, which only requires $p \neq 2$, we deduce that $X \to \P^2$ is also decomposable. Hence $H=\Aut(\P^2)\simeq \PGL_3$, contradicting our assumption that $H\simeq \PGL_2$.

It remains to consider the case where $\hat{X}\to \F_0$ is isomorphic over $\F_0$ to $\T^{b,c}\to\F_0$ for some $b,c\in \N$ satisfying Definition~\ref{def hat Schwrzenberger}. Let $\sigma\in\Aut(\hat{X})$ be the biregular involution of $\hat{X}=X\times_{\P^2}\F_0$ acting trivially on $X$ and exchanging the two factors of $\F_0$. Then $X=\hat{X}/\left\langle \sigma \right \rangle$.
Since $\hat{X}\simeq \T^{b,c}$, the involution $\sigma$ coincides with that of Proposition~\ref{prop:properties of the Tbc}\ref{item unique involution Tbc}. By Lemma~\ref{TbcInvolutionExplicit}, the smoothness of $X=\T^{b,c}/\left \langle \sigma \right \rangle$ implies that $l$ is even. Hence $X\to\P^2$ is isomorphic to the Schwarzenberger $\P^1$-bundle $\SS^{b,c}\to\P^2$ of Definition~\ref{def:Schwarz bc}. Since $H\simeq\PGL_2$,
Proposition~\ref{prop:aut-orbits-Sbc}\ref{item:aut-Sbc}
implies $c\neq3b$.
This gives Case~\ref{item ii: Prop:P1bundlesP2}.
\qedhere 
\end{itemize}
\end{proof}

\section{Maximality and non-maximality results}\label{sec:max and non-max results} 
The results of Sections~\ref{sec: Lsb non max}, \ref{sec: maximality in dec case}, and~\ref{sec: max for Ume bundles}
hold in arbitrary positive characteristic, while those of Sections~\ref{sec: relative max of Sbc}, \ref{proof: classification rel max}  and
\ref{proof: equivariant links} require $p\geq3$.

We study the $\P^1$-bundles introduced in the previous sections from the point of view of relative maximality, stiffness, and superstiffness, as defined in the introduction. More precisely, for each family, we determine whether its connected automorphism group is relatively maximal and whether the corresponding $\P^1$-bundle is stiff or superstiff. When relative maximality or stiffness fails, we exhibit explicit equivariant birational maps witnessing this failure. 
We treat successively the different families appearing in our classification.

\subsection{The case of \texorpdfstring{$\L^{s,b} \to \P^2$}{Lsb}}\label{sec: Lsb non max}
We study the relative maximality of the automorphism groups of the
$\P^1$-bundles $\L^{s,b}\to\P^2$. The behavior depends on whether
$b=p^s$ or $b<p^s$: in the first case $\L^{s,b}=\L^s$ is a flag
variety and is superstiff, whereas in the second case its automorphism
group is not relatively maximal.

\begin{lemma}
Let $s\geq 0$, and let $\L^s\to\P^2$ be the $\P^1$-bundle of
Definition~$\ref{def: P1 bundles Lsb}$. Then $\Autz(\L^s)$ is relatively maximal and
$\L^s$ is superstiff.
\end{lemma}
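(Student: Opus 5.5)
The plan is to show directly that every $\Autz(\L^s)$-equivariant square birational map $\varphi\colon(\L^s,\pi)\dashrightarrow(X',\pi')$ is a square isomorphism. Superstiffness then follows, and relative maximality comes for free: $\varphi\Autz(\L^s)\varphi^{-1}=\Autz(X')$ because conjugation by an isomorphism preserves neutral automorphism groups. Let $G=\Autz(\L^s)\simeq\PGL_3$ (Proposition~\ref{prop: LsbAut}). The key input is that $G$ acts transitively on $\L^s$ (Lemma~\ref{Lem:LsPGL3}) and on the base $\P^2$.

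First I treat the base map $\eta\colon\P^2\dashrightarrow S'$ under $\varphi$. By Blanchard's lemma $G$ acts on $S'$, and $\eta$ is $G$-equivariant. The indeterminacy locus of $\eta$ is a proper closed $G$-invariant subset of $\P^2$, hence empty, since $\PGL_3$ is transitive on $\P^2$. So $\eta$ is a birational morphism. Its exceptional locus is also $G$-invariant, hence empty. A birational morphism between smooth projective surfaces that contracts no curve is an isomorphism, so $\eta$ is an isomorphism. Composing with $\eta^{-1}$, I may assume $S'=\P^2$ and that $\varphi$ is a birational map of $\P^1$-bundles over $\P^2$.

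Next I argue in the same way upstairs. The image of $G$ in $\Autz(X')$ acts on $X'$ compatibly with $\pi'$. The indeterminacy locus of $\varphi$ is $G$-invariant and of codimension $\geq2$, so it is empty by transitivity of $G$ on $\L^s$. Hence $\varphi$ is a morphism. Similarly, its exceptional locus, which is a $G$-invariant divisor or empty, must be empty. A birational morphism over $\P^2$ with no exceptional divisor, between smooth $\P^1$-bundles, is an isomorphism: it is quasi-finite and birational onto a normal target, so Zariski's main theorem applies. Alternatively, one can note that its restriction to each fiber is an isomorphism of $\P^1$'s.

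The only point that needs care is the exceptional-locus step. For this I will use that the exceptional set of a birational morphism of normal varieties is closed and invariant under the group acting compatibly on both sides. The compatibility holds because $\varphi G\varphi^{-1}\subseteq\Autz(X')$ and $\varphi$ is a morphism. The remaining steps are the routine ones described above.
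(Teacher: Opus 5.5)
Your proof is correct and is essentially the paper's argument: transitivity of $\Autz(\L^s)\simeq\PGL_3$ on $\L^s$ forces the invariant indeterminacy locus of $\varphi$ to be empty and its invariant isomorphism locus to be all of $\L^s$, so every equivariant square birational map is an isomorphism, which gives both superstiffness and relative maximality. The one addition is that you first show the base map $\eta$ is an isomorphism using transitivity on $\P^2$; the paper leaves this implicit, and the step is correct and harmless.
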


\begin{proof}
	By Lemma~\ref{Lem:LsPGL3}, $\Autz(\L^s)$ acts
	transitively on $\L^s$.
	Any equivariant birational map from $\L^s$ to another
	$\P^1$-bundle is therefore everywhere defined, since
	its indeterminacy locus is invariant.
	It is a birational morphism, and its isomorphism locus
	is nonempty and invariant, hence equals $\L^s$.
	Thus every such map is an isomorphism, proving both assertions.
\end{proof}

\begin{lemma}\label{lem:Lsb-non-maximal}
Let $s\geq 1$ and $1\leq b<p^s$. 
Let $\L^{s,b} \to \P^2$ be the $\P^1$-bundle of
Definition~$\ref{def: P1 bundles Lsb}$.
Then there exists a birational map over $\P^2$
$\varphi\colon \L^{s,b}\dashrightarrow \P^1\times\P^2$
such that
\[
\varphi\Autz(\L^{s,b})\varphi^{-1}
\subsetneq
\Aut(\P^1\times\P^2).
\]
In particular, $\Autz(\L^{s,b})$ is not relatively maximal.
\end{lemma}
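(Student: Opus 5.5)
The idea is to use the explicit description of the lifted action in Proposition~\ref{prop: LsbAut}\ref{item: iso in LsbAut}. Over the chart $U_2=\{y_2\neq0\}$ this action is given on $\P^1\times U_2$ by
\[
\left(\begin{bmatrix}x_0\\x_1\end{bmatrix},y\right)\longmapsto
\left(F^s(A)\begin{bmatrix}x_0\\x_1\end{bmatrix},\,g\cdot y\right),
\qquad
A=\begin{bmatrix}\alpha&\beta\\ \gamma&\delta\end{bmatrix},
\]
where $g\in\Aut(\P^2,L)$ is written with last row $(0,0,1)$ and linear part $A\in\GL_2$. Here $F^s(A)$ denotes the entrywise $p^s$-th power. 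The crucial observation is that the fiber matrix does not depend on the point $y$. The formula is therefore the restriction to $\P^1\times U_2$ of a regular action on the trivial bundle $\P^1\times\P^2$.

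\textbf{Step 1.} Define $\varphi\colon\L^{s,b}\dashrightarrow\P^1\times\P^2$ as the birational map over $\P^2$ that is the identity in the trivialization $\pi^{-1}(U_2)=\P^1\times U_2$. This map is square over $\Id_{\P^2}$.

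\textbf{Step 2.} Check that the assignment
\[
g\longmapsto\bigl([F^s(A)],\,g\bigr)\in\PGL_2\times\PGL_3
\]
is a well-defined homomorphism of algebraic groups $\Aut(\P^2,L)\to\Aut(\P^1\times\P^2)$.

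\begin{itemize}
\item It is well defined because, once $g$ is normalized so that its last row is $(0,0,1)$, the linear part $A$ is uniquely determined by $g$.
\item It is a homomorphism because composing two such elements multiplies their linear parts, and $F^s$ is multiplicative.
\end{itemize}

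By construction, $\varphi$ intertwines the action of $\Autz(\L^{s,b})\simeq\Aut(\P^2,L)$ with this action on the dense open subset $\P^1\times U_2$. Hence
\[
\varphi\Autz(\L^{s,b})\varphi^{-1}=\bigl\{([F^s(A)],g)\bigr\}\subseteq\PGL_2\times\PGL_3=\Autz(\P^1\times\P^2).
\]
This uses Proposition~\ref{prop: LsbAut}\ref{AutLp2trivial}, that is, the triviality of the kernel, to identify $\Autz(\L^{s,b})$ with its image $\Aut(\P^2,L)$ in $\Aut(\P^2)$.

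\textbf{Step 3.} Strictness follows from a dimension count: the image has dimension $\dim\Aut(\P^2,L)=6$, while $\dim\Aut(\P^1\times\P^2)=11$. Alternatively, the image does not surject onto $\PGL_3$ in the second factor.

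Since $\varphi$ is an $\Autz(\L^{s,b})$-equivariant square birational map whose conjugate group is a proper subgroup of $\Autz(\P^1\times\P^2)$, relative maximality fails by definition.

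The only delicate point is Step~2, namely confirming that the chart formula really comes from a single global action. This is immediate once one notes that the fiber matrix $F^s(A)$ is constant on $U_2$ and depends only on $g$. Unlike the action on $\L^{s,b}$ itself, no transition functions need to be checked on the product side.
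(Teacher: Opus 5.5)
Your proposal is correct and follows the paper's proof. You trivialize over $U_2$, observe from Proposition~\ref{prop: LsbAut}\ref{item: iso in LsbAut} that the fiber matrix $F^s(A)$ is independent of the base point so the action extends to $\P^1\times\P^2$, and conclude strictness. The paper gets strictness from your alternative argument (the image on the base preserves $L$), but your dimension count $6<11$ works equally well.
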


\begin{proof}
	The trivialization
	$\pi^{-1}(U_2)\simeq\P^1\times U_2$
	defines a birational map
	$\varphi\colon\L^{s,b}\dashrightarrow\P^1\times\P^2$
	over $\P^2$.
	By Proposition~\ref{prop: LsbAut}\ref{item: iso in LsbAut},
	the action in this trivialization is constant on the
	$\P^1$-factor and extends to $\P^1\times\P^2$.
	Thus $\varphi$ conjugates $\Autz(\L^{s,b})$ into
	$\Aut(\P^1\times\P^2)$.
	The inclusion is strict because its image on the base
	preserves $L$, whereas $\Aut(\P^1\times\P^2)$ induces
	the full group $\PGL_3$.
\end{proof}

\subsection{The case of decomposable \texorpdfstring{$\P^1$}{P1}-bundles over \texorpdfstring{$\P^2$}{P2} and \texorpdfstring{$\F_a$}{Fa}} \label{sec: maximality in dec case}
We recall the results concerning the relative maximality and
stiffness of decomposable $\P^1$-bundles over $\P^2$ and $\F_a$.
The first lemma describes the possible equivariant elementary links
between $\P^1$-bundles, and the subsequent results give the complete
picture in the decomposable case.

\begin{lemma}[{\cite[Lemma~5.3.1]{BFT}}]\label{Lemm:DecElLinks}
Let $G$ be a connected algebraic group acting on a $\P^1$-bundle
$\pi\colon X\to S$, and let $H\subset\Autz(S)$ be the image of $G$. 
Assume either that no curve of $S$ is invariant under $H$,
	or that $(H,S)$ is one of the following pairs:
	\[
	(K,\F_a),\quad a\geq1,\qquad
	(\Autz(\P^1\times\P^1,\Delta),\P^1\times\P^1),
	\qquad \text{or} \qquad
	(\Aut(\P^2,\Gamma),\P^2),
	\]
where $K \subset \Aut(\F_a)$ acts on $\F_a$ with two orbits, $\Delta\subset\P^1\times\P^1$ is the diagonal, and $\Gamma\subseteq\P^2$ is a smooth conic.

Let $\pi'\colon X'\to S$ be a $\P^1$-bundle and
$\varphi\colon X\dasharrow X'$ a $G$-equivariant birational map of
$\P^1$-bundles which is not an
isomorphism. Then there exist $\P^1$-bundles
$\pi_i\colon X_i\to S$, $i=0,\ldots,n$, with $\pi_0=\pi$ and
$\pi_n=\pi'$, such that
\[
\varphi=\varphi_n\circ\cdots\circ\varphi_1,
\]
where each $\varphi_i\colon X_{i-1}\dasharrow X_i$ is the blow-up of an
irreducible $G$-invariant curve $\ell_i\subseteq X_{i-1}$, followed by the contraction
of the strict transform of
$\pi_{i-1}^{-1}(\pi_{i-1}(\ell_i))$, and
\[
\pi_{i-1}|_{\ell_i}\colon
\ell_i\xrightarrow{\sim}\pi_{i-1}(\ell_i)
\]
identifies $\ell_i$ with either $s_{-a}\subset\F_a$ for some $a\geq1$,
$\Delta\subset\P^1\times\P^1$, or $\Gamma\subset\P^2$.

Moreover, if $n$ is minimal, the sequence
$\varphi_1,\ldots,\varphi_n$ is unique up to isomorphisms of
$\P^1$-bundles.
\end{lemma}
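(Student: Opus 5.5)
The plan is to factor $\varphi$ into elementary links by a Sarkisov-type argument relative to $S$, exploiting that every indeterminacy locus that appears is $G$-invariant. Since $\varphi$ is a birational map of $\P^1$-bundles over the same base, it restricts to an isomorphism over a dense open subset of $S$. Its indeterminacy locus is a $G$-invariant closed subset of $X$ of codimension at least $2$, and its image in $S$ is $H$-invariant.

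The first step is to show that this image is a union of curves rather than points. If it contained an isolated point $q$, the fiber $\pi^{-1}(q)$ would be $G$-invariant, so $H$ would fix $q$. Under the hypotheses this is impossible. If no curve of $S$ is $H$-invariant, then no point is fixed either: through a fixed point one finds an invariant curve, for instance a fiber of a ruling or a line through it. In the listed pairs the closed orbits are exactly $s_{-a}$, $\Delta$, and $\Gamma$, which contain no fixed points.

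So assume $\varphi$ is not an isomorphism. In the first case the argument yields a contradiction: since no curve is invariant, $\varphi$ is an isomorphism over a big open subset, and the descent lemma's uniqueness (Lemma~\ref{lem:descent-lemma}, or its variant \cite[Lemma~2.3.1]{BFT}) extends it. In the second case the only invariant curve $D\subset S$ is $s_{-a}$, $\Delta$, or $\Gamma$, and $\varphi$ fails to be an isomorphism only over $D$.

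The second step produces the links. I would work over the generic point of $D$, that is, over the DVR $\O_{S,D}$. There $\varphi$ is a birational map between two $\P^1$-bundles over a DVR, which is a composition of elementary transformations centered at closed points of the special fiber. The number of such transformations is the valuation-theoretic distance between the two lattices, namely the distance in the Bruhat--Tits tree of $\PGL_2$. Take the first center; its closure is a curve $\ell_1\subset\pi^{-1}(D)$ which is a section over $D$. It is $G$-invariant because the minimal path in the tree is unique and $G$ preserves both endpoints, as $\varphi$ is $G$-equivariant. Blow up $\ell_1$ and contract the strict transform of $\pi^{-1}(D)$. Because $\pi|_{\ell_1}$ is an isomorphism onto the smooth curve $D$, the result $X_1\to S$ is again a $\P^1$-bundle, via the local computation $t\mapsto t/f$ of Proposition~\ref{prop:three case P1 bundles over F0}. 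It carries a $G$-action, and $\varphi\circ\varphi_1^{-1}$ has smaller distance. Induction on this distance gives the factorization, minimality of $n$ corresponds to a geodesic, and uniqueness of geodesics in a tree gives the uniqueness statement.

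The main obstacle is checking that the closure of each center is a genuine section over all of $D$, and that the resulting link is a $\P^1$-bundle over all of $S$ rather than only generically. I would use that $G$ acts transitively on $D$ (since $H$ does) and preserves the closure of the center. The closure therefore meets each fiber over $D$ in a finite invariant set, which by transitivity and irreducibility makes it a section. The remaining checks are local, and the isomorphism locus extends again by \cite[Lemma~2.3.1]{BFT}.
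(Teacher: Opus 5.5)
Your proof is correct in its main line, though two of your supporting justifications are wrong (both are dispensable; see the second paragraph). The paper itself does not prove this lemma: it quotes it from \cite[Lemma~5.3.1]{BFT}, which is written in characteristic zero. Your argument is an independent proof, and nothing in it depends on the characteristic.

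The core works as follows. The set $\Omega\subseteq S$ over which $\varphi$ is not a local isomorphism is closed and $H$-invariant, so its one-dimensional part is a union of $H$-invariant curves. Its finite part is absorbed by the extension across finitely many fibres \cite[Lemma~2.3.1]{BFT}. Over the unique invariant curve $D$, the Bruhat--Tits tree of $\PGL_2$ over the discrete valuation ring $\O_{S,D}$ turns existence into following the geodesic, and minimality into uniqueness of geodesics. For this, note that each link of any factorization is one edge, and that a $\P^1$-bundle over $S$ is recovered from its vertex and its identification with $X$ over $S\setminus D$, by the same extension lemma.

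One point needs a sentence. An element $g\in G(\k)$ moves points of $D$, so it acts on lattices only semilinearly, through $g^*$ on $\O_{S,D}$; this still gives an isometry of the tree. Alternatively, use the Cartan decomposition: in suitable trivializations over $\O_{S,D}$, $\varphi$ reads $[x_0:x_1]\mapsto[x_0:f^nx_1]$, with $f$ a local equation of $D$. So its indeterminacy locus over the generic point of $D$ is exactly the first centre. Then $\ell_1$ is the unique component of the $G$-invariant set $\operatorname{Ind}(\varphi)$ dominating $D$, hence $G$-invariant since $G$ is connected.

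Here are the two wrong justifications.

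First, the claim ``if no curve of $S$ is $H$-invariant, then no point is fixed'' fails on $\P^2$. The stabilizer $\Aut(\P^2,q)$ of a point has orbits $\{q\}$ and $\P^2\setminus\{q\}$: it has no invariant curve but fixes $q$, since lines through $q$ are permuted, not fixed. The same happens for $\Autz(\V_{\omega}^{b,2r})$; see the proof of Proposition~\ref{prop:relative-max-Ume-F1}. Simply delete this step. Your treatment of the first case already shows that $\Omega$ is finite and concludes by extension, so fixed points play no role.

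Second, in your last paragraph, transitivity on $D$ plus irreducibility does not make an invariant curve in $\pi^{-1}(D)$ a section in characteristic $p$. Take the second projection $\P^1\times\P^1\to\P^1$ with $g\cdot(x,y)=(g\cdot x,F(g)\cdot y)$, where $F$ is the Frobenius. The curve $\{(x,F(x))\}$ is irreducible and invariant, and the action on the base is transitive, yet the curve has degree $p$ over the base. Frobenius-twisted base actions of this kind genuinely occur in this setting (Lemma~\ref{lem: counter example for p=2}).

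The obstacle disappears once you use what the tree provides. A vertex adjacent to $[\Lambda]$ corresponds to a line in $\Lambda/\mathfrak{m}\Lambda\simeq\k(D)^2$. That is a $\k(D)$-rational point of the special fibre, i.e.\ a rational section of $\pi^{-1}(D)\to D$. Its closure is a section because $D\simeq\P^1$ is a smooth projective curve. With these two replacements, your proof is complete.
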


\begin{lemma}[{\cite[Lemma~5.7.1]{BFT}}]\label{Lem:DecP2NoCurve}
Let $b\geq 0$, and let $\PP_b\to \P^2$ be a decomposable $\P^1$-bundle.
\begin{enumerate}
\item
$\PP_b$ does not contain any $\Autz(\PP_b)$-invariant curve.
\item \label{AutPPbSuperstiff}
 $\Autz(\PP_b)$ is relatively maximal and $\PP_b$ is superstiff.
\end{enumerate}
\end{lemma}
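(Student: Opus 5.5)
The proof splits into two parts. Part (1) is a statement about invariant curves. Part (2) then follows from (1) together with Lemma~\ref{Lemm:DecElLinks}, once we know that no equivariant square birational map can change the base.

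For (1), set $G=\Autz(\PP_b)$. By Lemma~\ref{lemm:surjectiveDecP2}, $G$ surjects onto $\PGL_3$; for $b=0$ the bundle is $\P^1\times\P^2$ and the statement is immediate. Since $\PGL_3$ fixes no curve and no point of $\P^2$, a $G$-invariant irreducible curve $C\subset\PP_b$ would map onto an invariant closed subset of $\P^2$, which must be all of $\P^2$. This is impossible because $C$ is a curve. So no invariant curve exists.

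For (2), let $\varphi\colon\PP_b\dasharrow X'$ be a $G$-equivariant square birational map lying above $\eta\colon\P^2\dasharrow S'$. Since $\eta$ is $\PGL_3$-equivariant onto its image action on $S'$, and $\PGL_3$ acts on $\P^2$ with no invariant curves or points, $\eta$ can have no base points and can contract no curves. Hence $\eta$ is an isomorphism, and composing with it we may assume $\varphi$ is a birational map of $\P^1$-bundles over $\P^2$. The image $H=\PGL_3$ leaves no curve of $\P^2$ invariant, so Lemma~\ref{Lemm:DecElLinks} applies. If $\varphi$ were not an isomorphism, its first link would blow up a $G$-invariant curve $\ell_1\subset\PP_b$, contradicting (1). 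So $\varphi$ is a square isomorphism, which is superstiffness, and relative maximality follows at once.

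The main obstacle is the justification that $\eta$ is an isomorphism. To handle it, I would resolve $\eta$ equivariantly by invariant blow-ups, and observe that the exceptional loci in both directions must be $\PGL_3$-invariant finite sets or curves, which do not exist on $\P^2$.
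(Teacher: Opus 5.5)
Your proof is correct and follows the same route as the argument behind the cited \cite[Lemma~5.7.1]{BFT}. Since $\Autz(\PP_b)$ surjects onto $\PGL_3$, which acts transitively on $\P^2$, there is no invariant curve in $\PP_b$, and the base map $\eta$ has no base points and contracts no curves, so it is an isomorphism (the paper argues this way in the proof of Proposition~\ref{prop: relative max of Sbc}); Lemma~\ref{Lemm:DecElLinks} together with part~(1) then forces every equivariant square birational map to be a square isomorphism.
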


\begin{proposition}[{\cite[Lemmas~5.2.2,~5.4.1 and~5.4.2, Corollary~5.4.3]{BFT}}]\label{prop:MaxDecBundlesFa}
\ \\ Let $a,b\geq 0$ and $c\in\Z$, and assume that $c\leq 0$ if $b=0$.
Then the following hold.
\begin{enumerate}
\item
If $a=0$, then $\Autz(\FF_a^{b,c})$ is relatively maximal and $\FF_a^{b,c}$ is superstiff.

\item
If $a\geq 2$ and $b=c=0$, then $\Autz(\FF_a^{b,c})$ is relatively maximal and $\FF_a^{b,c}=\FF_a^{0,0}\simeq\F_a\times\P^1$ is superstiff.
\item\label{FFabcOnlyMax}
Assume that $a\geq2$, $b\geq0$, and
$-a<c<ab$.
Then $\Autz(\FF_a^{b,c})$ is relatively maximal, but $\FF_a^{b,c}$ is not stiff.
More precisely, let $r\geq0$ be the smallest integer such that
$c-ra\leq0$. Then $r\leq b$, and there is an infinite sequence
\[
\FF_a^{b-r,c-ra}
\dasharrow\cdots\dasharrow
\FF_a^{b,c}
\dasharrow
\FF_a^{b+1,c+a}
\dasharrow\cdots\dasharrow
\FF_a^{b+n,c+an}
\dasharrow\cdots .
\]
Each elementary link 
is given by 
\[
\begin{array}{rccc}
\varphi_s\colon&
\FF_a^{b+s,c+as}&\dasharrow&
\FF_a^{b+s+1,c+a(s+1)}
\\
&
[x_0:x_1;y_0:y_1;z_0:z_1]
&\mapsto&
[x_0:x_1y_0;y_0:y_1;z_0:z_1]
\end{array}\quad \text{for $s \geq -r$.}
\]
It is obtained by blowing up the curve
$l_{00}=\{x_0=y_0=0\}$
and contracting the strict transform of
$\pi^{-1}(s_{-a})$ onto the curve
$l_{10}=\{x_1=y_0=0\}$.
Moreover,
\[
\varphi_s
\Autz(\FF_a^{b+s,c+as})
\varphi_s^{-1}
=
\Autz(\FF_a^{b+s+1,c+a(s+1)})
\]
for every $s\geq-r$. Up to isomorphisms of $\P^1$-bundles,
compositions of these links and their inverses give all
$\Autz(\FF_a^{b,c})$-equivariant square birational maps from
$\FF_a^{b,c}$ to another $\P^1$-bundle.

\item
In all remaining cases, $\Autz(\FF_a^{b,c})$ is not relatively maximal.
More precisely:
\begin{enumerate}
\item
If $a=1$, the birational morphism
$\eta\colon\F_1\to\P^2$ contracting $s_{-1}$ induces the square
birational map
\[
\begin{array}{rccc}
\varphi\colon&
\FF_1^{b,c}&\dasharrow&\PP_{b-c}\\
&
[x_0:x_1;y_0:y_1;z_0:z_1]
&\mapsto&
[x_0y_0^c:x_1;y_0z_0:y_1:y_0z_1],
\end{array}
\]
and
\[
\varphi\Autz(\FF_1^{b,c})\varphi^{-1}
\subsetneq
\Autz(\PP_{b-c}).
\]

\item
If $a\geq2$, $b \geq 0$, and $c\leq-a$, the elementary link $\varphi_0$ defined above satisfies
\[
\varphi_0\Autz(\FF_a^{b,c})\varphi_{0}^{-1}
\subsetneq
\Autz(\FF_a^{b+1,c+a}).
\]

\item
If $a\geq2$, $b>0$, and $c\geq ab$, there is a sequence of inverse
elementary links
\[
\FF_a^{b,c}
\dasharrow
\FF_a^{b-1,c-a}
\dasharrow\cdots\dasharrow
\FF_a^{1,c-(b-1)a}
\dasharrow
\FF_a^{0,c-ba}.
\]
For $i=1,\ldots,b$, the map
\[
\psi_i\colon
\FF_a^{b-i+1,c-(i-1)a}
\dasharrow
\FF_a^{b-i,c-ia}
\]
is the inverse of the elementary link
\[
\begin{array}{rccc}
&
\FF_a^{b-i,c-ia}& \dashrightarrow &\FF_a^{b-i+1,c-(i-1)a}\\
&
[x_0:x_1;y_0:y_1;z_0:z_1]
&\mapsto&
[x_0:x_1y_0;y_0:y_1;z_0:z_1].
\end{array}
\]
that is, it is the inverse of the birational transformation obtained
by blowing up $l_{00}$ and contracting the strict transform of
$\pi^{-1}(s_{-a})$ onto $l_{10}$.
The first $b-1$ links conjugate the corresponding connected
automorphism groups, whereas the last one yields
\[
\psi_b\cdots\psi_1\,
\Autz(\FF_a^{b,c})\,
(\psi_b\cdots\psi_1)^{-1}
\subsetneq
\Autz(\FF_a^{0,c-ab}).
\]
\end{enumerate}
\end{enumerate}
\end{proposition}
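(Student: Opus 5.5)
The statement is quoted from \cite{BFT}, where it is proved over $\C$. The plan is to check that each step of that proof works over $\k$ of characteristic $p>0$. Throughout, $G=\Autz(\FF_a^{b,c})$ and $H$ is its image in $\Aut(S)$. By Lemma~\ref{lem:aut-group-Fabc}, $H=\Autz(\F_a)$, so $H$ has orbits $s_{-a}$ and its complement when $a\geq1$, and is transitive when $a=0$. Every $G$-equivariant square birational map either lies over an automorphism of $\F_a$ or, when $a=1$, contracts $s_{-1}$. After composing with square isomorphisms we may therefore apply Lemma~\ref{Lemm:DecElLinks} to maps of $\P^1$-bundles over $\F_a$. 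Equivariant maps leaving $\F_a$ are handled by the descent lemma (Lemma~\ref{lem:descent-lemma}): the only equivariant contraction of the base is $\F_1\to\P^2$. So every equivariant map factors as a sequence of elementary links centred at irreducible $G$-invariant curves $\ell\subseteq\pi^{-1}(s_{-a})$ that are sections over $s_{-a}$, possibly followed (when $a=1$) by descent to $\P^2$.

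First I will determine the $G$-invariant curves, using the explicit vertical group in Lemma~\ref{lem:aut-group-Fabc}. The surface $\pi^{-1}(s_{-a})$ is the restriction $y_0=0$, which is isomorphic to $\F_c$. On it the vertical elements act through $p_{2,-b,c}|_{y_0=0}$ and $p_{3,b,-c}|_{y_0=0}$, which are polynomials in $z$ of degrees $c$ and $-c$ times $y_1^{\pm b}$, and the torus $\G_m$ acts diagonally. Hence for $c\neq0$ the only invariant curves are $l_{00}$ and $l_{10}$. For $c=0$ the vertical part is transitive on the fibres of $\pi^{-1}(s_{-a})\to s_{-a}$ when $b=0$, and there are no invariant curves. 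This argument involves no characteristic-dependent binomial coefficients, since $p_{k}$ ranges over whole spaces of monomials. For $a=0$ there are no invariant curves at all, since $H$ is transitive. Lemma~\ref{Lemm:DecElLinks} then gives superstiffness in cases (1) and (2), because every equivariant map of bundles is an isomorphism.

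Next, in case (3) I will compute the blow-up of $l_{00}$ directly; it gives $\varphi_s$ as displayed. To show equality of groups, I will use Lemma~\ref{lem:aut-group-Fabc} to compare dimensions: the vertical group of $\FF_a^{b,c}$ has dimension
\[
1+h^0(\O(-b s_a+cf))+h^0(\O(b s_a-cf)),
\]
and the $a$-dependent Riemann--Roch counts on $\F_a$ are characteristic-free. One checks that conjugation carries the vertical group of $\FF_a^{b+s,c+as}$ into that of $\FF_a^{b+s+1,c+a(s+1)}$, and that the dimensions agree exactly when $-a<c+as<a(b+s)$. Hence the inclusion is an equality there and strict at the endpoints. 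Case (4b) is the endpoint $c\leq-a$. Case (4c) is the reverse endpoint, reached by iterating inverse links until $b=0$. For (4a), descending to $\PP_{b-c}$ makes the base group all of $\PGL_3$, which is strictly larger than the image of $\Autz(\F_1)$. Uniqueness of minimal link sequences in Lemma~\ref{Lemm:DecElLinks}, together with the fact that at each step only $l_{00}$ or $l_{10}$ is available, shows that the chain lists all equivariant maps. Non-stiffness follows because the $\FF_a^{b+n,\,c+an}$ are pairwise non-isomorphic, as they have distinct invariants $b$.

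The main obstacle is the last paragraph's dimension bookkeeping, specifically showing that equality of groups holds exactly in the range $-a<c<ab$. I expect it to be routine but delicate, and I would follow \cite[\S5.4]{BFT} line by line. The only possible characteristic-$p$ subtlety, non-reduced stabilizers, does not arise, because all groups here are reduced by convention.
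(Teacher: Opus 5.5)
Your overall strategy is the right one, and it is what the paper relies on when it imports this statement from \cite{BFT}: factor equivariant maps into links over $\F_a$ via Lemma~\ref{Lemm:DecElLinks}, read off invariant curves and dimensions from the explicit vertical group of Lemma~\ref{lem:aut-group-Fabc}, and observe that nothing depends on $p$. However, two of your concrete claims are wrong. As written, they break exactly the steps that separate case (3) from case (4).

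\textbf{The invariant curves.} On $\pi^{-1}(s_{-a})=\{y_0=0\}$, the entry $p_{2,-b,c}$ vanishes identically once $b\geq1$, since no monomial has negative first degree. The entry $p_{3,b,-c}$ restricts to $y_1^bF_0(z_0,z_1)$, where $F_0$ is an arbitrary form of degree $-c$. Hence $l_{10}$ is invariant if and only if $c>0$. For $c<0$, and also for $c=0<b$ (a case you omit), the only invariant curve is $l_{00}$. This is precisely what stops the chain of (3) on the left at $\FF_a^{b-r,c-ra}$, the first bundle with $c-ra\leq0$. With your list (``$l_{00}$ and $l_{10}$ for $c\neq0$''), the $l_{10}$-links would continue, and your exhaustiveness claim does not follow.

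\textbf{The dimension criterion.} Whenever $l_{00}$ is invariant (that is, $b'\geq1$, or $b'=0$ and $c'<0$), your own formula gives
\[
\dim\Autz(\FF_a^{b'+1,c'+a})-\dim\Autz(\FF_a^{b',c'})=\max(0,\,1-a-c').
\]
So the rightward link out of $\FF_a^{b',c'}$ is an equality if and only if $c'>-a$; no upper bound enters. Your criterion $-a<c+as<a(b+s)$, equivalently $c+as>-a$ together with $c<ab$, is false.
\begin{itemize}
\item For $(a,b,c)=(2,1,2)$, the vertical groups of $\FF_2^{1,2}$ and $\FF_2^{2,4}$ both have dimension $2$: the torus, plus $p_3\in\k y_0$, respectively $p_3\in\k y_0^2$. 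So $\varphi_0$ is an equality although $c=ab$.
\item For $(a,b,c)=(2,2,4)$, your criterion would make the first inverse link $\FF_2^{2,4}\dashrightarrow\FF_2^{1,2}$ strict. This contradicts the assertion in (4c) that the first $b-1$ links conjugate the groups.
\end{itemize}
The bound $c<ab$ is detected only through the leftward chain. The $l_{10}$-links exist while $c'>0$, so the chain reaches $b'=0$ with $c'=c-ab\geq0$ exactly when $c\geq ab$. Only there does $h^0(\O(-b's_a+c'f))$ become nonzero, coming from the entry $p_2$. This makes $\psi_b$ strictly enlarging, with the dimension going up by $c-ab+1$. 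Case (4b) is, as you say, the genuine lower endpoint.

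With these two corrections, the rest of your argument goes through as written: the base maps, superstiffness in (1)--(2), non-stiffness, and (4a). Everything is indeed characteristic-free.
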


\subsection{The case of Schwarzenberger \texorpdfstring{$\P^1$}{P1}-bundles over \texorpdfstring{$\P^2$}{P2}}\label{sec: relative max of Sbc}
Throughout this subsection, we assume that $p\geq3$.
We study the relative maximality and stiffness of the Schwarzenberger $\P^1$-bundles $\SS^{b,c}\to\P^2$. By Proposition~\ref{prop:aut-orbits-Sbc}\ref{item:aut-Sbc}, we have $\Autz(\SS^{b,c})\simeq \PGL_2$ if $c\neq 3b$, and $\Autz(\SS^{b,c})\simeq \PGL_3$ if $c=3b$.

We first determine their possible
$\PGL_2$-equivariant birational models among decomposable
$\P^1$-bundles and the $\P^1$-bundles $\L^r \to \P^2$, and then combine this with
the classification of $\P^1$-bundles over $\P^2$ (Theorem~\ref{th:P1bundlesP2}) to obtain the
relative maximality and stiffness results (Proposition~\ref{prop: relative max of Sbc}).

\smallskip 

In the following, we use the convention $s=0$ when $l=0$.
Note that if $c=3b$, then $b=p^s$, $m=2p^s$, and
$\PGL_2\subsetneq\Autz(\SS^{b,c})$. In this case, the birational map
$\SS^{b,c}\dasharrow X$ given by
Lemma~\ref{lem: possible birational models Sbc}\ref{item: birational Sbc Lt}
below can be chosen to be an isomorphism. Moreover, all integers $r$
appearing in the statement are at least equal to the integer $s$
associated with $b,c$ (see Definition~\ref{def:Schwarz bc}), but this
will not be used in the sequel.

Recall that we denote by
$\psi\colon\PGL_2\hookrightarrow\PGL_3$
the standard embedding induced by the action on
$\P(V_2)$, whose image preserves $\Gamma$.

\begin{lemma}\label{lem: possible birational models Sbc}
	Assume that $p \geq 3$.
Let $b,c,l$ be as in Definition~$\ref{def:Schwarz bc}$, and let
$\pi\colon \SS^{b,c}\to\P^2$
be the corresponding Schwarzenberger $\P^1$-bundle. Set
$m=\frac{b+c}{2}=b+l+1$.
Let $X$ be either a decomposable $\P^1$-bundle $\PP_d\to\P^2$ $($Definition~$\ref{def Pb})$, with
$d\geq 0$, or one of the $\P^1$-bundles $\L^r\to\P^2$ 
$($Definition~$\ref{def: P1 bundles Lsb})$, with $r\geq 0$.
Then there exists a $\PGL_2$-action on $X$ and a
$\PGL_2$-equivariant birational map
$\SS^{b,c}\dashrightarrow X$
if and only if one of the following holds:
\begin{enumerate}
\item
$X=\PP_0\simeq\P^1\times\P^2$ and $m=p^r$
for some $r\geq 0$. In this case
 the $\PGL_2$-action on $X$ is given by 
\[ F^r \times \psi\colon  \PGL_2 \hookrightarrow \PGL_2 \times \PGL_3 \simeq \Aut(\P^1\times\P^2),\]
where $F$ is the Frobenius morphism.
\item\label{item: birational Sbc Lt}
$X=\L^r \simeq \PGL_3/P_r$ and $m=2p^r$ for some $r\geq 0$.
In this case the $\PGL_2$-action on $X$ is given by 
\[\psi\colon \PGL_2 \hookrightarrow \PGL_3 \simeq \Autz(\L^r).\]
\end{enumerate}
In both cases, the equivariant birational map can be
chosen over the identity of $\P^2$.
\end{lemma}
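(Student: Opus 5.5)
We plan to reduce everything to the open orbit and the stabilizer subgroup schemes. By Proposition~\ref{prop:aut-orbits-Sbc}\ref{item:open-orbit-Sbc}, $\SS^{b,c}$ contains an open $\PGL_2$-orbit $\PGL_2/H'$, where $H'$ is generated by the image of $\mu_{2m}$ in the diagonal torus and the involution $\left[\begin{smallmatrix}0&1\\-1&0\end{smallmatrix}\right]$. A $\PGL_2$-equivariant birational map $\SS^{b,c}\dashrightarrow X$ must send this open orbit isomorphically onto an open $\PGL_2$-orbit of $X$. Conversely, if $X$ has an open orbit $\PGL_2$-isomorphic to $\PGL_2/H'$, we get such a map. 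So the statement reduces to two tasks: list the $\PGL_2$-actions on $X$ that have an open orbit, and compare the stabilizers up to conjugacy.

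\textbf{Constraining the action.} By Blanchard's lemma, any $\PGL_2$-action on $X$ descends to $\P^2$, and the equivariant map is square over some $\eta$. Since the map is equivariant, the image of $\PGL_2$ in $\PGL_3$ must act on $\P^2$ with an open orbit and without fixed points. Lemma~\ref{SubgroupsPGL3} then forces it to be conjugate to $\psi(\PGL_2)=\Aut(\P^2,\Gamma)$. The induced map on bases is then a $\PGL_2$-equivariant birational self-map of $\P^2$ for this action. Such a map restricts to an automorphism of the open orbit $\PGL_2/N(T)$ commuting with the action, that is, it comes from $N(N(T))/N(T)$, which is trivial. Hence we may take $\eta=\Id$, which gives the final assertion.

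\textbf{Case $X=\PP_d$.} Lifts of $\psi(\PGL_2)$ to $\Autz(\PP_d)$ are described by Lemma~\ref{lemm:surjectiveDecP2}; the fiber part is $\lambda,F$. For $d\ge1$, the invariant section $y_0=0$ is preserved, so $X$ contains an invariant surface isomorphic to $\P^2$. We then intend to show that the open orbit cannot have a finite stabilizer of the form $H'$: a point over $\P^2\setminus\Gamma$ has stabilizer contained in $N(T)$ acting on the fiber $\P^1$ through an affine group $\G_a\rtimes\G_m$ fixing $y_0=0$, so the stabilizer of a generic point contains a positive-dimensional or non-finite piece. Alternatively, we compare with Lemma~\ref{Lemm:DecElLinks}: only links centered over $\Gamma$ are possible. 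For $d=0$, actions of $\PGL_2$ on $\P^1\times\P^2$ over $\psi$ are $\varphi\times\psi$ with $\varphi$ an endomorphism of $\PGL_2$, hence $\varphi=F^r$ up to conjugation. The stabilizer of a generic point is then $N(T)\cap F^{-r}(\mathrm{Stab}(x))$. For a suitable generic $x$ this is $\mu_{2p^r}$ (image in $\PGL_2$) extended by the involution, which matches $H'$ exactly when $m=p^r$.

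\textbf{Case $X=\L^r$.} The action factors through $\psi$ into $\PGL_3$ acting as in Lemma~\ref{Lem:LsPGL3}. We compute the stabilizer in $\psi(\PGL_2)$ of a generic point of $Z_r$, i.e.\ of a pair (point $y\notin\Gamma$, line through $F_r$-twisted data). The stabilizer of $y$ is $N(T)$, and the condition on $x$ gives the scheme-theoretic kernel $t^{2p^r}$-type equation, yielding $\mu_{4p^r}\to$ image $\mu_{2p^r}$ plus the involution. This matches $H'$ iff $m=2p^r$.

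The main obstacle is twofold. First, the scheme-theoretic stabilizer computations must be carried out carefully, with non-reduced $\mu_n$ and Frobenius kernels; here we will compute the stabilizer of an explicit point $[1:0:1]$-type in coordinates. Second, we must show that the open orbits are non-isomorphic for $d\geq1$ and for mismatched $m$, using the lengths of the $\mu$-parts as the invariant distinguishing conjugacy classes of such $H'$ in $\PGL_2$.
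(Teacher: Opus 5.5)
Your framework (reduce to the open orbit $\PGL_2/H_m$ and compare stabilizer group schemes) is the paper's. Your direct computation on $\L^r$ is a legitimate replacement for the paper's shortcut through $\SS^{p^r,3p^r}\simeq\L^r$: the stabilizer of $([1:0:1],[0:1:0])\in Z_r$ is generated by the image of $\mu_{4p^r}$ and $w=\left[\begin{smallmatrix}0&1\\-1&0\end{smallmatrix}\right]$, so it is $H_{2p^r}$. There are, however, two genuine gaps in the ``only if'' direction.

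\textbf{First gap: $\PP_d$ with $d\geq1$.} This case is not actually argued. Knowing that $N(T)$ acts on the fiber over $y\in\P^2\setminus\Gamma$ through $\G_a\rtimes\G_m$ gives nothing by itself. A torus can fix a point of $\P^1$ and still have one-dimensional orbits, which would leave finite stabilizers and a three-dimensional orbit. The missing idea is the involution. Since $wtw^{-1}=t^{-1}$, the character $\chi\colon T\to\G_m$ coming from the fiber action satisfies $\chi=\chi^{-1}$. Hence $\chi=1$, because the character group of $\G_m$ is torsion-free. Then $T$ maps into $\G_a$, hence trivially. So $T$ fixes the whole fiber over $y$, and every orbit meeting that fiber has dimension at most $2$. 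Any open orbit must meet it, since $\PGL_2$ is transitive on $\P^2\setminus\Gamma$. Therefore $\PP_d$ has no open orbit at all, not merely one with the wrong stabilizer. Your fallback via Lemma~\ref{Lemm:DecElLinks} does not close this: it needs the map to lie over the identity and needs control of all intermediate bundles, which is exactly what is at stake.

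\textbf{Second gap: Frobenius twists.} You conflate the image of $\PGL_2\to\PGL_3$ with the homomorphism itself. Lemma~\ref{SubgroupsPGL3} only identifies the image with $\psi(\PGL_2)$, and you would also have to justify the absence of fixed points before invoking it. The homomorphism may be $\psi\circ F^t$ with $t>0$.
\begin{itemize}
\item On $\P^1\times\P^2$ you never consider the actions $(F^u,\psi\circ F^t)$ with $t>0$. The action $(\Id,\psi\circ F^t)$ really does have an open orbit. With $x=[1:i]$ and $B_x$ its stabilizer, the stabilizer of $(x,[0:1:0])$ is $B_x\cap(F^t)^{-1}(N(T))$. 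This is a finite group scheme whose Lie algebra is $\mathrm{Lie}(B_x)$, of dimension $2$. It is excluded only because $\dim\mathrm{Lie}(H_m)\leq1$.
\item The case $u,t>0$ is excluded because the Frobenius kernel would then lie in every stabilizer.
\item For $\L^r$ you likewise need faithfulness, or the same Lie algebra argument, before reducing to $\psi$.
\end{itemize}

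\textbf{Smaller point.} Your claim that every equivariant birational map is square is unjustified and false in general. For $m=2$, i.e.\ $\SS^{1,3}\simeq\L^0$, right multiplication on $\PGL_2/H_2$ by elements of $N(H_2)\simeq\mathfrak S_4\not\subseteq N(T)$ gives non-square maps. For the last assertion of the lemma you only need to match two points lying over $[0:1:0]$ that have the same stabilizer $H_m$.
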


\begin{proof}
By Proposition~\ref{prop:aut-orbits-Sbc}, the variety $\SS^{b,c}$
contains an open $\PGL_2$-orbit
$U\simeq\PGL_2/H_m$,
where
\[
H_m=
\left\langle
\left\{
\begin{bmatrix}
\alpha&0\\
0&\alpha^{-1}
\end{bmatrix}
\;\middle|\;
\alpha^{2m}=1
\right\},
\begin{bmatrix}
0&1\\
-1&0
\end{bmatrix}
\right\rangle
\]
as group schemes.
If
$\SS^{b,c}\dashrightarrow X$
is a $\PGL_2$-equivariant birational map, then $X$ contains an open
$\PGL_2$-orbit isomorphic to $\PGL_2/H_m$. Conversely, if $X$
contains such an open orbit, an identification of the two open
orbits induces a $\PGL_2$-equivariant birational map
$\SS^{b,c}\dashrightarrow X$. It is therefore enough to determine
which of the varieties $\PP_d$ and $\L^r$ admit an open
$\PGL_2$-orbit isomorphic to $\PGL_2/H_m$.

Let $X\to\P^2$ be a $\P^1$-bundle with an open
$\PGL_2$-orbit.
By Blanchard's lemma
(Proposition~\ref{prop:blanchard-lemma}),
the action induces a nontrivial homomorphism
$\PGL_2\to\PGL_3$.
By the classification of these homomorphisms recalled below,
after changing coordinates on the base we may assume
that $\P^2=\P(V_2)$ and that this homomorphism is
$\psi\circ F^j$ for some $j\geq0$.

Let $G=\PGL_2$, let $y=[0:1:0]\in\P^2\setminus\Gamma$, and let
\[
T=
\left\{
\begin{bsmallmatrix}
\alpha&0\\
0&\alpha^{-1}
\end{bsmallmatrix}
\;\middle|\;
\alpha\in \k^*
\right\}
\simeq\G_m.
\]
Then $T \subseteq  N_G(T)\subseteq  G_y$, where $G_y=\{g\in G\mid g(y)=y\}$.
We first assume that
\[
X=\PP_d=\P_{\P^2}\bigl(\cO_{\P^2}\oplus\cO_{\P^2}(-d)\bigr)
\]
with $d\geq1$ and derive a contradiction. By Lemma~\ref{lemm:surjectiveDecP2}, $\PP_d$ has an
$\Autz(\PP_d)$-invariant section $S=\{y_0=0\}$, hence $S$ is
$G$-invariant. Let
$w=\begin{bsmallmatrix}0&1\\-1&0\end{bsmallmatrix}\in N_G(T)$.
Then $wtw^{-1}=t^{-1}$ for each $t\in T$.
Since $N_G(T)\subseteq  G_y$ and $S$ is $G$-invariant, $N_G(T)$
fixes the point $z=S\cap(\PP_d)_y$, where $(\PP_d)_y$ is the fiber of $y$ for the $\P^1$-bundle $\PP_d\to \P^2$. Thus its action on
$(\PP_d)_y\simeq\P^1$ induces a homomorphism of algebraic groups
\[
\varphi\colon N_G(T)\to\Aut(\P^1)_z\simeq\G_a\rtimes\G_m.
\]
Let $\chi\colon T\to\G_m$ be the character obtained by composing
$\varphi|_T$ with the projection $\G_a\rtimes\G_m\to\G_m$.
Since $wtw^{-1}=t^{-1}$, we have
$\chi(t)=\chi(t^{-1})=\chi(t)^{-1}$, hence $\chi^2=1$.
Since the character group of $T\simeq\G_m$ is torsion-free,
$\chi$ is trivial. Therefore $\varphi(T)\subseteq \G_a$, and since
every homomorphism $\G_m\to\G_a$ is trivial, $\varphi|_T$ is
trivial. Thus $T$ acts trivially on $(\PP_d)_y$.

Consequently, every $G$-orbit intersecting $(\PP_d)_y$ has dimension at
most $\dim G-\dim T=2$. As $X$ has an open $G$-orbit, its image in
$\P^2$ intersects $\P^2\setminus\Gamma$. Since $G$ acts transitively
on $\P^2\setminus\Gamma$, this open orbit intersects $(\PP_d)_y$, a
contradiction. 

We now consider the case $X=\PP_d$ with $d=0$. 
We have therefore
\[
X=\PP_0\simeq\P^1\times\P^2.
\]
Since
$\Autz(\P^1\times\P^2)\simeq\PGL_2\times\PGL_3$,
the $G$-action is given by a pair of homomorphisms
\[
\alpha\colon G\to\PGL_2,
\qquad
\rho\colon G\to\PGL_3.
\]
Since the action is $G$-equivariantly birational to the faithful
action on $\SS^{b,c}$, the homomorphism
\[
(\alpha,\rho)\colon G\to\PGL_2\times\PGL_3
\]
is faithful.
Up to conjugation, every nontrivial homomorphism
$\PGL_2\to\PGL_2$ is of the form $F^u$, and every nontrivial
homomorphism $\PGL_2\to\PGL_3$ is of the form
$\psi\circ F^v$, for some $u,v\geq0$, where
$\psi\colon\PGL_2\hookrightarrow\PGL_3$ is induced by linear $\SL_2$-action on $V_2$ (see Definition~\ref{def:Vn}). Indeed, a nontrivial homomorphism
$\PGL_2\to\PGL_2$ is an isogeny, hence it is a power
of the Frobenius morphism followed by an automorphism of $\PGL_2$ (see \cite[Chapter~7]{Ste68});
the latter is inner. 
Similarly, a nontrivial homomorphism
$\PGL_2\to\PGL_3$ has finite kernel and, after factoring out a power
of Frobenius, yields a faithful homomorphism
$\PGL_2\hookrightarrow\PGL_3$. The corresponding $3$-dimensional
projective representation lifts to a representation of $\SL_2$ and,
since $p \geq 3$, is the irreducible representation of highest weight $2$,
namely $V_2$. This gives $\psi\circ F^v$ up to conjugation.

Moreover, neither $\alpha$ nor $\rho$ can be trivial, since otherwise
the $G$-action on $\P^1\times\P^2$ cannot have a dense orbit.
They cannot both be nontrivial with $u,v>0$, since in this case the
first Frobenius kernel $G_1$ would be contained in the kernel of
$(\alpha,\rho)$.
Consequently, up to conjugation, there are two possibilities:
\[
(\alpha,\rho)
=
(F^t,\psi)
\quad\text{or}\quad
(\Id,\psi\circ F^t)
\]
for some $t\geq0$.

Assume first that
$(\alpha,\rho)=(\Id,\psi\circ F^t)$ with $t>0$.
Choose $i\in\k$ with $i^2=-1$, and set
\[
x=[1:i],\qquad y=[0:1:0],\qquad z=(x,y).
\]
Writing $B_x\subseteq G$ for the stabilizer of $x$
under the standard action on $\P^1$, we have
\[
G_z=B_x\cap(F^t)^{-1}(N_G(T))
\]
scheme-theoretically.
Since the differential of $F^t$ is zero,
\[
\operatorname{Lie}(G_z)=\operatorname{Lie}(B_x),
\qquad
\dim\operatorname{Lie}(G_z)=2.
\]
On the other hand,
$\dim\operatorname{Lie}(H_m)\leq1$.
Thus $G_z$ cannot be conjugate to $H_m$,
which excludes this case.

Assume now that
$(\alpha,\rho)=(F^t,\psi)$ for some $t\geq0$.
For the same point $z$, a direct computation yields
$G_z=H_{p^t}$. Since $G_z$ is finite, $G\cdot z$ has dimension
$3=\dim X$ and is therefore the open $G$-orbit of $X$.
This case is possible if and only if $m=p^t$.

We now consider the case
\[
X=\L^r\to \P^2
\]
for some $r\geq0$. 
The $G$-action is faithful scheme-theoretically, since it is
	equivariantly birational to the faithful action on $\SS^{b,c}$.
	As $\Autz(\L^r)\simeq\PGL_3$, the classification above implies
	that the corresponding homomorphism
	$G\to\Autz(\L^r)$ is conjugate to $\psi$:
	a positive Frobenius power would have a nontrivial
	scheme-theoretic kernel.
There is a Schwarzenberger $\P^1$-bundle $\SS^{p^r,3p^r} \to \P^2$, with $l=p^r-1$ and $s=r$ if $r\ge 1$ (see Definition~\ref{def:Schwarz bc}). Moreover, we have $\SS^{p^r,3p^r}\simeq\L^r$, as $\P^1$-bundles, by
Proposition~\ref{prop:aut-orbits-Sbc}\ref{item:aut-Sbc}.
We may thus apply  Proposition~\ref{prop:aut-orbits-Sbc}\ref{item:open-orbit-Sbc},  and obtain that the $\PGL_2$-action on $\L^r$ induced by
$\psi\colon\PGL_2\hookrightarrow\PGL_3$ has an open orbit
isomorphic to $\PGL_2/H_{2p^r}$.
Therefore this case is possible if and only if $m=2p^r$.
Since $p\geq3$ and $p^s$ divides $m$, with the convention
$s=0$ when $l=0$, we must have $r\geq s$.
This proves~\ref{item: birational Sbc Lt} and completes the proof.
\end{proof}

\begin{proposition}\label{prop: relative max of Sbc}
	Assume that $p \geq 3$.
Let $b,c,l$ be as in Definition~$\ref{def:Schwarz bc}$, and let
$\pi\colon \SS^{b,c}\to\P^2$
be the corresponding Schwarzenberger $\P^1$-bundle. Set
$m=\frac{b+c}{2}=b+l+1$,
and assume that $c\neq 3b$. Then:
\begin{enumerate}
\item $\Autz(\SS^{b,c})$ is relatively maximal if and only if
$m\neq p^r,2p^r$ for every $r\geq 0$.
\item $\SS^{b,c}$ is stiff if and only if $p\nmid m$.
\item $\SS^{b,c}$ is not superstiff.
\end{enumerate}
\end{proposition}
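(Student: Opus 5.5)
The plan is to control every $\PGL_2$-equivariant square birational map $\varphi\colon\SS^{b,c}\dashrightarrow X'$ by the action induced on the base, and then to treat the three assertions in turn. Set $G=\Autz(\SS^{b,c})\simeq\PGL_2$, which acts on $\P^2$ as $\Aut(\P^2,\Gamma)$ by Proposition~\ref{prop:aut-orbits-Sbc}. Let $\varphi$ lie above $\eta\colon\P^2\dashrightarrow S'$. I first reduce to targets over $\P^2$. By the descent lemma (Lemma~\ref{lem:descent-lemma}) applied to an equivariant resolution of $\eta$, the $G$-action on $S'$ contracts equivariantly to a minimal surface. Since $G$ has no fixed point on $\P^2$ and its orbits there are $\Gamma$ and its complement, the only $G$-equivariant birational models of $\P^2$ that I expect to occur are $\P^2$ itself, equivariantly via an automorphism of $\P^2$. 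Blowing up $\Gamma$ is not a point blow-up, and there is no invariant point. So $\eta$ is, up to an automorphism, the identity, and $X'\to\P^2$ is a $\P^1$-bundle carrying a $G$-action.

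Now let $H'=\Im(\Autz(X')\to\Aut(\P^2))\supseteq\psi(\PGL_2)$ and apply Theorem~\ref{th:P1bundlesP2}. Case~(4) is excluded, since $\PGL_2$ fixes no point. Case~(3) is excluded because $\Aut(\P^2,L)$ does not contain $\Aut(\P^2,\Gamma)$. This leaves two possibilities. Either $X'$ is decomposable or $\L^r$, in which case Lemma~\ref{lem: possible birational models Sbc} forces $m=p^r$ (target $\PP_0$) or $m=2p^r$ (target $\L^r$). Or $X'\simeq\SS^{b',c'}$ with $c'\ne3b'$.

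\textbf{Assertion (1).} If $m=p^r$ or $2p^r$, the map of Lemma~\ref{lem: possible birational models Sbc} conjugates $G$ into $\PGL_2\times\PGL_3$ or $\PGL_3$, strictly, so $G$ is not relatively maximal. Conversely, if $m\notin\{p^r,2p^r\}$, every target is some $\SS^{b',c'}$ whose neutral group is again $\PGL_2$. Then $\varphi G\varphi^{-1}\subseteq\Autz(X')$ is an inclusion of connected groups of equal dimension, hence an equality. This proves relative maximality.

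\textbf{Assertions (2) and (3).} Here $\varphi$ is a birational map of $\P^1$-bundles over $\P^2$ with $H'=\Aut(\P^2,\Gamma)$. By Lemma~\ref{Lemm:DecElLinks}, $\varphi$ factors into elementary links centered at $G$-invariant curves lying over $\Gamma$. Corollary~\ref{cor: description of the orbits for Sbc} identifies these curves. If $p\mid\binom{m}{l+1}$, there are two invariant sections $\ell,\ell'$ of $\pi^{-1}(\Gamma)\simeq\F_{2(l+1)}$, so nontrivial links exist. In every case, the link centered at the curve $\ell$ (the graph of Frobenius) is a non-isomorphism, which should give non-superstiffness (3).

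For stiffness, I compute the effect of a link on the invariants. Pulling back by $\kappa$, a link over $\Gamma$ becomes an elementary transformation of $\T^{b,c}$ along a curve over the diagonal $\Delta$. As in Proposition~\ref{prop:three case P1 bundles over F0}, it changes $l$, and hence $(b,c)$, while it must preserve the stabilizer $H_m$ of the open orbit (Proposition~\ref{prop:aut-orbits-Sbc}\ref{item:open-orbit-Sbc}), hence preserve $m$. Thus targets are $\SS^{b',c'}$ with $b'+l'+1=m$. I then enumerate the admissible $l'$, namely those with $l'=0$, or $p^{s'-1}\le l'<p^{s'}$, $p^{s'}\mid m$ and $l'$ even. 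If $p\nmid m$, only $l'=0$ is allowed, so $X'\simeq\SS^{m-1,m+1}=\SS^{b,c}$, which gives stiffness. If $p\mid m$, both $l'=0$ and $l'=p-1$ are admissible, giving two non-isomorphic bundles (distinguished, for example, by Proposition~\ref{prop:aut-orbits-Sbc}\ref{FnLineSchwarz}). I will realize a link between them explicitly, using the invariant section over $\Gamma$ and the transition matrices of Proposition~\ref{prop: def of the Sbc}.

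\textbf{Main obstacle.} The hardest step is this last one: showing that each elementary link over $\Gamma$ sends $\SS^{b,c}$ to $\SS^{b',c'}$ with the same $m$, and that for $p\mid m$ a link to a different $l'$ actually exists. I would first try to carry this out on the double cover $\T^{b,c}$, where the links are explicit changes $t\mapsto t/f$ along $\Delta$.
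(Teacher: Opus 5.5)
Your treatment of assertion (1), of assertion (3), and of stiffness when $p\nmid m$ is correct and essentially follows the paper. The paper uses the same reduction to $\eta\simeq\Id$, Theorem~\ref{th:P1bundlesP2}, Lemma~\ref{lem: possible birational models Sbc}, equality of the open orbits $G/H_m$ forcing $m'=m$, $l'=0$ when $p\nmid m$, and the link centered at $\ell$ for non-superstiffness. For the reduction, replace ``I expect'' by the short argument: the indeterminacy locus of $\eta$ is finite and $G$-invariant, hence empty, and a birational morphism from $\P^2$ to a smooth projective surface is an isomorphism.

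The gap is non-stiffness when $p\mid m$ (in the relatively maximal range $m\notin\{p^r,2p^r\}$). You correctly find two non-square-isomorphic admissible bundles with the same $m$. Your choice $l'=p-1$, $(b',c')=(m-p,m+p)$ is valid: here $m\geq 3p$, so $b'\geq1$ and $c'\neq 3b'$. However, you never produce an $\Autz(\SS^{b,c})$-equivariant square birational map from $\SS^{b,c}$ to a bundle not square-isomorphic to it; you defer this, as your ``main obstacle,'' to explicit link computations on $\T^{b,c}$. As planned, this would at best connect the two specific bundles with $l'=0$ and $l'=p-1$. That does not cover a given $\SS^{b,c}$ with, say, $l=p^2-1$, and nothing guarantees that a single link changes $l$.

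The missing observation is one you already half-use. By Proposition~\ref{prop:aut-orbits-Sbc}\ref{item:open-orbit-Sbc} and its construction from $\T^{b,c}$, every Schwarzenberger bundle with the given $m$ contains a point over $y=[0:1:0]$ with stabilizer exactly $H_m\subseteq N_G(T)=G_y$. Hence its open orbit, together with the projection to the base, is the canonical map $G/H_m\to G/N_G(T)\simeq\P^2\setminus\Gamma$. Identifying these open orbits gives $G$-equivariant birational maps over $\Id_{\P^2}$ between any two such bundles, by the same mechanism as in Lemma~\ref{lem: possible birational models Sbc}. Since $\SS^{b,c}$ cannot be square-isomorphic to both of your candidates, it is not stiff, and no link computation is needed.
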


\begin{proof}
If $m=p^r$ or $2p^r$ for some $r\geq 0$, then
Lemma~\ref{lem: possible birational models Sbc} shows that
$\Autz(\SS^{b,c})\simeq\PGL_2$ is not relatively maximal, and hence is neither stiff nor superstiff.

We now assume that $m\neq p^r,2p^r$ for every $r\geq 0$, and suppose that there exists an
$\Autz(\SS^{b,c})$-equivariant square birational map of $\P^1$-bundles
$\varphi\colon\SS^{b,c}\dasharrow X$
over a birational map
$\eta\colon\P^2\dasharrow Y$,
where $Y$ is a smooth projective rational surface.
Set
$G=\Autz(\SS^{b,c})\simeq\PGL_2$.
The indeterminacy locus of $\eta$ is a finite $G$-invariant subset
of $\P^2$. Since the $G$-action on $\P^2$ has exactly two orbits,
namely the smooth conic $\Gamma$ and its complement, it has no
finite orbit. Hence $\eta$ is a birational morphism. Since
$\P^2$ admits no nontrivial birational morphism onto a smooth
projective surface, $\eta$ is an isomorphism. After identifying
$Y$ with $\P^2$ via $\eta$, we may therefore assume that $\eta$ is
the identity.

Hence $X\to\P^2$ is a $\P^1$-bundle endowed with a faithful
$G$-action whose action on the base has two orbits: the smooth conic $\Gamma$ and its open complement. According to
Theorem~\ref{th:P1bundlesP2}, the following possibilities remain:
\begin{itemize}
\item $X\to\P^2$ is a decomposable $\P^1$-bundle.
\item $X\to\P^2$ is square-isomorphic to $\L^r\to\P^2$, for some
$r\geq0$.
Since $\Autz(\L^r)\simeq\PGL_3$ and the $G$-action
is faithful, the corresponding homomorphism
$G\to\PGL_3$ is conjugate to $\psi$: a positive
Frobenius power would have a nontrivial
scheme-theoretic kernel.
\item $X\to\P^2$ is square-isomorphic to a Schwarzenberger
$\P^1$-bundle $\SS^{b',c'}\to\P^2$, for some $b'\geq1$ and
$c'\geq3$ as in Definition~\ref{def:Schwarz bc}, with $c'\not=3b'$.
\end{itemize}
Indeed, the other two cases of Theorem~\ref{th:P1bundlesP2} are
impossible, since the $G$-action on $\P^2$ preserves
neither a line nor a point.

The first two cases are excluded by
Lemma~\ref{lem: possible birational models Sbc}.
Thus $X\to\P^2$ is square-isomorphic to a Schwarzenberger
$\P^1$-bundle $\SS^{b',c'}\to\P^2$, with $c'\not=3b'$.

Set
$m'=b'+l'+1=\frac{b'+c'}{2}$.
By Proposition~\ref{prop:aut-orbits-Sbc}\ref{item:open-orbit-Sbc},
the open $G$-orbits of $\SS^{b,c}$ and $\SS^{b',c'}$ are
respectively isomorphic to $G/H_m$ and $G/H_{m'}$. Since $\varphi$
is $G$-equivariant, these open orbits are isomorphic, and therefore
$m=m'$, since the finite group scheme $H_m$ has length $2m$,
or equivalently
$b+c=b'+c'$.
Moreover,
$\varphi G\varphi^{-1}\subseteq \Autz(X)\simeq\PGL_2$.
This inclusion is an equality, since both groups are
connected of dimension $3$.
Hence $\Autz(\SS^{b,c})$ is relatively maximal.

It remains to determine the stiffness. Assume first that $p$ does not divide $m$, which implies $l=0$. 
As $p$ does not divide $m'=m$, we have $l'=0$. Hence,
$b'=m-1=b$, $c'=m+1=c$.
Thus $(b',c')=(b,c)$, and consequently $\SS^{b,c}\to\P^2$ is stiff.
It is, however, not superstiff. Indeed, by
Corollary~\ref{cor: description of the orbits for Sbc},
$\SS^{b,c}$ contains a $\PGL_2$-invariant curve $\ell$
such that $\pi|_\ell\colon\ell\to\Gamma$ is an isomorphism.
Blowing up $\ell$ and contracting afterwards the strict transform of the inverse image of
the smooth conic $\Gamma\subset\P^2$ yields a nontrivial
$\PGL_2$-equivariant elementary link to another $\P^1$-bundle over
$\P^2$. By the stiffness proved above, this $\P^1$-bundle is
square-isomorphic to $\SS^{b,c}\to\P^2$. Hence $\SS^{b,c}$ is not
superstiff.

Assume now that $p$ divides $m$. We exhibit two distinct Schwarzenberger
$\P^1$-bundles having the same value of $m$. The choice
$(l_0,b_0,c_0)=(0,m-1,m+1)$ gives one such $\P^1$-bundle. 
Since the cases $m=p^r$ and $m=2p^r$ have already been
excluded, write $m=up^t$ with $u\geq3$, $p\nmid u$,
and $t\geq1$, and set
\[
(l_1,b_1,c_1)=\bigl(p^t-1,(u-1)p^t,(u+1)p^t\bigr).
\]
Since $p$ is odd, $l_1$ is even, and
$p^{t-1}\leq l_1<p^t$ and $p^t\mid b_1+l_1+1=m$.
Thus $(b_1,c_1,l_1)$ also satisfies
Definition~\ref{def:Schwarz bc}. Moreover, $c_1=3b_1$ if and only
if $u=2$, which is excluded by the assumption $u\geq3$. 
The $\P^1$-bundles $\SS^{b_0,c_0}$ and $\SS^{b_1,c_1}$ are not
square-isomorphic: any square isomorphism would preserve
their invariant conics and hence identify their restrictions
to tangent lines, whereas these restrictions are
$\F_{b_0}$ and $\F_{b_1}$, with
$b_0=m-1\neq m-p^t=b_1$.

All three $\P^1$-bundles $\SS^{b,c}$, $\SS^{b_0,c_0}$, and
$\SS^{b_1,c_1}$ have open orbit $G/H_m$, with projection
to the open orbit of the base given by
\[
G/H_m\longrightarrow G/N_G(T).
\]
These identifications yield $G$-equivariant birational
maps over the identity of $\P^2$.
Since $\SS^{b,c}$ cannot be square-isomorphic to both
constructed $\P^1$-bundles, it is not stiff.
\end{proof}

\begin{remark} 
In characteristic zero, relative maximality implies stiffness for
Schwar\-zenberger $\P^1$-bundles by \cite[Corollary~5.6.3]{BFT}.
In positive characteristic, this is no longer true: by
Proposition~\ref{prop: relative max of Sbc}, there exist relatively
maximal Schwarzenberger $\P^1$-bundles which are not stiff.
\end{remark}

\subsection{The case of Umemura \texorpdfstring{$\P^1$}{P1}-bundles over \texorpdfstring{$\F_a$}{Fa}} \label{sec: max for Ume bundles}
We first consider Umemura $\P^1$-bundles over $\F_a$ $($Definition~$\ref{def Umemura bundle})$ for
arbitrary $a\geq1$ and study the equivariant elementary
links starting from them. Such links are obtained by
blowing up an invariant curve and contracting the strict
transform of the surface lying above its image in $\F_a$ (see Lemma~\ref{Lemm:DecElLinks}). 
Our first step is therefore to determine the invariant
curves that can serve as centers. (Since the induced action
on the base $\P^1$ of the ruling is transitive, an Umemura
bundle has no fixed points.)

\begin{definition}
	For each $\P^1$-bundle $Z_a^{b,c,P}\to\F_a$ and
	$i,j\in\{0,1\}$, let $l_{ij}\subseteq Z_a^{b,c,P}$
	be the closure of the curve defined by $x_i=y_j=0$
	in the first chart $\F_b\times\A^1$.
	This gives four curves $l_{00},l_{01},l_{10},l_{11}$.
\end{definition}

\begin{lemma}\label{Lem:InvariantUme}
	Let $\pi\colon X=\U_{a,\omega}^{b,c}=Z_a^{b,c,P}\to\F_a$
	be an Umemura $\P^1$-bundle, with
	$	P(y_0,y_1,z)=\sum_{i=0}^b y_0^iy_1^{b-i}P_i(z)z^{ai+1}$. 
	We describe the curves in $X$ invariant under $\Autz(X)$.
\begin{enumerate}
\item\label{Invl0Um}
The curve $l_{00}$, given by $x_0=y_0=0$ on both charts, is invariant.

\item\label{Invl1Um}
The curve $l_{10}$ is invariant if and only if $P_0=0$,
equivalently if and only if $y_0$ divides $P$.

\item\label{OnlyInvUm}
If $P_0=0$, the curves $l_{00}$ and $l_{10}$ are the only invariant
curves contained in $\pi^{-1}(s_{-a})$. If $P_0\not=0$, then $l_{00}$ is the unique invariant curve contained in $\pi^{-1}(s_{-a})$.
\item\label{OnlyInvTwo}
If the image of $\Autz(X)$ in $\Aut(\F_a)$ has exactly two orbits, every invariant curve belongs to $\{l_{00},l_{10}\}$.
\end{enumerate}
\end{lemma}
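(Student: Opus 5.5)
The plan is to work directly with the explicit charts of $Z_a^{b,c,P}$ and with the $\SL_2$-action of Proposition~\ref{ActionSL2GL2Umemura}\ref{SL2actionUmemura}, together with the automorphisms over $\F_a$ described in Remark~\ref{rk aut vert Umemura bundles} and the structure of $H=\Im(\Autz(X)\to\Aut(\F_a))\supseteq S$.

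\emph{Parts (1) and (2).} In the first chart, the formula for $\theta_1$ preserves $\{x_0=0\}$ and $\{y_0=0\}$; the same holds for $\theta_2$, and the gluing $\nu_{c,P}$ preserves both loci. Hence $l_{00}$ is $\SL_2$-invariant. For the remaining elements of $\Autz(X)$, I would argue intrinsically. The surface $\pi^{-1}(s_{-a})$ is invariant, since $s_{-a}$ is the unique negative section of $\F_a$. Restricting the gluing to $y_0=0$ gives $x_1\mapsto x_1z^c+x_0y_1^bP_0(z)z$. Thus $\pi^{-1}(s_{-a})\simeq\F_c$ if $P_0=0$, and it is a Hirzebruch surface with a distinguished negative section $\{x_0=0\}$ otherwise; this can be checked by comparing with the curve $x_0=0$. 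In either case $l_{00}$ is the unique section of negative self-intersection of this invariant surface, so it is invariant under the connected group $\Autz(X)$.

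For $l_{10}=\{x_1=y_0=0\}$: if $P_0=0$, then on $y_0=0$ the gluing is diagonal, and the $\SL_2$-formulas restricted to $y_0=0$ become diagonal because their extra terms carry a factor $y_0^i$ with $i\geq1$. The vertical automorphisms of Remark~\ref{rk aut vert Umemura bundles} act trivially on $\pi^{-1}(s_{-a})$. The translations $R\in\k[z_0,z_1]_a$ in $H$ replace $y_1$ by $y_1+y_0R$, so they are trivial on $y_0=0$ up to the lift. I would check that the lifted action preserves $x_1=0$ there; this check, using Lemma~\ref{Lemm:ActionAutFaParSpace}\ref{ActionFap}, is where care is needed. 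Conversely, if $P_0\neq0$, the restriction of $\theta_1$ to $y_0=0$ sends $x_1=0$ to $x_1=x_0y_1^b\lambda_0\beta^{p^{s_0}}(\alpha+\beta z)^{c-p^{s_0}}\neq0$ for $\beta\neq0$. So $l_{10}$ is not invariant.

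\emph{Part (3).} I would show that any invariant curve $C\subset\pi^{-1}(s_{-a})$ maps onto $s_{-a}$: its image is an invariant curve, or a point, in $s_{-a}\simeq\P^1$, and $S$ acts transitively there. Hence $C$ is an $S$-invariant multisection of $\pi^{-1}(s_{-a})\to s_{-a}$. The key step is to describe the $\SL_2$-orbits on this Hirzebruch surface. Using the torus and unipotent formulas, the stabilizer of a fiber (a Borel subgroup $B$) acts on that fiber $\P^1$ with fixed points only at $x_0=0$ and, when $P_0=0$, also at $x_1=0$. An invariant curve meets each fiber in a finite $B$-stable set, hence in $B$-fixed points, because $B$ is connected. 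This yields the listed curves.

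\emph{Part (4).} If $H$ has two orbits, $s_{-a}$ and its complement, then any invariant curve maps to a point or to an invariant curve in $\F_a$, hence into $s_{-a}$, since $H$ has no fixed points. Part (3) then concludes.

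The main obstacle is the invariance of $l_{10}$ under the non-$\SL_2$ part of $\Autz(X)$, namely lifts of translations. I would handle it by the negativity argument: when $P_0=0$, the surface $\pi^{-1}(s_{-a})$ is decomposable, and $l_{10}$ is the section disjoint from $l_{00}$. I would then show that it is the unique such section fixed pointwise by $B$-fixed points, using connectedness of $\Autz(X)$ acting on $\pi^{-1}(s_{-a})$ together with transitivity on the base.
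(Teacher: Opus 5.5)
There is a genuine gap in parts (1) and (2), which are exactly the parts where invariance must hold for all of $\Autz(X)$. Checking the $\SL_2$ of Proposition~\ref{ActionSL2GL2Umemura} is not enough, because you cannot write down the other elements, namely the lifts of translations or of other elements of $H$. Your intrinsic argument rests on the claim that $l_{00}=\{x_0=y_0=0\}$ is the negative section of $\pi^{-1}(s_{-a})$, and this is false in both cases.

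\begin{itemize}
\item If $P_0=0$, the restricted gluing is $([x_0:x_1],z)\mapsto([x_0:x_1z^c],1/z)$. Near $x_1=0$ the coordinate $t=x_1/x_0$ transforms as $t\mapsto z^ct$. So $l_{10}$ is the $(-c)$-section of $\pi^{-1}(s_{-a})\simeq\F_c$, while $l_{00}$ has self-intersection $+c$ and is not unique in its class.
\item If $P_0\neq0$, then $c=2r$ with $r=p^{s_0}$, and the restricted transition matrix is $\begin{bsmallmatrix}1&0\\ \lambda_0z^r&z^{2r}\end{bsmallmatrix}$. Hence $\pi^{-1}(s_{-a})\simeq\F_0$ (compare Proposition~\ref{prop:UmeF1Contraction}), which contains no negative curve at all, and $l_{00}$ is a section of self-intersection $2r$.
\end{itemize}

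So neither case yields invariance of $l_{00}$. The same sign confusion leaves the forward direction of (2) open; you yourself flag it as the main obstacle.

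The missing idea for (1) is to use the other fibration. The fibers of $\tau_a\circ\pi\colon X\to\P^1$ are all isomorphic to $\F_b$ with $b\geq1$, and they are permuted by $\Autz(X)$ by Blanchard's lemma. Hence the union $\{x_0=0\}$ of their $(-b)$-curves is an invariant surface, and $l_{00}$ is its intersection with the invariant surface $\pi^{-1}(s_{-a})$.

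Once the signs are corrected, the obstacle in (2) disappears. When $P_0=0$, the curve $l_{10}$ is the unique negative curve of the invariant surface $\pi^{-1}(s_{-a})\simeq\F_c$, with $c\geq2$. It is therefore invariant under the connected group $\Autz(X)$, with no need to compute lifts of translations.

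The rest of your plan is sound:
\begin{itemize}
\item the $\beta\neq0$ computation for the converse of (2);
\item the Borel-fixed-point argument for (3), a variant of the paper's torus-plus-lower-unipotent argument that even excludes $l_{10}$ directly when $P_0\neq0$;
\item the argument for (4).
\end{itemize}
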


\begin{proof}
Since $a\geq1$, the negative section $s_{-a}\subset\F_a$ is invariant
by $\Autz(\F_a)$. Hence the surface
$\pi^{-1}(s_{-a})$ given by $\{y_0=0\}$ on both charts is invariant under
$\Autz(X)$. Moreover, the fibers of the
$\F_b$-bundle $X \to \P^1$ are permuted by
$\Autz(X)$. Since $b\geq1$, the surface $S_{-b}$
given by $\{x_0=0\}$ on both charts, which is the union of the negative sections of these
fibers, is invariant. Therefore
$l_{00}=\pi^{-1}(s_{-a})\cap S_{-b}$
is invariant, proving~\ref{Invl0Um}.

Recall that $X=\U_{a,\omega}^{b,c}=Z_a^{b,c,P}$ has transition function \[
\nu_{c,P}\colon([x_0:x_1;y_0:y_1],z) \mapsto \left([x_0:x_1z^{c}+x_0P(y_0,y_1,z);y_0z^a:y_1],\frac{1}{z}\right).\]

\ref{Invl1Um}: If $P_0=0$, then the restriction to $y_0=0$ yields $([x_0:x_1],z)\mapsto ([x_0:x_1z^c],\frac{1}{z})$, so 
$\pi^{-1}(s_{-a})$ is isomorphic to $\F_c$, and $l_{10}$
corresponds to its negative section $s_{-c}$. Since $c>0$, this is the
unique curve of negative self-intersection on $\F_c$, and is therefore
invariant. Hence $l_{10}$ is invariant.

If $P_0\not=0$, then $P(0,y_1,z)=y_1^b\lambda_0z^{c-1-l_0}$ with $\lambda_0\not=0$ (see Definition~\ref{def Umemura bundle})
and Proposition~\ref{ActionSL2GL2Umemura} implies that 
$\begin{bsmallmatrix}
1&1\\
0&1
\end{bsmallmatrix}\in\SL_2$
sends
$([1:0;0:1],0)\in l_{10}$
to
$([1:\lambda_0;0:1],0)\notin l_{10}$.
Thus $l_{10}$ is not invariant in this case, which proves
\ref{Invl1Um}.

\ref{OnlyInvUm}: 
Let $\ell\subseteq\pi^{-1}(s_{-a})$ be an irreducible
$\Autz(X)$-invariant curve. In particular, $\ell$ is
$\SL_2$-invariant. Since $\SL_2$ acts transitively on $s_{-a}$,
we have $\pi(\ell)=s_{-a}$.
Choose a point
$q=([u_0:u_1;0:1],0)\in\ell$.
The fiber of $\ell\to s_{-a}$ over $z=0$ is finite.
The diagonal torus of $\SL_2$ preserves this finite set and,
being connected, fixes each of its points. By
Proposition~\ref{ActionSL2GL2Umemura}, the element
$\begin{bsmallmatrix}
	\alpha&0\\
	0&\alpha^{-1}
\end{bsmallmatrix}$
sends $q$ to $([u_0:u_1\alpha^c;0:1],0)$.
Since $c>0$, this implies that $u_0=0$ or $u_1=0$.
For every $\gamma\in\k$, the element
$\begin{bsmallmatrix}
	1&0\\
	\gamma&1
\end{bsmallmatrix}$
sends $q$ to
$([u_0:u_1;0:1],\gamma)$. 
Consequently, $\ell$ contains a dense open subset of $l_{00}$
or $l_{10}$, respectively. Since $\ell$ is irreducible,
we obtain $\ell=l_{00}$ or $\ell=l_{10}$.
Together with~\ref{Invl0Um} and~\ref{Invl1Um},
this proves~\ref{OnlyInvUm}.

\ref{OnlyInvTwo}: 
	If the image of $\Autz(X)$ in $\Aut(\F_a)$ has exactly two
	orbits, then these are $s_{-a}$ and its open complement.
	If $\ell$ is an $\Autz(X)$-invariant curve, then $\pi(\ell)=s_{-a}$.
\end{proof}

The following proposition describes the elementary links between Umemura $\P^1$-bundles.

\begin{proposition}\label{prop: description of the equ links Umemura}
	Let $\pi\colon X=\U_{a,\omega}^{b,c}=Z_a^{b,c,P}\to\F_a$
	be an Umemura $\P^1$-bundle, with
	$	P(y_0,y_1,z)
	=
	\sum_{i=0}^b
	y_0^iy_1^{b-i}P_i(z)z^{ai+1}$.

	\begin{enumerate}
		\item\label{Blowl00egal}
		The $\P^1$-bundle $X'=Z_a^{b+1,c+a,y_0z^aP}\to \F_a$ is again an Umemura $\P^1$-bundle. Blowing up the invariant curve $l_{00}\subseteq X$ and contracting the strict transform of
		$\pi^{-1}(s_{-a})\subseteq X$ onto $l_{10}\subseteq X'$ yields a birational map
		$\varphi\colon X\dashrightarrow X'$ satisfying
		$\varphi \Autz(X) \varphi^{-1}=\Autz(X')$.
		
		\item\label{Blowl10egal}
		If $b\ge 2$ and $P_0=0$, then $X'=Z_a^{b-1,c-a,P/(y_0z^a)}\to \F_a$ is again an Umemura $\P^1$-bundle. Blowing up the invariant curve $l_{10}\subseteq X$ and contracting the strict transform of
		$\pi^{-1}(s_{-a})$ onto $l_{00}\subseteq X'$ yields a birational map
		$\varphi\colon X\dashrightarrow X'$ satisfying
		$\varphi \Autz(X) \varphi^{-1}=\Autz(X')$.
		
		\item\label{Blowl10smaller}
		If $b=1$ and $P_0=0$, blowing up the invariant curve $l_{10}\subseteq X$ and contracting the strict transform of
		$\pi^{-1}(s_{-a})$ onto a curve yields a birational map
		$\varphi\colon X\dashrightarrow X'$ satisfying
		$\varphi \Autz(X) \varphi^{-1}\subsetneq\Autz(X')$, where $X'\to \F_a$ is a decomposable $\P^1$-bundle.
	\end{enumerate}
\end{proposition}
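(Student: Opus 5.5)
The plan is to compute each link explicitly in the two charts of Definition~\ref{Not:ZabcP}, modelled on the links $\varphi_s$ of Proposition~\ref{prop:MaxDecBundlesFa}, and to read off the new gluing function. For~\ref{Blowl00egal}, I would take the map $[x_0:x_1;y_0:y_1]\mapsto[x_0:x_1y_0;y_0:y_1]$ on both charts $\F_b\times\A^1\dashrightarrow\F_{b+1}\times\A^1$. In the first chart this is the blow-up of $l_{00}$ followed by the contraction of the strict transform of $\{y_0=0\}$ onto $\{x_1=y_0=0\}$. On the second chart the coordinate $y_0$ has been replaced by $y_0z^a$. Conjugating $\nu_{c,P}$ by the map, the second fiber coordinate becomes $x_1y_0z^{c+a}+x_0y_0z^aP$, so the new transition function is $\nu_{c+a,y_0z^aP}$. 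Here one has to check that the Hirzebruch exponent shifts correctly, $b\mapsto b+1$. This gives $X'=Z_a^{b+1,c+a,y_0z^aP}$.

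Next I would check that $X'$ is again Umemura. If $i\in I_{a,b,c}$, then $c+a-a(i+1)=c-ai=2p^{s_i}$, so $i+1\in I_{a,b+1,c+a}$. The monomial $y_0^iy_1^{b-i}\lambda_iz^{c-p^{s_i}}$ becomes $y_0^{i+1}y_1^{b-i}\lambda_iz^{(c+a)-p^{s_i}}$, which is exactly the Umemura form. In normal form its component is $z^{l_i}$, so the new parameter lies in $\Omega_a^{b+1,c+a}$. Part~\ref{Blowl10egal} is then the inverse computation: if $P_0=0$, then $y_0z^a$ divides $P$, the curve $l_{10}$ is invariant by Lemma~\ref{Lem:InvariantUme}, and the link blowing up $l_{10}$ is the inverse of the link of~\ref{Blowl00egal} applied to $Z_a^{b-1,c-a,P/(y_0z^a)}$. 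The condition $b\geq2$ ensures $b-1\geq1$. One also needs $c-a\geq2$, which follows from $c-a-a(i-1)=2p^{s_i}$.

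For the equalities of groups, the map is $\Autz(X)$-equivariant because its center $l_{00}$ (resp.\ $l_{10}$) and the contracted surface are invariant (Lemma~\ref{Lem:InvariantUme}). Hence $\varphi\Autz(X)\varphi^{-1}\subseteq\Autz(X')$. For the reverse inclusion, the inverse map blows up $l_{10}\subseteq X'$ (resp.\ $l_{00}\subseteq X'$). In case~\ref{Blowl00egal}, $X'$ has $P'_0=0$, so $l_{10}\subseteq X'$ is $\Autz(X')$-invariant by Lemma~\ref{Lem:InvariantUme}\ref{Invl1Um}. The surface $\pi'^{-1}(s_{-a})$ is always invariant. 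Thus $\varphi^{-1}$ is $\Autz(X')$-equivariant, and both inclusions give equality. Case~\ref{Blowl10egal} is symmetric, using invariance of $l_{00}$.

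For~\ref{Blowl10smaller}, with $b=1$ and $P=\lambda_1y_0y_1^0\cdot z^{c-p^{s_1}}$, dividing by $y_0z^a$ gives $b'=0$. The resulting $Z_a^{0,c-a,\ast}$ is decomposable by \cite[Corollary~3.3.7]{BFT}; I expect to have to normalize the sign of $c$ here via $\FF_a^{0,c}\simeq\FF_a^{0,-c}$. Equivariance gives one inclusion. Strictness holds because $\Autz(X')$ surjects onto $\Autz(\F_a)$ by Lemma~\ref{lem:aut-group-Fabc} and is not the conjugate. The cleanest argument is that $\Autz(\FF_a^{0,c'})$ acts on $\pi'^{-1}(s_{-a})$ without fixing the contracted curve, or alternatively that $\Autz(X')$ has dimension larger than $\Autz(X)$. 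The main obstacle I expect is the chart bookkeeping in~\ref{Blowl00egal}: checking that the transformed gluing on the second chart is exactly $\nu_{c+a,y_0z^aP}$, with no extra reduction needed, and that the map really is the stated blow-up/contraction near $z=\infty$.
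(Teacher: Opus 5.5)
Your proposal is correct and follows the paper's proof. Both use the chart map $([x_0:x_1;y_0:y_1],z)\mapsto([x_0:x_1y_0;y_0:y_1],z)$ intertwining $\nu_{c,P}$ with $\nu_{c+a,y_0z^aP}$, the invariance of $l_{00}$ and $l_{10}$ from Lemma~\ref{Lem:InvariantUme} for the two inclusions, the inverse link for~\ref{Blowl10egal}, and decomposability via \cite{BFT} when $b=1$.

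The one point to tighten is strictness in~\ref{Blowl10smaller}. Surjectivity onto $\Autz(\F_a)$ cannot distinguish the groups, since $P=\lambda_1y_0z^{c-p^{s_1}}$ does not involve $y_1$ and so $\Autz(X)$ already surjects. Your dimension count does work, because Lemma~\ref{AutZaF1} gives $\Aut(X)_{\F_a}=\k[z_0,z_1]_{a-c}=\{0\}$, while $\Autz(X')$ has a nontrivial vertical part. The paper makes the equivalent observation that $\Autz(\FF_a^{0,c})$ contains a $3$-dimensional torus.
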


\begin{proof}
	For each $\beta\in \N$, we denote by $\varphi_\beta$ the birational map
	\[
	\begin{aligned}
		\varphi_\beta\colon \F_\beta\times \A^1
		&\dashrightarrow \F_{\beta+1}\times \A^1,\\
		([x_0:x_1;y_0:y_1],z)
		&\mapsto ([x_0:x_1y_0;y_0:y_1],z),
	\end{aligned}
	\]
	which blows up $\{x_0=y_0=0\}$ and contracts $\{y_0=0\}$ onto $\{x_1=y_0=0\}$.
	
	For $\beta\in \N$, $\gamma\in \Z$ and
	\[
	Q=\sum_{i=0}^\beta y_0^iy_1^{\beta-i}Q_i(z)z^{ai+1},
	\]
	where $Q_i\in \k[z,1/z]$ for each $i\in \{0,\ldots,\beta\}$, we denote by
	$\nu_{\beta,\gamma,Q}\in \Aut(\F_{\beta}\times (\A^1\setminus\{0\}))$
	the automorphism
	\[
	\nu_{\beta,\gamma,Q}\colon
	([x_0:x_1;y_0:y_1],z)
	\mapsto
	\left(
	[x_0:x_1z^{\gamma}+x_0Q(y_0,y_1,z);y_0z^a:y_1],
	\frac{1}{z}
	\right).
	\]
	A direct calculation gives
	\[
	\nu_{\beta+1,\gamma+a,y_0z^aQ}\circ \varphi_\beta
	=
	\varphi_\beta\circ \nu_{\beta,\gamma,Q}.
	\]
	
	\ref{Blowl00egal}
	Choosing $\beta=b$, $\gamma=c$ and $Q=P$, gluing $\varphi_b$ on both charts gives the birational map
	$\varphi\colon X\dashrightarrow X'$ of \ref{Blowl00egal}, which is the blow-up of $l_{00}\subseteq X$ followed by the contraction of the strict transform of
	$\pi^{-1}(s_{-a})\subseteq X$ onto $l_{10}\subseteq X'$.
	As $c+a\ge c\ge 2$, $b+1\ge b\ge 1$ and $P'=y_0z^aP$ satisfy the conditions defining an Umemura $\P^1$-bundle, we find that $X'\to \F_a$ is an Umemura $\P^1$-bundle.
	Lemma~\ref{Lem:InvariantUme}\ref{Invl0Um} shows that $l_{00}\subset X$ is $\Autz(X)$-invariant, so
	$\varphi \Autz(X) \varphi^{-1}\subseteq \Autz(X')$.
	Lemma~\ref{Lem:InvariantUme}\ref{Invl1Um} shows that $l_{10}\subseteq X'$ is $\Autz(X')$-invariant, so
	$\varphi^{-1} \Autz(X') \varphi\subseteq \Autz(X)$.
	This gives
	$\varphi \Autz(X) \varphi^{-1}=\Autz(X')$.
	
	\ref{Blowl10egal}--\ref{Blowl10smaller}
	We now assume that $P_0=0$ and apply the construction above with
	$\beta=b-1$, $\gamma=c-a$ and $Q=P/(y_0z^a)$, so that
	$b=\beta+1$, $c=\gamma+a$ and $y_0z^aQ=P$.
	Hence, gluing $\varphi_{b-1}^{-1}$ on both charts gives a birational map
	$\varphi\colon X\dashrightarrow X'$, where
	$X'=Z_a^{b',c',P'}$, with
	$(b',c',P')=(b-1,c-a,P/(y_0z^a))$.
	Moreover, Lemma~\ref{Lem:InvariantUme}\ref{Invl1Um} shows that $l_{10}\subseteq X$ is $\Autz(X)$-invariant, so
	$\varphi \Autz(X) \varphi^{-1}\subseteq \Autz(X')$.
	In particular, $\SL_2$ acts on $X'$.
	
	If $b\ge 2$, then $b'=b-1\ge 1$ and $c'=c-a\ge 2$, since there exists $i\ge 1$ such that $P_i\neq 0$ (as $P_0=0$), and thus
	$0\le c-2-ai\le c-2-a$.
	Moreover, for each $i\in\{1,\ldots,b\}$, we have
	$P_i=P'_{i-1}\in \k[z]_{\le c'-2-a(i-1)}$, with
	$c'-2-a(i-1)=c-2-ai$.
	We thus obtain an Umemura $\P^1$-bundle $X'\to \F_a$.
	Applying \ref{Blowl00egal} to $\varphi^{-1}$ gives
	$\varphi \Autz(X) \varphi^{-1}=\Autz(X')$.
	This proves \ref{Blowl10egal}.
	
	If $b=1$, then $b'=b-1=0$, so $X'$ is not an Umemura $\P^1$-bundle.
	The morphism $X'\to \P^1$ is an $\F_0$-bundle, and thus
	$X'\to \F_a$ is isomorphic to the decomposable $\P^1$-bundle
	$\FF_a^{0,c}$ by \cite[Proposition~3.3.1]{BFT}.
	We then have
	$\varphi \Autz(X) \varphi^{-1}\subsetneq\Autz(X')$,
	since $\Autz(\FF_a^{0,c})$ contains a torus of dimension $3$
	(Lemma~\ref{lem:aut-group-Fabc}).
	This proves \ref{Blowl10smaller}.
\end{proof}

Let
$H=\Im\bigl(\Autz(\U_{a,\omega}^{b,c})\to\Autz(\F_a)\bigr)$.
Since $H$ contains the standard subgroup
$S=\Im(\SL_2\to\GL_2/\mu_a)$, there are two possibilities
in every positive characteristic:
\begin{enumerate}
	\item
	up to conjugation, $H$ is equal to $S$ or $\GL_2/\mu_a$,
	and acts on $\F_a$ with three orbits: the sections
	$s_a$, $s_{-a}$, and their open complement; or
	\item
	$H$ acts on $\F_a$ with two orbits: the section $s_{-a}$
	and its open complement.
\end{enumerate}
For $a\geq2$, this follows from
Proposition~\ref{prop:trichotomy for algebraic subgroups of AUt(Fa)}.
For $a=1$, the proof of Lemma~\ref{lem:UmeF1twoorbits}
shows that either $U_1\subseteq H$ or $H\subseteq\GL_2$,
where $U_1=\k[z_0,z_1]_1$.
In the latter case, $H=\SL_2$ or $H=\GL_2$.

We first treat the three-orbit case, which admits an equivariant
birational reduction to the two-orbit case.

\begin{proposition}\label{prop:case H=GL2/mu_a}
	Let $\pi\colon X=\U_{a,\omega}^{b,c}\to\F_a$ be an Umemura
	$\P^1$-bundle, and set
	\[
	H:=\Im\bigl(\Autz(X)\to\Aut(\F_a)\bigr), \quad
	G:=\mathrm{pr}_{\GL_2/\mu_a}(H).
	\]
	Assume that $H$ preserves the section $s_a$.
	Writing $X=Z_a^{b,c,P}$, there exists an infinite sequence of birational maps
	\[
	X=X_0\stackrel{\varphi_1}{\dashrightarrow} X_1
	\stackrel{\varphi_2}{\dashrightarrow} X_2
	\stackrel{\varphi_3}{\dashrightarrow} \cdots
	\]
	where, for each $i\ge 1$, $X_i=Z_a^{b+i,c,y_1^iP}$,
	$\varphi_i$ is the blow-up of $l_{01}\subseteq X_{i-1}$
	followed by the contraction of the strict transform of
	the preimage of $s_a$ onto $l_{11}\subseteq X_i$, and
	$\psi_i=\varphi_i\cdots\varphi_1\colon X\dashrightarrow X_i$
	satisfies
	$\psi_i\Autz(X)\psi_i^{-1}\subseteq\Autz(X_i)$.
	
	Moreover, there exists an integer $r\ge 1$ such that
	$\psi_r\Autz(X)\psi_r^{-1}\subsetneq\Autz(X_r)$
	and $\Autz(X_r)$ acts on $\F_a$ with exactly two orbits.
	If $p=2$, $G=\GL_2/\mu_a$, or $a$ is not a power of $p$,
	we may moreover assume that $\Autz(X_r)$ acts on $\F_a$
	as $\k[z_0,z_1]_a\rtimes G$.
\end{proposition}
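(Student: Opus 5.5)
The plan is to mirror the link construction of Proposition~\ref{prop: description of the equ links Umemura}, but centred on the positive section $s_a$ instead of $s_{-a}$. First I write down the local elementary transformation. On $\F_\beta\times\A^1$ consider
\[
\phi_\beta\colon([x_0:x_1;y_0:y_1],z)\mapsto([x_0:x_1y_1;y_0:y_1],z).
\]
It blows up $\{x_0=y_1=0\}$ and contracts $\{y_1=0\}$ onto $\{x_1=y_1=0\}$, with target $\F_{\beta+1}\times\A^1$. A direct check gives $\nu_{c,y_1Q}\circ\phi_\beta=\phi_\beta\circ\nu_{c,Q}$: the exponent $c$ stays fixed, because $y_1$ is not rescaled by the gluing, and $b$ increases by one. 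Gluing these maps on the two charts yields $\varphi_i\colon X_{i-1}\dashrightarrow X_i=Z_a^{b+i,c,y_1^iP}$. The polynomial $y_1^iP$ is already in normal form, since each $y_0^jy_1^{b+i-j}$-component keeps the same $P_j$ and the bound $c-2-aj$ is unchanged. It is nonzero, so $X_i$ is indecomposable.

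Next comes equivariance. Since $H$ preserves $s_a$, the surface $\pi^{-1}(s_a)$ is $\Autz(X)$-invariant, and so is the surface $S_{-b}=\{x_0=0\}$. Hence $l_{01}=\pi^{-1}(s_a)\cap S_{-b}$ is invariant, and $\varphi_1$ conjugates $\Autz(X)$ into $\Autz(X_1)$. Inductively, the image of $\psi_{i-1}\Autz(X)\psi_{i-1}^{-1}$ in $\Aut(\F_a)$ is still $H$, which preserves $s_a$. The same argument on $X_{i-1}$ then applies to this subgroup, because $l_{01}\subseteq X_{i-1}$ is invariant under the subgroup even if not under all of $\Autz(X_{i-1})$. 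This gives $\psi_i\Autz(X)\psi_i^{-1}\subseteq\Autz(X_i)$.

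Then I need strictness and the two-orbit property for large $i$. By Lemma~\ref{AutZaF1} and Remark~\ref{rk aut vert Umemura bundles}, the vertical group of $X_i$ is $\bigoplus_{j=1}^{b+i}\k[z_0,z_1]_{aj-c}$, so its dimension grows without bound as $i\to\infty$. The vertical group of $X$ conjugates into it, but dimensions alone do not force strictness of the whole group. The real point is the horizontal image. For $j<u$ the components are unchanged, and the translations $R\in U_a=\k[z_0,z_1]_a$ act through the binomials $\binom{b+i-j}{k-j}$ of formula $(\heartsuit)$ in the proof of Proposition~\ref{propMonom2}. I choose $i$ so that $p^{\nu_p(b+i-i_0)}$ is huge: take $b+i\equiv i_0\pmod{p^N}$ with $p^N>c/a$. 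Then Lucas's theorem kills every $\delta_{i_0,k}$ with $n_k\ge0$, and likewise for the other nonzero $P_j$, using the analogous valuation condition. So $u$ satisfies $n_u<0$, and Proposition~\ref{propMonom2} (after conjugating by a torus-lifting element, Lemma~\ref{lem:lifttorus}, if needed) gives $U_a\subseteq H_r$. Thus $H_r=U_a\rtimes G$, which has two orbits on $\F_a$ and does not preserve $s_a$. Strictness follows because $\psi_r\Autz(X)\psi_r^{-1}$ maps to $H$, which preserves $s_a$.

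The main obstacle is this choice of $r$. One must arrange the valuations for all nonzero $P_j$ simultaneously. One must also handle the cases where Proposition~\ref{propMonom2} does not apply ($p$ odd and $a$ a power of $p$). There I would only claim two orbits, by showing directly that the binomials make a nonzero subspace of $U_a$ (for instance $z_0^a,z_1^a$ raised to $p$-powers) act trivially. The final refinement $H_r=U_a\rtimes G$ is exactly where the hypotheses $p=2$, $G=\GL_2/\mu_a$, or $a$ not a power of $p$ enter.
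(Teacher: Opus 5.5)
Your link construction (including $\nu_{c,y_1Q}\circ\phi_\beta=\phi_\beta\circ\nu_{c,Q}$), the normal-form check for $y_1^iP$, the equivariance argument, the choice $b+r\equiv i_0\pmod{p^N}$ with $p^N$ large, and the strictness argument via $s_a$ are all exactly as in the paper.

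The gap is the phrase ``likewise for the other nonzero $P_j$, using the analogous valuation condition''. You then list this as the main obstacle, to be handled by the choice of $r$, and that cannot work. When $|I|\geq 2$, which is precisely the case $G=S$, the conditions $\nu_p(b+r-j)\geq M$ and $\nu_p(b+r-j')\geq M$ force $p^M\mid j-j'$. Moreover, once $b+r\equiv i_0\pmod{p^N}$ with $p^N>\max(b,c)$, one has $\nu_p(b+r-j)=\nu_p(j-i_0)$ for every $j\in I\setminus\{i_0\}$. So these valuations are not at your disposal.

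The missing idea is that this valuation is forced by the Umemura relation $a(j-i_0)=2(p^{s_{i_0}}-p^{s_j})$, with $s_j<s_{i_0}$. Write $a=p^ta_1$ with $p\nmid a_1$. Comparing $p$-adic valuations gives $a\,p^{\nu_p(j-i_0)}=a_1p^{s_j}$ for $p$ odd, and $2a_1p^{s_j}$ for $p=2$. Hence $q_j:=p^{\nu_p(b+r-j)}$ satisfies $aq_j\geq p^{s_j}$; for $j=i_0$ this is clear since $q_{i_0}\geq p^N$.

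\emph{Case $p=2$ or $a_1\geq 2$.} Then $n_{j+q_j}=c-2-a(j+q_j)<0$; for $p$ odd it equals $p^{s_j}(2-a_1)-2$. So indeed $n_u<0$, and Proposition~\ref{propMonom2} gives $U_a\subseteq H_r$.

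\emph{Case $p$ odd and $a=p^t$.} One only gets $aq_j=p^{s_j}$, so $n_{j+q_j}=p^{s_j}-2$. Since $n_u\geq n_{j+q_j}$, your claim $n_u<0$ is false whenever some $j\in I\setminus\{i_0\}$ has $s_j\geq 1$. Proposition~\ref{propMonom2} is unavailable here in any case. The paper instead uses the same inequality to check directly that the translations $\mu z_0^a$ fix the parameter of $X_r$. The substitution $y_1\mapsto y_1-\mu y_0$ turns $\lambda_jy_0^jy_1^{b+r-j}z^{aj+p^{s_j}}$ into new terms of $y_0$-degree $j+m$ with $m\geq q_j$ and $z$-degree $aj+p^{s_j}\leq a(j+m)$. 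These vanish in normal form, and together with $S\subseteq H_r$ this gives the two orbits.

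Your hint about $z_0^a$ points in the right direction. Without the inequality $aq_j\geq p^{s_j}$, however, neither case is actually proved. In the single-index case $G=\GL_2/\mu_a$, your Lucas argument already shows directly that $U_a$ fixes the parameter, so that case is fine.
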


\begin{proof}
	Write
	\[
	P=\sum_{i\in I}
	\lambda_i y_0^iy_1^{b-i}z^{ai+p^{s_i}},
	\qquad
	ai+2p^{s_i}=c,
	\qquad
	\lambda_i\in\k^*,
	\]
	where $I=\{i\mid P_i\neq0\}$, and set $i_0=\min I$.
	Choose $N$ such that $p^N>\max(b,c)$, and put
	\[
	B:=i_0+p^N,
	\qquad
	r:=B-b\geq1.
	\]
	For $0\leq j\leq r$, set
	\[
	Q_j:=y_1^jP,
	\qquad
	\pi_j\colon X_j:=Z_a^{b+j,c,Q_j}\to\F_a.
	\]
	The nonzero coefficients $P_i$ are unchanged (we only add
	some zero polynomials), so each $X_j$ is again an Umemura
	$\P^1$-bundle.
	For $0\leq j<r$, the birational map
	\[
	\begin{aligned}
		\F_{b+j}\times\A^1
		&\dashrightarrow \F_{b+j+1}\times\A^1,\\
		([x_0:x_1;y_0:y_1],z)
		&\mapsto ([x_0:x_1y_1;y_0:y_1],z)
	\end{aligned}
	\]
	on each chart is the blow-up of $l_{01}$, followed by the
	contraction of the strict transform of $\{y_1=0\}$,
	corresponding to the preimage of $s_a$.
	Since this map is compatible with the transition functions,
	it gives an elementary link
	$\varphi_{j+1}\colon X_j\dashrightarrow X_{j+1}$ over $\F_a$.
	
	These links are equivariant for the original group $\Autz(X)$,
	since $x_0=0$ is the union of the negative sections and
	$\{y_1=0\}=s_a$ is invariant under $H$.
	
	We now write
	\[
	H_r:=\Im\bigl(\Autz(X_r)\to\Aut(\F_a)\bigr).
	\]
	
	If $G=\GL_2/\mu_a$, then $I=\{i_0\}$ and
	\[
	P=\lambda_{i_0}y_0^{i_0}y_1^{b-i_0}
	z^{ai_0+p^{s_{i_0}}},
	\]
	so
	\[
	Q_r=\lambda_{i_0}y_0^{i_0}y_1^{p^N}
	z^{ai_0+p^{s_{i_0}}}.
	\]
	As $R\in\k[z_0,z_1]_a$ acts by replacing $y_1$ with
	$y_1+y_0R$, it sends $Q_r$ to
	\[
	Q_r+y_0^B R^{p^N}z^{ai_0+p^{s_{i_0}}},
	\]
	which is equivalent to $Q_r$, since $c-2-ap^N<0$.
	This proves that $H_r=\Aut(\F_a)$ in this case.
	
	We now assume that
	$G=S=\Im\bigl(\SL_2\to\GL_2/\mu_a\bigr)$.
	
	If $p=2$, or if $p$ is odd and $a$ is not a power of $p$,
	Proposition~\ref{propMonom2} implies that
	$\k[z_0,z_1]_a\subseteq H_r$.
	Indeed, in this case
	$u=i_0+p^{\nu_p(B-i_0)}=B$,
	$q=p^N$ and $n_u=n_{i_0}-aq<0$, so
	$K=\k[z_0,z_1]_a$.
	This proves that
	$H_r=\k[z_0,z_1]_a\rtimes G$ in this case.
	
	We may thus assume that $p$ is odd and that $a=p^t$
	for some $t\geq0$.
	We show that $H_r$ contains the translations
	$R=\mu z_0^a$, for all $\mu\in\k$.
	For $i\in I$, set
	\[
	q_i:=p^{\nu_p(B-i)}.
	\]
	We first observe that $aq_i\geq p^{s_i}$.
	For $i=i_0$, this follows from
	$q_{i_0}=p^N>c>2p^{s_{i_0}}$.
	For $i>i_0$, we have $s_i<s_{i_0}$ and
	\[
	a(i-i_0)=2(p^{s_{i_0}}-p^{s_i}).
	\]
	We obtain
	$\nu_p(i-i_0)=s_i-t$.
	Since $p^N>i-i_0$, this gives
	\[
	q_i=p^{\nu_p(B-i)}
	=p^{\nu_p(p^N-(i-i_0))}
	=p^{s_i-t},
	\]
	and thus $aq_i=p^{s_i}$.
	
	By Lemma~\ref{Lemm:ActionAutFaParSpace}\ref{ActionFap},
	the translation $R=\mu z_0^a$ acts by substituting
	$y_1-\mu y_0$ for $y_1$ and reducing to normal form.
	Writing $B-i=q_im_i$, with $p\nmid m_i$, we have
	\[
	(y_1-\mu y_0)^{B-i}
	=
	(y_1^{q_i}-\mu^{q_i}y_0^{q_i})^{m_i}.
	\]
	Thus every additional term has $y_0$-degree at least $q_i$.
	Hence each additional term arising from the $i$-th component
	of $Q_r$ has the form
	\[
	\alpha\,y_0^{i+m}y_1^{B-i-m}z^{ai+p^{s_i}},
	\qquad \alpha\in\k,\quad m\geq q_i.
	\]
	Its $z$-degree is strictly smaller than $a(i+m)+1$,
	since $am\geq p^{s_i}$.
	
	Hence, when writing the resulting polynomial in the normal form
	of Proposition~\ref{prop: paramter space for non-dceomposable P1 bundles},
	these additional terms do not contribute to the coefficient
	$P_{i+m}(z)$ of
	$y_0^{i+m}y_1^{B-(i+m)}z^{a(i+m)+1}$.
	
	Thus the connected subgroup
	$\{\mu z_0^a\mid \mu\in\k\}$
	fixes the parameter class of $X_r$ and is contained in $H_r$.
	This implies that $H_r$ acts on $\F_a$ with exactly two orbits,
	so
	$\psi_r\Autz(X)\psi_r^{-1}\subsetneq\Autz(X_r)$.
\end{proof}
	
According to Proposition~\ref{prop:case H=GL2/mu_a}, to study the relative maximality of Umemura $\P^1$-bundles,
it remains to consider those whose neutral automorphism groups
act on $\F_a$ with exactly two orbits.

\begin{proposition}\label{prop:birP1bUme}
	Let $\pi\colon X=\U_{a,\omega}^{b,c}=Z_a^{b,c,P}\to\F_a$
	be an Umemura $\P^1$-bundle, with
	$	P(y_0,y_1,z)=\sum_{i=0}^b y_0^iy_1^{b-i}P_i(z)z^{ai+1}$.
	Set $i_0=\min\{i\mid P_i\neq0\}$, and assume that the action of
	$\Autz(X)$ on $\F_a$ has exactly two orbits. Then the following hold:
	\begin{enumerate}
		\item\label{SequenceLinks}
		There is an infinite sequence of elementary links
		\[
		Y_s\dashrightarrow Y_{s+1}\dashrightarrow Y_{s+2}\dashrightarrow\cdots,
		\]
		where $s=\max(-i_0,-b+1)\leq0$ and
		$Y_n=Z_a^{b+n,c+an,(y_0z^a)^nP}$ for each $n\geq s$ and $Y_0=X$. Each link conjugates the neutral components of
		the automorphism groups.
		
		\item\label{i0bNotm}
		If $i_0=b$, there is a birational map of $\P^1$-bundles
		$\varphi\colon X\dashrightarrow X'$ satisfying
		\[
		\varphi\Autz(X)\varphi^{-1}\subsetneq\Autz(X'),
		\]
		where $X'\to\F_a$ is a decomposable $\P^1$-bundle.
		In particular, $\Autz(X)$ is not relatively maximal.
		
		\item\label{i0PbOnly}
		If $i_0<b$, then every $\Autz(X)$-equivariant birational map of $\p^1$-bundles is, up to isomorphisms of $\p^1$-bundles,
		a composition of the elementary links in~\ref{SequenceLinks}
		and their inverses.
	\end{enumerate}
\end{proposition}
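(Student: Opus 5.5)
Assertion~\ref{SequenceLinks} comes from iterating Proposition~\ref{prop: description of the equ links Umemura}. Applying item~\ref{Blowl00egal} repeatedly to $Y_n$ produces $Y_{n+1}=Z_a^{b+n+1,c+a(n+1),(y_0z^a)^{n+1}P}$. Each step is an elementary link centered at $l_{00}$, and it conjugates the neutral components exactly. For the negative direction we use item~\ref{Blowl10egal}. It applies to $Y_n$ as long as the first nonzero index of $(y_0z^a)^nP$, which is $i_0+n$, is positive, and the fiber index $b+n$ is at least $2$. Hence we can descend down to $n=s=\max(-i_0,-b+1)$. We also have to check that the two-orbit hypothesis survives along the sequence. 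This is automatic, because conjugation of the full groups preserves the image on $\F_a$: all the $Y_n$ have the same $H$.

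For~\ref{i0bNotm}, suppose $i_0=b$. Then $s=-b+1$ and the descent stops at $Y_{-b+1}$, which has fiber index $1$ and whose polynomial is divisible by $y_0$, i.e.\ its $P_0=0$. Proposition~\ref{prop: description of the equ links Umemura}\ref{Blowl10smaller} now gives a link to a decomposable $\FF_a^{0,c'}$ that strictly enlarges the group. Composing this link with the earlier ones, which all conjugate the groups, gives $\varphi$. None of these maps moves the base, so $\varphi$ is a birational map of $\P^1$-bundles.

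For~\ref{i0PbOnly}, take an $\Autz(X)$-equivariant birational map of $\P^1$-bundles. The image $H$ has two orbits on $\F_a$, $s_{-a}$ and its complement. So Lemma~\ref{Lemm:DecElLinks}, applied with the pair $(K,\F_a)$, decomposes the map into elementary links whose centers are $G$-invariant curves mapping isomorphically onto $s_{-a}$. We then argue by induction on the length. At every intermediate bundle we must know that it is again some $Y_n$, or at least that its invariant curves are the expected ones; Lemma~\ref{Lem:InvariantUme}\ref{OnlyInvTwo} then says the only possible centers are $l_{00}$ and $l_{10}$. For $Y_n$, blowing up $l_{00}$ is the forward link. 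Blowing up $l_{10}$ is possible only when $P_0=0$, and then it is the inverse link, as long as the fiber index stays at least $2$. The problem is the endpoint $n=s$. If $s=-i_0$, then $P_0\neq0$ there, so $l_{10}$ is not invariant and there is nothing to check. If instead $s=-b+1>-i_0$, then $b-1<i_0$, which forces $i_0=b$; this contradicts $i_0<b$. So at every stage only the listed links occur. Uniqueness up to isomorphism in Lemma~\ref{Lemm:DecElLinks} shows that each step is the one from~\ref{SequenceLinks} up to an isomorphism of $\P^1$-bundles, and the induction then covers the whole decomposition.

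The main obstacle is keeping the group under control inside the induction. An intermediate link is only $G$-equivariant, with $G$ the original group. So we need that $G$ still acts on $Y_n$ with image having two orbits, and that the $G$-invariant curves in $\pi^{-1}(s_{-a})$ are still only $l_{00}$ and $l_{10}$. Both follow because the earlier links conjugate $G$ onto the full group $\Autz(Y_n)$. Then Lemma~\ref{Lem:InvariantUme}\ref{OnlyInvUm} and~\ref{OnlyInvTwo} apply directly to $Y_n$.
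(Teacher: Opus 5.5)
Your proof is correct and follows essentially the same route as the paper: (1) and (2) come from iterating Proposition~\ref{prop: description of the equ links Umemura}, and (3) from Lemma~\ref{Lemm:DecElLinks} combined with Lemma~\ref{Lem:InvariantUme} applied to each $Y_n$. As in the paper, the links conjugate the full neutral groups, so the two-orbit hypothesis and the list of invariant curves carry over along the sequence. One small imprecision: to identify each factor with a link from (1) you need that an elementary link is determined by its center, up to isomorphism of $\P^1$-bundles at the target. The uniqueness clause of Lemma~\ref{Lemm:DecElLinks} is not the right tool here, since it only concerns minimal decompositions.
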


\begin{proof}
	Assertion~\ref{SequenceLinks} follows from
	Proposition~\ref{prop: description of the equ links Umemura}.
	The links from left to right always exist, are given by
	\ref{prop: description of the equ links Umemura}\ref{Blowl00egal},
	and start by blowing up $l_{00}$.
	The links from right to left are given by
	\ref{prop: description of the equ links Umemura}\ref{Blowl10egal}
	and start by blowing up $l_{10}$.
	The choice of $s$ ensures that $Y_n\dashrightarrow Y_{n-1}$
	exists for each $n>s$, since $b+n\geq2$ and $n+i_0\geq1$,
	so $y_0$ divides $(y_0z^a)^nP$.
	The assertion about the automorphism groups follows from the same
	proposition.
	
	\ref{i0bNotm}:
	Assume that $i_0=b$. Then
	$Y_s=Y_{-b+1}=Z_a^{1,c+as,(y_0z^a)^{-b+1}P}$. 
	Proposition~\ref{prop: description of the equ links Umemura}
	\ref{Blowl10smaller} gives a birational map of $\P^1$-bundles
	$\psi\colon Y_s\dashrightarrow X'$ satisfying
	\[
	\psi\Autz(Y_s)\psi^{-1}\subsetneq\Autz(X'),
	\]
	where $X'\to\F_a$ is a decomposable $\P^1$-bundle.
	Composing $\psi$ with the links from $X$ to $Y_s$ gives
	\ref{i0bNotm}.
	
	\ref{i0PbOnly}:
	Assume that $i_0<b$, so $s=-i_0$.
	By Lemma~\ref{Lemm:DecElLinks}, up to isomorphisms of
	$\P^1$-bundles over $\F_a$, every equivariant birational map
	of $\P^1$-bundles over $\F_a$ factors into elementary links
	above $s_{-a}$.
	Since the links in~\ref{SequenceLinks} conjugate the neutral
	components of the automorphism groups, the action of
	$\Autz(Y_n)$ on $\F_a$ has exactly two orbits for every $n\geq s$.
	By Lemma~\ref{Lem:InvariantUme} and
	Proposition~\ref{prop: description of the equ links Umemura},
	the only such link from $Y_s=Y_{-i_0}$ is
	$Y_s\dashrightarrow Y_{s+1}$, whereas from $Y_n$, for $n>s$,
	they are $Y_n\dashrightarrow Y_{n+1}$ and
	$Y_n\dashrightarrow Y_{n-1}$.
	This proves~\ref{i0PbOnly}.
\end{proof}

We now consider Umemura $\P^1$-bundles over $\F_1$. The following lemma describes when the action on $\F_1$ has two orbits.

\begin{lemma}\label{lem:UmeF1twoorbits}
	Let $\pi\colon X=\U_{1,\omega}^{b,c}=Z_1^{b,c,P}\to\F_1$
	be an Umemura $\P^1$-bundle. We can write
	\[
	P=\sum_{i\in I}
	\lambda_i y_0^iy_1^{b-i}z^{i+p^{s_i}},
	\qquad
	i+2p^{s_i}=c,
	\qquad
	\lambda_i\in\k^*,
	\qquad s_i\geq0,
	\]
	where $I\subseteq\{0,\ldots,b\}$ is nonempty, and set
	$i_0=\min I$, $r=p^{s_{i_0}}$, and $m=b-i_0$. 
	Let $H=\Im\bigl(\Autz(X)\to\Aut(\F_1)\bigr)$ and
	$U=\k[z_0,z_1]_1$. The following conditions are equivalent:
	\begin{enumerate}
		\item\label{UmeF1twoorbits1}
		The group $\Autz(X)$ $($equivalently, $H)$ acts on $\F_1$ with exactly two orbits;
		\item\label{UmeF1twoorbits2}
		The group $H$ is not contained in $\GL_2$;
		\item\label{UmeF1twoorbits3}
		$U\subseteq H$;
		\item\label{UmeF1twoorbits4}
		$(U\cap H)(\k)\neq\{0\}$;
		\item\label{UmeF1twoorbits5}
		$r\mid m$.
	\end{enumerate}
\end{lemma}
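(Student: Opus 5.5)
We prove the cycle of implications $\ref{UmeF1twoorbits3}\Rightarrow\ref{UmeF1twoorbits1}\Rightarrow\ref{UmeF1twoorbits2}\Rightarrow\ref{UmeF1twoorbits4}\Rightarrow\ref{UmeF1twoorbits5}\Rightarrow\ref{UmeF1twoorbits3}$, working throughout with the description of $H$ as the reduced identity component of the stabilizer of $[P]\in\M_1^{b,c}$. By Lemma~\ref{lem:characterization-Umemura}, $H$ contains the standard $S=\SL_2\subseteq\GL_2$ (here $\mu_1$ is trivial).

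The implications $\ref{UmeF1twoorbits3}\Rightarrow\ref{UmeF1twoorbits1}\Rightarrow\ref{UmeF1twoorbits2}$ are formal. If $U\subseteq H$, then $U$ acts transitively on each fiber minus $s_{-1}$ (evaluation of $U$ at any point of $\P^1$ is surjective), and $S$ is transitive on the base, so there are exactly two orbits. If $H\subseteq\GL_2$, then $H$ preserves $s_1$, so there are at least three orbits. For $\ref{UmeF1twoorbits2}\Rightarrow\ref{UmeF1twoorbits4}$, take $(R,g)\in H$ with $R\ne0$. Since $S\subseteq H$ and $H\to\GL_2$ has image $S$ or $\GL_2$, we may multiply by $(0,g)^{-1}$ when $g\in S$. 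Otherwise the image is $\GL_2$, and $H\cap(U\rtimes\SL_2)$ surjects onto $\SL_2$; the same trick, combined with a commutator with the torus if needed, yields a nonzero element of $U\cap H$. This step needs a little care, but $H\cap U$ is $S$-stable and $U\simeq V_1$ is irreducible, which should make it routine.

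The core of the proof is the computation of the action of a translation $R=\mu_0z_0+\mu_1z_1$ on $[P]$, using Lemma~\ref{Lemm:ActionAutFaParSpace}\ref{ActionFap}. Substituting $y_1-y_0R(1,z)$ gives
\[
y_0^iy_1^{b-i}\longmapsto\sum_j\binom{b-i}{j}(-1)^jR(1,z)^jy_0^{i+j}y_1^{b-i-j}.
\]
We then reduce to normal form, retaining in component $i+j$ only the powers $z^e$ with $i+j<e<c$. The term coming from $i_0$ has $z$-degree between $i_0+r$ and $i_0+r+j$, while the window for component $i_0+j$ is $i_0+j<e<c=i_0+2r$. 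Take the smallest $j\geq1$ with $\binom{m}{j}\ne0$ in $\k$, namely $j=p^{\nu_p(m)}$. Then $R(1,z)^j=\mu_0^j+\mu_1^jz^j$, and the resulting term $\mu_0^jz^{i_0+r}+\mu_1^jz^{i_0+r+j}$ survives reduction unless $j\ge r$ and $j\geq r$ respectively. Moreover, since $s_i$ is strictly decreasing in $i$, it cannot be cancelled by a term coming from a higher $i\in I$, nor by the original $P_{i_0+j}$.

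For $\ref{UmeF1twoorbits4}\Rightarrow\ref{UmeF1twoorbits5}$: a nonzero $R\in H\cap U$ can be moved by $S$ to any nonzero vector, so every nonzero $R$ lies in $H$, and the computation above then forces $p^{\nu_p(m)}\geq r$, i.e.\ $r\mid m$. Note that $m>0$ is automatic here: if $m=0$, then $I=\{b\}$, $U$ fixes $P$, and condition \ref{UmeF1twoorbits5} holds trivially, so this case must be treated separately by the same remark. For $\ref{UmeF1twoorbits5}\Rightarrow\ref{UmeF1twoorbits3}$: if $r\mid m$, we must show that all cross terms reduce to zero. For $i\in I$, the relation $i-i_0=2(r-p^{s_i})$ gives $\nu_p(b-i)\geq s_i$, so every $j$ with $\binom{b-i}{j}\neq0$ satisfies $j\geq p^{s_i}$. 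The new term then has $z$-degree at most $i+p^{s_i}+j\le i+j+c$, and at least $i+p^{s_i}$, which lies in the window only when $i+j<i+p^{s_i}$, impossible; the upper end must be checked to lie at $\geq c$ or be absorbed in the same way.

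The main obstacle is the last verification: getting the exact reduction window right, including the top-degree terms $z^{i+p^{s_i}+j'}$ for $0<j'\le j$, which must be shown to have exponent either $\le i+j$ or $\ge c$. We will check this digit by digit using Lucas's theorem, as in the proof of Proposition~\ref{propMonom2}.
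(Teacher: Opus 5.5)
Your treatment of (1)--(4) and of the direction $r\nmid m$ follows the paper's argument. The paper tests only the translation $-z_0$; your use of an arbitrary nonzero $R$ also works there, because $j=p^{\nu_p(m)}$ makes $R(1,z)^j=\mu_0^j+\mu_1^jz^j$.

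The gap is (5)$\Rightarrow$(3), which you leave as an unverified ``main obstacle''. For a general $R=\mu_0z_0+\mu_1z_1$, the cross term of component $i$ that lands in component $i+j$ is a multiple of $R(1,z)^jz^{i+p^{s_i}}$. Controlling only its lowest degree $i+p^{s_i}$ says nothing about the intermediate monomials $z^{i+p^{s_i}+k}$ with $0<k<j$, and your upper bound $i+p^{s_i}+j\le i+j+c$ is vacuous. As written, this direction is not proved.

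The missing observation is that you do not need to test all of $U$. In (4)$\Rightarrow$(5) you already use that one nonzero element of $H\cap U$ is carried by $\SL_2$ onto all of $U(\k)\setminus\{0\}$, which gives $U\subseteq H$. So, as in the paper, it suffices to check one translation with $R(1,z)$ constant, say $R=-z_0$, i.e.\ $y_1\mapsto y_1+y_0$.

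For $i\in I$ with $i<b$, you have $p^{s_i}\mid r\mid m$ and $b-i=m-2(r-p^{s_i})$, hence $p^{s_i}\mid b-i$. Lucas's theorem then gives $j\geq p^{s_i}$ whenever $j\geq1$ and $\binom{b-i}{j}\neq0$ in $\k$. The corresponding cross term has $y_0$-degree $i+j$ and $z$-degree exactly $i+p^{s_i}\le i+j$. It therefore lies outside the window $i+j<e<c$ and is discarded, so $[P]$ is fixed.

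If you prefer to keep a general $R$, it can also be finished. Set $q_i=p^{\nu_p(b-i)}\geq p^{s_i}$ and write $(y_1-y_0R)^{b-i}=(y_1^{q_i}-y_0^{q_i}R^{q_i})^{(b-i)/q_i}$. The $z$-exponents $k$ coming from $R(1,z)^j$ are then multiples of $q_i$. The case $k=0$ gives degree at most $i+j$, and $k\geq q_i$ gives degree at least $i+2p^{s_i}=c$, so every cross term is discarded.

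Two smaller points:
\begin{itemize}
\item In (4)$\Rightarrow$(5), justify non-cancellation by $i-i_0=2(r-p^{s_i})\geq 2(r-r/p)\geq r>j$ for $i\in I\setminus\{i_0\}$, not merely by monotonicity of $s_i$. Also note that the projective scalar is $1$, because translations leave the component $P_{i_0}$ unchanged.
\item In (2)$\Rightarrow$(4), a clean argument is this. For $(R,g)\in H(\k)$ and $h\in\SL_2$, one has $(R,g)(0,h)(R,g)^{-1}(0,ghg^{-1})^{-1}=(R-ghg^{-1}\cdot R,\,1)\in H\cap U$. This element is nonzero for a suitable $h$, since $U^{\SL_2}=0$ and $g\SL_2g^{-1}=\SL_2$.
\end{itemize}
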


\begin{proof}
	By Definition~\ref{def Umemura bundle}, we have
	\[
	P=\sum_{i\in I_{1,b,c}}
	\lambda_i y_0^{i}y_1^{b-i}z^{c-1-l_i},
	\qquad
	c-2-i=2l_i=2(p^{s_i}-1).
	\]
	Thus $c-i=2p^{s_i}$ and $c-1-l_i=i+p^{s_i}$.
	Retaining the nonzero coefficients yields the asserted form.
	
	Since
	$\SL_2\subseteq H\subseteq\Aut(\F_1)=U\rtimes\GL_2$
	and $\SL_2$ acts transitively on $U(\k)\setminus\{0\}$,
	either $(U\cap H)(\k)=\{0\}$ or $U\subseteq H$.
	In the first case, conjugating the standard $\SL_2$
	by elements of $H(\k)$ and projecting to $\GL_2$
	shows that $\SL_2(\k)$ is normal in $H(\k)$.
	Since $U^{\SL_2}=\{0\}$, every element of $H(\k)$
	has zero translation component.
	As $H$ is reduced, this gives $H\subseteq\GL_2$.
	The standard $\SL_2$ and $\GL_2$ each have exactly three orbits on $\F_1$,
	whereas $U\rtimes\SL_2$ has exactly two. This gives \ref{UmeF1twoorbits1}$\Leftrightarrow$\ref{UmeF1twoorbits2}$\Leftrightarrow$\ref{UmeF1twoorbits3}$\Leftrightarrow$ \ref{UmeF1twoorbits4} $\Leftrightarrow-z_0\in H$. It remains to show that $-z_0\in H\Leftrightarrow r\mid m$.
	
	By Lemma~\ref{Lemm:ActionAutFaParSpace}\ref{ActionFap},
	the action of $-z_0$ on $P$ is obtained by replacing $y_1$ with $y_1+y_0$
	and reducing to the normal form~\eqref{eq:def of P}.
	
	Assume first that $r\nmid m$, and write $q=p^{\nu_p(m)}<r$.
	Since $\binom{m}{q}\neq0$ in $\k$, the expansion of
	$\lambda_{i_0}y_0^{i_0}(y_1+y_0)^mz^{i_0+r}$
	produces the nonzero term
	\[
	\binom{m}{q}\lambda_{i_0}
	y_0^{i_0+q}y_1^{m-q}z^{i_0+r}.
	\]
	This term survives in normal form because
	\[
	i_0+q+1\leq i_0+r\leq c-1=i_0+2r-1.
	\]
	Moreover, for each $i\in I\setminus\{i_0\}$, we have
	\[
	i-i_0=2(r-p^{s_i})>0,
	\]
	so $s_i<s_{i_0}$ and hence $i-i_0\geq r$.
	Thus the term above cannot be canceled by a contribution from another monomial.
	It is absent from $P$, while the coefficient of
	$y_0^{i_0}y_1^mz^{i_0+r}$ remains unchanged.
	Therefore the transformed polynomial is not a scalar multiple of $P$,
	so $-z_0\notin H$.
	
	Assume now that $r\mid m$.
	For each $i\in I$, set $r_i=p^{s_i}$.
	Then $r_i\mid r$ and
	\[
	b-i=m-(i-i_0)=m-2(r-r_i),
	\]
	so $r_i\mid b-i$.
	Since $r_i$ is a power of $p$, every additional term arising from
	\[
	\lambda_i y_0^i(y_1+y_0)^{b-i}z^{i+r_i}
	\]
	has $y_0$-degree at least $i+r_i$ and $z$-degree $i+r_i$.
	All these terms vanish in normal form, since their $z$-degree
	is at most their $y_0$-degree.
	Thus $-z_0$ fixes $P$ so $-z_0\in H$.
\end{proof}

For each Umemura $\p^1$-bundle over $\F_1$, the descent lemma yields a
$\p^1$-bundle $\V\to\p^2$. Applying it to any bundle in the sequence of
Proposition~\ref{prop:birP1bUme}\ref{SequenceLinks} yields the same
$\p^1$-bundle $\V\to\p^2$. The next proposition characterizes when the corresponding
square birational map is a morphism.

\begin{proposition}\label{prop:UmeF1Contraction}
Let $\hat\pi\colon X=\U_{1,\omega}^{b,c}=Z_1^{b,c,P}\to\F_1$
be an Umemura $\P^1$-bundle, with
$P(y_0,y_1,z)
=
\sum_{i=0}^b
y_0^iy_1^{b-i}P_i(z)z^{i+1}$, and
assume that the action of $\Autz(X)$ on $\F_1$ has two orbits.

Let $\eta\colon\F_1\to\P^2$,
$[y_0:y_1;z_0:z_1]\mapsto[y_0z_0:y_1:y_0z_1]$,
be the blow-up of $q=[0:1:0]$, with exceptional section
$s_{-1}=\{y_0=0\}$. Then the following conditions are equivalent:
\begin{enumerate}
\item\label{item:UmeF1Contraction}
There exist a $\P^1$-bundle $\pi\colon \V=\V_{\omega}^{b,c} \to\P^2$ and a birational
morphism $\psi\colon X\to \V$ such that
\[
\xymatrix@R=3mm@C=2cm{
X \ar[r]^{\psi}\ar[d]_{\hat\pi}
& \V\ar[d]^{\pi}\\
\F_1\ar[r]_{\eta}
&\P^2
}
\]
commutes and $\psi$ is the blow-up of the fiber
$\pi^{-1}(q)\simeq\P^1$.

\item\label{item:UmeF1F0}
The $\P^1$-bundle
$\hat\pi^{-1}(s_{-1})\to s_{-1}$
is isomorphic to the trivial $\P^1$-bundle $\F_0\to\P^1$.

\item\label{item:UmeF1index0}
$P_0\neq 0$, or equivalently, $y_0$ does not divide $P$.
\end{enumerate}
\end{proposition}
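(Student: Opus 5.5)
I will prove the cycle \ref{item:UmeF1Contraction}$\Rightarrow$\ref{item:UmeF1F0}$\Rightarrow$\ref{item:UmeF1index0}$\Rightarrow$\ref{item:UmeF1Contraction}.

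For \ref{item:UmeF1Contraction}$\Rightarrow$\ref{item:UmeF1F0}, observe that if $\psi$ is the blow-up of the fiber $\pi^{-1}(q)$, then $X\simeq\F_1\times_{\P^2}\V$. This holds because blowing up commutes with the flat base change $\V\to\P^2$, as in the end of the proof of Lemma~\ref{Lem:LsbBlowUp}. Hence $\hat\pi^{-1}(s_{-1})\simeq s_{-1}\times\pi^{-1}(q)\simeq\P^1\times\P^1$, and this is the trivial bundle over $s_{-1}$.

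For \ref{item:UmeF1F0}$\Leftrightarrow$\ref{item:UmeF1index0}, I will restrict the transition function $\nu_{c,P}$ to $y_0=0$. This gives $([x_0:x_1],z)\mapsto([x_0:x_1z^c+x_0P_0(z)z\,y_1^b],1/z)$, that is, the matrix $\begin{bsmallmatrix}1&0\\ P_0(z)z&z^c\end{bsmallmatrix}$.
\begin{itemize}
\item If $P_0=0$, this is the transition function of $\F_c$ with $c\geq2$, as already noted in Lemma~\ref{Lem:InvariantUme}\ref{Invl1Um}. So the restriction is not trivial.
\item If $P_0\neq0$, then $P_0=\lambda_0z^{l_0}$ with $c=2p^{s_0}$ and $P_0(z)z=\lambda_0z^{r}$, where $r=p^{s_0}$ and $c=2r$. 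The matrix is then $\begin{bsmallmatrix}1&0\\ \lambda_0z^r&z^{2r}\end{bsmallmatrix}$. The factorization identity in the proof of Lemma~\ref{lem: contracting -1 curve yields a fixed point} (the matrix $N$ with $d=2q$) shows that this is equivalent to $\begin{bsmallmatrix}z^r&0\\0&z^r\end{bsmallmatrix}$, hence to the identity in $\PGL_2$. So the restriction is $\F_0$.
\end{itemize}

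For \ref{item:UmeF1index0}$\Rightarrow$\ref{item:UmeF1Contraction}, I will construct $\V$ by the descent lemma (Lemma~\ref{lem:descent-lemma}) applied to $\eta$. This gives $\V\to\P^2$ and a square birational map $\psi\colon X\dashrightarrow\V$ that is an isomorphism off $\hat\pi^{-1}(s_{-1})$. The main obstacle is showing that $\psi$ is a morphism identifying $X$ with $\F_1\times_{\P^2}\V$. The plan is local. Choose a neighborhood $W$ of $q$ in $\P^2$; the pullback of $X$ to $\eta^{-1}(W)$ is a $\P^1$-bundle that is trivial on $s_{-1}$ by \ref{item:UmeF1F0}. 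I then want to show it is the pullback of a trivial bundle on $W$.

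I will try this explicitly. Over the chart $\eta^{-1}(U_1)$, where $U_1=\{y\neq0\}$, use the coordinates $[u:1;1:z]$ and $[u:1;z:1]$ as in Lemma~\ref{Lem:LsbBlowUp}. There I will look for changes of trivialization on the two charts that turn the transition matrix into a constant matrix along a neighbourhood of $u=0$. The triviality along $s_{-1}$ from \ref{item:UmeF1F0} gives this at order zero in $u$, and I will then lift it using the form of $P$ in the two-orbit case. Concretely, Lemma~\ref{lem:UmeF1twoorbits} gives $r\mid m$, so each $(y_1+y_0R)$-expansion has the shape needed. The fiber coordinates should be $[x_1:x_0+\lambda_0 z^r x_1]$ and $[-x_0+\lambda_0z^rx_1:x_1]$, generalizing the formulas in the proof of Lemma~\ref{Lem:LsbBlowUp}; the additional components $P_i$ with $i>0$ are divisible by $y_0=u$ and should be absorbed.

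If this direct computation becomes unwieldy, I will fall back on a more abstract argument. A $\P^1$-bundle $\mathcal E$ on $\F_1$ that is trivial on the $(-1)$-curve $E$ descends: by the theorem on formal functions together with $H^1(E,\O_E(k))=0$ for $k\geq0$, the bundle is trivial on all infinitesimal neighbourhoods of $E$, hence $\mathcal E\simeq\eta^*\eta_*\mathcal E$ near $E$. This identifies $X$ with $\eta^*\V$, and it then follows that $\psi$ is the blow-up of $\pi^{-1}(q)$.
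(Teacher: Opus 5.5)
Your proofs of (1)$\Rightarrow$(2) and (2)$\Rightarrow$(3) are the paper's. Your direct proof of (3)$\Rightarrow$(2), via the identity for $N$ with $d=2q$, is correct; the paper never needs it, since it proves (3)$\Rightarrow$(1) directly.

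Your first plan for (3)$\Rightarrow$(1), however, does not work as written. Normalize $\lambda_0=1$ and put $F=\sum_{i\in I,\,i>0}\lambda_i u^i z^{r-p^{s_i}}$. Then the two fiber coordinates $[x_1:x_0+z^rx_1]$ and $[-x_0+z^rx_1:x_1]$ are compatible with $\nu_{c,P}$ only up to a term $x_0^2F$. Equivalently, the new transition matrix is $z^r$ times a unipotent matrix that differs from the identity as soon as $I\neq\{0\}$. This already happens for relatively maximal examples, e.g.\ $p=3$, $b=c=6$, $I=\{0,4\}$, and your sketch gives no mechanism for absorbing this term.

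The implication therefore rests on your fallback, which is correct once the standard steps are written out:
write $X=\P(\mathcal E)$ for a rank-$2$ vector bundle $\mathcal E$;
twist by $\O_{\F_1}(kE)$ so that $\mathcal E|_E\simeq\O_E^{2}$;
lift trivializations from $nE$ to $(n+1)E$ using $H^1(E,\O_E(n)^{2})=0$;
use the theorem on formal functions and Nakayama to find sections of $\eta_*\mathcal E$ near $q$ that trivialize $\mathcal E$ on some $\eta^{-1}(W)$;
glue to obtain a $\P^1$-bundle on $\P^2$ whose pullback is $X$, and identify it with $\V$ by the uniqueness in Lemma~\ref{lem:descent-lemma}.
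The blow-up statement then follows by flat base change.

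This is a genuinely different route from the paper's. The paper takes $\V$ from the descent lemma and blows up $\pi^{-1}(q)$ to get $Y\to\F_1$. It then uses the classification of equivariant links (Proposition~\ref{prop:birP1bUme}, which needs the two-orbit hypothesis and $i_0=0<b$) to get $Y\simeq Y_i$ with $i\geq0$. Finally it applies (1)$\Rightarrow$(3) to $Y$ to force $i=0$. That argument stays inside the paper's machinery and needs no cohomology, but it is tied to the group action.

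Your argument is local and ignores the action. It shows that any $\P^1$-bundle over $\F_1$ that is trivial over $s_{-1}$ is the pullback of a $\P^1$-bundle on $\P^2$. Hence the equivalence holds without the two-orbit assumption, for instance for the bundles of Lemma~\ref{Lem:LsbBlowUp} with $b<p^s$, which have three orbits on $\F_1$.
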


\begin{proof}

$\ref{item:UmeF1Contraction}\Rightarrow\ref{item:UmeF1F0}:$
As $\pi$ is locally trivial, there exists an open subset
$U_q\subseteq\P^2$ containing $q$ such that
$\pi^{-1}(U_q)\simeq U_q\times\P^1$. Since $\psi$ is the blow-up of
the fiber over $q$, its restriction corresponds to
$\kappa\times\Id_{\P^1}\colon\hat U_q\times\P^1\to U_q\times\P^1$,
where $\kappa\colon\hat U_q\to U_q$ is the blow-up of $q$.
This proves that $\hat\pi^{-1}(s_{-1})\simeq\P^1\times\P^1$.

$\ref{item:UmeF1F0}\Rightarrow\ref{item:UmeF1index0}:$ 
We assume that \ref{item:UmeF1index0} does not hold and prove that \ref{item:UmeF1F0} does not hold either.
The surface $\hat\pi^{-1}(s_{-1})$ is obtained by restricting
$\nu_{c,P}$ to $y_0=0$. Since $P_0=0$, we have $P(0,y_1,z)=0$, and the gluing function becomes
\[
([x_0:x_1],z)\longmapsto([x_0:x_1z^c],1/z).
\]
Hence, $\hat\pi^{-1}(s_{-1})\simeq\F_c$, which is not isomorphic to $\F_0$
since $c\geq2$.

$\ref{item:UmeF1index0}\Rightarrow\ref{item:UmeF1Contraction}:$ 
Assume that $P_0\neq0$. By Proposition~\ref{prop:birP1bUme}, we have an infinite sequence of elementary links
\[
Y_0\dashrightarrow Y_1\dashrightarrow Y_2\dashrightarrow\cdots,
\]
where $Y_n=Z_1^{b+n,c+n,(y_0z)^nP}$ for each $n\geq0$, with $Y_0=X$.
Each link conjugates the neutral components of the automorphism
groups, and every $\Autz(X)$-equivariant birational map
of $\P^1$-bundles over $\F_1$ is, up to isomorphisms
of $\P^1$-bundles over $\F_1$, a composition of these
elementary links and their inverses.

The existence of a $\P^1$-bundle $\pi\colon\V\to\P^2$ and a birational map $\psi\colon X\dashrightarrow\V$ such that $\pi\psi=\eta\hat\pi$ follows from the descent lemma (Lemma~\ref{lem:descent-lemma}), as does the uniqueness of $\pi$ and $\psi$ up to isomorphism. Moreover, the descent lemma implies that $\psi$ is $\Autz(X)$-equivariant. We now prove that $\psi$ is the blow-up of the fiber $\pi^{-1}(q)$, and hence is a birational morphism.

Let $Y\to\V$ be the blow-up of the fiber $\pi^{-1}(q)$. The composition $Y\dashrightarrow X\stackrel{\hat\pi}{\longrightarrow}\F_1$ defines a $\P^1$-bundle $Y\to\F_1$, and $X\dashrightarrow Y$ is an $\Autz(X)$-equivariant birational map of $\P^1$-bundles. Hence, $Y\to\F_1$ is square isomorphic to $Y_i\to\F_1$ for some $i\geq0$. Applying the implication
$\ref{item:UmeF1Contraction}\Rightarrow\ref{item:UmeF1index0}$
to $Y\to\V$, we obtain that $y_0$ does not divide $(y_0z)^iP$. This implies that $i=0$, since $y_0$ does not divide $P$. Thus, $Y\to\F_1$ is square isomorphic to $X\to\F_1$, and the uniqueness in the descent lemma implies that $\psi$ is, up to isomorphism, the blow-up of $\pi^{-1}(q)$. This proves~\ref{item:UmeF1Contraction}.
\end{proof}

\begin{lemma}\label{Lem:UmeF1P2-positive-char}
	Let $\hat\pi\colon X=\U_{1,\omega}^{b,c}=Z_1^{b,c,P}\to\F_1$
	be an Umemura $\P^1$-bundle, with
	\[
	P(y_0,y_1,z)
	=
	\sum_{i=0}^b y_0^iy_1^{b-i}P_i(z)z^{i+1},
	\]
	and assume that the action of $\Autz(X)$ on $\F_1$ has two orbits
	and that $P_0\neq0$.
	Let $\pi\colon\V\to\P^2$ be the associated $\P^1$-bundle,
	obtained by contracting $\hat\pi^{-1}(s_{-1})$ onto $\pi^{-1}(q)$
	via $\psi\colon X\to\V$, as in
	Proposition~$\ref{prop:UmeF1Contraction}$.
	
	Then $c=2r$, where $r=p^s$ for some $s\geq0$.
	Setting $d=c-b\in\Z$, the following assertions hold.
	\begin{enumerate}
		\item\label{item:UmeP2-lines-through-q}
		If $L\subset\P^2$ is a line through $q$, then
		$\pi^{-1}(L)\to L\simeq\P^1$
		is isomorphic to $\F_b\to\P^1$.
		
		\item\label{item:UmeP2-lines-not-through-q}
		If $L\subset\P^2$ is a line not passing through $q$, then
		\[
		\pi^{-1}(L)\simeq
		\begin{cases}
			\F_{\lvert d\rvert},&P_b=0,\\
			\F_0,&P_b\neq0.
		\end{cases}
		\]
		The second case occurs only when $p=2$, $b=r\geq2$, and
		\[
		P=\lambda_0y_1^rz^r+\lambda_b y_0^rz^{3r/2},
		\qquad \lambda_0,\lambda_b\in\k^*.
		\]
		
		\item\label{item:UmeP2-aut}
		We have $\psi\Autz(X)\psi^{-1}\subseteq\Autz(\V)$.
		Moreover:
		\begin{enumerate}
			\item If $b\neq r$ or $P_b\neq0$, then
			\[
			\psi\Autz(X)\psi^{-1}=\Autz(\V).
			\]
			\item If $b=r$ and $P_b=0$, then, after rescaling $P$,
			we have $P=y_1^rz^r$ and
			\[
			\V\simeq\P((F^s)^*T_{\P^2})\simeq\L^s
			\qquad (\text{see Remark~}\ref{rk: Ps}).
			\]
			In this case, the action on the base induces an isomorphism
			\[
			\Autz(\V)\xrightarrow{\sim}\Aut(\P^2)\simeq\PGL_3,\qquad
			\text{and} \qquad 
			\psi\Autz(X)\psi^{-1}\subsetneq\Autz(\V).
			\]
		\end{enumerate}
	\end{enumerate}
\end{lemma}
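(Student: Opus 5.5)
The plan starts with the numerical claim $c=2r$. Since $P_0\neq0$, we have $0\in I$, and the relation $i+2p^{s_i}=c$ from Lemma~\ref{lem:UmeF1twoorbits} at $i=0$ gives $c=2p^{s_0}=2r$. Since $i_0=0$, we get $m=b$. Lemma~\ref{lem:UmeF1twoorbits}\ref{UmeF1twoorbits5} then gives $r\mid b$, so every $i\in I$ satisfies $i=2(r-p^{s_i})$.

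For~\ref{item:UmeP2-lines-through-q}, I would use that $\psi$ is the blow-up of $\pi^{-1}(q)$. The strict transform of a line through $q$ is a fiber of $\F_1\to\P^1$, and $X$ restricted to such a fiber is $\F_b$. For~\ref{item:UmeP2-lines-not-through-q}, the preimage of a line not through $q$ is a section of $\F_1$ disjoint from $s_{-1}$, i.e.\ a curve in $|s_1|$. Since $U\subseteq H$ acts transitively on these sections, it suffices to treat $s_1=\{y_1=0\}$. Restricting $\nu_{c,P}$ to $y_1=0$, only the term with $i=b$ survives. If $P_b=0$, the transition matrix is $\mathrm{diag}(z^b,z^c)$, which gives $\F_{|d|}$. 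If $P_b\neq0$, we have $b\in I$ and $b+2p^{s_b}=2r$ with $r\mid b$. For $b\geq1$ this forces $b=r$ and $2p^{s_b}=r$, hence $p=2$ and $r\geq2$. The only elements of $I$ are $i=2(r-p^{s_i})$ with $p^{s_i}\in\{r,r/2\}$, which gives the displayed $P$. The restricted matrix is $\begin{bsmallmatrix}z^r&0\\ \lambda_b z^{3r/2}&z^{2r}\end{bsmallmatrix}$. I would reduce it exactly as in the proof of Lemma~\ref{lem: contracting -1 curve yields a fixed point} with $q=r/2$ and $d=r$, which should give $\F_{2q-d}=\F_0$.

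For~\ref{item:UmeP2-aut}, the inclusion comes from the equivariance in the descent lemma (Lemma~\ref{lem:descent-lemma}). For equality, the key step is to show that $H_\V:=\Im(\Autz(\V)\to\PGL_3)$ fixes $q$. Once that holds, Theorem~\ref{th:P1bundlesP2}\ref{item iv: Prop:P1bundlesP2} lets $\Autz(\V)$ lift to the blow-up $X$, and we get $\Autz(\V)\subseteq\psi\Autz(X)\psi^{-1}$, since $\Aut(\V)_{\P^2}$ also lifts. To prove that $q$ is fixed, I would compare line types. Lines through $q$ give $\F_b$, and other lines give $\F_{|d|}$ or $\F_0$. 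If $|d|\neq b$ (respectively $0\neq b$), then $H_\V$ preserves the pencil through $q$, hence fixes $q$. The remaining case is $|2r-b|=b$, i.e.\ $b=r$ with $P_b=0$, which is precisely case (b). There, $I$ consists of $i=2(r-p^{s_i})\le b=r$, and $i=b$ is excluded since $P_b=0$. So $I=\{0\}$ when $p\ge3$. When $p=2$, the index $i=r$ would be $b$, which is excluded, so again $I=\{0\}$. Rescaling gives $P=y_1^rz^r$. Lemma~\ref{Lem:LsbBlowUp} then identifies $\V$ with $\L^{s,r}=\L^s$, whose automorphism group is $\PGL_3$ by Proposition~\ref{prop: LsbAut}. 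The inclusion is strict because $\psi\Autz(X)\psi^{-1}$ fixes $q$.

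The main obstacle I expect is the matrix reduction for $P_b\neq0$ in characteristic~$2$, where one has to check carefully that the changes of trivialization are regular on the correct charts. A second point to check is that the dichotomy in~\ref{item:UmeP2-aut}(a) is exhaustive, namely that $|d|=b$ together with $P_b=0$ forces $b=r$.
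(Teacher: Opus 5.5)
Your proposal is correct and follows the paper's proof essentially step by step. It uses the same reduction to the single line $\{Y=0\}$ (your section $s_1=\{y_1=0\}$), the same matrix reduction giving $\F_0$ when $p=2$, and the same pencil argument showing that $q$ is fixed unless $b=r$ and $P_b=0$. For the identification $\V\simeq\L^s$ you combine Lemma~\ref{Lem:LsbBlowUp} with the uniqueness in Lemma~\ref{lem:descent-lemma}, which is a slightly cleaner justification than the paper's direct Frobenius comparison of transition matrices.

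One small point: Theorem~\ref{th:P1bundlesP2} assumes $p\geq3$, but this lemma is needed in every characteristic, and its case $P_b\neq0$ occurs only when $p=2$. You should therefore justify the lift of $\Autz(\V)$ to $X$ directly, as the paper does, by observing that the invariant fiber $\pi^{-1}(q)$ is blown up equivariantly.
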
  
\begin{proof}
	Recall that $X$ is obtained by
	gluing two copies of $\F_b\times\A^1$ along
	$\F_b\times(\A^1\setminus\{0\})$ through the automorphism
	$\nu_{c,P}\in\Aut(\F_b\times(\A^1\setminus\{0\}))$ defined by
	\[
	\nu_{c,P}\colon
	([x_0:x_1;y_0:y_1],z)
	\longmapsto
	\left(
	[x_0:x_1z^c+x_0P(y_0,y_1,z);y_0z:y_1],
	\frac1z
	\right).
	\]
	The $\P^1$-bundle $\hat\pi\colon X\to\F_1$ sends
	$([x_0:x_1;y_0:y_1],z)\in\F_b\times\A^1$ to
	$[y_0:y_1;1:z]\in\F_1$ on the first chart and to
	$[y_0:y_1;z:1]\in\F_1$ on the second chart
	(see Definition~\ref{Not:ZabcP}).
	
	We now describe the descended $\P^1$-bundle
	$\pi\colon\V\to\P^2$ using the contraction
	\[
	\eta\colon\F_1\to\P^2,\qquad
	[y_0:y_1;z_0:z_1]\longmapsto[y_0z_0:y_1:y_0z_1].
	\]
	We consider the two open subsets of $\P^2$ given by
	\[
	U_0=\{X\neq0\},
	\qquad
	U_1=\{Z\neq0\}.
	\]
	A point $[1:u:v]\in U_0$ corresponds to
	$[1:u;1:v]\in\F_1$, and the points of its fiber in $X$
	are represented by $([x_0:x_1;1:u],v)$ in the first chart.
	Similarly, a point $[v:u:1]\in U_1$ corresponds to
	$[1:u;v:1]\in\F_1$, and the points of its fiber in $X$
	are represented by $([x_0:x_1;1:u],v)$ in the second chart.
	
	Hence $\pi$ is trivial over $U_0$ and $U_1$.
	To compute the transition function, we calculate, for $v\neq0$,
	\[
	\begin{aligned}
		\nu_{c,P}([x_0:x_1;1:u],v)
		&=([x_0:x_1v^c+x_0P(1,u,v);v:u],1/v)\\
		&=([x_0:x_1v^{c-b}+x_0v^{-b}P(1,u,v);1:u/v],1/v).
	\end{aligned}
	\]
	Thus the gluing function is
	\[
	\bigl([x_0:x_1],[1:u:v]\bigr)
	\longmapsto
	\left(
	[x_0:x_1v^{c-b}+x_0v^{-b}P(1,u,v)],
	[v^{-1}:uv^{-1}:1]
	\right).
	\]
	We write
	\[
	P=\sum_{i\in I}
	\lambda_i y_0^iy_1^{b-i}z^{i+p^{s_i}},
	\qquad
	i+2p^{s_i}=c,
	\qquad
	\lambda_i\in\k^*,
	\qquad
	s_i\geq0,
	\]
	as in Lemma~\ref{lem:UmeF1twoorbits}.
	Since $P_0\not=0$, we have $i_0=0\in I$ and
	\[
	P(0,y_1,z)=\lambda_0y_1^bz^{p^{s_0}}.
	\]
	Set $r_i=p^{s_i}$ for each $i\in I$, and write
	$s=s_0$ and $r=r_0$, so that $c=2r=2p^s$.
	Since the action has exactly two orbits, we have
	$r\mid b$ by Lemma~\ref{lem:UmeF1twoorbits}.
	In particular, $b\geq r$.
	
	If $L\subset\P^2$ is a line through $q$, its strict transform
	in $\F_1$ is a fiber $f$ of the ruling $\F_1\to\P^1$.
	Since $X$ has numerical invariants $(1,b,c)$, we have
	$\hat\pi^{-1}(f)\simeq\pi^{-1}(L)\simeq\F_b$.
	This proves~\ref{item:UmeP2-lines-through-q}.
	
	We now consider lines not passing through $q$.
	Since the action of $\Autz(X)$ on $\F_1$ has exactly two orbits,
	its image in $\Aut(\F_1)$ contains the translation subgroup
	$U=\k[z_0,z_1]_1$ (Lemma~\ref{lem:UmeF1twoorbits}).
	Together with the standard $\SL_2$, this subgroup acts transitively
	on the set of lines in $\P^2$ not passing through $q$.
	It is therefore enough to consider $L_0=\{Y=0\}$.
	
	On $L_0$, we have $u=0$.
	If $P_b=0$, then $P(1,0,v)=0$, and the transition function
	restricts to
	\[
	([x_0:x_1],v)\longmapsto
	([x_0:x_1v^d],1/v).
	\]
	Thus $\pi^{-1}(L_0)\simeq\F_{\lvert d\rvert}$.
	
	Suppose now that $P_b\neq0$, which is equivalent to $b\in I$.
	Since $b>0$, we have
	\[
	b=2(r-r_b),\qquad 0<r_b<r,
	\]
	and hence $b<2r$.
	Together with $r\mid b$, this gives $b=r$ and $r_b=r/2$.
	Since $r$ and $r_b$ are powers of $p$, necessarily $p=2$
	and $r\geq2$.
	Moreover, every $i\in I\setminus\{0\}$ satisfies
	\[
	0<i=2(r-r_i)\leq b=r,
	\]
	so $r/2\leq r_i<r$.
	Thus $r_i=r/2$ and $i=r$, giving
	\[
	P=\lambda_0y_1^rz^r+\lambda_b y_0^rz^{3r/2}.
	\]
	Writing $t=r/2$, the restriction to $L_0$ is represented by
	\[
	M(v)=
	\begin{bmatrix}
		1&0\\
		\lambda_b v^t&v^{2t}
	\end{bmatrix}.
	\]
	The identity
	\[
	\begin{bmatrix}
		1&-\lambda_b^{-1}v^{-t}\\
		0&1
	\end{bmatrix}
	M(v)
	\begin{bmatrix}
		v^t&1\\
		-\lambda_b&0
	\end{bmatrix}
	=
	\begin{bmatrix}
		v^t&0\\
		0&\lambda_b v^t
	\end{bmatrix}
	\]
	uses changes of trivialization regular on the target
	and source charts, respectively.
	Consequently, $\pi^{-1}(L_0)\simeq\F_0$.
	This proves~\ref{item:UmeP2-lines-not-through-q}.
	
	The inclusion
	\[
	\psi\Autz(X)\psi^{-1}\subseteq\Autz(\V)
	\]
	follows from the descent lemma (Lemma~\ref{lem:descent-lemma}).
	Set
	\[
	H=\Im\bigl(\Autz(X)\to\Aut(\F_1)\bigr),
	\qquad
	G=\Im\bigl(\Autz(\V)\to\Aut(\P^2)\bigr).
	\]
	Via the natural isomorphism
	$\Aut(\F_1)\simeq\Aut(\P^2,q)$, we have $H\subseteq G$.
	
	Suppose first that $b\neq r$ or $P_b\neq0$.
	If $P_b=0$, then $b\neq r$ implies
	$\lvert d\rvert=\lvert2r-b\rvert\neq b$.
	If $P_b\neq0$, then $b=r>0$, whereas the inverse image
	of every line not passing through $q$ is $\F_0$.
	In either case, the lines whose inverse images are isomorphic
	to $\F_b$ are precisely the lines through $q$.
	Hence $G$ preserves this pencil and fixes $q$.
	Every element of $\Autz(\V)$ therefore preserves
	the fiber $\pi^{-1}(q)$ and lifts to its blow-up $X$.
	Thus
	\[
	\psi\Autz(X)\psi^{-1}=\Autz(\V).
	\]
	
	Suppose now that $b=r$ and $P_b=0$.
	For every $i\in I\setminus\{0\}$, we would have
	\[
	0<i=2(r-r_i)<r,
	\]
	and hence $r/2<r_i<r$.
	No power of $p$ lies strictly between $r/2$ and $r$.
	Thus $I=\{0\}$ and, after rescaling $P$,
	\[
	P=y_1^rz^r.
	\]
	The fiber transition matrix is then obtained by applying the
	$s$-th Frobenius morphism to the entries of the transition matrix
	of $\P(T_{\P^2})$. Hence
	\[
	\V\simeq\P((F^s)^*T_{\P^2})\simeq\L^s.
	\]
	By Proposition~\ref{prop: LsbAut}, the action on the base induces
	an isomorphism
	\[
	\Autz(\V)\xrightarrow{\sim}\Aut(\P^2)\simeq\PGL_3.
	\]
	Since $\psi\Autz(X)\psi^{-1}$ fixes $q$ on the base, we obtain
	\[
	\psi\Autz(X)\psi^{-1}\subsetneq\Autz(\V).
	\]
	This proves~\ref{item:UmeP2-aut}.
\end{proof}

We follow the same strategy as in \cite[\S~5.5]{BFT}, using equivariant elementary links (Proposition~\ref{prop: description of the equ links Umemura}) and the contraction of the negative section of $\F_1$ (Lemma~\ref{Lem:UmeF1P2-positive-char}) to determine the relative maximality of Umemura $\P^1$-bundles in positive characteristic.

\begin{proposition}\label{prop:relative-max-Ume-F1}
	Let $\pi\colon X=\U_{1,\omega}^{b,c}=Z_1^{b,c,P}\to\F_1$
	be an Umemura $\P^1$-bundle. Write
	\[
	P=\sum_{i\in I}
	\lambda_i y_0^iy_1^{b-i}z^{i+p^{s_i}},
	\qquad
	i+2p^{s_i}=c,
	\qquad
	\lambda_i\in\k^*,
	\]
	and set
	$i_0=\min I$, $r=p^{s_{i_0}}$, and $m=b-i_0$. 
	Then $\Autz(X)$ is relatively maximal if and only if
	$r\mid m$ and either
	\[
	m\geq2r,
	\]
	or
	\[
	p=2,\qquad m=r,\qquad P_b\neq0.
	\]
	In all cases, $X$ is not stiff.
\end{proposition}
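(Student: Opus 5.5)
The plan is a case analysis according to the orbit structure of $H=\Im(\Autz(X)\to\Aut(\F_1))$, following \cite[\S~5.5]{BFT}.

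\textbf{Step 1: three orbits.} If $r\nmid m$, Lemma~\ref{lem:UmeF1twoorbits} shows that $H\subseteq\GL_2$. Thus $H$ has three orbits and preserves $s_1$. Proposition~\ref{prop:case H=GL2/mu_a} then gives an equivariant square birational map $\psi_r$ with $\psi_r\Autz(X)\psi_r^{-1}\subsetneq\Autz(X_r)$. Hence $\Autz(X)$ is not relatively maximal, and we may assume from now on that $r\mid m$ and that $H$ has two orbits.

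\textbf{Step 2: the chain of links.} By Proposition~\ref{prop:birP1bUme}, $X$ lies in the infinite sequence $Y_s\dashrightarrow Y_{s+1}\dashrightarrow\cdots$ of group-preserving links. If $i_0=b$, i.e.\ $m=0$, assertion~\ref{i0bNotm} shows that relative maximality fails. If $m>0$, the first member is $Y_{-i_0}=Z_1^{m,c-i_0,y_0^{-i_0}z^{-i_0}P}$. Since $P_{i_0}$ becomes the index-$0$ component of this polynomial, $y_0$ does not divide it. Proposition~\ref{prop:UmeF1Contraction} therefore gives a contraction $Y_{-i_0}\to\V$ over $\P^2$.

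We then apply Lemma~\ref{Lem:UmeF1P2-positive-char} to $Y_{-i_0}$, which has $b$-invariant $m$ and $c$-invariant $2r$.
\begin{itemize}
\item If $m=r$ and $P_b=0$, then $\V\simeq\L^s$, and the conjugate group is strictly contained in $\PGL_3$. This is non-maximal.
\item If $m\ge 2r$, or if $m=r$ with $P_b\neq0$ (which forces $p=2$), then the contraction conjugates the groups onto $\Autz(\V)$.
\end{itemize}

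For maximality in the remaining cases, let $\varphi$ be an arbitrary equivariant square birational map from $X$. Since $H$ has two orbits, the image $G\subseteq\Aut(\P^2)$ fixes only $q$. Its target is obtained from $X$ by composing the links of Proposition~\ref{prop:birP1bUme}\ref{i0PbOnly}, possibly descending to $\V$ via the contraction. By the descent lemma, descending from any $Y_n$ gives the same $\V$, and Lemma~\ref{Lem:UmeF1P2-positive-char} shows that $\Autz(\V)$ fixes $q$. The expected obstacle is to exclude links from $\V$ over $\P^2$ to other bases. I would argue as follows. Since $G$ fixes $q$ and preserves the lines through it, any equivariant map from $\V$ has invariant indeterminacy on the base. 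Using Theorem~\ref{th:P1bundlesP2}\ref{item iv: Prop:P1bundlesP2} together with Lemma~\ref{Lemm:DecElLinks}, every equivariant map from $\V$ factors through the blow-up of $q$, returning to the chain over $\F_1$. Thus every target group equals the conjugate of $\Autz(X)$, which gives relative maximality.

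\textbf{Step 3: non-stiffness.} In the relatively maximal case, the bundles $Y_n$ for $n\ge 0$ are pairwise non-square-isomorphic, because their numerical invariants $(1,b+n,c+n)$ differ. They are nevertheless equivariantly linked to $X$, so $X$ is not stiff. In the non-maximal cases, non-stiffness is automatic.
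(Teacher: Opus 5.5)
Your proposal is correct and follows essentially the same route as the paper. The matching steps are: the three-orbit case via Lemma~\ref{lem:UmeF1twoorbits} and Proposition~\ref{prop:case H=GL2/mu_a}; the case $m=0$ via Proposition~\ref{prop:birP1bUme}\ref{i0bNotm}; the shift to $Y_{-i_0}$ followed by the contraction of Proposition~\ref{prop:UmeF1Contraction} and the dichotomy of Lemma~\ref{Lem:UmeF1P2-positive-char}; the observation that the only equivariant base modification is the blow-up of $q$, combined with Lemma~\ref{Lemm:DecElLinks}; and non-stiffness via the chain $Y_n$. The one imprecision is that the base-map step should rest on the orbit structure, not on Theorem~\ref{th:P1bundlesP2}: on $\P^2$ the group fixes only $q$ and leaves no curve invariant, and on $\F_1$ it has no fixed point, whereas that theorem assumes $p\geq3$ and the proposition also covers $p=2$.
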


\begin{proof}
	If $m=0$, then $\Autz(X)$ is not relatively maximal by
	Proposition~\ref{prop:birP1bUme}\ref{i0bNotm}.
	Since $m<2r$ in this case, we are done.
	
	If $m\geq1$ and $r\nmid m$, then the action of $\Autz(X)$ on $\F_1$
	has at least three orbits and its image in $\Aut(\F_1)$ is contained in the standard $\GL_2$ by Lemma~\ref{lem:UmeF1twoorbits}.
	Thus $\Autz(X)$ preserves the positive section $s_1$.
	In this case, $\Autz(X)$ is not relatively maximal by
	Proposition~\ref{prop:case H=GL2/mu_a}.
	
	We now assume that $m\geq1$ and $r\mid m$.
	By Lemma~\ref{lem:UmeF1twoorbits}, the action of $\Autz(X)$ on $\F_1$
	has exactly two orbits.
	Moreover, $X$ is not stiff by Proposition~\ref{prop:birP1bUme}:
	there is an infinite sequence of links
	\[
	Y_{-i_0}\dashrightarrow Y_{-i_0+1}\dashrightarrow\cdots,
	\]
	where
	\[
	Y_n=Z_1^{b+n,c+n,(y_0z)^nP}
	\qquad\text{for each }n\geq-i_0,
	\qquad
	Y_0=X.
	\]
	Each link conjugates the neutral components of the automorphism groups.
	Replacing $X$ with $Y_{-i_0}$ therefore preserves relative maximality
	and leaves $m$ and $r$ unchanged.
	It also preserves the nonvanishing of the top component $P_b$.
	We may consequently assume that $i_0=0$, so that $m=b$,
	$P_0\neq0$, and $c=2p^{s_0}=2r$.
	
	Proposition~\ref{prop:UmeF1Contraction} yields a birational morphism
	$\psi\colon X\to\V$, where $\pi\colon\V\to\P^2$ is a $\P^1$-bundle.
	If $m=r$ and $P_b=0$, Lemma~\ref{Lem:UmeF1P2-positive-char} gives
	\[
	\psi\Autz(X)\psi^{-1}\subsetneq\Autz(\V).
	\]
	Hence $\Autz(X)$ is not relatively maximal.
	
	It remains to consider $m\geq2r$, or $m=r$ and $P_b\neq0$.
	In the latter case, $p=2$ by
	Lemma~\ref{Lem:UmeF1P2-positive-char}.
	In both cases, the same lemma gives
	\[
	\psi\Autz(X)\psi^{-1}=\Autz(\V).
	\]
	Write $G=\Autz(X)$, identifying it with $\Autz(\V)$ through $\psi$.
	Its action on $\P^2$ has exactly two orbits,
	namely $\{q\}$ and $\P^2\setminus\{q\}$.
	Thus $q$ is its unique fixed point, and no curve of $\P^2$ is invariant.
	
	Consequently, the only nontrivial equivariant
	birational modification of the base is the
	blow-up of $q$, which gives $\F_1$.
	On $\F_1$, the negative section $s_{-1}$
	is the only invariant curve, and there
	are no invariant points.
	
	The elementary-link factorization (Lemma~\ref{Lemm:DecElLinks})
	therefore reduces every equivariant square birational map
	to a birational map of $\p^1$-bundles over $\F_1$,
	up to composition with $\psi$ or $\psi^{-1}$.
	By Proposition~\ref{prop:birP1bUme}, such maps are compositions of
	links in the infinite sequence described above and their inverses,
	up to isomorphisms.
	These links, as well as $\psi$, conjugate the neutral components
	of the automorphism groups.
	Hence $\Autz(X)$ is relatively maximal.
\end{proof}

\begin{proposition}\label{prop:relative-max-Ume-Fa}
	Assume that $a\geq2$.
	Let $\pi\colon X=\U_{a,\omega}^{b,c}=Z_a^{b,c,P}\to\F_a$
	be an Umemura $\P^1$-bundle. Write
	\[
	P=\sum_{i\in I}
	\lambda_i y_0^iy_1^{b-i}z^{ai+p^{s_i}},
	\qquad
	ai+2p^{s_i}=c,
	\qquad
	\lambda_i\in\k^*,
	\]
	and set
	\[
	i_0=\min I,\qquad m=b-i_0,\qquad r=p^{s_{i_0}},
	\qquad H=\Im\bigl(\Autz(X)\to\Aut(\F_a)\bigr).
	\]
	Then $\Autz(X)$ is relatively maximal if and only if
	$i_0<b$ and $H$ acts on $\F_a$ with exactly two orbits.
	This is equivalent to $m>0$ and
	$a p^{\nu_p(m)}\geq r$.
	
	Assume that $\Autz(X)$ is relatively maximal. Then $X$ is not stiff.
	There is an infinite sequence of elementary links
	\[
	Y_{-i_0}\dashrightarrow Y_{-i_0+1}\dashrightarrow Y_{-i_0+2}\dashrightarrow\cdots,
	\]
	where $Y_n=Z_a^{b+n,c+an,(y_0z^a)^nP}$ for each $n\geq-i_0$
	and $Y_0=X$. Each link conjugates the neutral components of
	the automorphism groups, and every
	$\Autz(X)$-equivariant square birational map
	to another $\P^1$-bundle is, up to square isomorphisms,
	a composition of the above links and their inverses.
\end{proposition}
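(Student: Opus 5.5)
We argue in parallel with the proof of Proposition~\ref{prop:relative-max-Ume-F1}, splitting according to $i_0$ and to the number of $H$-orbits on $\F_a$. By Proposition~\ref{prop:trichotomy for algebraic subgroups of AUt(Fa)}, valid since $a\geq2$, and because $H$ contains the standard $S$, the group $H$ either preserves $s_a$ (the three-orbit case) or has exactly two orbits. In the three-orbit case, Proposition~\ref{prop:case H=GL2/mu_a} gives an equivariant map $\psi_r$ with $\psi_r\Autz(X)\psi_r^{-1}\subsetneq\Autz(X_r)$, so $\Autz(X)$ is not relatively maximal. In the two-orbit case with $i_0=b$, Proposition~\ref{prop:birP1bUme}\ref{i0bNotm} again shows non-maximality. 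Conversely, suppose $H$ has two orbits and $i_0<b$. Since $s_{-a}$ is then the only invariant curve and there are no invariant points, every equivariant square birational map has base map an automorphism of $\F_a$. Indeed, contracting $s_{-a}$ is impossible for $a\geq2$, and the indeterminacy is invariant and finite, hence empty. After composing with a square automorphism, we reduce to a birational map of $\P^1$-bundles over $\F_a$. By Lemma~\ref{Lemm:DecElLinks} and Proposition~\ref{prop:birP1bUme}\ref{i0PbOnly}, this map is a composition of the links $Y_n\dashrightarrow Y_{n\pm1}$, each of which conjugates the full neutral components by Proposition~\ref{prop:birP1bUme}\ref{SequenceLinks}. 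This gives relative maximality, and the description of all equivariant square birational maps claimed in the statement.

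Non-stiffness follows from the same sequence. The $Y_n$ have numerical invariants $(a,b+n,c+an)$, and these are pairwise distinct. The integer $b+n$ is the Hirzebruch index of the fibres of $Y_n\to\F_a\to\P^1$, and it is invariant under square isomorphism: every automorphism of $\F_a$ preserves the ruling, since $a\geq2$. Hence $X=Y_0$ and $Y_1$ are not square-isomorphic.

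It remains to prove the numerical criterion: for $m>0$, $H$ has two orbits if and only if $ap^{\nu_p(m)}\geq r$. Our plan is to show that $H$ has two orbits precisely when it contains a nonzero translation. This mimics Lemma~\ref{lem:UmeF1twoorbits}: $H$ contains $S$, and the nonzero elements of $U_a$ that are fixed by the diagonal torus can be moved around by $S$. The trichotomy handles the remaining possibility, since in the three-orbit case $H$ is conjugate to $S$ or $\GL_2/\mu_a$. We then test the translations $R=\mu z_0^tz_1^{a-t}$ using Lemma~\ref{Lemm:ActionAutFaParSpace}\ref{ActionFap}. Set $q=p^{\nu_p(m)}$. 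Substituting $y_1\mapsto y_1+y_0R(1,z)$ in the $i_0$-component produces the term $\binom{m}{q}\lambda_{i_0}y_0^{i_0+q}y_1^{m-q}R(1,z)^qz^{ai_0+r}$, and its $z$-exponents range from $ai_0+r$ to $ai_0+r+aq$. The normal-form window for index $i_0+q$ is $a(i_0+q)<j<c=ai_0+2r$. If $aq<r$, the choice $R=z_0^a$ gives the exponent $ai_0+r$, which lies in that window, so the term survives. As in Lemma~\ref{lem:UmeF1twoorbits}, it cannot be cancelled by contributions from the other indices: $a(i-i_0)=2(r-r_i)$ forces $\nu_p$ constraints on $i-i_0$, so those contributions land in different slots. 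Hence no translation fixes $[P]$, and we are in the three-orbit case. If $aq\geq r$, then $R=\mu z_0^a$ fixes $P$, arguing as in the last part of the proof of Proposition~\ref{prop:case H=GL2/mu_a}. One checks that $q_i:=p^{\nu_p(b-i)}$ satisfies $aq_i\geq r_i$ for every $i\in I$, using $\nu_p(i-i_0)=s_i-\nu_p(a)$ when $p$ is odd and comparing valuations with $q$. It then follows that every term created by the substitution has $z$-degree at most $a\cdot(\text{its }y_0\text{-degree})$, so it is discarded by $\operatorname{red}_{a,c}$.

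The hard part is the case $aq\geq r$. We expect the main difficulty to be the valuation comparison showing $aq_i\geq r_i$ for all $i\in I$ (including $p=2$ and $a$ a power of $p$), and the accompanying claim that no new component with $j$ inside the window is created.
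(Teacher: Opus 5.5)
Your proposal is correct and follows essentially the same route as the paper. Non-maximality in the three-orbit case comes from Proposition~\ref{prop:case H=GL2/mu_a}, and for $i_0=b$ from Proposition~\ref{prop:birP1bUme}; relative maximality and the description of the links come from Lemma~\ref{Lemm:DecElLinks} together with Proposition~\ref{prop:birP1bUme}\ref{i0PbOnly}; and the orbit criterion uses the same substitution-and-valuation argument with $q_i=p^{\nu_p(b-i)}$, where $p=2$ causes no extra difficulty since $\nu_p(a)+\nu_p(i-i_0)=\nu_p(2)+s_i$ in all characteristics. The one step to tighten is the case $aq<r$: testing only $R=z_0^a$ is not enough. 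Run the lowest-degree computation for an arbitrary $R\in H\cap U_a$, where the surviving coefficient is $\binom{m}{q}\lambda_{i_0}R(1,0)^q$ up to sign, and then use the $S$-stability of $H\cap U_a$ to conclude $R=0$, as the paper does.
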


\begin{proof}
	If $i_0=b$, or equivalently if $m=0$, then
	Proposition~\ref{prop:birP1bUme}\ref{i0bNotm}
	shows that $\Autz(X)$ is not relatively maximal.
	We may thus assume that $i_0<b$.
	
	We first prove the numerical orbit criterion:
	$H$ acts on $\F_a$ with exactly two orbits if and only if
	$ap^{\nu_p(m)}\geq r$.
	Set $q=p^{\nu_p(m)}$ and $U=\k[z_0,z_1]_a$.
	For each $i\in I$ with $i<b$, put
	$q_i=p^{\nu_p(b-i)}$.
	
	Suppose that $aq\geq r$.
	For $i\in I$ with $i>i_0$, the equality
	$a(i-i_0)=2(r-p^{s_i})$ gives $s_{i_0}>s_i$.
	Taking $p$-adic valuations yields
	\[
	\nu_p(a)+\nu_p(i-i_0)=\nu_p(2)+s_i,
	\]
	so
	\[
	a p^{\nu_p(i-i_0)}
	=
	\frac{a}{p^{\nu_p(a)}}p^{\nu_p(2)+s_i}
	\geq p^{s_i}.
	\]
	Since $b-i=m-(i-i_0)$, it follows that, for $i<b$,
	\[
	aq_i\geq
	\min\bigl(aq,a p^{\nu_p(i-i_0)}\bigr)
	\geq p^{s_i}.
	\]
	The same inequality holds for $i=i_0$ by assumption.
	By Lemma~\ref{Lemm:ActionAutFaParSpace}\ref{ActionFap},
	the translation $R=\mu z_0^a$, with $\mu\in\k$,
	acts by replacing $y_1$ with $y_1-\mu y_0$.
	For $i<b$, every additional term arising from
	\[
	\lambda_i y_0^i(y_1-\mu y_0)^{b-i}z^{ai+p^{s_i}}
	\]
	has $y_0$-degree at least $i+q_i$
	and $z$-degree $ai+p^{s_i}\leq a(i+q_i)$,
	hence vanishes in normal form \eqref{eq:def of P}.
	The component with $i=b$, if present, is unchanged.
	Thus $\{\mu z_0^a\mid\mu\in\k\}\subseteq H$.
	Together with the standard $\SL_2$, this subgroup
	acts transitively on $\F_a\setminus s_{-a}$.
	Since the standard $\SL_2$ acts transitively on $s_{-a}$,
	$H$ acts with exactly two orbits.
	
	Conversely, suppose that $aq<r$, and let $R\in H\cap U$.
	By Lemma~\ref{Lemm:ActionAutFaParSpace}\ref{ActionFap},
	the action of $R$ replaces $P$ with
	$P(y_0,y_1-y_0R(1,z),z)$.
	The coefficient of
	$y_0^{i_0+q}y_1^{m-q}z^{ai_0+r}$
	in the transformed $i_0$-component is
	\[
	\lambda_{i_0}\binom{m}{q}(-R(1,0))^q.
	\]
	This monomial survives in normal form because
	\[
	a(i_0+q)<ai_0+r<c=ai_0+2r.
	\]
	No other component contributes to it, since
	\[
	ai+p^{s_i}=c-p^{s_i}>c-r=ai_0+r
	\qquad\text{for }i>i_0.
	\]
	As the first nonzero component is unchanged,
	preservation of the parameter class forces $R(1,0)=0$,
	using $\binom{m}{q}\neq0$ in $\k$.
	Since $H\cap U$ is stable under the standard $\SL_2$,
	applying this conclusion to all conjugates of $R$
	shows that $R$ vanishes everywhere on $\P^1$.
	Hence $(H\cap U)(\k)=\{0\}$.
	
	Write an element of $H(\k)$ as $(R,\alpha)$.
	Since $\alpha$ normalizes the standard subgroup $S$,
	conjugating $S\subseteq H$ by $(R,\alpha)$ and using
	$(H\cap U)(\k)=\{0\}$ shows that $R$ is $S$-invariant.
	As $U^S=\{0\}$, we obtain $R=0$.
	Since $H$ is reduced, it follows that
	$H\subseteq\GL_2/\mu_a$.
	Thus $H$ acts with three orbits.
	This proves the numerical orbit criterion.
	
	By Propositions~\ref{prop:case H=GL2/mu_a}
	and~\ref{prop:trichotomy for algebraic subgroups of AUt(Fa)},
	the three-orbit case is not relatively maximal.
	We may therefore assume that $H$ acts with exactly two orbits.
	
	Since $\F_a$ is minimal and $H$ has no fixed point,
	every $H$-equivariant birational map from $\F_a$ to
	a smooth projective surface is an isomorphism.
	Moreover, $s_{-a}$ is the unique $H$-invariant curve,
	and $H$ acts transitively on it.
	Thus Lemma~\ref{Lemm:DecElLinks} applies:
	up to square isomorphisms, every equivariant square
	birational map factors into elementary links above $s_{-a}$.
	As $i_0<b$, Proposition~\ref{prop:birP1bUme}\ref{i0PbOnly}
	shows that these links are precisely those in
	Proposition~\ref{prop:birP1bUme}\ref{SequenceLinks}
	and their inverses.
	All these links conjugate the neutral components of
	the automorphism groups.
	This proves relative maximality and the asserted description
	of equivariant square birational maps.
	The existence of these nontrivial elementary links also shows
	that $X$ is not stiff.
\end{proof}

\begin{corollary}\label{Cor:MaxUmeBundlesFa}
	Let $X=\U_{a,\omega}^{b,c}=Z_a^{b,c,P}\to\F_a$
	be an Umemura $\P^1$-bundle, where
	\[
	P=\sum_{i\in I}
	\lambda_i y_0^iy_1^{b-i}z^{ai+p^{s_i}},
	\qquad
	ai+2p^{s_i}=c,
	\qquad
	\lambda_i\in\k^*.
	\]
	Set $i_0=\min I$, $m=b-i_0$ and $r=p^{s_{i_0}}$.
	Then $\Autz(X)$ is relatively maximal if and only if
	$m>0$ and one of the following holds:
	\begin{enumerate}
		\item $a=1$, $r\mid m$, and either $m\geq2r$ or
		$p=2,\ m=r,\ P_b\neq0$;
		\item $a\geq2$ and $a p^{\nu_p(m)}\geq r$.
	\end{enumerate}
	In all these cases, $X$ is not stiff.
\end{corollary}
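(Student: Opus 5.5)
The corollary merely combines the two preceding propositions, split according to whether $a=1$ or $a\geq2$. The notation $i_0$, $m$, $r$ is the same as in Propositions~\ref{prop:relative-max-Ume-F1} and~\ref{prop:relative-max-Ume-Fa}, so the only work is to match the numerical conditions.

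For $a=1$, Proposition~\ref{prop:relative-max-Ume-F1} says that $\Autz(X)$ is relatively maximal if and only if $r\mid m$ and either $m\geq2r$, or $p=2$, $m=r$ and $P_b\neq0$. This is condition~(1) except for the explicit requirement $m>0$. That requirement is automatic: if $m\geq2r$ or $m=r$, then $m\geq r\geq1$. So the conditions agree. The same proposition also states that $X$ is never stiff.

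For $a\geq2$, Proposition~\ref{prop:relative-max-Ume-Fa} gives exactly the criterion $m>0$ and $ap^{\nu_p(m)}\geq r$, which is condition~(2). It also shows that a relatively maximal $X$ is not stiff, because the elementary links $Y_n\dashrightarrow Y_{n+1}$ relate $X$ to the $\P^1$-bundles $Y_n$, which have different numerical invariants $(a,b+n,c+an)$. The valuation $\nu_p(m)$ makes sense because $m>0$.

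Since both cases are direct citations, I expect no real obstacle. The only thing to check is that the parametrization of $P$ by $I$, $\lambda_i$ and $s_i$ is the one used in both propositions. This holds by Definition~\ref{def Umemura bundle}, since $c-1-l_i=ai+p^{s_i}$.
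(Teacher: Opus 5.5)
Your proposal is correct and follows the paper's own proof, which cites Proposition~\ref{prop:relative-max-Ume-F1} for $a=1$ and Proposition~\ref{prop:relative-max-Ume-Fa} for $a\geq2$. Your observation that $m>0$ is automatic when $a=1$ (each alternative gives $m\geq r\geq1$) is the only bookkeeping needed to match the two formulations.
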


\begin{proof}
	For $a=1$, the assertions follow from
	Proposition~\ref{prop:relative-max-Ume-F1}.
	For $a\geq2$, they follow from
	Proposition~\ref{prop:relative-max-Ume-Fa}.
\end{proof}

\subsection{Proof of Theorem~\ref{th: classification rel max}} \label{proof: classification rel max}
The relative maximality assertions
in cases~\eqref{item:relmax-Fabc}--\eqref{item:relmax-Schwarzenberger}
follow from Proposition~\ref{prop:MaxDecBundlesFa},
Lemma~\ref{Lem:DecP2NoCurve},
Corollary~\ref{Cor:MaxUmeBundlesFa}, and
Proposition~\ref{prop: relative max of Sbc}, respectively.
The transitivity of $\Autz(\L^s)$
(Lemma~\ref{Lem:LsPGL3}) gives relative maximality in case~\eqref{item:relmax-Ls}.
For case~\eqref{item:relmax-V},
Lemma~\ref{Lem:UmeF1P2-positive-char} gives
$\psi\Autz(\U_{1,\omega}^{b,2r})\psi^{-1}
=\Autz(\V_{\omega}^{b,2r}),$
so the relative maximality follows from
Proposition~\ref{prop:relative-max-Ume-F1}.

For an arbitrary $\P^1$-bundle, Lemma~\ref{lem:descent-lemma}
first gives an equivariant square birational model over
$\P^2$ or a minimal Hirzebruch surface.
Theorems~\ref{th:P1bundlesFa} and~\ref{th:P1bundlesP2}
then reduce us to the families studied above; in the
fixed-point case over $\P^2$, we first blow up the fixed
point and its fiber.
The non-maximal cases admit the equivariant enlargements
described in Proposition~\ref{prop:MaxDecBundlesFa},
Proposition~\ref{prop:case H=GL2/mu_a},
Proposition~\ref{prop: description of the equ links Umemura},
Lemma~\ref{lem: possible birational models Sbc}, and
Lemma~\ref{lem:Lsb-non-maximal}.
These constructions terminate in one of the listed cases,
proving the existence assertion.

\smallskip

Suppose now that $\Autz(X)$ is relatively maximal.
By Lemma~\ref{lem:descent-lemma}, descent to a minimal
base gives a relatively maximal $\P^1$-bundle $X_0\to S_0$
and conjugates the full neutral automorphism groups.
The preceding classification and maximality results
give cases~\eqref{item:relmax-Fabc}
and~\eqref{item:relmax-Umemura} when $S_0$ is a
Hirzebruch surface, and cases~\eqref{item:relmax-Pb},
\eqref{item:relmax-Schwarzenberger}
and~\eqref{item:relmax-Ls} when $S_0=\P^2$
and the base action has no fixed point.

If the action on $\P^2$ fixes a point $q$, blowing up
$q$ and its fiber gives a relatively maximal $\P^1$-bundle
over $\F_1$. By Theorem~\ref{th:P1bundlesFa} and the
preceding maximality results, this $\P^1$-bundle must be
Umemura. 
Since this $\P^1$-bundle is the pullback of $X_0\to\P^2$
along $\eta\colon\F_1\to\P^2$, its restriction to
the exceptional section $E=\eta^{-1}(q)$ is
$(X_0)_q\times E\longrightarrow E$, 
hence is trivial.
Proposition~\ref{prop:UmeF1Contraction} gives
$i_0=0$ and $c=2r$. Proposition~\ref{prop:relative-max-Ume-F1}
then gives $r\mid b$ and $b\geq2r$, yielding
case~\eqref{item:relmax-V}.

Finally, we must recover the original base $S$.
The descent morphism $S\to S_0$ is equivariant for the
full neutral automorphism group of $X_0$, since descent
conjugates the neutral automorphism groups.
It factors into equivariant point blow-ups, whose centers
are fixed points because the group is connected.
All the listed actions on $S_0$ have no fixed point,
except in case~\eqref{item:relmax-V}, where $q$ is the
unique fixed point. Thus either $S\simeq S_0$, or the
first blow-up is at $q$ and gives $\F_1$.
The lifted action on $\F_1$ has no fixed point,
so no further blow-up is possible.

Hence the only additional possibility is $S\simeq\F_1$;
Theorem~\ref{th:P1bundlesFa} and
Proposition~\ref{prop:relative-max-Ume-F1} then give
case~\eqref{item:relmax-Umemura}.
This proves that the list is exhaustive. \qed

\subsection{Proof of Theorem~\ref{th: equivariant links}} \label{proof: equivariant links}
The stiffness and superstiffness assertions in
cases \eqref{item:relmax-Fabc}--\eqref{item:relmax-Schwarzenberger}
follow from Proposition~\ref{prop:MaxDecBundlesFa},
Lemma~\ref{Lem:DecP2NoCurve},
Corollary~\ref{Cor:MaxUmeBundlesFa}, and
Proposition~\ref{prop: relative max of Sbc}, respectively.
The transitivity of $\Autz(\L^s)$
(Lemma~\ref{Lem:LsPGL3}) implies superstiffness.
In case~\eqref{item:relmax-V}, the inverse contraction gives
an equivariant square birational map to a $\P^1$-bundle over $\F_1$,
so these $\P^1$-bundles are not stiff.

The description and exhaustiveness of the decomposable
and Umemura links follow from the cited propositions,
together with Propositions~\ref{prop:relative-max-Ume-F1}
and~\ref{prop:relative-max-Ume-Fa}.

For Schwarzenberger $\P^1$-bundles, the proof of
Proposition~\ref{prop: relative max of Sbc} shows that the
base map is an isomorphism and that every target is again
a Schwarzenberger $\P^1$-bundle.
Lemma~\ref{Lemm:DecElLinks} then factors the map into
elementary links above $\Gamma$, whose centers are the
invariant sections described in
Corollary~\ref{cor: description of the orbits for Sbc}.

Finally, in case~\eqref{item:relmax-V}, the base action has
a unique fixed point $q$ and no invariant curve.
Blowing up $q$ gives $\F_1$, whose lifted action has no
fixed point. Thus the only additional base modification
is $\F_1\to\P^2$ or its inverse, giving precisely the
morphisms of Proposition~\ref{prop:UmeF1Contraction}.
This proves exhaustiveness of the list.
The equality of neutral automorphism groups follows
from relative maximality.
\qed

\bibliographystyle{alpha}
\bibliography{biblio}

@article {Fong,
    AUTHOR = {Fong, Pascal},
     TITLE = {Automorphism groups of {$\mathbb{P}^1$}-bundles over geometrically
              ruled surfaces},
   JOURNAL = {J. Lond. Math. Soc. (2)},
  FJOURNAL = {Journal of the London Mathematical Society. Second Series},
    VOLUME = {113},
      YEAR = {2026},
    NUMBER = {6},
     PAGES = {Paper No. e70593, 87},
      ISSN = {0024-6107,1469-7750},
   MRCLASS = {14E07 (14E30 14J26 14J60)},
  MRNUMBER = {5084120},
       DOI = {10.1112/jlms.70593},
       URL = {https://doi.org/10.1112/jlms.70593},
}

@book{Ste68,
 author = {Steinberg, R.},
 title = {Endomorphisms of linear algebraic groups},
 fseries = {Memoirs of the American Mathematical Society},
 series = {Mem. Am. Math. Soc.},
 issn = {0065-9266},
 volume = {80},
 isbn = {978-0-8218-1280-8; 978-1-4704-0029-3},
 year = {1968},
 publisher = {Providence, RI: American Mathematical Society (AMS)},
 language = {English},
 doi = {10.1090/memo/0080},
 zbMATH = {3262382},
 Zbl = {0164.02902}
}

@misc{Mac25,
 author = {Maccan, Matilde},
 title = {Automorphism group of flag varieties with non-reduced stabilizer},
 year = {2025},
 howpublished = {Preprint, {arXiv}:2507.04549 [math.{AG}] (2025)},
 url = {https://arxiv.org/abs/2507.04549},
 arXiv = {arXiv:2507.04549}
}

@article{BFT-MFS,
	author = {Blanc, J{\'e}r{\'e}my and Fanelli, Andrea and Terpereau, Ronan},
	title = {Connected algebraic groups acting on three-dimensional {Mori} fibrations},
	fjournal = {IMRN. International Mathematics Research Notices},
	journal = {Int. Math. Res. Not.},
	issn = {1073-7928},
	volume = {2023},
	number = {2},
	pages = {1572--1689},
	year = {2023},
	language = {English},
	doi = {10.1093/imrn/rnab293},
	zbMATH = {7652768},
	Zbl = {1510.14031}
}

@article{BFT,
 author = {Blanc, J{\'e}r{\'e}my and Fanelli, Andrea and Terpereau, Ronan},
 title = {Automorphisms of {{\(\mathbb{P}^1\)}}-bundles over rational surfaces},
 fjournal = {{\'E}pijournal de G{\'e}om{\'e}trie Alg{\'e}brique. EPIGA},
 journal = {{\'E}pijournal de G{\'e}om. Alg{\'e}br., EPIGA},
 issn = {2491-6765},
 volume = {6},
 pages = {47},
 note = {Id/No 23},
 year = {2022},
 language = {English},
 zbMATH = {7641777},
 Zbl = {1511.14027}
}

@book {Jan03,
    AUTHOR = {Jantzen, Jens Carsten},
     TITLE = {Representations of algebraic groups},
    SERIES = {Mathematical Surveys and Monographs},
    VOLUME = {107},
   EDITION = {Second},
 PUBLISHER = {American Mathematical Society, Providence, RI},
      YEAR = {2003},
     PAGES = {xiv+576},
      ISBN = {0-8218-3527-0},
   MRCLASS = {20G05 (17B10)},
  MRNUMBER = {2015057},
}

@Article{EF1898,
    Author = {Enriques, F. and Fano, G.},
    Title = {{Sui gruppi continui di trasformazioni Cremoniane dello spazio.}},
    FJournal = {{Annali di Matematica Pura ed Applicata. II. Serie}},
    Journal = {{Annali di Mat. (2)}},
    ISSN = {0373-3114; 1618-1891/e},
    Volume = {26},
    Pages = {59--98},
    Year = {1898},
    Language = {Italian},
    DOI = {10.1007/BF02346204},
    Zbl = {28.0598.03}
}

@article {MO67,
    AUTHOR = {Matsumura, Hideyuki and Oort, Frans},
     TITLE = {Representability of group functors, and automorphisms of algebraic schemes},
   JOURNAL = {Invent. Math.},
  FJOURNAL = {Inventiones Mathematicae},
    VOLUME = {4},
      YEAR = {1967},
     PAGES = {1--25},
      ISSN = {0020-9910},
   MRCLASS = {14.55},
  MRNUMBER = {0217090},
MRREVIEWER = {M. Miyanishi},
}

@incollection {MU83,
    AUTHOR = {Mukai, Shigeru and Umemura, Hiroshi},
     TITLE = {Minimal rational threefolds},
 BOOKTITLE = {Algebraic geometry ({T}okyo/{K}yoto, 1982)},
    SERIES = {Lecture Notes in Math.},
    VOLUME = {1016},
     PAGES = {490--518},
 PUBLISHER = {Springer, Berlin},
      YEAR = {1983},
   MRCLASS = {14J30 (14E35 14M20)},
  MRNUMBER = {726439 (85c:14027)},
MRREVIEWER = {Mary Schaps},
       DOI = {10.1007/BFb0099976},
       URL = {http://dx.doi.org/10.1007/BFb0099976},
}

@article {Ume82a,
    AUTHOR = {Umemura, Hiroshi},
     TITLE = {Maximal algebraic subgroups of the {C}remona group of three
              variables. {I}mprimitive algebraic subgroups of exceptional
              type},
   JOURNAL = {Nagoya Math. J.},
  FJOURNAL = {Nagoya Mathematical Journal},
    VOLUME = {87},
      YEAR = {1982},
     PAGES = {59--78},
      ISSN = {0027-7630},
     CODEN = {NGMJA2},
   MRCLASS = {14E07 (14M20)},
  MRNUMBER = {676586 (84b:14005)},
MRREVIEWER = {M. Kh. Gizatullin},
       URL = {http://projecteuclid.org/euclid.nmj/1118786899},
}

@article {Ume80,
    AUTHOR = {Umemura, Hiroshi},
     TITLE = {Sur les sous-groupes alg\'ebriques primitifs du groupe de
              {C}remona \`a trois variables},
   JOURNAL = {Nagoya Math. J.},
  FJOURNAL = {Nagoya Mathematical Journal},
    VOLUME = {79},
      YEAR = {1980},
     PAGES = {47--67},
      ISSN = {0027-7630},
     CODEN = {NGMJA2},
   MRCLASS = {14E05 (14M20 57S25)},
  MRNUMBER = {587409},
MRREVIEWER = {M. Kh. Gizatullin},
       URL = {http://projecteuclid.org/euclid.nmj/1118786168},
}

@article {Ume82b,
    AUTHOR = {Umemura, Hiroshi},
     TITLE = {On the maximal connected algebraic subgroups of the {C}remona
              group. {I}},
   JOURNAL = {Nagoya Math. J.},
  FJOURNAL = {Nagoya Mathematical Journal},
    VOLUME = {88},
      YEAR = {1982},
     PAGES = {213--246},
      ISSN = {0027-7630},
     CODEN = {NGMJA2},
   MRCLASS = {14E07 (14M17 14M20)},
  MRNUMBER = {683251 (84g:14013)},
MRREVIEWER = {M. Kh. Gizatullin},
       URL = {http://projecteuclid.org/euclid.nmj/1118787013},
}

@incollection {Ume85,
    AUTHOR = {Umemura, Hiroshi},
     TITLE = {On the maximal connected algebraic subgroups of the {C}remona
              group. {II}},
 BOOKTITLE = {Algebraic groups and related topics ({K}yoto/{N}agoya, 1983)},
    SERIES = {Adv. Stud. Pure Math.},
    VOLUME = {6},
     PAGES = {349--436},
 PUBLISHER = {North-Holland, Amsterdam},
      YEAR = {1985},
   MRCLASS = {14E07 (14L30)},
  MRNUMBER = {803342 (87d:14008)},
MRREVIEWER = {M. Kh. Gizatullin},
}

@article {Ume88,
    AUTHOR = {Umemura, Hiroshi},
     TITLE = {Minimal rational threefolds. {II}},
   JOURNAL = {Nagoya Math. J.},
  FJOURNAL = {Nagoya Mathematical Journal},
    VOLUME = {110},
      YEAR = {1988},
     PAGES = {15--80},
      ISSN = {0027-7630},
     CODEN = {NGMJA2},
   MRCLASS = {14J30 (14E07 14E30)},
  MRNUMBER = {945907 (89f:14040)},
MRREVIEWER = {Mary Schaps},
       URL = {http://projecteuclid.org/euclid.nmj/1118780959},
}

@phdthesis{Lau,
	Author = {Laurent, Bruno},
	Title = {Courbes et surfaces presque homogènes},
	School = {Université Grenoble Alpes},
	Year = {2018},
	Url = {https://theses.hal.science/tel-01999851},
}

@book {BSU13,
    AUTHOR = {Brion, Michel and Samuel, Preena and Uma, Vikraman},
     TITLE = {Lectures on the structure of algebraic groups and geometric
              applications},
    SERIES = {CMI Lecture Series in Mathematics},
    VOLUME = {1},
 PUBLISHER = {Hindustan Book Agency, New Delhi; Chennai Mathematical
              Institute (CMI), Chennai},
      YEAR = {2013},
     PAGES = {viii+120},
      ISBN = {978-93-80250-46-5},
   MRCLASS = {14L15 (14J60 14K05 14L30 20G15)},
  MRNUMBER = {3088271},
MRREVIEWER = {Andy R. Magid},
}

\end{document}